\DeclareMathOperator{\Div}{Div}
\DeclareMathOperator{\lct}{lct}
\DeclareMathOperator{\Pic}{Pic}
\DeclareMathOperator{\Supp}{Supp}
\DeclareMathOperator{\vol}{vol}
 \numberwithin{equation}{subsection}
 \numberwithin{footnote}{subsection}
 \newtheorem{cor}[subsection]{Corollary}
 \newtheorem{lem}[subsection]{Lemma}
 \newtheorem{prop}[subsection]{Proposition}
 \newtheorem{thm}[subsection]{Theorem}
 \newtheorem{conj}[subsection]{Conjecture}
\theoremstyle{upright}
 \newtheorem{defn}[subsection]{Definition}
 \newtheorem{rem}[subsection]{Remark}
 \newcommand{\N}{\mathbb N}
 \newcommand{\PP}{\mathbb P}
 \newcommand{\A}{\mathbb A}
 \newcommand{\Q}{\mathbb Q}
 \newcommand{\R}{\mathbb R}
 \newcommand{\Z}{\mathbb Z}
  \newcommand{\C}{\mathbb C}
 \newcommand{\bir}{\dashrightarrow}
 \newcommand{\rddown}[1]{\left\lfloor{#1}\right\rfloor} % round-down
\title{\large L\MakeLowercase{og} C\MakeLowercase{alabi}-Y\MakeLowercase{au fibrations}}
\thanks{2010 MSC:
14J45, % Fano varieties
14J32,  % Calabi-Yau manifolds
14J10,  % moduli, classification
14E30, %Minimal model program (Mori theory, extremal rays)
14J17, % singularities
14C20, % Linear systems, divisors, etc
14E05. %Rational and birational maps
}
\author{\large C\MakeLowercase{aucher} B\MakeLowercase{irkar}}
\date{\today}
\begin{document}
\maketitle

\begin{abstract}
In this paper we study boundedness properties and singularities of log Calabi-Yau fibrations, particularly those 
admitting Fano type structures. A log Calabi-Yau fibration roughly consists of a pair $(X,B)$ with good singularities 
and a  projective morphism $X\to Z$  such that $K_X+B$ is numerically trivial 
over $Z$. This class includes many central ingredients of birational geometry such as 
Calabi-Yau and Fano varieties and also fibre spaces of such varieties, 
flipping and divisorial contractions, crepant models, germs of singularities, etc.  

\end{abstract}

\tableofcontents

%%%%%%%%%%%%%%%%%%%%%%%%
%%%%%%%%%%%%%%%%%%%%%%%%%%

\section{\bf Introduction}

We work over a fixed algebraically closed field $k$ of characteristic zero unless stated otherwise.
According to the minimal model  program (including the abundance conjecture) 
every variety $W$ is birational to a projective variety $X$ 
with good singularities such that either 
\begin{itemize}
\item $X$ is canonically polarised (i.e. $K_X$ is ample), or

\item  $X$ admits a Mori-Fano fibration $X\to Z$ (i.e. $K_X$ is anti-ample over $Z$), or 

\item $X$ admits a Calabi-Yau fibration $X\to Z$ (i.e. $K_X$ is numerically trivial over $Z$).
\end{itemize} 
This reduces the birational classification 
of algebraic varieties to classifying such  $X$. From the point of view of moduli theory 
it makes perfect sense to focus on such $X$ as they have a better chance of having a 
reasonable moduli theory due to the special geometric structures they carry. For this 
and other reasons Fano and Calabi-Yau varieties and their fibrations are central to birational geometry. 
They are also of great importance in many other parts of mathematics such as arithmetic geometry, differential 
geometry, mirror symmetry, and mathematical physics.

Boundedness properties of canonically polarised varieties and Fano varieties have been extensively studied  
in the literature leading to recent advances [\ref{HMX-moduli}][\ref{B-compl}][\ref{B-BAB}] but much less is known 
about Calabi-Yau varieties. With the above philosophy of the minimal model program in mind, 
there is a natural urge to extend such studies to the more general framework of Fano and Calabi-Yau fibrations. 
It is also more fruitful and more flexible to discuss this in the context of pairs. 

Now we introduce the notion which unifies many central ingredients of birational geometry. 
A \emph{log Calabi-Yau fibration} consists of a pair $(X,B)$ with log canonical singularities and 
a contraction $f\colon X\to Z$ (i.e. a surjective projective morphism 
with connected fibres) such that $K_X+B\sim_\R 0$ relatively over $Z$. 
We usually denote the fibration by $(X,B)\to Z$. 
Note that we allow the two extreme cases: when $f$ is birational and when $f$ is constant.
When $f$ is birational such a fibration is a crepant model of $(Z,f_*B)$ (see below).
When $f$ is constant, that is, when $Z$ is a point, we just say $(X,B)$ is a \emph{log Calabi-Yau pair}.  
In general, if $F$ is a general fibre of $f$ and if we let $K_F+B_F=(K_X+B)|_F$, then $K_F+B_F\sim_\R 0$, 
hence $(F,B_F)$ is a log Calabi-Yau pair justifying the terminology. 

The class of log Calabi-Yau fibrations includes all log Fano and log Calabi-Yau varieties 
 and much more. For example, if $X$ is a variety which is Fano over a base $Z$, then 
we can easily find $B$ so that $(X,B)\to Z$ is a log Calabi-Yau fibration. This includes all Mori fibre spaces. 
Since we allow birational contractions, it also includes all divisorial and flipping contractions. 
Another interesting example of log Calabi-Yau fibrations $(X,B)\to Z$ is when  
$X\to Z$ is the identity morphism; the set of such fibrations simply coincides with the set of 
pairs with log canonical singularities.
On the other hand, a surface with a minimal elliptic fibration over a curve is another instance of 
a log Calabi-Yau fibration. 

Besides the classification problem mentioned above, there are other motivations for considering 
log Calabi-Yau fibrations. Indeed, they are very useful for inductive 
treatment of various problems in algebraic geometry. For example they are used to treat 
the minimal model and abundance and Iitaka conjectures, to construct complements on Fano varieties, 
etc. They appear in the literature with other names, e.g. lc-trivial fibrations [\ref{ambro-lc-trivial}].

The following are general guiding questions which are the focus of this paper:

\smallskip

{\bf Questions.} 
{\it Under what conditions do log Calabi-Yau fibrations form bounded families?} 

\smallskip

{\it How do singularities behave on the total space and base of log Calabi-Yau fibrations?}  
\smallskip

{\it When do bounded (klt or lc) complements exist for log Calabi-Yau fibrations?}

\smallskip
{\flushleft These} questions are naturally related to many problems in birational geometry. 
In this paper we investigate these questions giving particular attention to those log Calabi-Yau fibrations 
which in some sense carry full or partial Fano type structures.

A log Calabi-Yau fibration $(X,B)\to Z$ is of \emph{Fano type} if $X$ is of Fano type over $Z$, 
that is, if $-(K_X+C)$ is ample over $Z$ and $(X,C)$ is klt for some boundary $C$. When 
$(X,B)$ is klt, this is equivalent to saying that $-K_X$ is big over $Z$.
 We introduce some notation, somewhat similar to [\ref{Jiang}], 
to simplify the statements of our results below. 

\begin{defn}\label{d-FT-fib}
Let $d,r$ be natural numbers and $\epsilon$ be a positive real number. 
A \emph{$(d,r,\epsilon)$-Fano type (log Calabi-Yau) fibration} consists of a 
pair $(X,B)$ and a contraction $f\colon X\to Z$ such that we have the following:
\begin{itemize}
\item $(X,B)$ is a projective $\epsilon$-lc pair of dimension $d$,

\item $K_X+B\sim_\R f^* L$ for some $\R$-divisor $L$, 

\item $-K_X$ is big over  $Z$, i.e. $X$ is of Fano type over $Z$,

\item $A$ is a very ample divisor on $Z$ with $A^{\dim Z}\le r$, and 

\item $A-L$ is ample.
\end{itemize}
\end{defn}

That is, a $(d,r,\epsilon)$-Fano type fibration is a log Calabi-Yau fibration which is of Fano type and with certain 
geometric and numerical data bounded by the numbers $d,r,\epsilon$. The condition $A^{\dim Z}\le r$ 
means that $Z$ belongs to a bounded family of varieties. Ampleness of $A-L$ means that 
the ``degree" of $K_X+B$ is in some sense bounded (this degree is measured with respect to $A$).
When $Z$ is a point the last two conditions in the definition are vacuous: in this case the fibration is 
simply a Fano type $\epsilon$-lc  Calabi-Yau pair of dimension $d$.

In the rest of this introduction we will state some of the main results of this paper. To keep the introduction 
as simple as possible we have moved further results and remarks to Section 2.\\

{\textbf{\sffamily{Boundedness of  log Calabi-Yau fibrations with Fano type structure.}}}
Our first result concerns the boundedness of Fano type fibrations as defined above. 
This  maybe considered as a relative version 
of the so-called BAB conjecture [\ref{B-BAB}, Theorem 1.1] which is about boundedness of Fano varieties 
in the global setting. 

\begin{thm}\label{t-bnd-cy-fib}
Let $d,r$ be natural numbers and $\epsilon$ be a positive real number. Consider the set of all
$(d,r,\epsilon)$-Fano type fibrations $(X,B)\to Z$ as in \ref{d-FT-fib}. 
Then the $X$ form a bounded family.
\end{thm}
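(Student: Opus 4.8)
The plan is to reduce the boundedness of $X$ to three bounded pieces of data — the base $Z$, a general fibre, and the canonical-bundle-formula structure linking them — and then reassemble $X$ from a uniformly bounded ample divisor. First I would note that the base is already under control: since $A$ is very ample with $A^{\dim Z}\le r$, the system $|A|$ embeds $Z$ into a projective space of dimension bounded in terms of $r$ and $\dim Z\le d$ (as $h^0(A)\le A^{\dim Z}+\dim Z$), with image of degree $\le r$, so the $Z$ form a bounded family. By perturbing $B$ inside the finite-dimensional affine space of boundaries with the same support — using that $\epsilon$-lc is an open condition and that $K_X+B\sim_\R f^*L$ involves only finitely many divisors — I would reduce, at the cost of replacing $\epsilon$ by $\epsilon/2$, to the case where $B$ is a $\Q$-boundary, $L$ is $\Q$-Cartier, and $A-L$ is still ample. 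Then for a general fibre $F=f^{-1}(z)$ the pair $(F,B_F)$ with $K_F+B_F=(K_X+B)|_F$ satisfies $K_F+B_F\sim_\Q 0$, is $\epsilon$-lc of dimension $\le d$, and $F$ is of Fano type.

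Next I would invoke boundedness of the general fibre: an $\epsilon$-lc pair $(F,B_F)$ of dimension $\le d$ with $F$ of Fano type and $K_F+B_F\sim_\Q 0$ lies in a bounded family. This should follow from the BAB theorem [\ref{B-BAB}] together with the theory of complements [\ref{B-compl}] — take a bounded $n$-complement, or run a suitable MMP to reduce to an $\epsilon'$-lc weak Fano of dimension $\le d$ and apply BAB — and it should yield a bounded $n=n(d,\epsilon)$ with $n(K_F+B_F)\sim 0$ together with a very ample divisor of bounded degree on each such $F$.

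Then I would run the canonical bundle formula for the lc-trivial fibration $(X,B)\to Z$, writing $K_X+B\sim_\Q f^*(K_Z+B_Z+M_Z)$ with $B_Z\ge 0$ the discriminant and $M_Z$ the moduli part, nef on a high model. Boundedness of the fibres from the previous step should feed into effective adjunction to give a bounded $m=m(d)$ with $mM_Z$ b-Cartier and b-semiample, and should force the coefficients of $B_Z$ into a fixed finite set. Since $L\sim_\Q K_Z+B_Z+M_Z$ and $A-L$ is ample, intersecting with $A^{\dim Z-1}$ gives $(K_Z+B_Z+M_Z)\cdot A^{\dim Z-1}\le A^{\dim Z}\le r$; as $Z$ is bounded, $K_Z\cdot A^{\dim Z-1}$ is bounded below, so $B_Z\cdot A^{\dim Z-1}$ and $M_Z\cdot A^{\dim Z-1}$ are bounded above, which together bounds all the data of the generalized pair $(Z,B_Z+M_Z)$.

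Finally I would reconstruct $X$: by a Kollár–Matsusaka-type criterion it suffices to exhibit a very ample $H$ on $X$ with $H^{\dim X}$ bounded in terms of $d,r,\epsilon$. Using that $-K_X$ is big over $Z$ — equivalently that $B$ is big over $Z$, since $B_F\sim_\Q -K_F$ is big on the bounded fibre $F$ — together with relative Kawamata–Viehweg vanishing over the bounded $Z$ and the bounded moduli data, I would aim to produce bounded $p,k\in\N$ and a bounded very ample $A''$ on $Z$ so that $N:=-pK_X+f^*(kA'')$ is ample on $X$ with $N^{\dim X}$ bounded, and then take a further bounded multiple to obtain $H$. \textbf{The main obstacle} I expect is exactly this last step: it is easy to make $-pK_X$ relatively very ample with bounded $p$, but turning relative ampleness plus boundedness of fibres and of the base into an \emph{effective} ampleness statement on the total space — i.e. ruling out that $X$ becomes arbitrarily twisted over a fixed bounded $Z$, so that $k$ can be taken uniformly bounded — requires a genuinely global argument, presumably via bounded complements for $X/Z$ and the bounded multiple $m$ in the canonical bundle formula; controlling the moduli part $M_Z$ precisely (effective adjunction) is itself a substantial input here.
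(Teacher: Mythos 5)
Your plan takes a genuinely different route from the paper's, and there are concrete gaps that I do not see how to close. The decisive one, which you yourself flag, is the final reconstruction step. For singular $X$ there is no Koll\'ar--Matsusaka criterion of the form ``very ample $H$ with bounded $H^d$ implies bounded''; the relevant result is [\ref{HMX2}, Theorem 1.6], whose hypotheses are that $(X,B)$ is already \emph{log birationally bounded}, that $K_X+B$ is nef and big, and that the coefficients of $B$ lie in a DCC set. Log birational boundedness must therefore be established first, and it is not an output of ``bounded $Z$ plus bounded fibres plus canonical bundle formula''; in the paper it occupies Propositions \ref{l-bnd-cy-effective-bir}, \ref{l-bnd-cy-bndness-volume}, \ref{l-bnd-cy-bir-bnd}, proved by an entirely different mechanism (effective birationality of $|m(lf^*A-K_X)|$, volume bounds via induction on hyperplane sections of $Z$). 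Several of your intermediate claims also fail as stated: after the $\Q$-perturbation (Lemma \ref{l-rational-approximation}) the denominators of $B$ are not controlled, so BAB for the fibres does \emph{not} yield a bounded $n$ with $n(K_F+B_F)\sim 0$; bounded fibres give $B_Z$ coefficients only in a DCC set (Lemma \ref{l-fib-adj-dcc}) or bounded away from $1$ (Theorem \ref{t-sh-conj-bnd-base}), not in a finite set; and a bounded b-Cartier index for $M_Z$ is not automatic. Each of these, in the Fano type setting, is ultimately obtained from bounded complements --- but Theorems \ref{t-bnd-comp-lc-global} and \ref{t-bnd-comp-lc-global-cy-fib} are themselves proved by an induction intertwined with the boundedness theorem (see Lemma \ref{l-bnd-cy-bnd-klt-compl-induction}), so invoking them as an independent input is circular.

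The paper's actual route avoids trying to reassemble $X$ from the base, the general fibre, and adjunction data. It first proves a numerical bound $lf^*A-K_X$ nef (Proposition \ref{l-bnd-cy-numerical-bndness}, using boundedness of lengths of extremal rays together with the induction hypothesis on lc thresholds), then effective birationality and volume bounds, hence log birational boundedness; in parallel it constructs bounded klt complements $\Lambda$ with $n(K_X+\Lambda)\sim mf^*A$, which supply the fixed denominators you were trying to extract from the fibres directly; finally, and this is the step your proposal has no substitute for, it passes to a model $X'$ isomorphic in codimension one on which the components of a bounded reduced divisor generate $N^1(X'/Z)$ (Proposition \ref{p-cy-fib-bnd-Neron-Severi}), which makes it possible to perturb to a nef and big $K_{X'}+\Theta'$ with the right properties and invoke [\ref{HMX2}, Theorem 1.6]. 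The ``twist over a fixed bounded $Z$'' obstruction you correctly identify is controlled exactly by this N\'eron--Severi argument, not by the canonical bundle formula, so the top-down assembly you sketch cannot be completed along the lines indicated.
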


The theorem also holds in the 
more general setting of generalised pairs, see \ref{t-log-bnd-cy-gen-fib}. 
A key ingredient of the proof is the theory of complements. Indeed 
in the process of proving the theorem we show that there is $\Lambda\ge 0$ 
such that $(X,\Lambda)$ is klt, $K_X+\Lambda\sim_\Q 0/Z$ having bounded Cartier index, 
and $(X,\Lambda)$ is log bounded. 

Jiang [\ref{Jiang}, Theorem 1.4] considers the setting of the theorem and proves birational boundedness 
of $X$ modulo several conjectures. We use some of his arguments to get the birational boundedness 
 but we need to do a lot more work to get boundedness.

The boundedness statement of Theorem \ref{t-bnd-cy-fib} does not say anything about boundedness of 
$\Supp B$. This is because in general we have no control over $\Supp B$, e.g.  
when $X=\PP^2$ and $Z$ is a point, $\Supp B$ can contain arbitrary 
hypersufaces. However, if the coefficients of $B$ are bounded away from zero, then indeed 
$\Supp B$ would also be bounded. More generally we have: 
 
\begin{thm}\label{t-log-bnd-cy-fib}
Let $d,r$ be natural numbers and $\epsilon,\delta$ be positive real numbers. Consider the set of all
$(d,r,\epsilon)$-Fano type fibrations $(X,B)\to Z$ as in \ref{d-FT-fib} and $\R$-divisors   
$0\le \Delta\le B$ whose non-zero coefficients are $\ge \delta$.
Then the set of such $(X,\Delta)$ is log bounded.
\end{thm}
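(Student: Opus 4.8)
The plan is to reduce to bounding a single intersection number on $X$ and then to control it using the fibration. By Theorem~\ref{t-bnd-cy-fib} the varieties $X$ form a bounded family, so I would fix a bounded family $(\mathcal X,\mathcal H)\to T$ realising it, with $\mathcal H$ relatively very ample of bounded intersection numbers, and set $H=\mathcal H|_X$. Since $X$ is bounded, to show the $(X,\Delta)$ are log bounded it suffices to bound $H^{d-1}\cdot\Supp\Delta$ from above: reduced divisors of bounded $\mathcal H$-degree on the fibres of $\mathcal X/T$ lie in finitely many components of the relative Chow scheme, hence form a bounded family, while the coefficients of $\Delta$ lie in the bounded interval $[\delta,1-\epsilon]$ (the upper bound because $\Delta\le B$ and $(X,B)$ is $\epsilon$-lc). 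Since the nonzero coefficients of $\Delta$ are $\ge\delta$ we have $\delta\,\Supp\Delta\le\Delta\le B$, so it is enough to bound $H^{d-1}\cdot B$.

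From $K_X+B\sim_\R f^*L$ we get $H^{d-1}\cdot B=H^{d-1}\cdot f^*L-H^{d-1}\cdot K_X$, and the term $H^{d-1}\cdot K_X$ is bounded since $X$ is bounded. As $A-L$ is ample and $f$ is a morphism, $f^*(A-L)$ is nef, so pairing it with the effective $1$-cycle $H^{d-1}$ gives $H^{d-1}\cdot f^*L\le H^{d-1}\cdot f^*A$. Thus everything reduces to bounding $H^{d-1}\cdot f^*A$, equivalently the $A$-degree of the effective $1$-cycle $f_*(H^{d-1})$ on the base $Z$ (which is itself bounded, as $A^{\dim Z}\le r$), equivalently, via the Khovanskii--Teissier inequalities, that the graph $\Gamma_f\subseteq X\times Z$ lies in a bounded family; in short, that the fibration $f$ itself is bounded. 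Granting this, $H^{d-1}\cdot\Supp\Delta\le\frac1\delta\big(H^{d-1}\cdot f^*A-H^{d-1}\cdot K_X\big)$ is bounded, and by the previous paragraph the $(X,\Delta)$ are log bounded.

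The main obstacle is exactly this boundedness of $f$: a morphism between two bounded varieties need not be bounded, so one must use the log Calabi-Yau structure. Here I would invoke the finer output of the proof of Theorem~\ref{t-bnd-cy-fib} — a divisor $\Lambda\ge0$ with $(X,\Lambda)$ klt, $m(K_X+\Lambda)\sim0$ over $Z$ of bounded Cartier index $m$, and $(X,\Lambda)$ log bounded — together with the canonical bundle formula for the lc-trivial fibration $(X,B)\to Z$ (see~[\ref{ambro-lc-trivial}]), which writes $K_X+B\sim_\R f^*(K_Z+B_Z+M_Z)$ with discriminant part $B_Z\ge0$ and moduli part $M_Z$. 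Descending the data to the bounded base $Z$, the hypothesis ``$A-L$ ample'' becomes ``$A-K_Z-B_Z-M_Z$ ample'', which together with boundedness of $Z$ bounds $\deg_A(B_Z+M_Z)$; one then bootstraps on $\dim Z$ — for instance by restricting the fibration over a general member of $|A|$, the base case $\dim Z=0$ being immediate since then $B\sim_\R-K_X$ — using $(X,\Lambda)$ to keep Cartier indices and coefficients in a finite set and to control the relation between the boundaries $B,\Lambda$ and between $L$ and the very ample $A$, and using the generalised-pair boundedness of~\ref{t-log-bnd-cy-gen-fib} to let the induction go through. Turning ``$B$ has bounded $A$-degree after descent to $Z$'' back into ``$B$ has bounded $H$-degree on $X$'' is the technical heart of the proof.
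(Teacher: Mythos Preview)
Your reduction is correct up to the point you flag: once $X$ is bounded with very ample $H$, bounding $H^{d-1}\cdot\Supp\Delta$ reduces via $\delta\,\Supp\Delta\le B$ and $K_X+B\sim_\R f^*L$ to bounding $H^{d-1}\cdot f^*A$, i.e.\ to bounding the morphism $f$. You are right that this is the heart of the matter.

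But your resolution of this step does not work as written. First, there is a circularity: in the paper Theorems \ref{t-bnd-cy-fib} and \ref{t-log-bnd-cy-fib} are proved together by the same induction (Lemma \ref{l-log-bnd-cy-fib-induction}), with \ref{t-bnd-cy-fib} being the case $\Delta=0$ of \ref{t-log-bnd-cy-fib}; the claim ``$(X,\Lambda)$ is log bounded'' in the introduction describes the output of that argument, not an input available beforehand, so invoking it here is assuming the result. (Likewise \ref{t-log-bnd-cy-gen-fib} is deduced from \ref{t-log-bnd-cy-fib} via Lemma \ref{l-from-gen-fib-to-usual-fib}, so you cannot appeal to it either.) Second, even setting this aside, you quote only $m(K_X+\Lambda)\sim_\Q 0/Z$, which says $K_X+\Lambda\sim_\Q f^*D$ for some unknown $D$ and gives no relation between $H^{d-1}\cdot(K_X+\Lambda)$ and $H^{d-1}\cdot f^*A$. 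What Theorem \ref{t-bnd-comp-lc-global-cy-fib} actually provides is the global relation $n(K_X+\Lambda)\sim mf^*A$, which \emph{would} yield $H^{d-1}\cdot f^*A=\tfrac{n}{m}H^{d-1}\cdot(K_X+\Lambda)$---but then bounding the right-hand side needs $(X,\Lambda)$ log bounded, which is the circularity again. Finally, your canonical bundle formula sketch controls degrees of $B_Z+M_Z$ on $Z$, whereas the quantity you need lives on $X$; you do not explain how to transfer one to the other, and I do not see how the induction on $\dim Z$ carries this through.

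The paper's route is structurally different and sidesteps bounding $f$ directly. It uses the complement $\Lambda$ from Theorem \ref{t-bnd-comp-lc-global-cy-fib} only as a reduction to the case where $B$ has coefficients in a fixed finite set, and then invokes Proposition \ref{p-cy-fib-bnd-Neron-Severi}: up to a small modification $X\dashrightarrow X'$ there is a log bounded reduced divisor $\Sigma'\supseteq\Supp B'$ whose components generate $N^1(X'/Z)$. This makes it possible to perturb $B$ within its class over $Z$ to a boundary $\Theta$ with $K_X+\Theta$ ample and $(X,\Theta)$ log birationally bounded, after which [\ref{HMX2}, Theorem 1.6] gives log boundedness of $(X,\Theta)$, hence of $(X,\Delta)$ since $\Supp\Delta\subseteq\Supp\Theta$. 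The device you are missing is this N\'eron--Severi generation statement; it is what substitutes for a direct bound on $f$.
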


For applications it is important to consider variants of the above results by 
replacing the Fano type assumption with a more flexible notion. We say that a contraction $X\to Z$ of normal varieties 
\emph{factors as a tower of Fano fibrations of length $l$} if $X\to Z$ factors as a sequence of contractions 
$$
X=X_1\to  X_2 \to \cdots \to X_l=Z 
$$ 
where  $-K_{X_i}$ is ample over $X_{i+1}$, for each $1\le i\le l-1$. For practical convenience we allow 
$X_i\to X_{i+1}$ to be an isomorphism and allow $\dim X_i=0$, so $l$ is not uniquely determined by $X\to Z$.
The next result replaces Fano type with existence of a tower of Fano fibrations. 
It will be a crucial ingredient of the proof of \ref{cor-bnd-cy-fib-non-product} below.

\begin{thm}\label{t-towers-of-Fanos}
Let $d,r,l$ be natural numbers and $\epsilon,\tau$ be positive real numbers. 
Consider pairs $(X,B)$ and contractions $f\colon X\to Z$ such that 
\begin{itemize}
\item $(X,B)$ is projective $\epsilon$-lc of dimension $d$,

\item the non-zero coefficients of $B$ are $\ge \tau$,

\item $K_X+B\sim_\R f^*L$ for some $\R$-divisor $L$,

\item $X\to Z$ factors as a tower of Fano fibrations of length $l$,

\item there is a very ample divisor $A$ on $Z$ with $A^{\dim Z}\le r$, and 

\item $A-L$ is ample. 

\end{itemize}
Then the set of such $(X,B)$ forms a log bounded family.
\end{thm}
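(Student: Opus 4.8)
The plan is to induct on the length $l$ of the tower, pushing the log Calabi--Yau structure down one Fano fibration at a time via the canonical bundle formula and invoking Theorem \ref{t-log-bnd-cy-fib} for the topmost fibration. Since the canonical bundle formula produces a nef moduli part, it is convenient to prove the statement in the wider setting of generalised pairs $(X,B+\mathbf M)$, with all hypotheses read in the generalised sense; Theorem \ref{t-towers-of-Fanos} is then the case $\mathbf M=0$. If $l=1$ then $X=Z$, so $\dim X=\dim Z=d$ and $X$ lies in a bounded family because $A$ is very ample with $A^{d}\le r$; moreover $K_X+B+M_X\sim_\R L$ with $A-L$ ample gives $B\cdot A^{d-1}<A^{d}-K_X\cdot A^{d-1}$, which is bounded once $X$ is bounded, so since the non-zero coefficients of $B$ are $\ge\tau$ the pair $(X,B)$ is log bounded.

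Assume now $l\ge 2$ and factor $f$ as $X\xrightarrow{g}Y\xrightarrow{p}Z$, where $g$ is the first Fano fibration of the tower and $p\colon Y\to Z$ is a tower of Fano fibrations of length $l-1$; we may assume $g$ is not an isomorphism, otherwise $X\cong Y$ and we conclude by the inductive hypothesis. The first task is to bound $Y$. A general fibre $F$ of $g$ is an $\epsilon$-lc Fano variety of dimension $\le d$, hence lies in a bounded family by BAB [\ref{B-BAB}]; the same then holds for the fibres of $g$ over codimension one points of $Y$, so the multiplicities of the (possibly non-reduced) fibres of $g$ are bounded by some $m_0=m_0(d,\epsilon)$. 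Restricting $K_X+B+M_X\sim_\R f^{*}L$ to $F$ gives $K_F+B_F+M_F\sim_\R 0$, so $(F,B_F+\mathbf M|_F)$ is a generalised $\epsilon$-lc log Calabi--Yau pair with bounded underlying variety and coefficients $\ge\tau$, hence log bounded. Applying the generalised canonical bundle formula to $g\colon(X,B+\mathbf M)\to Y$ yields $K_X+B+M_X\sim_\R g^{*}(K_Y+B_Y+N_Y)$ with $(Y,B_Y+\mathbf N)$ a generalised pair and $\mathbf N$ b-nef and good; since $g$ is a contraction this forces $K_Y+B_Y+N_Y\sim_\R p^{*}L$, so the pushed-down fibration retains the same data $A$ and $L$ on $Z$. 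Using the $\epsilon$-lc hypothesis on $(X,B+\mathbf M)$ together with the bound $m_0$ on fibre multiplicities one shows the coefficients of $B_Y$ are $\le 1-\epsilon_1$ for some $\epsilon_1=\epsilon_1(d,\epsilon)>0$; log boundedness of the fibres $(F,B_F+\mathbf M|_F)$ bounds the Cartier index and positivity of $\mathbf N$; and the global ACC bounds the non-zero coefficients of $B_Y$ from below. One concludes that $(Y,B_Y+\mathbf N)\to Z$ satisfies the hypotheses of the generalised form of Theorem \ref{t-towers-of-Fanos} for towers of length $l-1$, with constants depending only on $d,r,l,\epsilon,\tau$, so by the inductive hypothesis $Y$ is bounded.

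It remains to bound $(X,B+\mathbf M)$. Since $Y$ is bounded we may pick a very ample divisor $H$ on $Y$ of bounded degree and put $A_Y:=H+p^{*}A$; then $A_Y$ is very ample of bounded degree, $A_Y-p^{*}L=H+p^{*}(A-L)$ is ample, and $K_X+B+M_X\sim_\R f^{*}L=g^{*}(p^{*}L)$. As $-K_X$ is ample, hence big, over $Y$, the fibration $(X,B+\mathbf M)\to Y$ together with $A_Y$ and $p^{*}L$ is a $(d,r_Y,\epsilon)$-Fano type fibration in the generalised sense; applying Theorem \ref{t-log-bnd-cy-fib}, resp.\ its generalised analogue, with $\Delta=B$ and $\delta=\tau$ shows $(X,B)$, resp.\ $(X,B+\mathbf M)$, is log bounded. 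This closes the induction.

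The main obstacle is the step that bounds $Y$, i.e.\ proving that pushing the generalised log Calabi--Yau structure through a single Fano fibration $g$ keeps the singularities of the base uniformly $\epsilon''$-lc and the moduli part uniformly bounded, so that the tower can be shortened and the induction re-run; this rests on a careful study of the canonical bundle formula for fibrations with bounded fibres (boundedness of the fibres of $g$ both bounds their multiplicities, keeping the discriminant coefficients away from $1$, and controls the moduli b-divisor) and on the global ACC to keep all constants uniform through the $l$ iterations. A naive alternative, namely to argue directly that a tower of Fano fibrations makes $X$ of Fano type over $Z$ and then quote Theorem \ref{t-log-bnd-cy-fib}, runs into exactly this point: the positivity contributed by the successive moduli parts obstructs the Fano type property of the composed morphism, which is why the canonical bundle formula must be iterated and its output controlled at each stage. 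A further technical layer, not spelled out above, is the passage from birational boundedness to genuine boundedness, which is where most of the heavy machinery behind Theorems \ref{t-bnd-cy-fib}--\ref{t-log-bnd-cy-fib} (in particular the theory of complements, entering through Theorem \ref{t-log-bnd-cy-fib}) is actually used.
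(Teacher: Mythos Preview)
Your overall architecture—induct on the length $l$, push the log Calabi--Yau structure down one Fano fibration via the canonical bundle formula, control the base by induction, then finish with Theorem~\ref{t-log-bnd-cy-fib}—is essentially the ``alternative proof'' the paper gives at the end of Section~7. However, that alternative proof is carried out only under the extra hypothesis that the coefficients of $B$ lie in a fixed DCC set, and the paper's main proof uses a genuinely different mechanism precisely because the DCC hypothesis is not available. Your write-up glosses over exactly the two points where this matters.

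First, the sentence ``the same then holds for the fibres of $g$ over codimension one points of $Y$, so the multiplicities of the (possibly non-reduced) fibres of $g$ are bounded'' is not correct: BAB bounds the \emph{general} fibre, not special fibres, and bounding multiplicities of special fibres is a hard statement (this is essentially Theorem~\ref{t-sing-FT-fib-totalspace}, or in the form you need, Theorem~\ref{t-cb-conj-sing-bnd-fib}/Corollary~\ref{cor-cb-conj-sing-bnd-gen-fib}). Second, and more seriously, the claim ``the global ACC bounds the non-zero coefficients of $B_Y$ from below'' is unjustified: global ACC needs DCC input, and the hypothesis ``coefficients $\ge\tau$'' does not put the coefficients of $B$ in a DCC set. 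After one application of the canonical bundle formula the discriminant coefficients $1-t_D$ can accumulate to $0$, so you cannot feed $(Y,B_Y+\mathbf N)$ back into your inductive hypothesis, which requires a uniform lower bound on the non-zero coefficients. This is exactly why the paper's alternative proof imposes the DCC assumption and uses Lemma~\ref{l-fib-adj-dcc} to propagate it.

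The paper's main proof sidesteps both issues. Rather than iterating the canonical bundle formula through generalised pairs, it always works with the original pair $(X,B)$ and applies adjunction directly to $(X,B)\to X_i$ for each $i$. The key input replacing your DCC/global ACC step is Theorem~\ref{t-cb-conj-sing-bnd-fib}: one shows, by induction on \emph{dimension} (not on $l$), that there is an integral divisor $J\ge 0$ on $X$ with $0<\vol((B+J)|_F)<v$ on general fibres $F$ of $X\to X_i$, and then Theorem~\ref{t-cb-conj-sing-bnd-fib} gives directly that $(X_i,B_i+M_i)$ is generalised $\delta$-lc, with no need to control the coefficients of $B_i$ from below. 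With the $X_i$ thus controlled, one works \emph{up} the tower from $X_{l-1}$ (bounded by BAB) using Theorem~\ref{t-log-bnd-cy-gen-fib} and Lemma~\ref{l-bnd-cy-fib-v-ampleness} at each stage. Your top-down induction can be made to work if you add the DCC hypothesis, but as stated the induction does not close.
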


Special cases of this are proved in [\ref{DiCerbo-Svaldi}, 1.9 and 1.10].\\

%%%%%%%%%%%
{\textbf{\sffamily{Boundedness of crepant models.}}}
It is interesting to look at the special cases of Theorem \ref{t-bnd-cy-fib}  when $f$ is birational 
and when it is constant. In the latter case, the theorem is equivalent to the 
BAB conjecture [\ref{B-BAB}, Theorem 1.1 and Corollary 1.2] but 
in the former case, which says something about \emph{crepant models}, a lot work is needed to 
derive it from the BAB. 

Given a pair $(Z,B_Z)$ and a birational contraction $\phi \colon X\to Z$, we can write  
$K_X+B=\phi^*(K_Z+B_Z)$ for some uniquely determined $B$. We say $(X,B)$ is a 
crepant model of $(Z,B_Z)$ if $B\ge 0$. The birational case of Theorem \ref{t-bnd-cy-fib} 
then essentially says that if $Z$ belongs to a bounded family, if the ``degree" of $B_Z$ is bounded 
with respect to some very ample divisor, and if $(Z,B_Z)$ is $\epsilon$-lc, then   
the underlying varieties of all the crepant models of such pairs form a bounded family; 
this is quite non-trivial even in the case $Z=\PP^3$ (actually it is already challenging for $Z=\PP^2$ 
if we do not use BAB). 
Special cases of 
boundedness of crepant models have appeared in the literature assuming that $\Supp B_Z$ is bounded, see 
[\ref{Mc-Pr}, Lemma 10.5][\ref{HX}, Propositions 2.5, 2.9][\ref{CDHJS}, Proposition 4.8]. The key 
point here is that we remove such assumptions on the support of $B_Z$.  

Note that the $\epsilon$-lc condition and 
boundedness of  ``degree" of $B_Z$ are both necessary. Indeed if we replace $\epsilon$-lc by 
lc, then the crepant models will not be bounded, e.g. 
considering $(Z=\PP^2,B_Z)$ where $B_Z$ is the union of three lines intersecting transversally and  
successively blowing up intersection points in the boundary gives an infinite sequence of 
crepant models with no bound on their Picard number. On the other hand, if 
$B_Z$ can have arbitrary degree, then we can easily choose it so that  
$(Z=\PP^2,B_Z)$ is $\frac{1}{2}$-lc having a crepant model of arbitrarily large Picard number.\\

{\textbf{\sffamily{Boundedness of log Calabi-Yau pairs.}}}
Without appropriate restrictions, the set of log Calabi-Yau pairs of a fixed dimension is far from being bounded. 
For example, it is well-known that the set of K3 surfaces is not bounded although they are topologically 
bounded. In this respect log Calabi-Yau pairs with non-zero 
boundary behave better as the next result illustrates. 

\begin{thm}\label{cor-bnd-cy-fib-non-product}
Let $d$ be a natural number and $\epsilon,\tau$ be positive real numbers. 
Consider pairs $(X,B)$ with the following properties: 
\begin{itemize}
\item $(X,B)$ is projective $\epsilon$-lc of dimension $d$,

\item $K_X+B\sim_\R 0$, 

\item $B\neq 0$ and its coefficients are $\ge \tau$, and

\item $(X,B)$ is not of product type.   
\end{itemize}
Then the set of such $(X,B)$ is log bounded up to isomorphism in codimension one.
\end{thm}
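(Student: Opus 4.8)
The plan is to reduce to Theorem \ref{t-towers-of-Fanos} by producing, on a suitable small modification of $X$, a tower of Fano fibrations over a base of bounded degree. First I would run a careful minimal model program to replace $(X,B)$ by a $\Q$-factorial pair isomorphic in codimension one, so it suffices to bound the new pair up to isomorphism in codimension one; since $K_X+B\sim_\R 0$, every such MMP is $(K_X+B)$-trivial and preserves the $\epsilon$-lc and $K_X+B\sim_\R 0$ hypotheses. The key structural input is that $(X,B)$ is \emph{not of product type}: this should force $X$ to be of Fano type over some nontrivial base, or more precisely to admit a birational model carrying a nonbirational Fano contraction. Concretely, I would argue that after the MMP either $-K_X$ is big (so $X$ is of Fano type over a point and we are in the situation already covered by Theorem \ref{t-bnd-cy-fib} with $Z$ a point), or there is a Mori fibre space structure $X'\to T$ with $\dim T<\dim X'$; iterating on $T$ (noting $K_{X'}+B'\sim_\R 0$ restricts to log Calabi-Yau structures on the fibres and descends an analogous structure to $T$ by the canonical bundle formula) yields a tower of Fano fibrations $X'=X_1\to\cdots\to X_l=Z$.

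The next step is to control the base $Z$ and the divisor $L$ appearing in $K_{X'}+B'\sim_\R f^*L$. Here the canonical bundle formula gives $K_Z+B_Z+M_Z\sim_\R L$ with $(Z,B_Z+M_Z)$ generalised $\epsilon'$-lc for some $\epsilon'>0$ depending only on $d$ (using that the coefficients of $B$ are $\ge\tau$ and boundedness of the fibres, which are themselves log Calabi-Yau pairs with bounded data by Theorem \ref{t-bnd-cy-fib}), and $L\sim_\R 0$ on the total space after pullback — in fact $L\equiv 0$ since $K_{X'}+B'\equiv 0$. So $Z$ itself is a log Calabi-Yau (generalised) pair of lower dimension with bounded coefficients, and by induction on dimension $Z$ lies in a bounded family; fix a very ample $A$ on $Z$ with $A^{\dim Z}\le r$ for appropriate $r$. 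Because $L\equiv 0$, the divisor $A-L$ is ample, so all the hypotheses of Theorem \ref{t-towers-of-Fanos} are met with $\tau$ unchanged and $l\le d$, and that theorem delivers log boundedness of $(X',B')$, hence log boundedness of $(X,B)$ up to isomorphism in codimension one.

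The main obstacle I expect is the dichotomy step: showing that failure of product type really produces a \emph{nontrivial} Fano fibration on a good birational model, with all numerical data ($\epsilon'$, $\tau$, the degree bound on the base) controlled uniformly in terms of $d,\epsilon,\tau$ alone. Two subtleties feed into this. First, after the MMP one might land on a minimal model with $K_X+B$ nef but not immediately Fano-fibred; one must use that $B\ne 0$ together with $K_X+B\sim_\R 0$ to extract an extremal contraction that is genuinely $K_X$-negative, and ensure the "not of product type" hypothesis is exactly what rules out the degenerate case where the only such contraction is birational or the whole pair splits as a product. Second, controlling $\epsilon'$ for the generalised base pair requires an effective version of the canonical bundle formula with bounded denominators, which in turn relies on the boundedness of the general fibre — so the argument is genuinely inductive on dimension, with Theorem \ref{t-bnd-cy-fib} and Theorem \ref{t-towers-of-Fanos} as the engines at each stage. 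Once these uniformities are in place, assembling the tower and invoking \ref{t-towers-of-Fanos} is routine.
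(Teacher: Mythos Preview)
Your overall plan---reduce to Theorem \ref{t-towers-of-Fanos} by producing a tower of Mori fibre spaces---is the right one, and matches the paper. But two of your steps do not go through as written.

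First, the tower terminates at a \emph{point}, and this is precisely where ``not of product type'' is spent. You propose to stop the tower at some positive-dimensional base $Z$, use the canonical bundle formula to regard $(Z,B_Z+M_Z)$ as a lower-dimensional log Calabi--Yau, and bound it by induction on dimension. This does not work: the induction hypothesis is the theorem itself, which applies to \emph{pairs} satisfying $B\neq 0$ and ``not of product type'', not to generalised pairs, and neither hypothesis descends automatically to the base. The paper instead quotes Theorem \ref{t-tower-of-Mfs} (Di Cerbo--Svaldi): under the non-product-type assumption one gets a birational map $\phi\colon X\dashrightarrow X_1$ with $\phi^{-1}$ not contracting divisors, and a tower of Mori fibre spaces $X_1\to\cdots\to X_l$ with $X_l$ a point. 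Then Theorem \ref{t-towers-of-Fanos} applies directly with $Z$ a point (so the conditions on $A$ and $L$ are vacuous), and no induction on the base is needed.

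Second, you assert that the MMP replaces $(X,B)$ by a pair isomorphic to it in codimension one. This is false: the MMP on $K_X$ producing the Mori fibre space structure may well contract divisors; the Di Cerbo--Svaldi statement only guarantees that $\phi^{-1}$ does not contract divisors. So once $(X_1,B_1=\phi_*B)$ is log bounded, one still has to recover a model isomorphic to $X$ in codimension one. The paper observes that every prime divisor $D$ contracted by $\phi$ satisfies $a(D,X_1,B_1)=a(D,X,B)\le 1$ (since $K_X+B\sim_\R 0$), so one can extract these divisors over $X_1$ to obtain a crepant model $(X',B')$ of $(X_1,B_1)$ with $X\dashrightarrow X'$ an isomorphism in codimension one, and then invoke the boundedness of crepant models (Theorem \ref{t-bnd-cy-fib}) to conclude that $(X',B')$ is log bounded.
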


Boundedness up to isomorphism in codimension one means that there is a bounded family of projective 
varieties $Y$ such that for each $X$ in the theorem we can find some $Y$ together with a birational map $X\bir Y$ 
which is an isomorphism in codimension one; a similar definition applies to the case of pairs. 

Di Cerbo and Svaldi [\ref{DiCerbo-Svaldi}, Theorem 1.3] studied this statement and proved it in dimension $\le 4$ 
when the coefficients of $B$ belong to a fixed DCC set; in this case we can replace $\epsilon$-lc with klt 
as $\epsilon$-lc would then follow from the other assumptions. 
They use MMP to obtain a tower of Mori fibre spaces on a birational model of $X$ (see \ref{t-tower-of-Mfs}) 
and use this tower to prove the claimed boundedness. We will use this strategy and 
\ref{t-log-bnd-cy-fib} and \ref{t-towers-of-Fanos} to prove the theorem.

The condition of $(X,B)$ not being of product type means that if there is a birational map 
$\phi\colon X\bir Y$ to a normal projective 
variety whose inverse does not contract divisors and if $g\colon Y\to Z$ is a contraction with 
$\dim Y>\dim Z>0$, then $K_Z \not \equiv 0$. 
This is a technical condition related to moduli and Hodge 
theory but without it the statement fails. We will see that $K_Y+B_Y$ is numerically equivalent to 
$g^*(K_Z+B_Z+M_Z)$ where $B_Z,M_Z$ are the discriminant and moduli divisors of adjunction, 
and the condition says that $B_Z+M_Z$ is not numerically trivial.

When $X$ is of Fano type the theorem was already known [\ref{B-compl}, Theorem 1.4] (or [\ref{HX}] 
when the coefficients of $B$ are in a fixed DCC set) in which case  
we can remove the non-product type assumption as it follows from the Fano type property. 
But the main point of the theorem is that 
we have replaced Fano type with the weaker property of not being of product type.
The property allows one to reduce the theorem  
to boundedness of certain towers of Mori fibre spaces which turns out to be a 
special case of Theorem \ref{t-towers-of-Fanos}. It it worth mentioning that the theorem also holds in 
the relative setting similar to \ref{t-towers-of-Fanos} but for simplicity we prove the above version.\\

%%%%%%%%%%%
{\textbf{\sffamily{Boundedness of singularities on log Calabi-Yau fibrations.}}}
Understanding singularities on log Calabi-Yau fibrations 
is very important as it naturally appears in inductive arguments. 
The next statement gives a lower bound for lc thresholds on Fano type fibrations. 
In particular, it implies [\ref{Jiang}, Conjecture 1.13] as a special case. 

\begin{thm}\label{t-sing-FT-fib-totalspace}
Let $d,r$ be natural numbers and $\epsilon$ be a positive real number. 
 Then there is a positive real number $t$ depending only on $d,r,\epsilon$ satisfying the following. 
Let $(X,B)\to Z$ be any $(d,r,\epsilon)$-Fano type fibration as in \ref{d-FT-fib}. If 
$P\ge 0$ is any $\R$-Cartier divisor on $X$ such that either  
\begin{itemize}
\item $f^*A+B-P$ is pseudo-effective, or 

\item $f^*A-K_X-P$ is pseudo-effective, 
\end{itemize}
then  
$(X,B+t P)$ is klt.
\end{thm}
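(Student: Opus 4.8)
The plan is to reduce, using the boundedness already established, to a statement about log canonical thresholds of bounded‑degree divisors on a bounded family of $\epsilon$‑lc varieties, to isolate the two cases of the hypothesis into a single bound on the degree of $P$, and then to prove a uniform ``margin of klt‑ness'' for $(X,B)$ which, combined with an elementary bound for $\operatorname{ord}_D(\mu^*P)$, yields the result.

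First I would fix the bounded family. By Theorem \ref{t-bnd-cy-fib} the $X$ form a bounded family, and — as recorded after that theorem — we may attach to $(X,B)\to Z$ a boundary $\Lambda\ge 0$ with $(X,\Lambda)$ klt and log bounded and $K_X+\Lambda\sim_\Q 0/Z$ of bounded index; we may also take $Z$, $f$ and $A$ to lie in a bounded family. Fix a very ample $H$ on $X$ coming from this family. Since $K_X+B\sim_\R f^*L$ with $A-L$ ample, $f^*A-(K_X+B)=f^*(A-L)$ is nef, so $(f^*A-K_X-B)\cdot H^{d-1}\ge 0$; as $f^*A\cdot H^{d-1}$ and $K_X\cdot H^{d-1}$ are bounded in the family, it follows that $B\cdot H^{d-1}\le f^*A\cdot H^{d-1}-K_X\cdot H^{d-1}$ is bounded as well. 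Now intersect the given pseudo‑effective divisor — either $f^*A+B-P$ or $f^*A-K_X-P$ — with $H^{d-1}$: in both cases one gets $P\cdot H^{d-1}\le c_0$ for some $c_0=c_0(d,r,\epsilon)$. In particular the multiplicities and coefficients of $B$ and of $P$ along subvarieties of $X$ are all $\le c_0$.

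Next, passing to a bounded log resolution $\mu\colon W\to X$ of $(X,\Lambda)$ (over the base of the family), on which by Theorem \ref{t-log-bnd-cy-fib} one may also arrange the part of $B$ of coefficient $\ge\delta$ to be snc for a small fixed $\delta=\delta(d,r,\epsilon)$, the claim becomes that $(W,B_W+t\mu^*P)$ is sub‑klt, where $K_W+B_W=\mu^*(K_X+B)$. Since $W$ is smooth and $\mu^*P$ effective of bounded $H_W$‑degree, for every divisor $D$ over $W$ one has $\operatorname{ord}_D(\mu^*P)\le \operatorname{mult}_{\max}(\mu^*P)\cdot a(D,W,0)\le c_0'\,a(D,W,0)$ (using $\operatorname{lct}(W;\mu^*P)\ge 1/\operatorname{mult}_{\max}(\mu^*P)$ and $\operatorname{mult}_{\max}\le \deg_{H_W}$). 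Hence it suffices to prove
$$(\star)\qquad a(D,W,B_W)\ge \gamma\, a(D,W,0)\quad\text{for all }D\text{ over }W,$$
for some $\gamma=\gamma(d,r,\epsilon)>0$: then $a(D,W,B_W+t\mu^*P)\ge(\gamma-tc_0')\,a(D,W,0)>0$ for $t<\gamma/(2c_0')$, and this single $t$ handles both cases of the statement since both were reduced to $P\cdot H^{d-1}\le c_0$. Note $(\star)$ is a uniform quantitative form of the (already known) klt‑ness of $(X,B)$.

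The main obstacle is $(\star)$. For divisors $D$ of bounded log discrepancy $a(D,W,0)$ it is immediate: such $D$ arise from a bounded family of blowups, $a(D,W,B_W)=a(D,X,B)\ge\epsilon$, and $a(D,W,0)$ is bounded, so the ratio is bounded below. The difficulty is with the divisors of large log discrepancy — those adapted to the small‑coefficient part of $B$, whose support can have unbounded degree, and to the $\mu$‑exceptional divisors along which $B_W$ is negative; here crude multiplicity estimates are too lossy, and the $\epsilon$‑lc hypothesis must be used more globally. I expect the cleanest route is by contradiction: if $(\star)$ failed with $\gamma\to 0$, Theorem \ref{t-bnd-cy-fib} would produce a sequence $(X_i,B_i)$ inside one bounded family with the infimum in $(\star)$ tending to $0$ while $B_i\cdot H^{d-1}$ stays bounded; splitting each $B_i$ into its high‑coefficient part (log bounded by Theorem \ref{t-log-bnd-cy-fib}) and a small‑coefficient tail, and rounding the former into a fixed finite coefficient set while keeping $\tfrac\epsilon2$‑lc, one contradicts the ACC for log canonical thresholds [\ref{HMX-moduli}] together with lower semicontinuity of lct in bounded families — the $\epsilon$‑lc condition being exactly what forces the relevant thresholds to stay bounded away from their limiting value. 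Granting $(\star)$, the estimate of the previous paragraph completes the proof.
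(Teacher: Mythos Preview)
Your setup through the degree bound $P\cdot H^{d-1}\le c_0$ is correct and matches the paper's approach. The decomposition into the multiplicity estimate for $\mu^*P$ and the inequality $(\star)$ is also logically sound: given both, the conclusion would follow. The gap is in your proof of $(\star)$.

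The ACC argument you sketch for $(\star)$ does not go through. ACC for log canonical thresholds requires the boundary coefficients to lie in a fixed DCC set; after splitting $B_i=B_i^{\ge\delta}+B_i^{<\delta}$ and rounding the high part, the tail $B_i^{<\delta}$ still has arbitrary real coefficients in $(0,\delta)$, so you are not in a DCC situation and no contradiction arises. Lower semicontinuity of lct in families likewise needs the boundary to vary in a controlled way, which the unbounded support of $B$ prevents. The inequality $(\star)$ is equivalent to $(X,(1+\gamma')B)$ being lc for some uniform $\gamma'>0$; this is true but is essentially a special case of Theorem~\ref{t-bnd-lct} (=[\ref{B-BAB}, Theorem~1.6]), applied with $M=B$. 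Your elementary estimates and ACC do not substitute for that input.

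The paper's proof avoids this detour entirely. After reducing to the case $f^*A-K_X-P$ pseudo-effective, it uses Theorem~\ref{t-log-bnd-cy-fib} in dimension $d$ (already established in the inductive scheme) to log-bound $(X,D)$ for a general $D\in|f^*A|$, giving a very ample $H$ with $H^d$ bounded and $H+K_X-D$ ample. From nefness of $f^*A-(K_X+B)$ one gets $H-B$ ample, and from pseudo-effectivity of $f^*A-K_X-P$ one gets $|H-P|_\R\neq\emptyset$. Then a single application of Theorem~\ref{t-bnd-lct} with $M=P$ gives $(X,B+tP)$ klt directly. There is no need to pass to a resolution $W$, no need for $(\star)$, and no separate treatment of $B$ and $P$: the theorem from [\ref{B-BAB}] handles both at once.
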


In particular, the theorem can be applied to any $0\le P\sim_\R f^*A+B$ or any $0\le P\sim_\R f^*A-K_X$ 
assuming $P$ is $\R$-Cartier. To get a feeling for what the theorem says consider the case 
when $Z$ is a curve; in this case the theorem implies that the multiplicities of each fibre of $f$ 
are bounded from above (compare with the main result of [\ref{Mori-Prokhorov-del-pezzo}] 
for del Pezzo fibrations over curves): 
indeed, for any closed point $z\in Z$ we can find $0\le Q\sim A$ so that 
$z$ is a component of $Q$; then applying the theorem to $P:=f^*Q$ implies that the multiplicities of 
the fibre of $f$ over $z$ are bounded.

One can derive the theorem from \ref{t-log-bnd-cy-fib} and the results of [\ref{B-BAB}]. 
However, in practice the theorem is proved together along with \ref{t-log-bnd-cy-fib} in an intertwining inductive process.\\

On the other hand, a fundamental problem on singularities is a conjecture of 
Shokurov [\ref{B-sing-fano-fib}, Conjecture 1.2] (a special case of which is due to M$^{\rm c}$Kernan) 
which roughly says that the singularities on 
the base of a Fano type fibration are controlled by those on the total space.
We will prove Shokurov's conjecture under some boundedness assumptions on the 
base (\ref{t-sh-conj-bnd-base}) and prove a generalisation of the conjecture under some 
boundedness assumptions on the general fibres (\ref{t-cb-conj-sing-bnd-fib}). 
These results are of independent interest but also closely related to the other results of this paper. 

First we recall adjuction for fibrations also known as canonical bundle formula. If $(X,B)$ is an lc pair
and $f\colon X\to Z$ is a contraction with $K_X+B\sim_\R 0/Z$, then 
by [\ref{kaw-subadjuntion}][\ref{ambro-adj}] we can define a 
discriminant divisor $B_Z$ and a moduli divisor $M_Z$ so that we have 
$$
K_X+B\sim_\R f^*(K_Z+B_Z+M_Z).
$$
This is a generalisation of the Kodaira canonical bundle formula. 
Let $D$ be a prime divisor on $Z$. Let $t$ be the lc threshold of $f^*D$ with respect to $(X,B)$ 
over the generic point of $D$. 
We then put the coefficient of  $D$ in $B_Z$ to be $1-t$. 
Having defined $B_Z$, we can find $M_Z$ giving 
$$
K_{X}+B+M\sim_\R f^*(K_Z+B_Z+M_Z)
$$
where $M_Z$ is determined up to $\R$-linear equivalence. 
We call $B_Z$ the \emph{discriminant divisor of adjunction} for $(X,B)$ over $Z$. 

For any birational contraction $Z'\to Z$  from a normal variety there is 
a birational contraction $X'\to X$ from a normal variety so that the induced map $X'\bir Z'$ is a morphism.  
Let $K_{X'}+B'$ be the 
pullback of $K_{X}+B$. We can similarly define $B_{Z'},M_{Z'}$ for $(X',B')$ over $Z'$. In this way we 
get the \emph{discriminant b-divisor ${\bf{B}}_Z$ of adjunction} for $(X,B)$ over $Z$. 

The conjecture of Shokurov then can be stated as:

\begin{conj}\label{conj-sh-sing-fib}
Let $d$ be a natural number and $\epsilon$ be a positive real number. 
 Then there is a positive real number $\delta$ depending only on $d,\epsilon$ satisfying the following. 
 Assume that $(X,B)$ is a pair and $f\colon X\to Z$ is a contraction such that
\begin{itemize}
\item $(X,B)$ is $\epsilon$-lc of dimension $d$,

\item $K_X+B\sim_\R 0/Z$, and  

\item $-K_X$ is big over  $Z$.
\end{itemize}
Then the discriminant b-divisor ${\bf B}_Z$ has coefficients in $(-\infty, 1-\delta]$.
\end{conj}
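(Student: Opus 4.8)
The plan is to prove the conjecture by standard reductions on the base, reducing to the case $\dim Z=1$, and there to apply Theorem \ref{t-sing-FT-fib-totalspace}. First I would fix a prime b-divisor $D$ over $Z$ and aim to bound its coefficient in ${\bf B}_Z$ by $1-\delta$. Passing to a high birational model $Z'\to Z$ we may assume $D$ is a prime divisor on $Z'$, and after taking a suitable model $X'\to X$ over $Z'$ the pullback $K_{X'}+B'$ of $K_X+B$ defines a fibration with the same features over the generic point of $D$ (it is $\epsilon$-lc there and $-K_{X'}$ is big over $Z'$ near that point, which is all that enters the computation of the coefficient of $D$). So we may assume $D\subseteq Z$ is prime. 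Now cut $Z$ with $\dim Z-1$ general members of a very ample linear system to get a general complete intersection curve $C\subseteq Z$ meeting $D$ at a general point $z$, and set $X_C=f^{-1}(C)$, $B_C=B|_{X_C}$. By the standard compatibility of the discriminant of an lc-trivial fibration with general hyperplane sections, the coefficient of $z$ in the discriminant divisor of $(X_C,B_C)\to C$ equals the coefficient of $D$ in ${\bf B}_Z$; moreover $(X_C,B_C)$ is still $\epsilon$-lc, $K_{X_C}+B_C\sim_\R 0/C$, and $-K_{X_C}$ is still big over $C$. Thus it suffices to treat $\dim Z=1$ and to show that the coefficient of every closed point $z\in Z$ in $B_Z$ is at most $1-\delta$.

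Over a curve this is exactly the content explained after Theorem \ref{t-sing-FT-fib-totalspace}: if the curve fibration $(X,B)\to Z$ can be realised as a $(d,r,\epsilon)$-Fano type fibration as in \ref{d-FT-fib}, pick $0\le Q\sim A$ with $z\in \Supp Q$, apply Theorem \ref{t-sing-FT-fib-totalspace} to $P=f^*Q$ to obtain $t=t(d,r,\epsilon)>0$ with $(X,B+tf^*Q)$ klt, hence $(X,B+tf^*z)$ klt near the fibre over $z$, hence the coefficient of $z$ in $B_Z$ is $\le 1-t$. Equivalently, the theorem bounds the multiplicities of the fibre components, which together with the $\epsilon$-lc bound on the coefficients of $B$ forces $(X,B+\delta f^*z)$ to be lc for a uniform $\delta$. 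So the entire problem reduces to presenting the curve fibration as a $(d,r,\epsilon)$-Fano type fibration with $r$ depending only on $d,\epsilon$: concretely, to producing a very ample $A$ on the curve $Z$ with $\deg A\le r$ and $A-L$ ample, where $K_X+B\sim_\R f^*L$; this amounts to bounding $\deg L$.

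The main obstacle is precisely this last step, i.e. bounding $\deg L$, equivalently bounding the degree of the moduli divisor $M_Z$ in $K_X+B\sim_\R f^*(K_Z+B_Z+M_Z)$. The discriminant contribution is harmless (its coefficients lie in $[0,1)$ and, once fibre multiplicities are known to be bounded as in the remark following \ref{t-sing-FT-fib-totalspace}, the non-smooth locus of $f$ on $Z$ is controlled), and $\deg K_Z$ is $-2$ on a rational base, but $\deg M_Z$ is not bounded a priori. The natural route is via the general fibre: $(F,B_F)$ is an $\epsilon$-lc Fano type log Calabi–Yau pair of dimension $\le d$, so by [\ref{B-BAB}] the variety $F$ is bounded and the theory of complements provides a bounded complement on it; what is still missing in general is an effective b-semiampleness statement for the moduli b-divisor over this family, which would bound $\deg M_Z$ and complete the proof. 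It is exactly this gap that keeps the statement a conjecture and forces one, for unconditional results, to add boundedness hypotheses either on the base (as in \ref{t-sh-conj-bnd-base}, where $r$ is then given outright) or on the general fibres (as in \ref{t-cb-conj-sing-bnd-fib}, where the moduli part can be controlled directly).
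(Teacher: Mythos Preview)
The statement you were given is Conjecture~\ref{conj-sh-sing-fib}, which the paper does \emph{not} prove; it is stated as an open conjecture of Shokurov. So there is no ``paper's own proof'' to compare against, and your proposal is correctly not a proof but an analysis of why the statement remains open. On that level your analysis is accurate: you correctly isolate the obstruction as the inability to bound $\deg L$ (equivalently, the degree of the moduli part $M_Z$) on the curve base, and you correctly note that this is precisely what the extra hypotheses in Theorems~\ref{t-sh-conj-bnd-base} and~\ref{t-cb-conj-sing-bnd-fib} are designed to circumvent.

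Two technical points in your reduction deserve care. First, after replacing $Z$ by a higher model $Z'$ so that $D$ becomes a divisor, the crepant pullback $(X',B')$ is only a \emph{sub}-pair: $B'$ can have negative coefficients along exceptional divisors, and $(X',B')$ is only sub-$\epsilon$-lc, not $\epsilon$-lc. Your phrase ``$\epsilon$-lc there'' glosses over this; to proceed one must either work with sub-pairs throughout or, as the paper does in the proof of Theorem~\ref{t-cb-conj-sing-bnd-gen-fib}, run an MMP to restore effectivity over the generic point of $D$. Second, your claim that the discriminant coefficient is \emph{equal} after restriction to a general complete intersection curve is not quite right: what one gets (see the proof of Theorem~\ref{t-sh-conj-bnd-base-gen-fib}) is the inequality $s\le t$ for the lc thresholds, hence $1-t\le 1-s$, which is the direction you need. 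Neither point breaks your overall narrative, but both would need attention in an actual argument.

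It is also worth noting that the paper's route to the curve-base case in the related Theorem~\ref{t-cb-conj-sing-bnd-fib} (Proposition~\ref{l-adj-disc-div-e-lc}) does \emph{not} go through Theorem~\ref{t-sing-FT-fib-totalspace} or through bounding $\deg M_Z$. Instead it builds a generalised lc model, bounds volumes on general fibres using the auxiliary divisor $G$, and deduces bounded fibre multiplicities via an intersection-number comparison. So even over a curve there is more than one viable strategy, and the one you sketch is not the one the paper uses for its unconditional partial results.
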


The next result says that Shokurov conjecture holds in the setting of Fano type fibrations.
The strength of this result is in the fact that (similar to some of the other results above, e.g. \ref{t-bnd-cy-fib}) we are 
not assuming any boundedness condition on support of $B$ along the general fibres of $f$.

\begin{thm}\label{t-sh-conj-bnd-base}
Let $d,r$ be natural numbers and $\epsilon$ be a positive real number. 
 Then there is a positive real number $\delta$ depending only on $d,r,\epsilon$ satisfying the following. 
Let $(X,B)\to Z$ be any  $(d,r,\epsilon)$-Fano type fibration as in \ref{d-FT-fib}. 
Then  the discriminant b-divisor ${\bf B}_Z$ has coefficients in $(-\infty, 1-\delta]$.
\end{thm}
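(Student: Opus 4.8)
The plan is to deduce Theorem \ref{t-sh-conj-bnd-base} from the boundedness result \ref{t-log-bnd-cy-fib} together with the structure of adjunction. Fix $d,r,\epsilon$. Suppose the conclusion fails, so there is a sequence of $(d,r,\epsilon)$-Fano type fibrations $(X_i,B_i)\to Z_i$ and, over each, a prime b-divisor with coefficient $1-t_i$ in ${\bf B}_{Z_i}$, where $t_i\to 0$. By definition of the discriminant b-divisor, after replacing $Z_i$ by a higher birational model and $X_i$ by a corresponding model (which only changes the data within the same boundedness class, since $K_X+B$ is pulled back crepantly), we may assume the offending divisor is an honest prime divisor $D_i\subset Z_i$, and that $t_i$ is the lc threshold of $f_i^*D_i$ with respect to $(X_i,B_i)$ over the generic point of $D_i$. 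The point is that this lc threshold is tiny, so there is a divisorial valuation computing a log discrepancy close to $0$ after adding a small multiple of $f_i^*D_i$; I want to show this forces either a violation of $\epsilon$-lc-ness on a bounded model of $X_i$, or an unbounded family, contradicting \ref{t-log-bnd-cy-fib}.

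The key step is to put ourselves in the bounded setting. First I would use \ref{t-log-bnd-cy-fib}, applied with $\Delta=0$, to conclude that the $X_i$ (indeed the pairs $(X_i,0)$, trivially log bounded) lie in a bounded family; more usefully, the proof of \ref{t-log-bnd-cy-fib} (as indicated in the introduction) produces $\Lambda_i\ge 0$ with $(X_i,\Lambda_i)$ klt, $K_{X_i}+\Lambda_i\sim_\Q 0/Z_i$ of bounded Cartier index, and $(X_i,\Lambda_i)$ log bounded. So there is a bounded family $\mathcal{X}\to T$ and, for each $i$, a closed point with fibre isomorphic to $X_i$; spreading out, we may assume all $X_i$ are fibres of a single family and $A_i$ pulls back from a bounded relatively ample divisor on the base family. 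Since $A_i-L_i$ is ample and $A_i^{\dim Z_i}\le r$, the divisors $L_i=\frac{1}{\text{(index)}}(K_{X_i}+\Lambda_i\text{-type data})$ also have bounded degree, so $f_i^*D_i$ — being a component of $f_i^*A_i$ up to choosing $0\le D_i' \sim A_i$ through $D_i$ — is a Cartier (or boundedly $\Q$-Cartier) divisor of bounded degree on $X_i$. Thus we are looking at $\epsilon$-lc pairs $(X_i,B_i)$ in a bounded family, together with effective divisors $P_i := f_i^*D_i$ of bounded degree, such that $(X_i, B_i + t_i P_i)$ has a non-klt (in fact log discrepancy $\to$ something $\le 1-\epsilon$ is automatic; we want worse) centre as $t_i\to 0$.

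At this stage I would invoke \ref{t-sing-FT-fib-totalspace}: applying it to $P=f^*D'\sim f^*A$ (choosing $0\le D'\sim A$ containing $D$, as in the remark following that theorem) gives a fixed $t>0$, depending only on $d,r,\epsilon$, with $(X,B+tP)$ klt. But klt-ness of $(X,B+tf^*D')$ in a neighbourhood of the generic point of $D$ means precisely that the lc threshold of $f^*D$ over the generic point of $D$ is $>t$; hence the coefficient of $D$ in $B_Z$ is $<1-t$, and the same runs on every birational model $Z'\to Z$ (the pulled-back fibration is again Fano type with the same $d$, and $A$ can be replaced by a very ample divisor on $Z'$ of bounded degree — this is the one genuinely delicate point, controlling the bounded family of models $Z'$, which I would handle exactly as in the construction of ${\bf B}_Z$ using that only finitely many bounded $Z'$ are needed to read off all coefficients, or rather by a direct ACC-type argument bounding how much $A$ must grow). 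Therefore ${\bf B}_Z$ has coefficients $\le 1-t=:1-\delta$, which is the assertion.

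The main obstacle I anticipate is the passage from the fixed base $Z$ to arbitrary higher birational models $Z'$ when reading off the b-divisor: one must ensure that the pulled-back fibration $(X',B')\to Z'$ is still a $(d,r',\epsilon)$-Fano type fibration for an $r'$ controlled by $r$ (or reduce to finitely many relevant $Z'$), since a priori blowing up $Z$ could force the minimal very ample $A'$ to have unbounded top self-intersection. I expect this to be surmountable because the coefficient of any fixed prime b-divisor can be computed on a \emph{single} sufficiently high model, and boundedness of $X_i$ plus the structure of the crepant pullback bounds the relevant exceptional divisors — but making this uniform is where the real work lies, and it is presumably why the introduction says this theorem is ``proved together along with \ref{t-log-bnd-cy-fib} in an intertwining inductive process'' rather than as a clean corollary.
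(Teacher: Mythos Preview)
Your third paragraph contains the right mechanism for divisors $D$ on $Z$: apply \ref{t-sing-FT-fib-totalspace} to $P=f^*D'$ with $0\le D'\sim A$ through $D$. (The contradiction setup and the log-boundedness detour in your first two paragraphs are unnecessary; \ref{t-sing-FT-fib-totalspace} applies directly.) The gap you flag at the end---passing to higher models $Z'$ to read off b-divisor coefficients without losing control of $r$---is real, and your proposed repairs do not work: the b-divisor has infinitely many components, blowups of $Z$ produce models with $(A')^{\dim Z'}$ unbounded, and there is no ACC mechanism that bounds how much $A$ must grow when you pass to $Z'$.

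The paper never replaces $Z$ by an unbounded model. It argues by induction on $d$ and splits on the dimension of the centre of $D$ on $Z$. If the centre is a closed point $z$, one stays on $Z$: pick $N\in|A|$ through $z$ and use \ref{t-sing-FT-fib-totalspace} to bound the global lc threshold $u$ of $f^*N$ from below by some $\delta$. On any $\psi\colon Z'\to Z$ carrying $D$ as a divisor one has $\psi^*N\ge D$ (since $z\in N$), so the lc threshold of $f'^*D$ over the generic point of $D$ is at least the global threshold of $f'^*\psi^*N$, which equals $u\ge\delta$; hence $\mu_D{\bf B}_Z\le 1-\delta$ with no control of $Z'$ needed. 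If the centre is positive-dimensional, a general $H\in|A|$ meets it; restricting to $G=f^*H$ yields a $(d-1,2^{d-1}r,\epsilon)$-Fano type fibration $(G,B_G)\to H$, and by the inductive hypothesis the coefficient of any component $C$ of $D\cap H'$ in the restricted discriminant b-divisor is $\le 1-\delta$. Choosing a prime divisor $S$ on $X'$ mapping onto $D$ that computes the threshold for $D$ and restricting to $G'$ shows the threshold for $D$ is at least that for $C$.
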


This can be viewed as a relative version of Ambro's conjecture [\ref{B-BAB}, Theorem 1.4] which is closely related 
to the BAB conjecture.

Next we prove a variant of Conjecture \ref{conj-sh-sing-fib} which is weaker in the sense that we assume 
certain boundedness along the general fibres but it is stronger in the sense that we replace 
bigness of $-K_X$ over $Z$ with a less restrictive condition. This is important for applications, eg 
proofs of \ref{t-towers-of-Fanos} and \ref{cor-bnd-cy-fib-non-product}. 

\begin{thm}\label{t-cb-conj-sing-bnd-fib}
Let $d,v$ be natural numbers and $\epsilon$ be a positive real number. 
 Then there is a positive real number $\delta$ depending only on $d,v,\epsilon$ satisfying the following. 
 Assume that $(X,B)$ is a pair and $f\colon X\to Z$ is a contraction such that
\begin{itemize}
\item $(X,B)$ is $\epsilon$-lc of dimension $d$,

\item $K_X+B\sim_\R 0/Z$, and

\item there is an integral divisor $G\ge 0$ with 
$$
0<\vol((\Supp B+G)|_F)<v
$$ 
for the general fibres $F$ of $f$.
\end{itemize}
Then the discriminant b-divisor ${\bf B}_Z$ has coefficients in $(-\infty, 1-\delta]$.
\end{thm}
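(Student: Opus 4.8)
The plan is to bound from below, uniformly in $d,v,\epsilon$, the log canonical threshold computing each coefficient of ${\bf B}_Z$, after reducing to a base curve and using the volume hypothesis to make the general fibre bounded. Concretely, first reduce to a base curve. Let $Z''\to Z$ be a birational contraction and $D$ a prime divisor on $Z''$. Pick a birational contraction $X''\to X$ with $X''\to Z''$ a morphism and $K_{X''}+B''$ the crepant pullback of $K_X+B$; then $(X'',B'')$ is $\epsilon$-lc, $K_{X''}+B''\sim_\R 0/Z''$, and on a general fibre $F$ — which is also a general fibre of $f$ — we have $\Supp B''|_F=\Supp B_F$. Cut $Z''$ by $\dim Z''-1$ general hyperplane sections through a general point $z$ of $D$ and pull them back to $X''$: this yields a contraction $(S,B_S)\to C$ with $C$ a smooth affine curve through $z$, $(S,B_S)$ still $\epsilon$-lc, $K_S+B_S\sim_\R 0/C$, the same general fibre $F$, and $0<\vol((\Supp B_S+G_S)|_F)<v$ for a suitable integral $G_S\ge 0$. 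Since the discriminant of adjunction is compatible with restriction to general hyperplanes on the base, the coefficient of $D$ in ${\bf B}_Z$ equals the coefficient of $z$ in the discriminant of $(S,B_S)$ over $C$, which — as $C$ is a curve — is $1-t$, where $t$ is the lc threshold over $z$ of the fibre of $S\to C$ with respect to $(S,B_S)$. Thus we may replace $(X,B)\to Z$ by $(S,B_S)\to C$; if $\dim X=\dim Z$ the reduced morphism is an isomorphism and $t\ge\epsilon$ trivially, so we may assume $\dim X>\dim Z=1$, and it suffices to produce $\delta=\delta(d,v,\epsilon)>0$ with $t\ge\delta$.

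\emph{Boundedness of the general fibre.} The general fibre $(F,B_F)$ satisfies $K_F+B_F\sim_\R 0$, is $\epsilon$-lc of dimension $d-1$, and carries the big integral divisor $N_F:=(\Supp B+G)|_F\ge\Supp B_F$ with $\vol(N_F)<v$. By boundedness of $\epsilon$-lc log Calabi--Yau pairs endowed with a big integral divisor of bounded volume — which rests on the BAB theorem [\ref{B-BAB}] and on the boundedness machinery of this paper — the pairs $(F,\Supp B_F)$ together with $N_F$ lie in a log bounded family depending only on $d,v,\epsilon$.

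\emph{Bounding the fibre multiplicities and concluding.} Let $E$ be a prime divisor over $X$ lying over $z$ that computes $t$. Extracting $E$ on a $\Q$-factorial dlt modification crepant over $(X,B)$, and using that $t$ is a crepant-birational invariant, we get $t=a_E(X,B)/\mult_E f^{*}z\ge \epsilon/\mult_E f^{*}z$, so it is enough to bound $\mult_E f^{*}z$, over all such $E$, by some $M=M(d,v,\epsilon)$. For this one exploits the boundedness of the general fibre: by a finite base change $C'\to C$ and a crepant modification over $C'$, combined with relative complements from [\ref{B-compl}] and the canonical bundle formula to control the vertical divisors, the fibration is brought close to a standard model of a member of the bounded family — while keeping track of how the discriminant coefficient changes — and on such a model the fibre over the point above $z$, together with every divisor over it dominated by a bounded sequence of blow-ups, has bounded multiplicity in the pullback of that point. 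Hence $\mult_E f^{*}z\le M$ and $t\ge\epsilon/M=:\delta$.

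\emph{Main obstacle.} The crux is the last step. Since $X$ need not be of Fano type over $Z$, complements cannot simply be lifted from the general fibre, so converting boundedness of the general fibre into a uniform bound on the degeneration of $f$ over $z$ — together with the correct bookkeeping of the discriminant under base change and crepant modification — is the essential difficulty. The boundedness input of the second step is also substantial, and it is exactly there that the hypothesis $0<\vol((\Supp B+G)|_F)<v$ enters: without a big divisor of bounded volume the general fibre (e.g. a K3 surface) need not be bounded, and the statement fails.
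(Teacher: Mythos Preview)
Your proposal is not a proof: the two central steps are either unjustified or explicitly left open.

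\textbf{Step 2 is not established.} You assert that an $\epsilon$-lc log Calabi--Yau fibre $(F,B_F)$ equipped with a big integral divisor $N_F$ of bounded volume is log bounded ``by BAB and the boundedness machinery of this paper''. BAB concerns Fano varieties, not Calabi--Yau varieties, and the Calabi--Yau boundedness results in this paper (e.g.\ \ref{cor-bnd-cy-fib-non-product}) require side hypotheses such as $B_F\neq 0$ or non-product type which are absent here. The paper itself warns that K3 surfaces are not bounded, and the theorem is meant to apply, for instance, to minimal elliptic surfaces whose general fibre is an elliptic curve with $B_F=0$; there the extra data is only a divisor of bounded degree, and no log boundedness of this type is available or invoked.

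\textbf{Step 3 is left open.} You yourself flag that ``the crux is the last step'' and that complements cannot be lifted because $X$ is not of Fano type over $Z$; the paragraph that follows is a description of what one would like to do (base change, crepant modification, relative complements, ``brought close to a standard model''), not an argument. In particular there is no mechanism given for bounding $\mult_E f^*z$ from the putative boundedness of the general fibre.

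The paper's route is genuinely different and avoids both obstacles. Over a base curve it does \emph{not} try to bound the general fibre. Instead it builds an auxiliary boundary $\Delta'=(1-\tfrac{\epsilon}{2})\Sigma'+\Supp f'^*D$ on a log resolution, so that $K_{X'}+\Delta'+2M'$ is \emph{big over $Z$} precisely because of the hypothesis $\vol((\Supp B+G)|_F)>0$; it then passes to the generalised lc model $X''$ over $Z$. On the normalisation of each component $T_i$ of the special fibre, the restricted log canonical divisor is ample with coefficients in a fixed DCC set, so effective birationality \cite[Theorem~1.3]{BZh} gives a uniform lower bound $\tfrac{1}{m^{d-1}}$ for its volume. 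Summing over the fibre and comparing with the upper bound $\vol((K_{X''}+\Delta''+2M'')|_{F''})<v$ coming from the hypothesis $\vol((\Supp B+G)|_F)<v$ bounds all multiplicities $m_i$, and a direct comparison $\Delta''-B''\ge\epsilon\,\Supp f''^*D$ then yields the bound on the lc threshold. The general base is reduced to the curve case by a birational/hyperplane-section argument on the base. Thus the volume hypothesis is used twice --- once for bigness, once for the upper bound --- and the argument never passes through boundedness of the general fibre or through complements over $Z$.
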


Consider a minimal elliptic surface, that is, a smooth projective surface $X$ endowed with an elliptic 
fibration $f\colon X\to Z$, that is, a contraction with $K_X\sim_\Q 0/Z$ (the general fibres are then elliptic curves). 
In general, $f$ can have singular fibres of arbitrarily large multiplicity (cf. [\ref{PSh-II}, Example 7.17]), 
hence the discriminant divisor $B_Z$ can have coefficients arbitrarily close to $1$. Here $(X,B)\to Z$ 
satisfies all the assumptions of the theorem with $B=0$ and $\epsilon=1$ except the last condition 
involving $G$. This illustrates the role of $G$ in the theorem. Indeed, if we additionally assume that 
$f$ has a multi-section of fixed degree $l$ (that is, if there is a horizontal$/Z$ curve $G$ with degree 
of $G\to Z$ being $l$), then the discriminant divisor $B_Z$ would have 
coefficients bounded away from $1$.\\

{\textbf{\sffamily{Boundedness of relative-global complements.}}}
One of the key tools used in this paper is the theory of complements. 
We need to prove a more general version of the bounded ``complements" 
statement of [\ref{B-BAB}, Theorem 1.7]. Most likely this will be useful elsewhere.
These complements are relative but are controlled globally.

\begin{thm}\label{t-bnd-comp-lc-global}
Let $d$ be a natural number and 
$\mathfrak{R}\subset [0,1]$ be a finite set of rational numbers. Then there exists a natural number 
$n$ depending only on $d,\mathfrak{R}$ satisfying the following. Assume 

\begin{itemize}
\item $(X,B)$ is a projective lc pair of dimension $d$,

\item the coefficients of $B$ are in $\mathfrak{R}$, 

\item $M$ is a semi-ample Cartier divisor on $X$ defining a contraction $f\colon X\to Z$,

\item $X$ is of Fano type over $Z$, 

\item $M-(K_X+B)$ is nef and big, and 

\item $S$ is a non-klt centre of $(X,B)$ with $M|_{S}\equiv 0$.\\ 

\end{itemize}
Then there is a $\Q$-divisor $\Lambda\ge B$ such that  
\begin{itemize}
\item $(X,\Lambda)$ is lc over a neighbourhood of $z:=f(S)$, and

\item $n(K_X+\Lambda)\sim (n+2)M$.
\end{itemize}
\end{thm}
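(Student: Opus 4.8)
This is a relative--global strengthening of the bounded--complements theorem, and the natural approach is the one behind [\ref{B-BAB}, Theorem 1.7] and the complements machinery of [\ref{B-compl}]: induct on $d$, use adjunction to the non-klt centre $S$ to drop to a lower-dimensional complement problem, and lift the resulting complement back by a vanishing theorem. Since adjunction to $S$ introduces a moduli part, the statement one actually proves by induction should be formulated for generalised pairs, the present case being the one with trivial moduli part.

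First I would pass to a $\Q$-factorial dlt model of $(X,B)$ and pull back $M$: Fano type over $Z$, semi-ampleness of $M$, nef-and-bigness of $M-(K_X+B)$, and the existence of a non-klt centre over $S$ all survive, and by the negativity lemma it suffices to build $\Lambda$ on the model and push it down. Shrinking $S$ to a minimal non-klt centre and running a suitable MMP over $Z$, I would arrange that $S$ is a component of $\lfloor B\rfloor$ with $(X,B)$ plt near $S$; since $M$ is semi-ample, defines $f$, and $M|_S\equiv 0$, the point $z=f(S)$ is closed and $S$ lies in the fibre $f^{-1}(z)$, so $M|_S$ is $\Q$-linearly trivial, with any torsion bounded (hence absorbed into $n$) by the complements machinery. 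Divisorial adjunction then gives $(K_X+B)|_S=K_S+B_S$ with $B_S$ having coefficients in a fixed hyperstandard set depending only on $d,\mathfrak{R}$, the pair $(S,B_S)$ is of Fano type (being a component of $\lfloor B\rfloor$ inside a fibre of the Fano type contraction $f$), and $-(K_S+B_S)=-(K_X+B)|_S\equiv\big(M-(K_X+B)\big)|_S$ is nef.

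By the inductive hypothesis on $S$ (with semi-ample divisor $M|_S$) there is $\Lambda_S\ge B_S$ with $(S,\Lambda_S)$ lc and $n(K_S+\Lambda_S)\sim(n+2)M|_S$; for our choice of $n$ the right-hand side is $\sim 0$, so $\Lambda_S$ is an honest $n$-complement. To lift it, set $D:=(n+2)M-n(K_X+B)$; writing $B=S+(B-S)$ near $S$ one checks
$$
D-S=K_X+(B-S)+\big((n+1)(M-(K_X+B))+M\big),
$$
where $(n+1)(M-(K_X+B))+M$ is nef and big --- this is exactly why the factor is $n+2$ and not $n+1$, the extra copy of $M$ being what makes the bracket big so that a vanishing theorem applies. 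Kawamata--Viehweg vanishing (in its form for lc pairs, the needed log-bigness being provided by nef-and-bigness of $M-(K_X+B)$) gives $H^1\big(X,\mathcal{O}_X(D-S)\big)=0$, so the effective divisor on $S$ realising $n(\Lambda_S-B_S)$ lifts to $nG\sim D$ with $nG|_S=n(\Lambda_S-B_S)$ and $S\not\subseteq\Supp G$; then $\Lambda:=B+G$ satisfies $\Lambda\ge B$ and $n(K_X+\Lambda)\sim(n+2)M$, and inversion of adjunction makes $(X,\Lambda)$ lc near $S$. To upgrade this to lc over a whole neighbourhood of $z$ one uses that $-(K_X+B)$ is nef over $Z$ (as $M\equiv 0$ over $Z$), so $(X,B)$ has a relative $n$-complement over $Z$ near $z$ with $n$ bounded, and the complement-lifting propagates the local data over the fibre of $f$ by an MMP argument, just as in [\ref{B-compl}].

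The real difficulty is uniformity of $n$: one must control, independently of $(X,B)$, the Cartier indices and the denominators and torsion of all the divisors that appear --- above all $M|_S$, the different $B_S$, and, in the generalised version, the moduli part --- so that a single $n=n(d,\mathfrak{R})$ works for the entire family, while simultaneously tracking the precise locus (a neighbourhood of $z$, not all of $X$) on which the output is lc and ensuring the vanishing theorem is applicable there. Both are handled by the intertwined inductive bookkeeping of the theory of complements, and I would organise the induction so as to carry exactly these data along.
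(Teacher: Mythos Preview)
Your overall shape is right --- reduce to a divisor $S$ in the boundary, restrict to get a bounded complement on $S$, lift by vanishing, then pass from ``lc near $S$'' to ``lc over $z$'' --- and this is exactly the skeleton of the paper's argument. But there is a genuine gap at the vanishing step, and it is precisely the place where the paper does most of its work.

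You write $D-S=K_X+(B-S)+\big((n+1)(M-(K_X+B))+M\big)$ and then invoke ``Kawamata--Viehweg vanishing in its form for lc pairs''. There is no such form that gives $H^1=0$ here: $(X,B)$ is merely lc (dlt after your reduction), and after subtracting $S$ the pair $(X,B-S)$ need not be klt --- $\lfloor B\rfloor$ can have other components, possibly not even mapping to $z$. Your earlier clause ``$(X,B)$ plt near $S$'' is local, whereas the vanishing you need is global on $X$. The paper confronts exactly this obstacle: it first isolates a special case (Proposition~\ref{l-bnd-comp-Gamma}) where one has an auxiliary boundary $\Gamma$ with $(X,\Gamma)$ \emph{globally} plt, $\lfloor\Gamma\rfloor=S$, and $\alpha M-(K_X+\Gamma)$ ample for some $\alpha\in(0,2)$; in that setting one \emph{can} write $L'+P'=K_{X'}+\Lambda'+(\mbox{nef and big})$ on a log resolution with $(X',\Lambda')$ plt and $\lfloor\Lambda'\rfloor=S'$, and standard Kawamata--Viehweg gives the surjection. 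The full proof (five steps) then reduces the general statement to this special case by a sequence of MMP's and perturbations: first build $\Delta\le B$ so that all non-klt centres of $(X,\Delta)$ map to $z$ and $aM-(K_X+\Delta)$ is still nef and big, then a klt $\tilde\Delta\le\Delta$ with small difference, then a threshold construction producing $\Omega$ with $aM-(K_X+\Omega)$ ample and every non-klt place of $(X,\Omega)$ already a non-klt place of $(X,\Delta)$; finally either $\lfloor\Omega\rfloor\neq 0$ gives $\Gamma$ directly or one extracts a divisor to manufacture it. None of this is a formality, and your sentence ``running a suitable MMP over $Z$'' does not substitute for it.

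Two smaller points. First, the paper does not induct on $d$ for this theorem: once $S$ is a divisor one applies the (already-proved) boundedness of complements [\ref{B-compl}, Theorem~1.7] directly to $(S,B_S)$, since $S$ is of Fano type and $-(K_S+B_S)$ is nef; no induction, and no generalised pairs are needed here, because divisorial adjunction to a component of $\lfloor B\rfloor$ produces an honest pair with hyperstandard coefficients, not a generalised pair with moduli part. Second, the passage from ``lc near $S$'' to ``lc over $z$'' in the paper is by the connectedness principle applied to $(X,(1-u)\Lambda+u\Gamma)$, using ampleness of $\alpha M-(K_X+\Gamma)$ over $Z$; your ``MMP argument just as in [\ref{B-compl}]'' is again the right idea but pointing at the wrong mechanism.
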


We prove a stronger statement when singularities are milder which is proved along with the 
other results above.

\begin{thm}\label{t-bnd-comp-lc-global-cy-fib}
Let $d,r$ be natural numbers, $\epsilon$ be a positive real number,  and 
$\mathfrak{R}\subset [0,1]$ be a finite set of rational numbers. Then there exists natural numbers 
$n,m$ depending only on $d,r,\epsilon,\mathfrak{R}$ satisfying the following. 
Assume that $(X,B)\to Z$ is a $(d,r,\epsilon)$-Fano type fibration and that 
\begin{itemize}
\item  we have  $0\le \Delta\le B$ with coefficients in $\mathfrak{R}$, and 

\item $-(K_X+\Delta)$ is big over $Z$. 
\end{itemize}
Then there is a $\Q$-divisor $\Lambda\ge \Delta$ such that  
\begin{itemize}
\item $(X,\Lambda)$ is klt, and 

\item $n(K_X+\Lambda)\sim mf^*A$.
\end{itemize}
\end{thm}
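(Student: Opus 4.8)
The plan is to combine the boundedness coming from Theorem~\ref{t-log-bnd-cy-fib} with the theory of relative-global complements in Theorem~\ref{t-bnd-comp-lc-global} (and its precursors in the literature), arguing in the intertwined inductive framework announced after Theorem~\ref{t-sh-conj-bnd-base}. First I would apply Theorem~\ref{t-log-bnd-cy-fib} to the family of $(d,r,\epsilon)$-Fano type fibrations $(X,B)\to Z$ together with the divisors $0\le\Delta\le B$: since the non-zero coefficients of $\Delta$ lie in the finite set $\mathfrak R\subset[0,1]$, they are bounded away from zero, so the pairs $(X,\Delta)$ form a log bounded family. Passing to this bounded family we may fix a very ample $H$ on $X$ (pulled back from a bounding scheme) with $H^{\dim X}$, $H^{\dim X -1}\cdot K_X$, $H^{\dim X-1}\cdot\Delta$ all bounded, and—after a standard argument using that $Z$ is also bounded via $A$—we may assume $f^*A$ is controlled by $H$ as well.

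Next I would produce the complement $\Lambda$. The key point is that we want $\Lambda\ge\Delta$ with $(X,\Lambda)$ klt and $n(K_X+\Lambda)\sim m f^*A$ for bounded $n,m$; this is a relative complement over $Z$ that is nonetheless controlled globally by $A$. I would set $M:=f^*A$, which is semi-ample and Cartier, and observe that $X$ is of Fano type over $Z$ by hypothesis, so $-(K_X+\Delta)$ is big over $Z$ as assumed; moreover $M-(K_X+\Delta)=f^*A-(K_X+\Delta)$ should be nef and big after possibly scaling $A$, using $A-L$ ample, $K_X+B\sim_\R f^*L$, $\Delta\le B$, and bigness of $-(K_X+\Delta)$ over $Z$. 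Then Theorem~\ref{t-bnd-comp-lc-global} (in the no-non-klt-centre case, or rather the version producing a genuine complement) yields, for bounded $n$ depending on $d,\mathfrak R$, a $\Q$-divisor $\Lambda\ge\Delta$ with $(X,\Lambda)$ lc over a neighbourhood of each point of $Z$ and $n(K_X+\Lambda)\sim (n+2)M = (n+2)f^*A$, which after clearing over all of $Z$ gives the required relation with $m:=n+2$ times the (bounded) number needed to cover $Z$. To upgrade lc to klt I would exploit the $\epsilon$-lc hypothesis on $(X,B)$ together with the boundedness of the family: on a bounded family the complement coefficients come from a finite set, and perturbing $\Lambda$ slightly towards $\Delta$ (or towards $B$, using $\epsilon$-lc of $(X,B)$ to absorb the perturbation) keeps the linear equivalence class rational with bounded denominator while making the pair klt; alternatively one runs an MMP on a small boundary to conclude that $\Lambda$ can be chosen klt.

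The main obstacle will be controlling the boundedness of the complement index $n$ \emph{uniformly} in the relative setting while the equivalence $n(K_X+\Lambda)\sim mf^*A$ is demanded with $m$ also bounded: the relative complements of Theorem~\ref{t-bnd-comp-lc-global} come with $n(K_X+\Lambda)\sim(n+2)M$ only over a neighbourhood of a chosen point $z=f(S)$, and patching these local complements into a single global $\Lambda$ with one bounded $n$ requires that the discriminant b-divisor ${\bf B}_Z$ has coefficients bounded away from $1$—precisely Theorem~\ref{t-sh-conj-bnd-base}. Thus the genuinely hard step is the interplay: one needs Theorem~\ref{t-sh-conj-bnd-base} to run adjunction $K_X+B\sim_\R f^*(K_Z+B_Z+M_Z)$ with $(Z,B_Z+M_Z)$ in a controlled class, then apply global complement theory downstairs on $Z$ (which is bounded via $A$) to get a bounded complement for $K_Z+B_Z+M_Z$, and finally pull it back and compare with $\Delta$ upstairs using the lc-trivial fibration structure. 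Executing this pull-back so that $\Lambda\ge\Delta$ rather than merely $\Lambda\ge 0$ is the delicate bookkeeping point, handled by choosing the downstairs complement to dominate the discriminant contribution of $\Delta$ and invoking the finiteness of coefficients on the bounded family.
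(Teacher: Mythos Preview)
Your proposal has two structural gaps that would block the argument.

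First, your opening move---applying Theorem~\ref{t-log-bnd-cy-fib} in dimension $d$ to conclude that $(X,\Delta)$ is log bounded---is circular in the paper's inductive scheme. Theorem~\ref{t-log-bnd-cy-fib} in dimension $d$ is proved \emph{after} Theorem~\ref{t-bnd-comp-lc-global-cy-fib} in dimension $d$, not before: Lemma~\ref{l-log-bnd-cy-fib-induction} deduces \ref{t-log-bnd-cy-fib}$_d$ from \ref{t-bnd-comp-lc-global-cy-fib}$_d$ (via Lemma~\ref{l-bnd-cy-bnd-klt-compl-induction}), and indeed the key step there is to first produce the bounded klt complement $\Lambda$ and only then argue boundedness. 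The paper's actual route assumes only \ref{t-log-bnd-cy-fib}, \ref{t-sing-FT-fib-totalspace}, \ref{t-bnd-comp-lc-global-cy-fib} in dimension $d-1$ and builds the complement in dimension $d$ from scratch (Propositions~\ref{l-bnd-cy-bnd-klt-compl-1}, \ref{l-bnd-cy-bnd-klt-compl-2}, Lemma~\ref{l-bnd-cy-bnd-klt-compl-induction}).

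Second, your invocation of Theorem~\ref{t-bnd-comp-lc-global} ``in the no-non-klt-centre case'' does not match that statement: the theorem requires a non-klt centre $S$ with $M|_S\equiv 0$, and since $(X,B)$ is $\epsilon$-lc there is none. The paper manufactures one: over a closed point $z\in Z$ it adds a carefully weighted sum of members of $|f^*A|$ through $f^{-1}\{z\}$ to force a divisor $T$ with $a(T,X,B+\cdots)=\epsilon'$ small, extracts $T$, and then applies Theorem~\ref{t-bnd-comp-lc-global} with $S$ the centre of $T$ (Proposition~\ref{l-bnd-cy-bnd-klt-compl-1}). This yields an lc complement over $z$. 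Upgrading lc to klt is then not a matter of perturbing coefficients (which would destroy $n(K_X+\Lambda)\sim mf^*A$), nor of Theorem~\ref{t-sh-conj-bnd-base}; rather, Proposition~\ref{l-bnd-cy-bnd-klt-compl-2} uses the lc threshold bound of Proposition~\ref{l-bnd-cy-bnd-lct-special} to replace a vertical part $tP$ of $\Lambda$ by a general $tQ\sim tP$, combined with a restriction-and-lift argument to a general $G\in|f^*A|$ invoking \ref{t-bnd-comp-lc-global-cy-fib}$_{d-1}$ on $G$. The global klt complement (Lemma~\ref{l-bnd-cy-bnd-klt-compl-induction}) then comes from covering $Z$ by finitely many $U_{z_i}$ and taking a general member of the single linear system $|mf^*A-n(K_X+\Delta)|$, not from any adjunction/pull-back bookkeeping on $Z$.
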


For example we can apply the theorem under the assumptions of \ref{t-bnd-cy-fib} by taking 
$\Delta=0$.\\

{\textbf{\sffamily{Announcement of results of a sequel paper.}}
Here we state several results whose proofs will appear in a sequel paper joint with Di Cerbo and Svaldi. 
For simplicity we do not state them in their most general form.

\begin{thm}
Let $d,p$ be natural numbers. 
Consider pairs $(X,B)$ with the following properties: 
\begin{itemize}
\item $(X,B)$ is projective klt of dimension $d$,

\item $p(K_X+B)\sim 0$, and

\item $X$ is rationally connected.   
\end{itemize}
Then the set of such $(X,B)$ is log bounded up to isomorphism in codimension one.
\end{thm}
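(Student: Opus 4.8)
The plan is to reduce this statement to Theorem \ref{cor-bnd-cy-fib-non-product}, i.e. to the boundedness up to isomorphism in codimension one of $\epsilon$-lc log Calabi-Yau pairs with nonzero boundary of coefficients $\ge \tau$ which are not of product type. First I would observe that the torsion condition $p(K_X+B)\sim 0$ forces the coefficients of $B$ into a fixed finite set (multiples of $1/p$), so in particular they are bounded below by $\tau:=1/p$, and $K_X+B\sim_\Q 0$, hence $K_X+B\sim_\R 0$. So two of the hypotheses of \ref{cor-bnd-cy-fib-non-product} come for free. The two remaining issues are: (i) we are not assuming $B\neq 0$; and (ii) we are not assuming $(X,B)$ is $\epsilon$-lc for a fixed $\epsilon$, nor that $(X,B)$ is not of product type — instead we assume $X$ is rationally connected.

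The key step is to exploit rational connectedness. If $X$ is rationally connected then $X$ is not covered by a nontrivial fibration onto a positive-dimensional base $Z$ with $K_Z\equiv 0$: indeed a rationally connected variety admits no dominant map to a positive-dimensional variety of non-negative Kodaira dimension, and more precisely any base $Z$ of a contraction from $X$ is itself rationally connected, so $-K_Z$ would be big if $\dim Z>0$ and certainly $K_Z\not\equiv 0$. This is preserved under passing to birational models whose inverse contracts no divisors (rational connectedness is a birational invariant), so $(X,B)$ is automatically \emph{not of product type} in the sense defined in the excerpt. Similarly, since $X$ rationally connected with $K_X+B\sim_\Q 0$ and $B\ge 0$ forces $-K_X$ to be pseudoeffective, running an MMP one sees $X$ is Fano type (or one argues $B\neq 0$: if $B=0$ then $K_X\sim_\Q 0$ contradicts rational connectedness unless $\dim X=0$, which is the trivial bounded case). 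So $B\neq 0$, handling (i). For (ii), the $\epsilon$-lc issue: by the above $X$ is of Fano type, and klt Fano type varieties of dimension $d$ with $K_X+B\sim_\Q 0$ and coefficients in the fixed finite set $\{j/p\}$ are $\epsilon$-lc for some $\epsilon=\epsilon(d,p)>0$ — this is exactly the ACC-for-lc-thresholds / global-ACC circle of results (the relevant statement being that such pairs form a bounded family of log canonical thresholds bounded away from $0$), or alternatively it follows a posteriori once boundedness is established; I would cite the appropriate consequence of [\ref{HMX-moduli}] or [\ref{B-compl}].

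With these reductions in place, $(X,B)$ satisfies all the hypotheses of Theorem \ref{cor-bnd-cy-fib-non-product} with $\tau=1/p$ and the $\epsilon$ just produced, so the family of such $(X,B)$ is log bounded up to isomorphism in codimension one, which is the conclusion. In fact, since $X$ is rationally connected, the non-product-type route in the proof of \ref{cor-bnd-cy-fib-non-product} simplifies: one runs a $K_X$-MMP to reach a Mori fibre space structure, and rational connectedness guarantees every fibration appearing has Fano type fibres, producing a tower of Fano fibrations to which Theorem \ref{t-towers-of-Fanos} applies directly; combined with Theorem \ref{t-log-bnd-cy-fib} this yields the log boundedness of a suitable birational model isomorphic to $X$ in codimension one.

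The main obstacle I anticipate is not the reduction itself but pinning down the claim that $(X,B)$ is automatically $\epsilon$-lc with $\epsilon$ depending only on $d,p$: one must be careful that this does not secretly already require the boundedness one is trying to prove. The clean way around this is to note that the coefficients of $B$ lie in the finite set $\{0,1/p,\dots,1\}$, so by the theory of complements and ACC for minimal log discrepancies on Fano type varieties (in the form used throughout the excerpt, e.g. as in the construction of bounded complements in \ref{t-bnd-comp-lc-global}), Fano type $d$-folds with boundary coefficients in a fixed finite set and $K_X+B\sim_\Q 0$ have minimal log discrepancy bounded below by a positive constant $\epsilon(d,p)$. Once that input is granted, everything else is a direct appeal to the theorems already proved in the paper.
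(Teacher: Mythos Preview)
This theorem is only \emph{announced} in the present paper; it appears under ``Announcement of results of a sequel paper'' with the explicit note that proofs will appear in joint work with Di Cerbo and Svaldi. There is therefore no proof here to compare your attempt against.

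That said, your reduction to Theorem~\ref{cor-bnd-cy-fib-non-product} is the natural route and is essentially sound, though several points are stated too loosely. The $\epsilon$-lc bound is exactly Lemma~\ref{l-bnd-sing-gen-cy-pairs} of the paper (with $M=0$): global ACC gives it for any projective klt pair of dimension $d$ with $K_X+B\sim_\R 0$ and coefficients in a fixed DCC set, with no Fano type hypothesis needed --- and indeed your claim that $X$ is automatically of Fano type is unjustified and unnecessary. Your assertion that a rationally connected base $Z$ has $-K_Z$ big is false in general; what is true and sufficient is $K_Z\not\equiv 0$. Both this step and the step $B\neq 0$ boil down to the claim that a positive-dimensional klt rationally connected variety cannot have $K\equiv 0$. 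This is correct but not as immediate as you make it: the vanishing $h^0(mK)=0$ on a smooth resolution does not transfer back to $X$ when some discrepancies are negative. One clean argument passes to the index-one cover $\tilde X\to X$ (which has canonical singularities and $K_{\tilde X}\sim 0$), shows $\tilde X$ is still rationally connected via the MRC fibration and the Galois action, and then derives the contradiction on a resolution of $\tilde X$.
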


In low dimension we have a stronger statement. 

\begin{thm}
Let $\epsilon$ be a positive real number. 
Consider pairs $(X,B)$ with the following properties: 
\begin{itemize}
\item $(X,B)$ is projective $\epsilon$-lc of dimension $3$,

\item $-(K_X+B)$ is nef, 

\item $K_X\not\equiv 0$, and

\item $X$ is rationally connected.   
\end{itemize}
Then the set of such $X$ is bounded up to isomorphism in codimension one.
\end{thm}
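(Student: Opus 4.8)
The plan is to reduce the assertion, via the canonical bundle formula and the theory of complements, to Theorems~\ref{t-bnd-cy-fib}, \ref{t-towers-of-Fanos} and \ref{cor-bnd-cy-fib-non-product}, after first disposing of the Fano type case. Throughout I would exploit rational connectedness of $X$ in two ways: $K_X$ is not pseudo-effective, and $(X,\Delta)$ is not of product type for \emph{any} boundary $\Delta$, because any contraction $Y\to Z$ with $\dim Y>\dim Z>0$ out of a birational model $Y$ of $X$ has rationally connected base, so $K_Z$ is not pseudo-effective, in particular $K_Z\not\equiv 0$.

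First I would replace $(X,B)$ by a small $\Q$-factorialisation, which is harmless up to isomorphism in codimension one, and let $\mathbf M$ be the b-divisor descending to $X$ with $\mathbf M_X=-(K_X+B)$; since $-(K_X+B)$ is nef, $(X,B,\mathbf M)$ is a generalised $\epsilon$-lc pair with $K_X+B+\mathbf M_X\equiv 0$. If $-(K_X+B)$ is big, then $(X,B)$ is klt with nef and big anticanonical divisor, so $X$ is of Fano type, and boundedness of $X$ — even without the qualifier ``up to isomorphism in codimension one'' — is the boundedness of $\epsilon$-lc Fano type threefolds, i.e.\ [\ref{B-BAB}]. So from now on I would assume $-(K_X+B)$ is nef but not big; this is where $\dim X=3$ enters, through the abundance theorem, which makes the nef divisor $-(K_X+B)$ semiample. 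It therefore defines a contraction $h\colon X\to V$, still on a model isomorphic to $X$ in codimension one, with $-(K_X+B)\sim_\R h^*H$ for an ample $\R$-divisor $H$ on $V$ and $0\le \dim V\le 2$; the case $\dim V=0$, where $K_X+B\equiv 0$, is the degenerate Calabi--Yau case.

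Assume $\dim V>0$. Then $(X,B)\to V$ is a log Calabi--Yau fibration with general fibre $(F,B_F)$ an $\epsilon'$-lc log Calabi--Yau pair, and the canonical bundle formula gives $K_X+B\sim_\R h^*(K_V+B_V+M_V)$ with $-(K_V+B_V+M_V)\sim_\R H$ ample; combined with Theorems~\ref{t-sh-conj-bnd-base} and \ref{t-cb-conj-sing-bnd-fib} controlling the discriminant, $(V,B_V,\mathbf M_V)$ is a generalised $\delta$-log Fano pair of dimension $\le 2$, so $V$ is bounded with a very ample divisor of bounded degree, and $V$ is rationally connected as it is dominated by $X$. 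It follows that $F$ is rationally connected too: a positive-dimensional non-rationally-connected, hence non-uniruled, part of $F$ would, via the maximal rationally connected fibration, produce a dominant map from $X$ onto a positive-dimensional non-uniruled variety. So $F$ is $\PP^1$ or a rational surface. If $\dim V=2$, then $F\cong\PP^1$, so $-K_X$ is big over $V$ and $(X,B)\to V$ is a $(3,r,\epsilon)$-Fano type fibration over the bounded surface $V$ (choosing $A$ very ample of bounded degree, $A-L\sim_\R A+H$ is ample), and Theorem~\ref{t-bnd-cy-fib} bounds $X$; the same works if $\dim V=1$ and $-K_F$ is big, with $V=\PP^1$. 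If $\dim V=1$ and $-K_F$ is not big, running the analogous construction inside $F$ produces a further fibration $X\to S\to V$ whose fibres have positive genus.

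In the remaining cases I would upgrade to a bounded complement: combining the relative--global complement statements \ref{t-bnd-comp-lc-global} and \ref{t-bnd-comp-lc-global-cy-fib} with boundedness of the base, there is a bounded $n$ and a $\Q$-divisor $\Lambda\ge B$ with $(X,\Lambda)$ klt, $n(K_X+\Lambda)$ a multiple of the pullback of an ample divisor of bounded degree, and all non-zero coefficients of $\Lambda$ at least $1/n$; since $K_X\not\equiv 0$ we get $\Lambda\ne 0$, while $(X,\Lambda)$ is $\epsilon''$-lc for some $\epsilon''=\epsilon''(\epsilon)>0$ and is not of product type. Then Theorem~\ref{t-towers-of-Fanos} — or Theorem~\ref{cor-bnd-cy-fib-non-product} when $\dim V=0$, where $K_X+\Lambda\sim_\R 0$ — gives log boundedness of $(X,\Lambda)$, hence boundedness of $X$ up to isomorphism in codimension one. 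The hard part will be the case $\dim V=1$ with $-K_F$ not big, where the further fibration $X\to S$ has fibres of positive genus: there rational connectedness of $X$ must be used decisively both to exclude $K3$ or abelian surface fibres and isotrivial configurations carrying non-trivial moduli (which would make the total space unbounded), and to produce an integral $G\ge 0$ that is a multisection of $X\to S$ of bounded degree, so that Theorem~\ref{t-cb-conj-sing-bnd-fib} controls the discriminant b-divisor on $S$ and forces $(S,B_S,\mathbf M)$ to be a generalised $\epsilon''$-log Fano surface pair, hence bounded. Making this work while keeping the singularities $\epsilon''$-lc and the coefficients of $\Lambda$ bounded below, with no a priori Fano type hypothesis, rests on the interplay of abundance in dimension three, Theorems~\ref{t-sh-conj-bnd-base} and \ref{t-cb-conj-sing-bnd-fib}, the complement results above, and Theorems~\ref{t-bnd-cy-fib} and \ref{t-towers-of-Fanos}; this is the technical heart of the sequel.
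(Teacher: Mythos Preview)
The paper does not prove this statement. It appears in the subsection titled ``Announcement of results of a sequel paper'', where the author explicitly writes that ``proofs will appear in a sequel paper joint with Di Cerbo and Svaldi''. There is therefore no proof in this paper to compare your proposal against.

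That said, a few remarks on your outline. Your invocation of ``the abundance theorem'' to make $-(K_X+B)$ semiample is not the standard abundance statement: abundance concerns nef $K_X+B$, not nef $-(K_X+B)$, and semiampleness of a nef anti-log-canonical divisor on a klt threefold is a separate (and subtler) question that you would need to justify independently. Similarly, your appeal to Theorems~\ref{t-bnd-comp-lc-global} and~\ref{t-bnd-comp-lc-global-cy-fib} to produce a bounded klt complement presupposes that $X$ is of Fano type over the base, which is exactly what fails in the case you single out as hardest ($\dim V=1$ with $-K_F$ not big). Your instinct that rational connectedness must be used to rule out product-type behaviour and to manufacture a bounded-degree multisection feeding into Theorem~\ref{t-cb-conj-sing-bnd-fib} is reasonable, and the overall architecture---reduce to a tower or to Theorem~\ref{cor-bnd-cy-fib-non-product} after building a controlled complement---is in the spirit of the paper, but as you yourself acknowledge, the substance lies in the sequel.
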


The above results have applications to Calabi-Yau varieties with elliptic fibrations.

\begin{thm}
Let $d,p$ be natural numbers. 
Consider varieties $X$ with the following properties: 
\begin{itemize}
\item $X$ is projective klt of dimension $d$,

\item $pK_X\sim 0$, 

\item there is an elliptic fibration $X\to Y$ admitting a rational section, and 

\item $Y$ is rationally connected.   
\end{itemize}
Then the set of such $X$ is bounded up to isomorphism in codimension one.
\end{thm}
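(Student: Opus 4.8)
The plan is to reduce this statement about Calabi-Yau varieties $X$ with an elliptic fibration $X\to Y$ over a rationally connected base to the preceding theorem on klt pairs $(X,B)$ with $p(K_X+B)\sim 0$ and $X$ rationally connected. The first step is to run adjunction (the canonical bundle formula) along $f\colon X\to Y$: since $K_X\sim_\Q 0/Y$ and the general fibre is an elliptic curve, we obtain a discriminant divisor $B_Y$ and a moduli $\Q$-divisor $M_Y$ with $K_X\sim_\Q f^*(K_Y+B_Y+M_Y)$. The existence of a rational section forces the generic fibre to be an elliptic curve with a rational point, hence the $j$-map and the monodromy are controlled; in particular, after a birational modification of $Y$ the moduli part $M_Y$ becomes a nef $\Q$-divisor that descends to a semi-ample one (the classical theory of minimal elliptic fibrations, or Fujino--Mori). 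This puts $(Y,B_Y+M_Y)$ into the framework of a klt generalised pair (or, by perturbing, an honest klt pair) with $p'(K_Y+B_Y+M_Y)\sim_\Q 0$ for some bounded $p'$, and with $Y$ rationally connected by hypothesis.

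Second, I would apply the previous theorem (the one immediately above, for klt pairs with torsion log-canonical divisor on rationally connected varieties) to the pair on the base: this gives a bounded family, up to isomorphism in codimension one, for the pairs $(Y, B_Y+M_Y)$ — equivalently the $Y$ together with the boundary-plus-moduli data form a log bounded family up to codimension-one isomorphism. The key point is that the index $p$ of $K_X$ bounds the index of $K_Y+B_Y+M_Y$ (up to a universal multiple depending only on $d$ coming from the denominators in the canonical bundle formula for elliptic fibrations, which are explicitly bounded — the Kodaira multiple $12$ and the local monodromy exponents), so that we are genuinely in the hypotheses of that theorem with parameters depending only on $d,p$.

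Third, having bounded the base $Y$ with its moduli divisor, I would reconstruct $X$. Because the elliptic fibration admits a rational section, $X$ is birational (over $Y$) to the relative Weierstrass model, which is cut out inside a projective bundle $\PP(\x O_Y\oplus L^{-2}\oplus L^{-3})$ determined by the moduli data $L$ (a $\Q$-divisor tied to $M_Y$) by the vanishing of a Weierstrass cubic whose coefficients $a_4, a_6$ are sections of $L^{4}, L^{6}$. Since $Y$ and $L$ vary in a bounded family, these projective bundles vary in a bounded family, and the Weierstrass equations vary in a bounded linear system; thus the Weierstrass models form a bounded family. Finally $X$ is isomorphic in codimension one to a small modification of this Weierstrass model — running an appropriate MMP over $Y$ to pass from the (possibly singular) Weierstrass model to $X$, which is an isomorphism away from the singular fibres and in particular in codimension one over $Y$. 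This yields boundedness of $X$ up to isomorphism in codimension one.

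The main obstacle I expect is controlling the moduli divisor $M_Y$ well enough to keep it in a bounded family: one needs the $b$-semiampleness / effective descent of $M_Y$ and a uniform bound on the denominators and on the positivity of the relevant line bundle $L$, so that the Weierstrass data lives in a bounded space. A secondary technical point is that the base pair $(Y,B_Y+M_Y)$ is a priori only a generalised pair, so one must either invoke the generalised-pair version of the cited boundedness theorem or argue that, after the birational modification making $M_Y$ semi-ample, a general member of $|mM_Y|$ can be added to $B_Y$ to produce an honest klt pair with the same index up to a bounded factor — and then check that this replacement is compatible with the isomorphism-in-codimension-one conclusion.
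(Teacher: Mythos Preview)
The paper does not contain a proof of this statement. It appears in the subsection \textbf{Announcement of results of a sequel paper}, where the author explicitly writes: ``Here we state several results whose proofs will appear in a sequel paper joint with Di Cerbo and Svaldi.'' The only further comment the paper makes is that the result ``generalises [\ref{DiCerbo-Svaldi}, Theorem 1.1] to every dimension.'' So there is no proof here to compare your proposal against.

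That said, your outline is the expected strategy and matches the Di Cerbo--Svaldi approach that the paper cites as the low-dimensional precursor: run the canonical bundle formula on the elliptic fibration, use the rational section to control the moduli part and pass to a Weierstrass model, bound the base pair via the preceding announced theorem on rationally connected klt log Calabi--Yau pairs, and then recover $X$ up to isomorphism in codimension one from the bounded Weierstrass data. You have also correctly identified the two genuine technical pressure points: (i) the base pair $(Y,B_Y+M_Y)$ is a priori only a generalised pair, so one must either invoke a generalised-pair version of the boundedness result or replace $M_Y$ by a general member of a bounded multiple; and (ii) one needs uniform control on the line bundle $L$ (equivalently on $M_Y$) so that the Weierstrass coefficients live in bounded linear systems. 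Both are handled in the Di Cerbo--Svaldi framework in low dimension, and extending them is precisely what the announced sequel must do; your sketch is consistent with that.
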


This generalises [\ref{DiCerbo-Svaldi}, Theorem 1.1] to every dimension.

\bigskip

%%%%%%%%%%%%%%%%%%%%

{\textbf{\sffamily{Acknowledgements.}}
This work was supported by a grant of the Leverhulme Trust. Part of it was done while  visiting the 
University of Tokyo in March 2018 which was arranged by Yujiro Kawamata and Yoshinori Gongyo, 
and I would like to thank them for their hospitality. Thanks to Christopher Hacon for answering our 
questions regarding moduli part of adjunction.  Thanks to Yifei Chen for numerous valuable comments.
Thanks to Jingjun Han, Roberto Svaldi, and Yanning Xu for useful comments on an earlier version of this paper.

%%%%%%%%%%%%%%%%%%%%
%%%%%%%%%%%%%%%%%%%%
\section{\bf Further results and remarks}

In this section we state further results and remarks working mostly in the context of generalised pairs.\\

{\textbf{\sffamily{A framework for classification of Fano fibrations.}}
Here we illustrate how the results above can be used towards classification of 
Fano fibrations such as Mori fibre spaces  
in the context of birational classification of algebraic varieties.
Suppose that we are given a normal projective variety $Z$ with a very ample divisor $A$ on it. The aim is 
to somehow classify a given set of Fano fibrations over $Z$. We naturally want to fix or bound 
certain invariants. Let $d$ be a natural number and $\epsilon$ be a positive real number. 
Assume $\mathcal{P}$ is a set of contractions $f\colon X\to Z$ such that 
\begin{itemize}
\item $X$ is projective of dimension $d$ with $\epsilon$-lc singularities, and 

\item $-K_X$ is ample over $Z$. 
\end{itemize}
For each non-negative integer $l$, let $\mathcal{P}_l$ be the set of all $X \to Z$ in $\mathcal{P}$ 
such that 
$$
l=\min\{a\in \Z^{\ge 0} \mid \mbox{$af^*A-K_X$ is ample}\}.
$$ 
For example, $X\to Z\in \mathcal{P}_0$ means that $-K_X$ is ample, hence $X$ is globally a Fano variety; 
 $X\to Z\in \mathcal{P}_1$ means that $-K_X$ is not ample but $f^*A-K_X$ is ample. Thus we have a disjoint union 
$$
\mathcal{P}=\bigcup_{l\in \Z^{\ge 0}} \mathcal{P}_l.
$$
For each $X\to Z$ in $\mathcal{P}_l$, we can choose a general $0\le B\sim_\Q lf^*A-K_X$, so that  
$(X,B)\to Z$ is a 
$$
\mbox{$(d,((l+1)A)^{\dim Z},{\epsilon})$-Fano type fibration} 
$$
perhaps after a slight decrease of $\epsilon$ (we need to decrease only if $\epsilon=1$).
Thus the set of such  
$X$ forms a bounded family, by Theorem \ref{t-bnd-cy-fib}. That is, we can write $\mathcal{P}$ as a disjoint union of 
bounded sets. The next step is to study each set $\mathcal{P}_l$ more closely to get a finer classification.

Lets look at the simplest non-trivial case of surfaces, that is, consider the set $\mathcal{P}$ of 
Mori fibre spaces $X\to Z=\PP^1$ where $d=\dim X=2$ and $X$ is smooth.  In this case it is well-known that 
$\mathcal{P}$ coincides with the sequence of Hirzebruch surfaces, that is, $\PP^1$-bundles $f_i\colon X_i\to Z$ having a section 
$E_i$ satisfying $E_i^2=-i$, for $i=0,1,\dots$. Applying the divisorial 
adjunction formula gives $K_{X_i}\cdot E_i=i-2$. 
Letting $A$ be a point on $Z$ and using the fact that the Picard group of $X_i$ is generated by $E_i$ and 
a fibre of $f_i$, it is easy to check that $X_0$ and $X_1$ are Fano, and $(i-1)f_i^*A-K_{X_i}$ is ample
but $(i-2)f_i^*A-K_{X_i}$ is not ample for $i\ge 2$. Therefore,  under the notation introduced above, we have 
$$
\mathcal{P}_0=\{X_0\to Z,X_1\to Z\}, ~~\mbox{and}~~ \mathcal{P}_l=\{X_{l+1}\to Z\}~~ \mbox{for}~~ l\ge 1.
$$
Of course we have used the classification of ruled surfaces over $\PP^1$ but the point we want to make is that 
conversely studying $\mathcal{P}$ and each subset $\mathcal{P}_l$ closely will naturally lead us  
to the classification of ruled surfaces over $\PP^1$.\\

{\textbf{\sffamily{Boundedness of generalised Fano type fibrations.}}}
We will prove some of the results stated above in the context of generalised pairs.
For the basics of generalised pairs see [\ref{BZh}] and [\ref{B-compl}] and the preliminaries below. 
A \emph{generalised log Calabi-Yau fibration} consists of a generalised pair $(X,B+M)$ 
with generalised lc singularities and a contraction $X\to Z$ such that $K_X+B+M\sim_\R 0/Z$. 
We define generalised Fano type fibrations  similar to \ref{d-FT-fib}. 

\begin{defn}\label{d-gen-FT-fib}
Let $d,r$ be natural numbers and $\epsilon$ be a positive real number. 
A generalised $(d,r,\epsilon)$-Fano type (log Calabi-Yau) fibration consists of a projective generalised pair 
$(X,B+M)$ with data $X'\to X$ and $M'$, and a contraction $f\colon X\to Z$ such that we have: 
\begin{itemize}
\item $(X,B+M)$ is generalised $\epsilon$-lc of dimension $d$,

\item $K_X+B+M\sim_\R f^* L$ for some $\R$-divisor $L$,

\item $-K_X$ is big over  $Z$, i.e. $X$ is of Fano type over $Z$,

\item $A$ is a very ample divisor on $Z$ with $A^{\dim Z}\le r$, and 

\item $A-L$ is ample.   
\end{itemize}
\end{defn}
Note that $M'$ is assumed to be nef globally. We usually write $(X,B+M)\to Z$ to denote the fibration.
Theorem \ref{t-log-bnd-cy-fib} can be extended to the case of generalised pairs, that is:

\begin{thm}\label{t-log-bnd-cy-gen-fib}
Let $d,r$ be natural numbers and $\epsilon,\tau$ be positive real numbers. Consider the set of all
generalised $(d,r,\epsilon)$-Fano type fibrations $(X,B+M)\to Z$ such that  
\begin{itemize}
\item we have  $0\le \Delta\le B$ whose non-zero coefficients are $\ge \tau$, and

\item $-(K_X+\Delta)$ is big over $Z$. 
\end{itemize} 
Then the set of such $(X,\Delta)$ is log bounded.
\end{thm}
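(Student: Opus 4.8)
The plan is to deduce the statement in two stages. First I would establish boundedness of the total spaces $X$ together with the fibration data — this is the generalised analogue of Theorem \ref{t-bnd-cy-fib} — and then bound the degree of $\Supp\Delta$ with respect to a bounded very ample divisor on $X$. Log boundedness of $(X,\Delta)$ then follows by a standard argument via Chow varieties (as in the proof of Theorem \ref{t-log-bnd-cy-fib}): $X$ ranges in a bounded family carrying a fixed very ample $H$, $\Supp\Delta$ is an effective divisor of bounded degree with respect to $H$, and the coefficients of $\Delta$ lie in the bounded interval $[\tau,1]$.

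For the first stage I would invoke the theory of relative–global complements in the generalised setting — the generalised counterpart of Theorem \ref{t-bnd-comp-lc-global-cy-fib}, applied with auxiliary boundary $0$ (legitimate because $-K_X$ is big over $Z$ by hypothesis). This produces natural numbers $n,m$ depending only on $d,r,\epsilon$ and a $\Q$-divisor $\Lambda\ge 0$ such that the generalised pair $(X,\Lambda+M)$ (with the same nef part $M'$) is generalised klt and $n(K_X+\Lambda+M)\sim m f^*A$; in particular $K_X+\Lambda+M$ is $\Q$-Cartier with $n(K_X+\Lambda+M)\sim_\Q 0/Z$, hence semiample of bounded index defining $f$ (as $f_*\mathcal{O}_X=\mathcal{O}_Z$ and $A$ is ample). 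On a general fibre $F$ the pair $(F,\Lambda_F+M_F)$ is a generalised klt Calabi–Yau pair of bounded index with $-K_F$ big, so $F$ lies in a bounded family by (the generalised form of) the BAB theorem [\ref{B-BAB}], while $Z$ is bounded since $A$ is very ample with $A^{\dim Z}\le r$. Feeding the bounded fibre, the bounded base, the boundedness of $L$ coming from ampleness of $A-L$, and the bounded Cartier index of $K_X+\Lambda+M$ into the argument of Jiang [\ref{Jiang}, Theorem 1.4] — using the bounded index to upgrade birational boundedness to honest boundedness — one obtains that $(X,\Lambda+M)$ is log bounded. In particular, on a bounded family containing $X$ we may fix a very ample $H$ for which $H^{d-1}\cdot\Lambda$ and $H^{d-1}\cdot f^*A$ are bounded.

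For the second stage, ampleness of $A-L$ gives that $f^*(A-L)$ is nef, so $H^{d-1}\cdot f^*L\le H^{d-1}\cdot f^*A$. From $K_X+B+M\sim_\R f^*L$, linearity of the intersection of the ample Cartier class $H^{d-1}$ against Weil cycles, and $n(K_X+\Lambda+M)\sim mf^*A$, we get
$$
0\le H^{d-1}\cdot B = H^{d-1}\cdot f^*L - H^{d-1}\cdot(K_X+M) = H^{d-1}\cdot f^*L - \frac{m}{n}H^{d-1}\cdot f^*A + H^{d-1}\cdot\Lambda,
$$
and the right-hand side is bounded since $H^{d-1}\cdot f^*L\le H^{d-1}\cdot f^*A$, $H^{d-1}\cdot f^*A\ge 0$, and $m,n$, $H^{d-1}\cdot f^*A$, $H^{d-1}\cdot\Lambda$ are all bounded by the first stage. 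As $0\le\Delta\le B$ and the non-zero coefficients of $\Delta$ are $\ge\tau$, we have $\Supp\Delta\le\frac1\tau\Delta\le\frac1\tau B$, hence $H^{d-1}\cdot\Supp\Delta$ is bounded, and the conclusion follows as indicated in the first paragraph.

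The hard part will be the first stage: running the complement machinery in the generalised category, where the moduli part $M$ is merely the pushforward of a nef divisor $M'$ and need not be $\Q$-Cartier, while keeping the complementary index $n$ independent of $M$; and the passage from Jiang's birational boundedness to genuine boundedness of $(X,\Lambda+M)$, which rests essentially on the bounded Cartier index of $K_X+\Lambda+M$ and a careful analysis of the semiample system it defines (in practice this is carried out in an intertwining induction together with the complement and singularity statements). By contrast, once $X$ and $f^*A$ are bounded, the second stage is a short intersection-theoretic estimate exploiting only the nefness of $f^*(A-L)$ and the relation furnished by the complement.
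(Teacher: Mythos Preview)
Your approach has a genuine gap and is far more complicated than what is needed. The ``generalised counterpart of Theorem \ref{t-bnd-comp-lc-global-cy-fib}'' you invoke is not established in the paper, and as stated it is problematic: you ask for $\Lambda\ge 0$ with $n(K_X+\Lambda+M)\sim mf^*A$ for bounded $n,m$, but in the generalised setting there is no hypothesis bounding the Cartier index (or even the rationality) of $M'$ --- the nef part is an arbitrary nef $\R$-divisor. Forcing $n(K_X+\Lambda+M)$ to be linearly equivalent to an integral divisor would impose integrality constraints on $M$ that simply are not available. So the ``hard part'' you flag is not merely hard; as formulated it does not go through without extra assumptions on $M'$.

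The paper bypasses all of this with a single reduction. Lemma \ref{l-from-gen-fib-to-usual-fib} shows that, under exactly your hypotheses (in particular $-(K_X+\Delta)$ big over $Z$), one can absorb $M$ into a boundary: there exists $\Theta\ge\Delta$ such that $(X,\Theta)\to Z$ is an \emph{ordinary} $(d,r,\tfrac{\epsilon}{2})$-Fano type fibration. The construction writes $B-\Delta+M+f^*A$ as big, perturbs on a log resolution using a small ample contribution, and pushes down; no control on the Cartier index of $M'$ is needed. Once this is done, Theorem \ref{t-log-bnd-cy-fib} applies verbatim to $(X,\Theta)\to Z$ with the same $\Delta$ (since $0\le\Delta\le\Theta$ and the non-zero coefficients of $\Delta$ are $\ge\tau$), and log boundedness of $(X,\Delta)$ follows. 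This is the entire proof; your second-stage intersection estimate is unnecessary because Theorem \ref{t-log-bnd-cy-fib} already delivers log boundedness of $(X,\Delta)$ directly.
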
\

{\textbf{\sffamily{Singularities on generalised log Calabi-Yau fibrations.}}}
Theorem \ref{t-sing-FT-fib-totalspace} also holds for generalised pairs, that is: 

\begin{thm}\label{t-sing-gen-FT-fib-totalspace}
Let $d,r$ be natural numbers and $\epsilon$ be a positive real number. 
 Then there is a positive real number $t$ depending only on $d,r,\epsilon$ satisfying the following. 
Let $(X,B+M)\to Z$ be any generalised $(d,r,\epsilon)$-Fano type fibration as in \ref{d-gen-FT-fib}. If 
$P\ge 0$ is any $\R$-Cartier divisor on $X$ such that either  
\begin{itemize}
\item $f^*A+B+M-P$ is pseudo-effective, or 

\item $f^*A-K_X-P$ is pseudo-effective, 
\end{itemize}
then  
$(X,B+t P+M)$ is generalised klt.
\end{thm}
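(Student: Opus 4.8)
The plan is to mimic the derivation of \ref{t-sing-FT-fib-totalspace} from \ref{t-log-bnd-cy-fib} and the boundedness-of-singularities results of [\ref{B-BAB}], carried out with generalised pairs and with \ref{t-log-bnd-cy-gen-fib} replacing \ref{t-log-bnd-cy-fib}; I would treat \ref{t-log-bnd-cy-gen-fib} as already established (in a self-contained account the present statement would be absorbed into the same intertwined induction). I would first apply \ref{t-log-bnd-cy-gen-fib} with $\Delta=0$: the non-zero-coefficient hypothesis is then vacuous and $-(K_X+\Delta)=-K_X$ is big over $Z$ by hypothesis, so the $X$ — together with the contraction $f\colon X\to Z$ and the very ample $A$, which may be taken from a bounded family — range in a bounded family. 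I would then fix a very ample divisor $H$ on $X$ in this family with $H^{d}$, $-K_X\cdot H^{d-1}$ and $(f^{*}A)\cdot H^{d-1}$ each at most a constant $v=v(d,r,\epsilon)$.

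Next I would bound the degree of $P$ against $H$. Since $A-L$ is ample, $f^{*}(A-L)$ is nef, so $(f^{*}L)\cdot H^{d-1}\le (f^{*}A)\cdot H^{d-1}\le v$; with $K_X+B+M\sim_\R f^{*}L$ this bounds $(B+M)\cdot H^{d-1}=(f^{*}L-K_X)\cdot H^{d-1}$ above by $2v$, and below by $0$ since $B\ge0$ and $M\cdot H^{d-1}=M'\cdot(\pi^{*}H)^{d-1}\ge0$, where $\pi\colon X'\to X$ and $M'$ are the data of the generalised pair. Intersecting the pseudo-effective divisor $f^{*}A+B+M-P$ (first case) or $f^{*}A-K_X-P$ (second case) with $H^{d-1}$ would then give $0\le P\cdot H^{d-1}\le w$ for some constant $w=w(d,r,\epsilon)$.

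Then I would invoke boundedness of singularities on bounded families. At this point $X$ is a bounded variety with the fixed polarisation $H$, $(X,B+M)$ is a generalised $\epsilon$-lc pair with $K_X+B+M\equiv f^{*}L$ (so $(B+M)\cdot H^{d-1}$ and $(K_X+B+M)\cdot H^{d-1}$ are bounded in terms of $v$), and $P\ge0$ is $\R$-Cartier with $P\cdot H^{d-1}\le w$. By the results of [\ref{B-BAB}], in the generalised-pair form developed in [\ref{B-compl}] — to the effect that on a bounded family the generalised lc threshold of an $\R$-Cartier divisor of bounded degree, with respect to a generalised $\epsilon$-lc pair whose log canonical class has bounded degree, is bounded below away from zero — there would be $t_0=t_0(d,r,\epsilon)>0$ with $(X,B+t_0P+M)$ generalised lc. I would then take $t=t_0/2$: as the generalised log discrepancy of any prime divisor $E$ over $X$ is affine in the coefficient of $P$, its value for $(X,B+tP+M)$ is $\tfrac12$ of its value for $(X,B+M)$ plus $\tfrac12$ of its value for $(X,B+t_0P+M)$, hence at least $\tfrac12\epsilon+0=\tfrac\epsilon2$. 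Thus $(X,B+tP+M)$ is generalised $\tfrac\epsilon2$-lc, in particular generalised klt, with $t$ depending only on $d,r,\epsilon$.

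The hard part will be this last step. The pair $(X,B+M)$ is not confined to a bounded family — only $X$ is, and there is no control over $\Supp B$ along the fibres of $f$ — so one cannot appeal to ACC for lc thresholds and must instead use exactly the form of boundedness of singularities that assumes only boundedness of the underlying variety together with degree bounds on the boundary-plus-moduli part and on $P$; one must also keep the nef part $M'$ and its model $X'$ under control. It is here that the bounded-complements machinery (of the type of \ref{t-bnd-comp-lc-global-cy-fib}), and ultimately the interaction with the inductive proof of \ref{t-log-bnd-cy-gen-fib}, genuinely enter.
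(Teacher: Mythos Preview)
Your approach differs from the paper's, and the gap you flag at the end is genuine and, as stated, unresolved. The generalised-pair analogue of [\ref{B-BAB}, Theorem 1.6] that you invoke is not available in the form you need: the versions in [\ref{B-BAB}] and [\ref{B-compl}] for generalised pairs require a bound on the Cartier index of the nef part $M'$ (e.g.\ $pM'$ b-Cartier for fixed $p$), and no such bound is part of the hypotheses of \ref{d-gen-FT-fib}. Without it, bounding $X$ and the degrees of $B+M$ and $P$ against $H$ is not enough to control the generalised lc threshold of $P$ with respect to $(X,B+M)$.

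The paper avoids this problem entirely by reducing to the ordinary case \ref{t-sing-FT-fib-totalspace} via a perturbation-and-limit argument, rather than appealing to a generalised threshold bound. On a log resolution $\phi\colon X'\to X$ carrying $M'$, write $\phi^*(B+M+f^*A)\sim_\R G'+H'$ with $G'\ge 0$ and $H'$ ample; for small $\alpha>0$ pick general $R'\sim_\R \alpha H'+(1-\alpha)M'$ and set
\[
\Delta'=B'-\alpha\phi^*B-\alpha E'+\alpha G'+R',
\]
where $E'=\phi^*M-M'\ge 0$. One checks $K_{X'}+\Delta'\sim_\R \phi^*f^*(L+\alpha A)$ and that the coefficients of $\Delta'$ are at most $1-\frac{\epsilon}{2}$, so $(X,\Delta)\to Z$ is an \emph{ordinary} $(d,r,\frac{\epsilon}{2})$-Fano type fibration. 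Now \ref{t-sing-FT-fib-totalspace} gives $t>0$, independent of $\alpha$, with $(X,\Delta+2tP)$ klt, hence the coefficients of $\Delta'+2tP'$ are at most $1$. Letting $\alpha\to 0$ kills the perturbation terms $\alpha\phi^*B$, $\alpha E'$, $\alpha G'$ (and $R'$ tends to a general member of $|M'|_\R$, but only the coefficient bound matters), so the coefficients of $B'+2tP'$ are at most $1$; since those of $B'$ are at most $1-\epsilon$, the coefficients of $B'+tP'$ are strictly less than $1$, i.e.\ $(X,B+tP+M)$ is generalised klt.

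The point is that the nef part $M'$ is absorbed into the ample $H'$ and the effective $G'$ at the cost of an $\alpha$-error, the ordinary theorem is applied uniformly in $\alpha$, and the limit recovers the generalised conclusion. This sidesteps any need to bound the Cartier index of $M'$. Your route would work if you could first prove the needed generalised threshold bound without such a hypothesis, but that is essentially equivalent in difficulty to the theorem itself; the paper's reduction is the missing idea.
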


In particular, the theorem can be applied to any $0\le P\sim_\R f^*A+B+M$ or any $0\le P\sim_\R f^*A-K_X$ 
assuming $P$ is $\R$-Cartier. 

Adjunction for fibrations also makes sense for generalised pairs. That is, if $(X,B+M)\to Z$ is a 
generalised log Calabi-Yau pair, then we can define a discriminant divisor $B_Z$ and a moduli divisor 
$M_Z$ giving 
$$
K_X+B+M\sim_\R f^*(K_Z+B_Z+M_Z).
$$
Moreover, for any birational contraction $Z'\to Z$ from a normal variety we can define the discriminant divisor $B_{Z'}$ 
whose pushdown to $Z$ is just $B_Z$. In this way we get the discriminant b-divisor  ${\bf B}_Z$.
See \ref{fib-adj-setup} for more details.

Now we state a generalised version of Shokurov's conjecture \ref{conj-sh-sing-fib}.

\begin{conj}\label{conj-sh-sing-gen-fib}
Let $d$ be a natural number and $\epsilon$ be a positive real number. 
 Then there is a positive real number $\delta$ depending only on $d,\epsilon$ satisfying the following. 
 Assume that $(X,B+M)$ is a generalised pair with data $X'\to X\overset{f}\to Z$ and $M'$ 
where $f$ is a contraction such that
\begin{itemize}
\item $(X,B+M)$ is generalised $\epsilon$-lc of dimension $d$,

\item $K_X+B+M\sim_\R 0/Z$, and  

\item $-K_X$ is big over $Z$.
\end{itemize}
Then the discriminant b-divisor ${\bf B}_Z$ has coefficients in $(-\infty, 1-\delta]$.
\end{conj}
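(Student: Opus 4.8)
\smallskip

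We outline the strategy we would pursue towards Conjecture \ref{conj-sh-sing-gen-fib}; it also explains why Theorems \ref{t-sh-conj-bnd-base} and \ref{t-cb-conj-sing-bnd-fib} are the cases currently within reach. First one reduces to the case when $Z$ is a curve. Cutting $Z$ by general very ample hypersurfaces and restricting $f$, the generalised $\epsilon$-lc property of $(X,B+M)$, the relation $K_X+B+M\sim_\R 0/Z$, and the bigness of $-K_X$ over $Z$ are all preserved, while each coefficient of ${\bf B}_Z$ is recovered as a coefficient of the discriminant b-divisor of the fibration restricted over a general hypersurface section of $Z$. As the dimension of the total space can only drop, it suffices to treat the case in which $Z$ is a smooth curve; then $Z$ has no non-trivial birational model, so ${\bf B}_Z=B_Z$, and we must bound below, by a positive number depending only on $d$ and $\epsilon$, the lc threshold $t$ of the fibre $X_z=f^*z$ with respect to $(X,B+M)$ over a closed point $z\in Z$.

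Localising at $z$ and applying generalised adjunction, the general fibre $(F,B_F+M_F)$ is a generalised $\epsilon$-lc log Calabi-Yau pair of dimension $d-1$ with $-K_F$ big, hence of Fano type, so by the BAB theorem [\ref{B-BAB}] in its generalised form the $F$ form a bounded family. The theory of complements for generalised pairs ([\ref{B-compl}]) then produces a bounded natural number $n$ and a generalised lc $n$-complement $\Lambda_F\ge B_F$ of $(F,B_F+M_F)$ whose support lies in a bounded set. The crux is to spread $\Lambda_F$ out to a $\Q$-divisor $\Lambda\ge B$ on $X$, over the germ $(Z,z)$, with $(X,\Lambda+M)$ generalised lc near $X_z$ and $K_X+\Lambda+M$ of bounded Cartier index relatively over $(Z,z)$: a relative-global bounded complement of the kind supplied by \ref{t-bnd-comp-lc-global} and \ref{t-bnd-comp-lc-global-cy-fib}, but now with no boundedness assumption on $Z$.

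Granting such a $\Lambda$, one finishes as follows. Since $X_z\sim 0$ over the germ and $B\le\Lambda$, on a log resolution one writes $X_z=\sum m_iE_i$, so that $t\le a(E_i;X,\Lambda+M)/m_i$ for each $i$; the bounded relative Cartier index, together with the Fano type of $X$ over $(Z,z)$ and $K_X+\Lambda+M\sim_\Q 0/(Z,z)$, confines the germ $(X,\Lambda+M)\to(Z,z)$, in particular its central fibre, to a bounded family, which bounds the $m_i$ from above and the discrepancies $a(E_i;X,\Lambda+M)$ from below, and hence gives $t\ge\delta$. In other words the content is that $\epsilon$-lc-ness of the total space bounds the multiplicities in the special fibre --- exactly what fails for non-Fano-type fibrations such as minimal elliptic surfaces. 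The moduli part $M$ does not change the shape of the argument: $M'$ is nef and contributes non-negatively to log discrepancies, and all the tools invoked (BAB, complements, adjunction) have generalised-pair versions, so the only extra cost is keeping track of the data $X'\to X$ and $M'$.

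The main obstacle is precisely the relative-global complement step: extending a complement from the generic fibre to a neighbourhood of the central fibre with bounded index, without any boundedness of $X$ or $Z$, is not available in this generality and is essentially equivalent to the conjecture. In the two cases we can handle, the missing control is recovered: when $Z$ lies in a bounded family (Theorem \ref{t-sh-conj-bnd-base}) the whole fibration becomes bounded after passing to a relative complement, via \ref{t-bnd-cy-fib} and \ref{t-bnd-comp-lc-global-cy-fib}; when $\Supp B$ together with an auxiliary integral divisor $G$ has bounded volume on the general fibres (Theorem \ref{t-cb-conj-sing-bnd-fib}) one instead obtains enough boundedness of the fibres themselves to run the argument after adjunction. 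Removing both hypotheses at once appears to need a new ingredient --- for instance a bound on how badly a bounded family of Fano type pairs can degenerate over a curve --- which is of independent interest.
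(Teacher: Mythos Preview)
The statement is an open conjecture; the paper does not prove it, so there is no paper proof to compare against. Your write-up is a strategy outline and you correctly identify the relative--global complement step (spreading a fibrewise complement to a neighbourhood of the central fibre with bounded index, absent any boundedness on $Z$) as the essential missing ingredient.

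There is, however, a real gap in your reduction to $\dim Z=1$. You claim each coefficient of ${\bf B}_Z$ is recovered from the discriminant of the fibration restricted over a general hyperplane section of $Z$. This fails when the prime divisor $D$ over $Z$ in question has centre a closed point: a general hyperplane misses it entirely, so $\mu_D{\bf B}_Z$ is invisible to the restriction. One could first pass to a resolution $Z'\to Z$ on which $D$ is a divisor and then cut $Z'$ --- this is exactly the route the paper takes for Theorem \ref{t-cb-conj-sing-bnd-gen-fib} --- but that base change does not preserve bigness of $-K_X$ over the base, so the hypotheses of the conjecture are destroyed before one reaches the curve case. The paper flags precisely this obstruction in the paragraph following the proof of \ref{t-cb-conj-sing-bnd-gen-fib}: for the bounded-base theorem \ref{t-sh-conj-bnd-base-gen-fib} the reduction-to-curves strategy is unavailable, and divisors with point centre $z$ are instead treated directly by picking $N\in|A|$ through $z$ and invoking the global lc-threshold bound of Theorem \ref{t-sing-gen-FT-fib-totalspace}, with no complement constructed at all. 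Your two ``cases within reach'' are thus handled by genuinely different mechanisms, not by specialisations of a single curve-base argument as your outline suggests.
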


Note that in particular we are assuming that $M'$ is nef over $Z$ as this is part of the definition of a generalised pair. 
The next result says that \ref{conj-sh-sing-gen-fib} holds in the setting of generalised Fano type fibrations.

\begin{thm}\label{t-sh-conj-bnd-base-gen-fib}
Let $d,r$ be natural numbers and $\epsilon$ be a positive real number. 
 Then there is a positive real number $\delta$ depending only on $d,r,\epsilon$ satisfying the following. 
Let $(X,B+M)\to Z$ be any generalised $(d,r,\epsilon)$-Fano type fibration as in \ref{d-gen-FT-fib}. 
Then  the discriminant b-divisor ${\bf B}_Z$ has coefficients in $(-\infty, 1-\delta]$.
\end{thm}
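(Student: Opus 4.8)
The plan is to reformulate the conclusion as a uniform lower bound for certain log canonical thresholds on the base, reduce to the case of a point on a bounded base by slicing with members of $|A|$, and then treat the ``visible'' contribution via Theorem~\ref{t-sing-gen-FT-fib-totalspace} and the moduli contribution separately, the latter being the main difficulty. For a prime divisor $D$ on a birational model $Z'\to Z$, the coefficient of $D$ in ${\bf B}_Z$ equals $1-t_D$, where $t_D$ is the generalised lc threshold of $g'^*D$ with respect to the crepant pullback $(X',B'+M')$ over the generic point $\eta_D$ of $D$; here $X'\to X$ is birational, $g'\colon X'\to Z'$ is a morphism, and $K_{X'}+B'+M'$ is the crepant pullback of $K_X+B+M$. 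So we must produce $\delta=\delta(d,r,\epsilon)>0$ with $t_D\ge\delta$ for every such $D$. If $E$ over $X'$ computes $t_D$ then $a(E,X',B'+t_D g'^*D+M')=0$, hence by crepancy $t_D=a(E,X,B+M)/\mult_E(g'^*D)\ge \epsilon/\mult_E(g'^*D)$ as $(X,B+M)$ is generalised $\epsilon$-lc; so it suffices to bound the ``fibre multiplicity'' $\mult_E(g'^*D)$ from above uniformly.

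Next I reduce to the case where the centre $\bar D\subset Z$ of $D$ is a closed point and $Z$ is bounded. Let $c$ be the codimension of $\bar D$ in $Z$ and cut $Z$ by $\dim Z-c$ general members of $|A|$ to obtain a subvariety $W$ with $\bar D\cap W$ a single point $w$. Replacing the very ample divisor on $W$ by $(\dim Z-c+1)A|_W$ preserves the inequality in the last bullet of Definition~\ref{d-gen-FT-fib} at the cost of replacing $r$ by $d^dr$; the restriction $(X_W,(B+M)|_{X_W})\to W$ with $X_W=f^{-1}(W)$ is again a generalised Fano type fibration of dimension $\le d$; its discriminant b-divisor is the restriction of ${\bf B}_Z$, with the divisor over $W$ attached to $D$ having centre $w$; and $W$ lies in a bounded family since it carries a very ample divisor of bounded degree. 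So we may assume $\bar D=\{w\}$ with $Z$ bounded of dimension $c\le d$.

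Finally, suppose first $c=1$, so $\bar D$ is a divisor on the curve $Z$ and $f^*\bar D$ is the fibre over $w$. Choosing $0\le Q\sim A$ with $w\in\Supp Q$ and applying Theorem~\ref{t-sing-gen-FT-fib-totalspace} with $P=f^*Q$ --- possible because $f^*A+B+M-f^*Q\sim_\R B+M$ is pseudo-effective --- shows that $(X,B+tP+M)$ is generalised klt for some $t=t(d,r,\epsilon)>0$, so the discriminant $B_Z+tQ$ of the fibration $(X,B+tf^*Q+M)\to Z$ has all coefficients $\le 1$, and since $w\in\Supp Q$ this forces $t_D=t_w\ge t$. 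Now suppose $c\ge 2$. The same application of Theorem~\ref{t-sing-gen-FT-fib-totalspace} still bounds the coefficients of the discriminant $B_Z$ near $w$, but it does not control the moduli b-divisor ${\bf M}_Z$, whose coefficients also enter $\mult_E(g'^*D)$; bounding the latter is the crux, and it is exactly where the Fano type hypothesis is used essentially. Indeed the general fibres $(F,B_F)$ are $\epsilon$-lc Fano type Calabi-Yau pairs, hence a bounded family by [\ref{B-BAB}][\ref{B-compl}], so the moduli variation is bounded; combined with boundedness of $X$ (Theorem~\ref{t-log-bnd-cy-gen-fib} with $\Delta=0$) and an MMP reducing to a bounded Fano type total space, this gives a uniform bound on the moduli contribution by appealing to the absolute form of Ambro's conjecture [\ref{B-BAB}, Theorem~1.4]. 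The principal obstacle is that the birational models of $Z$ on which $D$ becomes a divisor, and on which ${\bf M}_Z$ descends, need not be bounded, so the moduli control must be made uniform; in practice this is established in the same inductive loop as Theorems~\ref{t-log-bnd-cy-gen-fib} and \ref{t-sing-gen-FT-fib-totalspace}.
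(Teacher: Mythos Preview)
Your outline begins well---the reformulation in terms of lc thresholds and the slicing idea are on the right track---but the argument for $c\ge 2$ has a genuine gap, and in fact the detour through the moduli b-divisor is a red herring.

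The key point you miss is that the $c=1$ argument \emph{already works for all $c$}. Suppose the centre of $D$ on $Z$ (or on your $W$) is a closed point $z$, and pick $N\in|A|$ through $z$. By Theorem~\ref{t-sing-gen-FT-fib-totalspace} applied with $P=f^*N$, the generalised pair $(X,B+tf^*N+M)$ is klt for some fixed $t>0$, so the \emph{global} generalised lc threshold $u$ of $f^*N$ with respect to $(X,B+M)$ satisfies $u\ge t$. Now let $\psi\colon Z'\to Z$ be a model on which $D$ is a divisor. Since $N$ passes through $z$ and $D$ is contracted to $z$, the effective Cartier divisor $\psi^*N$ contains $D$ with multiplicity at least $1$: $\psi^*N\ge D$. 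Pulling back to $X'$ via $f'$ and comparing thresholds over the generic point of $D$ gives
\[
t\ \le\ u\ \le\ \lct_{\eta_D}\big((X',B'+M');\,f'^*\psi^*N\big)\ \le\ \lct_{\eta_D}\big((X',B'+M');\,f'^*D\big)\ =\ t_D.
\]
No moduli b-divisor appears; the whole problem is controlled by a single application of Theorem~\ref{t-sing-gen-FT-fib-totalspace} plus the trivial inequality $\psi^*N\ge D$. This is exactly how the paper handles the closed-point case.

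A secondary issue: your reduction step ``cut $Z$ by $\dim Z-c$ general members of $|A|$'' needs more care than you give it. After cutting, $D$ restricts to some divisor on a model over $W$, and you must check that its coefficient in the discriminant b-divisor of the restricted fibration controls the original coefficient. The paper handles this more cleanly by cutting with a \emph{single} general $H\in|A|$ and inducting on $d$: when the centre of $D$ on $Z$ is positive-dimensional, $H'$ (the pullback to $Z'$) meets $D$, and a direct comparison of thresholds along a component $C$ of $D\cap H'$ (using the fact that $f'^*D|_{G'}=g'^*C$ near $\eta_C$) shows $t_D\ge t_C$, where $t_C$ is bounded below by induction. Only the closed-point case remains, and that is handled as above.
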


Now we propose a conjecture which is stronger than \ref{conj-sh-sing-gen-fib} in the sense that we replace 
the bigness of $-K_X$ over $Z$ with a weaker condition. 

\begin{conj}\label{conj-cb-sing-gen-fib}
Let $d,v$ be natural numbers and $\epsilon$ be a positive real number. 
 Then there is a positive real number $\delta$ depending only on $d,v,\epsilon$ satisfying the following. 
 Assume that $(X,B+M)$ is a generalised pair with data $X'\to X\overset{f}\to Z$ and $M'$ 
where $f$ is a contraction such that
\begin{itemize}
\item $(X,B+M)$ is generalised $\epsilon$-lc of dimension $d$,

\item $K_X+B+M\sim_\R 0/Z$, and  

\item there is an integral divisor $G\ge 0$ on $X$ with 
$$
0<\vol((B+M+G)|_F)<v
$$ 
for the general fibres $F$ of $f$.
\end{itemize}
Then the discriminant b-divisor ${\bf B}_Z$ has coefficients in $(-\infty, 1-\delta]$.
\end{conj}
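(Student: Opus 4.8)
The plan is to adapt the proof of Theorem \ref{t-cb-conj-sing-bnd-fib} to the generalised setting. Since a coefficient of ${\bf B}_Z$ along a prime divisor $D$ on a birational model $Z'\to Z$ is read off from the generalised fibration over the generic point of $D$, I would first replace $Z$ by such a $Z'$, cut $Z'$ by general hyperplane sections and $X$ by their pullbacks, and use compatibility of generalised adjunction with restriction (see \ref{fib-adj-setup}) to reduce to the case where $Z$ is a smooth curve and we must bound the coefficient of a closed point $z\in Z$ in the discriminant $B_Z$. The general fibre $F$ is unchanged by this process, so the divisor $G$ and the bound $0<\vol((B+M+G)|_F)<v$ survive. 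The coefficient in question is $1-\lct_z(X,B+M;f^*z)$, so it suffices to find $\delta>0$, depending only on $d,v,\epsilon$, with $(X,B+M+\delta f^*z)$ generalised lc near the fibre $X_z$. Writing $f^*z=\sum m_iV_i$, the generalised $\epsilon$-lc property already controls this threshold once the multiplicities $m_i$ and the singularities of the reduced fibre are bounded, because an $\epsilon$-lc pair cannot acquire a non-lc place from adding a small multiple of a divisor of bounded complexity.

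To obtain such bounds I would exploit the divisor $G$: the class $(B+M+G)|_F$ is big with bounded volume and $(F,B_F+M_F)$ is a generalised $\epsilon$-lc Calabi-Yau pair, so — as in [\ref{B-compl}] and the machinery behind Theorems \ref{t-bnd-comp-lc-global} and \ref{t-bnd-comp-lc-global-cy-fib} — the birational geometry of the general fibre and of $G$ should be pinned down up to bounded data, and after a $\Q$-factorialisation and a relative minimal model program one should be able to produce a bounded natural number $n$ and a $\Q$-divisor $\Lambda\ge B$ such that $(X,\Lambda+M)$ is generalised lc over a neighbourhood of $z$ and $n(K_X+\Lambda+M)$ is, over that neighbourhood, linearly equivalent to the pullback of $nz$ with bounded Cartier index. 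Such a relative-global complement forces the fibre $X_z$ to have bounded multiplicities and bounded singularities, since the complement restricts to the fibre with coefficients dominating those of $X_z^{\mathrm{red}}$. Plugging these bounds into the threshold estimate of the first paragraph, in the spirit of [\ref{B-sing-fano-fib}], gives the desired uniform $\delta$.

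The main obstacle — and the reason the statement is only conjectural in the generalised category, while its non-generalised analogue \ref{t-cb-conj-sing-bnd-fib} is a theorem — is precisely the complement step: building the bounded relative-global complement $\Lambda$ for the generalised pair $(X,B+M)$ requires boundedness of complements in the presence of a nef moduli part $M$, i.e.\ a generalised version of Theorem \ref{t-bnd-comp-lc-global}, and this is not available in the generality needed here; it is close in strength to Conjecture \ref{conj-sh-sing-gen-fib} itself. A secondary difficulty is that generalised adjunction for $f$ produces its own moduli b-divisor ${\bf M}_Z$, so one must check that the reduction to a curve base is compatible with this moduli part and that the final bound on ${\bf B}_Z$ is insensitive to ${\bf M}_Z$; this should follow from the theory recalled in \ref{fib-adj-setup}.
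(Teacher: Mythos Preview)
The statement is a \emph{conjecture}; the paper does not prove it. What the paper does prove is the weaker Theorem~\ref{t-cb-conj-sing-bnd-gen-fib}, which adds the hypotheses that $pM'$ is b-Cartier and that the volume bound involves $\Supp B$ rather than $B$. So your task is not to compare against a proof of the conjecture itself, but to see how your outline relates to the paper's method for the weaker result.

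Your approach diverges substantially from the paper's. The reduction to $\dim Z=1$ via hyperplane sections is shared (though the paper first arranges, via an MMP argument in Steps~1--4 of the proof of \ref{t-cb-conj-sing-bnd-gen-fib}, that $D$ is already a divisor on $Z$). But your core mechanism---producing a bounded relative-global complement as in Theorems~\ref{t-bnd-comp-lc-global} and \ref{t-bnd-comp-lc-global-cy-fib}---is not what the paper uses. Those complement theorems require $X$ to be of Fano type over $Z$, which is \emph{not} assumed in Conjecture~\ref{conj-cb-sing-gen-fib}; this is a gap in your plan independent of whether generalised complements exist. The paper's proof of the curve-base case (Proposition~\ref{l-adj-disc-div-e-lc}) avoids complements entirely: it builds an auxiliary boundary $\Delta'$ with coefficients in $\{1-\tfrac{\epsilon}{2},1\}$, passes to the generalised lc model $(X'',\Delta''+2M'')$ over $Z$, and bounds the fibre multiplicities $m_i$ by comparing $\vol((K_{X''}+\Delta''+2M'')|_{F''})$ on a general fibre (bounded above by $v$) against a lower bound on each component coming from effective birationality [\ref{BZh}, Theorem~1.3].

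Consequently, your diagnosis of the obstacle is off-target. The genuine bottleneck, visible in Step~4 of Proposition~\ref{l-adj-disc-div-e-lc}, is the appeal to [\ref{BZh}, Theorem~1.3]: that result needs the coefficients of the boundary in a fixed DCC set and the nef part to be b-Cartier with bounded index. This is exactly why Theorem~\ref{t-cb-conj-sing-bnd-gen-fib} assumes $pM'$ is b-Cartier and uses $\Supp B$ (to force the coefficients of $\Delta'$ into a finite set). Removing those hypotheses---not the unavailability of generalised complements---is what stands between the theorem and the conjecture.
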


When $-K_X$ is big over $Z$, the general fibres $F$ 
belong to a bounded family by [\ref{B-BAB}], so $\vol((B+M)|_F)=\vol(-K_X|_F)$ 
is positive and bounded from above, hence in this case 
we can take $G=0$. That is, \ref{conj-sh-sing-gen-fib} is a special case of \ref{conj-cb-sing-gen-fib}.

The next statement says that Conjecture \ref{conj-cb-sing-gen-fib} holds if we put some boundedness assumptions on the 
general fibres. 

\begin{thm}\label{t-cb-conj-sing-bnd-gen-fib}
Let $d,v,p$ be natural numbers and $\epsilon$ be a positive real number. 
 Then there is a positive real number $\delta$ depending only on $d,v,p,\epsilon$ satisfying the following. 
 Assume that $(X,B+M)$ is a generalised pair with data $X'\to X\overset{f}\to Z$ and $M'$ 
where $f$ is a contraction such that
\begin{itemize}
\item $(X,B+M)$ is generalised $\epsilon$-lc of dimension $d$,

\item $K_X+B+M\sim_\R 0/Z$,

\item there is an integral divisor $G\ge 0$ on $X$ with 
$$
0<\vol(((\Supp B)+M+G)|_F)<v
$$ 
for the  general fibres $F$ of $f$, and 

\item $pM'$ is b-Cartier.
\end{itemize}
Then the discriminant b-divisor ${\bf B}_Z$ has coefficients in $(-\infty, 1-\delta]$.
\end{thm}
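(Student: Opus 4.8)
The plan is to reduce the statement to an application of Theorem \ref{t-bnd-cy-fib} / \ref{t-log-bnd-cy-gen-fib} on the general fibres, combined with boundedness of complements. First, observe that the problem is local on $Z$ over the generic point of each prime divisor $D$, so fix such a $D$ and let $t$ be the lc threshold of $f^*D$ with respect to $(X,B+M)$ over the generic point of $D$; we must bound $1-t$ away from $1$, i.e. bound $t$ away from $0$ by some $\delta=\delta(d,v,p,\epsilon)$. Passing to a high enough resolution $X'\to X$ and taking a log resolution, the coefficient $1-t$ is computed by some prime divisor $S$ on a birational model mapping onto $D$; the generalised $\epsilon$-lc hypothesis bounds the log discrepancies of $(X,B+M)$ from below by $\epsilon$, so the only way $t$ can be small is if $f^*D$ is ``very singular'' along the fibre over the generic point of $D$. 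The key idea, as in the treatment of the non-generalised case, is that such bad behaviour forces the general fibre $F$ together with the data on it to leave any bounded family — but the third and fourth hypotheses (the divisor $G$ with $0<\vol(((\Supp B)+M+G)|_F)<v$, and $pM'$ b-Cartier) are precisely designed to keep $(F, (\Supp B)|_F + M|_F + G|_F)$, or rather a suitable complement of it, in a bounded family.

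Concretely, I would first construct, on the general fibre $F$, a boundary realising the bounded-volume hypothesis: set $\Theta_F \le (\Supp B + G)|_F$ with small positive coefficients, so that $(F,\Theta_F + M|_F)$ is a generalised klt pair with $\vol(\Theta_F + M|_F) < v'$ for some $v'$ depending only on $v$ and $d$. Running an MMP and using the bounded-volume hypothesis together with the theory of complements (Theorem \ref{t-bnd-comp-lc-global}, applied fibrewise, using $pM'$ b-Cartier to control the moduli part), one produces an $n$-complement $\Lambda_F \ge \Theta_F$ with $n$ bounded and $(F,\Lambda_F + M|_F)$ generalised log Calabi-Yau and log bounded — this is where Theorem \ref{t-log-bnd-cy-gen-fib}, or rather the underlying boundedness machinery behind \ref{t-bnd-cy-fib}, enters. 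The upshot is that $F$, and the relevant divisors on it including the components of $B|_F$ and $\mathrm{Supp}(f^*D)|_F$, lie in a bounded family; in particular the multiplicities with which horizontal components of $B$ and the vertical structure of the fibration meet a general fibre are bounded.

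Next I would spread this fibrewise boundedness out over a neighbourhood of the generic point of $D$. Shrinking $Z$ around $\eta_D$, one obtains a fibration $(X,B+M)\to Z$ whose generic fibre over $\eta_D$ (a one–dimensional local base) is controlled, and then the computation of the lc threshold $t$ of $f^*D$ becomes a computation on a bounded family of pairs: the coefficient $1-t$ is, up to the contribution of the base, the coefficient appearing in the canonical bundle formula for the fibration over the DVR at $\eta_D$, and boundedness of the total space of the fibre plus $\epsilon$-lc-ness of $(X,B+M)$ bounds this coefficient away from $1$. Here one must be careful that $M'$ being only globally nef (and $pM'$ b-Cartier) still gives enough control on the moduli part $M_Z$; this is handled by the standard argument that the moduli b-divisor of a generalised lc-trivial fibration with $pM'$ b-Cartier descends to a nef b-divisor with bounded denominators.

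The main obstacle, I expect, is exactly the passage from \emph{birational/log boundedness of the general fibre} to a \emph{uniform bound on the discriminant coefficient}: one needs that the bounded family of fibres, together with the compatible boundary and moduli data, is ``stable'' enough that the lc threshold of the pulled-back base divisor — which is sensitive to the whole semistable-type structure of the fibration near $\eta_D$, not just to the general fibre — is bounded. This requires combining the fibrewise boundedness with an analysis of how $(X,B+M)$ degenerates over $D$, and it is here that the $\epsilon$-lc hypothesis on the total space, rather than just on fibres, is essential. I would expect the proof to run in tandem (in an intertwined induction on $d$) with Theorems \ref{t-log-bnd-cy-gen-fib} and \ref{t-sing-gen-FT-fib-totalspace}, much as the introduction indicates is the case for \ref{t-sh-conj-bnd-base} and \ref{t-log-bnd-cy-fib}.
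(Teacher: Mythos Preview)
Your proposal has a genuine gap at exactly the point you yourself flag as the ``main obstacle''. You want to bound the discriminant coefficient of $D$ by first establishing log boundedness of the \emph{general} fibre and then ``spreading out'' to control the lc threshold of $f^*D$. But the discriminant coefficient is governed by the multiplicities of the components of the \emph{special} fibre $f^*D$, and boundedness of the general fibre together with $\epsilon$-lc-ness of the total space does not bound these. The elliptic-surface example in the introduction already shows this: smooth general fibre, $\epsilon=1$, yet arbitrarily large fibre multiplicities unless one has the multisection $G$. Your sentence ``boundedness of the total space of the fibre plus $\epsilon$-lc-ness of $(X,B+M)$ bounds this coefficient'' is precisely where the argument breaks; no mechanism is supplied.

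The paper's argument is quite different and does not use Theorems \ref{t-bnd-cy-fib} or \ref{t-log-bnd-cy-gen-fib}, nor any intertwined induction with the Fano-type results. After reducing (via MMP tricks so that $D$ becomes a divisor on $Z$, and then hyperplane sections of $Z$) to the case $\dim Z=1$, it constructs on a log resolution an auxiliary boundary $\Delta'=(1-\tfrac{\epsilon}{2})\Sigma'+\Supp f'^*D$, with the special fibre appearing with coefficient $1$, and passes to the generalised lc model $(X'',\Delta''+2M'')$ where $K_{X''}+\Delta''+2M''$ is ample over $Z$. The decisive step is numerical: over a curve one has $f''^*D\equiv F''$, so
\[
\sum_i m_i\,(K_{X''}+\Delta''+2M'')^{d-1}\cdot T_i \;=\;(K_{X''}+\Delta''+2M'')^{d-1}\cdot F''\;=\;\vol\bigl((K_{X''}+\Delta''+2M'')|_{F''}\bigr),
\]
where $f''^*D=\sum m_iT_i$. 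The right-hand side is bounded above by the volume hypothesis. Each factor $(K_{X''}+\Delta''+2M'')^{d-1}\cdot T_i$ is bounded below: by generalised adjunction to the normalisation $S_i$ of $T_i$ one gets a generalised lc pair with ample log canonical class, DCC coefficients, and $pM'$ b-Cartier, and effective birationality for such pairs (the result of [\ref{BZh}, Theorem 1.3]) gives $\vol(K_{S_i}+\Delta''_{S_i}+2M''_{S_i})\ge 1/m^{d-1}$ for bounded $m$. Hence all $m_i$ are bounded, and a direct comparison of $\Delta''$ with $B''$ yields the lc-threshold bound.

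In short, the key idea you are missing is: make the special fibre reduced in an auxiliary boundary, pass to the lc model to get ampleness, and use numerical equivalence of fibres over a curve to trade the unknown special-fibre intersection for the bounded general-fibre volume, with the lower bound on each special-fibre component coming from effective birationality rather than from boundedness of the general fibre.
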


The b-Cartier condition of $pM'$ means that the pullback of $pM'$ to some resolution of $X$ 
is a Cartier divisor.

\begin{cor}\label{cor-cb-conj-sing-bnd-gen-fib}
Let $p$ be a natural number and $\tau$ be a positive real number. Then Conjectures 
\ref{conj-sh-sing-gen-fib} and \ref{conj-cb-sing-gen-fib} hold for those $(X,B+M)$ which in addition satisfy:
\begin{itemize}
\item any horizontal$/Z$ component of $B$ has coefficient $\ge \tau$, and 

\item $pM'$ is b-Cartier.
\end{itemize}
\end{cor}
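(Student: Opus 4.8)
The plan is to deduce both conjectures from Theorem \ref{t-cb-conj-sing-bnd-gen-fib} by producing, under the two extra hypotheses, a bounding divisor $G$ of the kind required there, with $v$ and $p$ depending only on the given data. Since Conjecture \ref{conj-sh-sing-gen-fib} is a special case of Conjecture \ref{conj-cb-sing-gen-fib} (as noted in the excerpt, taking $G=0$ when $-K_X$ is big over $Z$), it suffices to treat \ref{conj-cb-sing-gen-fib}. So fix $(X,B+M)$ and $f\colon X\to Z$ as in \ref{conj-cb-sing-gen-fib}, with an integral $G_0\ge 0$ satisfying $0<\vol((B+M+G_0)|_F)<v$ on general fibres $F$, and assume additionally that every horizontal$/Z$ component of $B$ has coefficient $\ge \tau$ and that $pM'$ is b-Cartier. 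We already have the b-Cartier hypothesis demanded by \ref{t-cb-conj-sing-bnd-gen-fib}; what is missing is a bound on $\vol(((\Supp B)+M+G)|_F)$ rather than on $\vol((B+M+G)|_F)$, and this is exactly where the coefficient hypothesis enters.

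The key step is the comparison of $\Supp B$ with $B$ along the general fibre. Let $F$ be a general fibre of $f$ and write $B_F=B|_F$, $M_F=M|_F$. Decompose $B_F$ into its horizontal$/Z$ part $B_F^{h}$ (the restriction of the horizontal components of $B$) and vertical part; the vertical components do not meet the general fibre, so $B_F=B_F^{h}$ and similarly $(\Supp B)|_F=\Supp(B_F^{h})$. By the coefficient hypothesis, every component of $B_F^{h}$ appears in $B_F$ with coefficient $\ge\tau$, so $\Supp(B_F^{h})\le \tfrac{1}{\tau}B_F$ as divisors. Hence
$$
(\Supp B)|_F + M_F + G_0|_F \;\le\; \tfrac{1}{\tau}\bigl(B_F+M_F+G_0|_F\bigr)+M_F
$$
(being generous with the $M_F$ term; one can be more careful but it is not needed). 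Taking volumes and using that $\vol$ is monotone and behaves well under scaling, together with $\vol((B+M+G_0)|_F)<v$, we bound $\vol(((\Supp B)+M+G_0|_F)|_F)$ by a constant depending only on $d$, $\tau$, $v$ and on $\vol(M_F)$. To control $\vol(M_F)$ one uses $0<\vol((B+M+G_0)|_F)$ together with $K_F+B_F+M_F\sim_\R 0$ and generalised $\epsilon$-lc-ness: since $M'$ is nef, $M_F$ is a nef divisor on $F$ with $M_F\le B_F+M_F+G_0|_F$ up to $\R$-linear equivalence in an effective sense, and $\vol(M_F)\le\vol((B+M+G_0)|_F)<v$. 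So in fact $\vol(((\Supp B)+M+G_0)|_F)<v'$ for $v'=v'(d,\tau,v)$, and the lower bound $0<\vol(((\Supp B)+M+G_0)|_F)$ holds because $\vol((B+M+G_0)|_F)>0$ forces the larger (in support) divisor to have positive volume too. Now take $G:=G_0$, which is already integral and $\ge 0$.

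Having produced $G$ with $0<\vol(((\Supp B)+M+G)|_F)<v'$ and retaining the hypothesis that $pM'$ is b-Cartier, apply Theorem \ref{t-cb-conj-sing-bnd-gen-fib} with parameters $d$, $v'$, $p$, $\epsilon$: it yields $\delta=\delta(d,v',p,\epsilon)=\delta(d,v,p,\tau,\epsilon)$ such that ${\bf B}_Z$ has coefficients in $(-\infty,1-\delta]$. This is precisely the conclusion of \ref{conj-cb-sing-gen-fib} (and hence of \ref{conj-sh-sing-gen-fib}) for pairs satisfying the two extra conditions, with $\delta$ depending only on $d$, $\epsilon$, $p$, $\tau$ (and, for \ref{conj-cb-sing-gen-fib}, also $v$), as required. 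The main obstacle is the bookkeeping in the middle step: one must make sure that replacing $B$ by $\Supp B$ only enlarges the relevant volume by a factor controlled purely by $\tau$ and $d$, and that the auxiliary bound on $\vol(M_F)$ is genuinely a consequence of the generalised log Calabi-Yau condition on the fibre rather than something needing an extra hypothesis — this is where one has to be careful about horizontal versus vertical components and about the interaction of nefness of $M'$ with the adjunction $K_F+B_F+M_F\sim_\R 0$.
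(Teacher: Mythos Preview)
Your approach is the same as the paper's: reduce to Theorem \ref{t-cb-conj-sing-bnd-gen-fib} by using the coefficient hypothesis $\Supp B \le \tfrac{1}{\tau}B$ (on horizontal components) to pass from a bound on $\vol((B+M+G)|_F)$ to one on $\vol(((\Supp B)+M+G)|_F)$. However, your bookkeeping in the middle step is both unnecessarily complicated and contains a genuine gap. You bound $(\Supp B)|_F+M_F+G_0|_F$ by $\tfrac{1}{\tau}(B_F+M_F+G_0|_F)+M_F$ and then assert that the volume of the right-hand side is controlled by a function of $\vol((B+M+G_0)|_F)$ and $\vol(M_F)$. But $\vol(D+E)$ is not in general bounded by any function of $\vol(D)$ and $\vol(E)$ separately (take nef divisors on a surface with small self-intersection and large mutual intersection), so this step, as written, does not go through. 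Your argument can be repaired by noting $M_F\le B_F+M_F+G_0|_F$, hence the sum is $\le (1+\tau)\cdot\tfrac{1}{\tau}(B_F+M_F+G_0|_F)$, but then $\vol(M_F)$ never actually enters and the detour through it is spurious.

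The paper's argument is cleaner and avoids the issue entirely: since $M+G$ is pseudo-effective over $Z$ and $\tfrac{1}{\tau}>1$, one has directly
\[
(\Supp B)+M+G \;\le\; \tfrac{1}{\tau}B+M+G \;\le\; \tfrac{1}{\tau}(B+M+G)
\]
in the sense that each difference is pseudo-effective, whence
\[
\vol\bigl(((\Supp B)+M+G)|_F\bigr)\;\le\;\tau^{-d}\,\vol\bigl((B+M+G)|_F\bigr)\;<\;v\tau^{-d}.
\]
For Conjecture \ref{conj-sh-sing-gen-fib} you cite the remark that it is a special case of \ref{conj-cb-sing-gen-fib} with $G=0$; this is correct, but note that the bound $v$ so obtained depends on $d,\epsilon$ via BAB applied to the general fibres, and the paper's proof makes this explicit using Lemma \ref{l-from-gen-fib-to-usual-fib} and [\ref{B-BAB}, Corollary 1.2].
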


Note that  we allow the case when $B$ has no horizontal$/Z$ components.\\

{\textbf{\sffamily{Plan of the paper.}}}
We will prove Theorem \ref{t-bnd-comp-lc-global} in Section 4, Theorems \ref{t-bnd-cy-fib}, \ref{t-log-bnd-cy-fib}, 
\ref{t-log-bnd-cy-gen-fib}, \ref{t-sing-FT-fib-totalspace},  \ref{t-bnd-comp-lc-global-cy-fib} in Section 5, and 
Theorems \ref{t-sh-conj-bnd-base}, \ref{t-cb-conj-sing-bnd-fib}, \ref{t-sing-gen-FT-fib-totalspace}, 
\ref{t-sh-conj-bnd-base-gen-fib}, \ref{t-cb-conj-sing-bnd-gen-fib} and 
Corollary \ref{cor-cb-conj-sing-bnd-gen-fib} in Section 6, and Theorems \ref{t-towers-of-Fanos} 
and \ref{cor-bnd-cy-fib-non-product} in Section 7.

%%%%%%%%%%%%%%%%%%%%
%%%%%%%%%%%%%%%%%%%%
\section{\bf Preliminaries}

All the varieties in this paper are quasi-projective over a fixed algebraically closed field $k$ of characteristic zero
unless stated otherwise. 

\subsection{Numbers}

Let $\mathfrak{R}$ be a subset of $[0,1]$. Following [\ref{PSh-II}, 3.2] we define 
$$
\Phi(\mathfrak{R})=\left\{1-\frac{r}{m} \mid r\in \mathfrak{R},~ m\in \N\right\}
$$
to be the set of \emph{hyperstandard multiplicities} associated to $\mathfrak{R}$.

\subsection{Contractions}

By a \emph{contraction} we mean a projective morphism $f\colon X\to Y$ of varieties 
such that $f_*\mathcal{O}_X=\mathcal{O}_Y$ ($f$ is not necessarily birational). In particular, 
$f$ is surjective and has connected fibres.

\subsection{Divisors}\label{ss-divisors}

Let $X$ be a variety. If $D$ is a prime divisor on birational models of $X$ whose centre on $X$ is non-empty, 
then we say $D$ is a prime divisor \emph{over} $X$. If $X$ is normal and $M$ is an $\R$-divisor on $X$, 
we let 
$$
|M|_\R=\{N \ge 0\mid N\sim_\R M\}.
$$
Recall that $N\sim_\R M$ means that 
$N-M=\sum r_i\Div(\alpha_i)$ for certain real numbers $r_i$ and rational functions $\alpha_i$. 
When all the $r_i$ can be chosen to be rational numbers, then we write $N\sim_\Q M$.
We define $|M|_\Q$ similarly by replacing $\sim_\R$ with $\sim_\Q$. 

Assume $\rho\colon X\bir Y/Z$ is a rational map of normal varieties projective over a base variety $Z$. 
For an $\R$-Cartier divisor $L$ on $Y$ we define the pullback $\rho^*L$ as follows. Take a 
common resolution $\phi\colon W\to X$ and $\psi\colon W\to Y$. Then let $\rho^*L:=\phi_*\psi^*L$. 
It is easy to see that this does not depend on the choice of the common resolution as 
any two such resolutions are dominated by a third one.

\begin{lem}\label{l-semi-ample-div}
Assume $Y\to X$ is a contraction of normal projective varieties, $C$ is a nef $\R$-divisor on 
$Y$ and $A$ is the pullback of an ample $\R$-divisor on $X$. If $C$ is semi-ample over $X$, then 
$C+aA$ is semi-ample (globally) for any real number $a>0$.
\end{lem}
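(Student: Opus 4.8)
The plan is to realise $C+aA$ as the pullback of an ample $\R$-divisor under a contraction, using the given relative semi-ampleness of $C$ together with its global nefness.

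First I would unwind the hypothesis: since $C$ is semi-ample over $X$, there is a contraction $g\colon Y\to V$ over $X$ together with an $\R$-Cartier divisor $H$ on $V$ that is ample over $X$ and satisfies $C\sim_\R g^*H$ over $X$. Write $\pi_Y\colon Y\to X$ and $\pi_V\colon V\to X$ for the structure morphisms, so $\pi_Y=\pi_V\circ g$; the relation ``$C\sim_\R g^*H$ over $X$'' means $C\sim_\R g^*H+\pi_Y^*E$ for some $\R$-Cartier divisor $E$ on $X$, and replacing $H$ by $H+\pi_V^*E$ (still ample over $X$) we may assume $C\sim_\R g^*H$ holds globally. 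If $A_X$ denotes the ample $\R$-divisor on $X$ with $A=\pi_Y^*A_X$, then, setting $A_V:=\pi_V^*A_X$, we have $A=g^*A_V$ and $A_V$ is nef.

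The key step is to check that $H$ is nef on $V$ and that consequently $H+aA_V$ is ample for every real $a>0$. For nefness of $H$: given an irreducible curve $\Gamma\subseteq V$, pick an irreducible curve $\Gamma'\subseteq Y$ with $g(\Gamma')=\Gamma$ of some degree $e\ge 1$ (possible since $g$ is surjective and projective); then $C\cdot\Gamma'=g^*H\cdot\Gamma'=e\,(H\cdot\Gamma)$ by the projection formula, and nefness of $C$ forces $H\cdot\Gamma\ge 0$. Now choose $m_0\in\N$ with $H+m_0A_V$ ample (possible since $H$ is ample over $X$ and $A_V$ is the pullback of an ample divisor on $X$). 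For $0<a\le m_0$ write $H+aA_V=(1-\tfrac{a}{m_0})H+\tfrac{a}{m_0}(H+m_0A_V)$, which is nef-plus-ample, hence ample; for $a>m_0$ write $H+aA_V=(H+m_0A_V)+(a-m_0)A_V$, which is ample-plus-nef, hence ample.

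Finally, $C+aA\sim_\R g^*H+a\,g^*A_V=g^*(H+aA_V)$, the pullback of an ample $\R$-divisor under the contraction $g$, and such a pullback is semi-ample; hence $C+aA$ is semi-ample. I do not anticipate a genuine obstacle here; the only delicate point is the reduction to a global linear equivalence $C\sim_\R g^*H$ together with the observation that the global nefness of $C$ — the hypothesis that would otherwise be unused — is precisely what makes $H$ nef, which is what is needed for $H+aA_V$ to be ample for all $a>0$ rather than only for $a\gg 0$.
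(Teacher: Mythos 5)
Your proof is correct and follows essentially the same route as the paper: pass to the contraction $g\colon Y\to V$ over $X$ induced by the relative semi-ampleness of $C$, observe that the target divisor $H$ inherits nefness from the global nefness of $C$, and then interpolate between the nef $H$ and the ample $H+m_0A_V$ to get ampleness of $H+aA_V$ for all $a>0$, finally pulling back. The paper phrases the interpolation slightly more economically (choose $b\gg a$, write $C+tbA=(1-t)C+t(C+bA)$ and set $t=a/b$, avoiding your case split), and it replaces $Y$ by $V$ outright rather than carrying both along, but these are cosmetic; the substance and the use of global nefness are identical.
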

\begin{proof}
Since $C$ is semi-ample over $X$, it defines a contraction $\phi \colon Y\to Z/X$ to a normal projective variety. Replacing $Y$ 
with $Z$ and replacing $C,A$ with $\phi_*C,\phi_*A$, respectively, we can assume $C$ is ample over $X$. Pick $a>0$. 
Now $C+bA$ is ample for some $b\gg a$ because 
$A$ is the pullback of an ample divisor on $X$. Since $C$ is globally nef, 
$$
C+tbA=(1-t)C+t(C+bA)
$$ 
is ample for any $t\in (0,1]$. In particular, taking $t=\frac{a}{b}$ we see that $C+aA$ is ample.

\end{proof}

\subsection{Linear systems}

Let $X$ be a normal projective variety and $M$ be an integral Weil divisor on $X$. 
A \emph{sub-linear system} $L\subseteq |M|$ is given by some linear subspace $V\subseteq \PP(H^0(M))$, that is, 
$$
L=\{\Div(\alpha)+M \mid \alpha \in V\}.
$$
The general members of $L$, by definition, are those $\Div(\alpha)+M$ 
where $\alpha$ is in some given non-empty open subset $W\subseteq V$ (open in the Zariski topology). 
Being a general member then depends on the choice of $W$ but we usually shrink it if necessary without notice.

\begin{lem}\label{l-gen-element-lin-system}
Let $X$ be a normal projective variety, $M$ be an integral Weil divisor on $X$, and $L\subseteq |M|$ be a sub-linear system. 
Assume that $x\in X$ is a smooth closed point and that some member of $L$ is smooth at $x$. 
Then a general member of $L$ is smooth at $x$.
\end{lem}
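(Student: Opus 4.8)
This is a pointwise Bertini-type statement, and the plan is to translate ``smooth at $x$'' into a linear condition on the space of sections defining $L$ and observe that the locus where it fails is a proper linear subspace. Since the members of $L$ over any non-empty open subset of $V$ are by definition the general members (and $V$, being a projective space, is irreducible), it suffices to produce a single non-empty Zariski-open subset of $V$ over which every member is smooth at $x$.

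First I would localize at $x$. Write $V=\PP(\widetilde V)$ for a linear subspace $\widetilde V\subseteq H^0(X,\mathcal{O}_X(M))$. Since $x$ is a smooth (hence regular) closed point of $X$, the reflexive sheaf $\mathcal{O}_X(M)$ is invertible on a neighbourhood $U\ni x$, so after fixing a trivialization $\mathcal{O}_X(M)|_U\cong\mathcal{O}_U$ each section $\alpha\in\widetilde V$ corresponds, $k$-linearly in $\alpha$, to a regular function $f_\alpha$ on $U$, in such a way that the member $D_\alpha:=\Div(\alpha)+M$ restricts on $U$ to the Cartier divisor $\{f_\alpha=0\}$. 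Next I would record the elementary local criterion: since $\mathcal{O}_{X,x}$ is a regular local ring with maximal ideal $\mathfrak m_x$ and residue field $k$, the divisor $D_\alpha$ is smooth at $x$ if and only if $f_\alpha\notin\mathfrak m_x^2$ (this includes the harmless case $x\notin\Supp D_\alpha$, where $f_\alpha$ is a unit). Hence the locus of $\alpha\in\widetilde V$ for which $D_\alpha$ is singular at $x$ is exactly the kernel of the $k$-linear map $\phi\colon\widetilde V\to\mathcal{O}_{X,x}/\mathfrak m_x^2$ sending $\alpha$ to the class of $f_\alpha$.

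Finally, the hypothesis that some member of $L$ is smooth at $x$ provides $\alpha_0\in\widetilde V$ with $\phi(\alpha_0)\neq0$, so $\ker\phi\subsetneq\widetilde V$ is a proper linear subspace. Therefore $U_x:=V\setminus\PP(\ker\phi)$ is a non-empty Zariski-open subset of $V$, and every divisor parametrized by a point of $U_x$ is smooth at $x$; replacing the open set $W$ defining ``general member'' by $W\cap U_x$ finishes the proof. I expect no real obstacle here: the only point requiring a little care is the reduction to this linear-algebra statement, i.e.\ the local triviality of $\mathcal{O}_X(M)$ near the smooth point $x$ together with the translation of scheme-theoretic smoothness of the (possibly non-reduced) divisor $D_\alpha$ into the condition $f_\alpha\notin\mathfrak m_x^2$.
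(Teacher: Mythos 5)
Your proof is correct and takes a genuinely different route from the paper's. The paper reduces the statement to dimension $\dim X - 1$ by taking a general hypersurface section $H$ through $x$, restricting $L$ to $H$, applying induction, and then lifting smoothness at $x$ back to $X$ via a system of local parameters. You instead handle the problem directly at the source: trivializing the invertible sheaf $\mathcal O_X(M)$ near the smooth point $x$, you translate ``$D_\alpha$ singular at $x$'' into the linear condition $f_\alpha\in\mathfrak m_x^2$, so the locus of bad $\alpha$ is the kernel of the single $k$-linear map $\widetilde V\to\mathcal O_{X,x}/\mathfrak m_x^2$, and the hypothesis that one member is smooth at $x$ shows this kernel is proper. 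Your version is shorter and more self-contained: it avoids the inductive machinery, the choice of $H$, and the lifting-of-local-parameters step, essentially exposing the linear algebra that the paper's argument carries out indirectly. One small point worth keeping explicit (which you do handle): the characterization $f_\alpha\notin\mathfrak m_x^2$ absorbs both the case $x\in\Supp D_\alpha$ with $D_\alpha$ smooth there and the case $x\notin\Supp D_\alpha$ (where $f_\alpha$ is a unit), so you do not need the paper's preliminary reduction to the case where every member passes through $x$; this is an incidental simplification, not a gap.
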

\begin{proof}
We can assume that every member of $L$ passes through $x$ otherwise the general members do not pass 
through $x$, hence the statement holds 
trivially. By assumption, there is $D\in  L$ such that $D$ is smooth at $x$. 
Let $H$ be a general hypersurface section of $X$ passing through $x$. Then 
$H$ and $H|_D$ are both smooth at $x$. In particular, $D|_H$ is also smooth at $x$ 
because, in a neighbourhood of $x$, both $H|_D$ and $D|_H$ considered as schemes coincide 
with the scheme-theoretic intersection $H\cap D$. 

We can choose $H$ so that it is not a component of any member of $L$ (e.g. enough to choose $H$ so that 
$H^d>H^{d-1}\cdot M$). Let $N:=L|_H$ be the restriction of $L$ to $H$, that is, $N$ consists of divisors $E|_H$ where $E\in L$. 
Although $E$ may not be $\Q$-Cartier but $E|_H$ is well-defined as a Weil divisor by our choice of $H$.
Moreover, again by our choice of $H$, the map $H^0(M)\to H^0(M|_H)$ is injective, hence 
induces a map $\PP(H^0(M))\to \PP(H^0(M|_H))$, and $N$ is given by the image of $V$ under this map.

By construction, $D|_H\in N$ is smooth at $x$. Then by induction a general member of $N$  
is smooth at $x$. This is possible only if a general member of $L$ is smooth at $x$ which can be seen 
as follows. Let $E$ be a general member of $L$ and let 
$h,e$ be the defining equations of $H,E$ near $x$. Since $H$ and $E|_H$ are smooth at $x$, we can choose a system of local 
parameters $e,t_2,\dots,t_r$ for $H$ at $x$ where $r=\dim H$. But then $h,e,t_2,\dots,t_r$ is a system of local parameters 
for $X$ at $x$, hence $E$ is smooth at $x$.

\end{proof}

\begin{lem}\label{l-gen-element-v.ample-div-passing-x,y}
Let $X$ be a normal projective variety and $A$ be a very ample divisor on $X$. 
For each pair of closed points $x,y\in X$, let $L_{x,y}$ be the sub-linear system of $|2A|$ 
consisting of the members that pass through both $x,y$. Then there is a 
non-empty open subset $U\subseteq X$ such that for any pair of closed points $x,y\in U$, a general 
member of $L_{x,y}$ is smooth at both $x,y$.
\end{lem}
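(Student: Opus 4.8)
The plan is to reduce the statement to Lemma \ref{l-gen-element-lin-system} by producing, for $x,y$ ranging over a suitable open set, a single member of $L_{x,y}$ that is smooth at both points. First I would take $U_0\subseteq X$ to be the smooth locus; since $X$ is normal, $U_0$ is a non-empty open set whose complement has codimension $\ge 2$. The key observation is that $|2A|$ separates two points and also separates tangent directions at a point, in the following sense: because $A$ is very ample, the linear system $|A|$ already separates points and tangent vectors, so for $x\ne y$ we may pick $H_1\in|A|$ smooth at $x$ with $x\in H_1$, $y\notin H_1$, and $H_2\in|A|$ smooth at $y$ with $y\in H_2$, $x\notin H_2$; then $D:=H_1+H_2\in|2A|$ lies in $L_{x,y}$ and, since near $x$ it coincides with $H_1$ and near $y$ with $H_2$, it is smooth at both $x$ and $y$ (the two components do not meet near either point). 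For $x=y$ one instead chooses $H_1,H_2\in|A|$ through $x$, smooth at $x$, with distinct tangent hyperplanes at $x$ — again possible because $|A|$ separates tangent directions — but this case can in fact be discarded by shrinking $U$ to lie off the diagonal issues, or handled directly; I expect to simply require $x\neq y$ after shrinking, or note smoothness at $x$ of $H_1$ suffices when $x=y$.

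Next I would upgrade ``smooth at a single member'' to ``smooth at a general member'' via Lemma \ref{l-gen-element-lin-system}, applied twice (first at $x$, then at $y$, intersecting the two resulting open dense subsets of the parameter space of $L_{x,y}$). This gives, for each fixed pair $(x,y)$ with $x,y\in U_0$ and $x\ne y$, that a general member of $L_{x,y}$ is smooth at both $x$ and $y$. The remaining issue — and this is the real content beyond the pointwise statement — is uniformity: I must produce one open set $U$ working for all pairs simultaneously, i.e. the ``general member is smooth'' condition must be open as $(x,y)$ varies. For this I would set up the incidence variety $\mathcal{I}\subseteq X\times X\times |2A|$ of triples $(x,y,D)$ with $x,y\in D$, together with the bad locus $\mathcal{B}\subseteq\mathcal{I}$ where $D$ fails to be smooth at $x$ or at $y$; both are closed, and projection $\mathcal{I}\to X\times X$ is a projective morphism whose fibre over $(x,y)$ is (a linear space cutting out) $L_{x,y}$. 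The pointwise argument shows that for $(x,y)$ in a dense set, the fibre of $\mathcal{B}$ is a proper closed subset of the fibre of $\mathcal{I}$; by upper semicontinuity of fibre dimension (applied to $\mathcal{B}\to X\times X$ versus $\mathcal{I}\to X\times X$, using that the latter has equidimensional fibres over $U_0\times U_0$ minus the diagonal), the locus of $(x,y)$ where the $\mathcal{B}$-fibre is proper in the $\mathcal{I}$-fibre is open; intersecting with $U_0\times U_0$ and with the complement of the diagonal gives an open dense $W\subseteq X\times X$, and one takes $U$ to be a non-empty open set with $U\times U\subseteq W$ (possible since $W$ is open and dense, hence contains a product of opens by standard arguments, or one simply projects).

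The main obstacle I anticipate is precisely this last uniformity/constructibility step: ensuring that the parameter spaces $L_{x,y}$ vary nicely enough (constant dimension) so that ``the bad locus is a proper subvariety of the fibre'' is an open condition on $(x,y)$. The dimension of $L_{x,y}$ as a subsystem of $|2A|$ is $\dim|2A|-2$ for $x\neq y$ general (imposing two independent point conditions), and could jump on a closed set — but since we are already free to restrict to an open $U$, we just throw away the locus where the two conditions fail to be independent, which is harmless. Once the incidence-variety formalism is in place, everything else is a routine application of semicontinuity plus Lemmas \ref{l-gen-element-lin-system} and the separation properties of very ample linear systems, so I do not expect further difficulties.
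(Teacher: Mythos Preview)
Your core construction is the same as the paper's: for distinct smooth points $x,y$, write a member of $L_{x,y}$ as $H_1+H_2$ with $H_i\in|A|$, each passing through and smooth at one of the points and avoiding the other, then invoke Lemma~\ref{l-gen-element-lin-system}. The paper takes $U$ to be the smooth locus minus a finite set (arising from first fixing an open $W\subseteq\PP(H^0(A))$ of Bertini-good sections and then discarding the finitely many points whose vanishing hyperplane misses $W$), whereas you argue directly from very ampleness that such $H_1,H_2$ exist for every pair of distinct smooth points; your route is slightly cleaner on this point.

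Where you diverge is the uniformity step, and this is unnecessary. Re-read the paper's convention on ``general member'' (subsection on linear systems): the open subset $W\subseteq L_{x,y}$ witnessing generality is allowed to depend on the pair $(x,y)$. So the statement only asks for a single open $U\subseteq X$ such that, for each pair $x,y\in U$, \emph{some} dense open of $L_{x,y}$ consists of members smooth at $x$ and $y$; it does not ask that these dense opens be cut out uniformly by a single constructible condition on $X\times X$. Your steps 1--3 already deliver this with $U=U_0$ the smooth locus (the $x=y$ case is immediate: a general member of $|2A|$ through a smooth point is smooth there). The incidence-variety argument is correct in spirit but superfluous, and your final remark that an open dense $W\subseteq X\times X$ ``contains a product of opens by standard arguments'' is in fact false in general (e.g.\ the complement of the diagonal), so it is fortunate you do not need it.
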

\begin{proof}
By definition, a general member of $|A|$ is the element given by a section in some non-empty open subset 
$W$ of $\PP(H^0(A))$. Perhaps after shrinking $W$, we can assume that the restriction of these 
general members to the smooth locus of $X$ are smooth. 

We claim that there is a finite set $\Pi$  of closed points of $X$ (depending on $W$) such 
that for each closed point $x\in X\setminus \Pi$ 
 we can find a general member of $|A|$ passing through $x$:  indeed since $A$ is very ample, 
the set $H_z$ of elements of $\PP(H^0(A))$ vanishing at a given closed point $z$ is a hyperplane, 
and for distinct points $z,z'$ we have $H_z\neq H_{z'}$; but there are at most finitely many $z$ 
with $W\cap H_z=\emptyset$ because the complement of $W$ in $\PP(H^0(A))$ is a proper closed set; 
hence for any closed point $x$ other than those finite set we can 
find an element of $W$ vanishing at $x$ which proves the claim.  
Thus there is a non-empty open subset $U$ of the smooth locus of $X$ 
such that  for each closed point $x\in U$ we can find a 
member of $|A|$ passing through $x$ which is smooth at $x$.

Now pick a closed point $x\in U$ and let $L_x$ be the sub-linear system of $|A|$ consisting of 
members passing through $x$. By the above arguments some member of $L_x$ is smooth at $x$, 
hence a general member of 
$L_x$ is also smooth at $x$, by Lemma \ref{l-gen-element-lin-system}. 
In particular, for any other closed point $y\in U$, we can pick a member of  
$L_x$ smooth at $x$ but not containing $y$.

Now let $x,y\in U$ be a pair of distinct closed points and let $L_{x,y}$ be the sub-linear system 
of $|2A|$ consisting of members passing through $x,y$. 
By the previous paragraph, there exists a member $D$ (resp. $E$) of $|A|$ which passes through and 
smooth at $x$ (resp. $y$) but not containing $y$ (resp. $x$). Then $D+E$ is a member of $L_{x,y}$ 
passing through $x,y$ and smooth at both $x,y$. Therefore, a general member  of $L_{x,y}$ 
is smooth at both $x,y$, by Lemma \ref{l-gen-element-lin-system}.

\end{proof}

\subsection{Pairs and singularities}
A \emph{pair} $(X,B)$ consists of a normal variety $X$ and 
an $\R$-divisor $B\ge 0$ such that $K_X+B$ is $\R$-Cartier. 
Let $\phi\colon W\to X$ be a log resolution of $(X,B)$ and let 
$$
K_W+B_W=\phi^*(K_X+B).
$$
 The \emph{log discrepancy} of a prime divisor $D$ on $W$ with respect to $(X,B)$ 
is $1-\mu_DB_W$ and it is denoted by $a(D,X,B)$.
We say $(X,B)$ is \emph{lc} (resp. \emph{klt})(resp. \emph{$\epsilon$-lc}) 
if $a(D,X,B)$ is $\ge 0$ (resp. $>0$)(resp. $\ge \epsilon$) for every $D$. Note that if $(X,B)$ is $\epsilon$-lc, then 
automatically $\epsilon\le 1$ because $a(D,X,B)=1$ for almost all $D$. 

A \emph{non-klt place} of $(X,B)$ is a prime divisor $D$ over $X$, that is, 
on birational models of $X$, such that $a(D,X,B)\le 0$. A \emph{non-klt centre} is the image on 
$X$ of a non-klt place. 

\emph{Sub-pairs} and their singularities are defined similarly by letting the coefficients of $B$ to be any real number. 
In this case instead of lc, klt, etc, we say sub-lc, sub-klt, etc.

\begin{lem}\label{l-gen-element-v.ample-div-passing-x,y-contraction}
Let $(X,B)$ be a projective $\epsilon$-lc pair for some $\epsilon>0$ and $f\colon X\to Z$ be a contraction. 
Let $A$ be a very ample divisor on $Z$. 
For each pair of closed points $z,z'\in Z$, let $L_{z,z'}$ be the sub-linear system of $|2A|$ 
consisting of the members that pass through $z,z'$. Then there is a 
non-empty open subset $U\subseteq Z$ such that if $z,z'\in U$ are closed points and  if 
$H$ is a general member of $L_{z,z'}$, then $(X,B+G)$ is a plt pair where 
$G:=f^*H$. In particular, $G$ is normal and $(G,B_G)$ is an $\epsilon$-lc pair where 
$$
K_G+B_G:=(K_X+B+G)|_G.
$$
\end{lem}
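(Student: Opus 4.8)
The plan is to fix, once and for all, a log resolution of $(X,B)$, to take $U$ inside the part of $Z$ over which this resolution together with all its strata behaves generically, and then to analyse the divisor $f^*H$ by a Bertini-type argument, paying special attention to the fibres over $z,z'$ where the linear system $L_{z,z'}$ fails to be base point free. Concretely, I would fix a log resolution $\phi\colon W\to X$ of $(X,B)$ and write $K_W+B_W=\phi^*(K_X+B)$; since $(X,B)$ is $\epsilon$-lc every coefficient of $B_W$ is $\le 1-\epsilon<1$. Put $g:=f\circ\phi\colon W\to Z$ and let $\mathcal S$ be the finite set of irreducible components of the intersections of the components of $\Supp B_W\cup\exc(\phi)$ (the strata of the log smooth pair), together with $W$ itself. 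By generic smoothness in characteristic zero there is a dense open $U_1\subseteq Z$, contained in the smooth locus of $Z$, over which $g$ is smooth and over which $g|_S$ is smooth for every $S\in\mathcal S$ dominating $Z$. Letting $U_Z\subseteq Z$ be the open set produced by Lemma \ref{l-gen-element-v.ample-div-passing-x,y} applied to $(Z,A)$, I would then set
$U:=(U_1\cap U_Z)\setminus\bigcup_{S\in\mathcal S,\ g(S)\neq Z}g(S)$,
which is a nonempty dense open subset of $Z$.

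Next, fix closed points $z,z'\in U$, let $H$ be a general member of $L_{z,z'}$, and set $G:=f^*H$ and $G_W:=\phi^*G=g^*H$. Since $2A$ is very ample, $L_{z,z'}$ has base locus $\{z,z'\}$, so $g^*L_{z,z'}$ is base point free outside $g^{-1}(\{z,z'\})$; by Bertini the general $G_W$ is smooth there and meets every $S\in\mathcal S$ transversally there. Over a neighbourhood of $z$ (and of $z'$) the morphism $g$, as well as $g|_S$ for each $S$ meeting $g^{-1}(z)$ — which by the choice of $U$ must dominate $Z$ — is smooth; as $H$ is smooth at $z,z'$ it follows that $G_W=g^{-1}(H)$ is smooth near $g^{-1}(\{z,z'\})$ and meets these $S$ transversally. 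Hence $G_W$ is smooth; being connected over the connected $H$ it is irreducible, and $\Supp B_W\cup\exc(\phi)\cup G_W$ is SNC. Therefore $\phi$ is also a log resolution of $(X,B+G)$, the prime divisor $G=\phi_*G_W$ has strict transform $G_W$, and $K_W+B_W+G_W=\phi^*(K_X+B+G)$; since $B_W$ has coefficients $\le 1-\epsilon$ while $G_W$ has coefficient $1$, this exhibits $(X,B+G)$ as plt, and in particular $G$ is normal. (When $\dim Z=1$, for general $H$ the divisors $H$ and $G=f^*H$ are disjoint unions of prime divisors $f^*p_i$ with the $p_i\in U$ distinct, and the same argument applies verbatim.)

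For the statement about $G$ I would then use adjunction: $K_G+B_G:=(K_X+B+G)|_G$ has $B_G\ge 0$ and $(G,B_G)$ klt. To upgrade this to $\epsilon$-lc, restrict to $G_W$: since $(W,B_W+G_W)$ is log smooth the different is just restriction, so $(K_W+B_W+G_W)|_{G_W}=K_{G_W}+B_W|_{G_W}$; because adjunction commutes with pullback, the induced birational morphism $G_W\to G$ satisfies $(G_W\to G)^*(K_G+B_G)=K_{G_W}+B_W|_{G_W}$, and it is a log resolution of $(G,B_G)$ because $\Supp(B_W|_{G_W})\cup\exc(G_W\to G)$ is cut out on $G_W$ by an SNC divisor transverse to it. As the coefficients of $B_W|_{G_W}$ are among those of $B_W$, hence $\le 1-\epsilon$, this gives $(G,B_G)$ is $\epsilon$-lc.

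The main obstacle will be the analysis in the second step near the fibres $g^{-1}(z)$ and $g^{-1}(z')$: there $L_{z,z'}$ is not base point free, so plain Bertini does not apply, and one must instead exploit that $U$ was chosen inside the smooth locus of $g$ (and of the relevant $g|_S$) so that the smoothness of $H$ at $z,z'$ forces $G_W=g^{-1}(H)$ to be smooth and transverse to the strata along those fibres. The rest reduces to standard Bertini statements and routine facts about adjunction for log smooth pairs.
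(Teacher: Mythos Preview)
Your argument is correct and follows essentially the same route as the paper's proof: fix a log resolution $\phi\colon W\to X$, choose $U$ so that every stratum of $(W,\Supp B_W\cup\exc\phi)$ is either smooth over $U$ or has image disjoint from $U$, intersect with the open set from Lemma~\ref{l-gen-element-v.ample-div-passing-x,y}, and then verify log smoothness of $(W,\Supp B_W\cup\exc\phi\cup G_W)$ by combining Bertini away from the fibres over $z,z'$ with the observation that near those fibres $G_W\cap S=(g|_S)^{-1}(H)$ is the smooth preimage of a smooth divisor under a smooth map. The adjunction step is also handled the same way.
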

\begin{proof}
Let $U$ be as in Lemma \ref{l-gen-element-v.ample-div-passing-x,y} chosen for $|2A|$ on $Z$.
We can assume that $U$ is contained in the smooth locus of $Z$. 
Let $\phi\colon W\to X$ be a log resolution of $(X,B)$ and let $\Sigma$ be the union of the exceptional divisors of 
$\phi$ and the birational transform of $\Supp B$. Shrinking $U$ we can assume that for any stratum $S$ 
of $(W,\Sigma)$ the morphism $S\to Z$ is smooth over $U$.  A stratum of $(W,\Sigma)$ is either  
$W$ itself or an irreducible component of 
$\bigcap_{i\in I} D_i$ for some $I\subseteq \{1,\dots,r\}$ where $D_1,\dots,D_r$ are the irreducible components of 
$\Sigma$. For each stratum $S$ we can assume that either 
$S\to Z$ is surjective or that its image is contained in $Z\setminus U$. 

Pick closed points $z,z'\in U$ and a general member $H$ of $L_{z,z'}$, and let $G=f^*H$ and $E=\phi^*G$. 
We claim that $(W,\Sigma+E)$ is log smooth. 
Let $S$ be a stratum of $(W,\Sigma)$. 
Since $L_{z,z'}$ is base point 
free outside $z,z'$ by definition of $L_{z,z'}$, 
the pullback of $L_{z,z'}$ to $S$ is base point free outside the fibres of $S\to Z$ over $z,z'$. Thus 
any singular point of $E|_S$ (if there is any) is mapped to  $z$ or $z'$. In particular, if 
$S\to Z$ is not surjective, then $E|_S$ is smooth because in this case $E|_S$ has no point 
mapping to either $z$ or $z'$. Assume that $S\to Z$ is surjective. 
Then $E|_S\to H$ is surjective. Moreover, 
by our choice of $U$, the fibres of $E|_S\to H$ over $z,z'$ are both smooth as they coincide with the 
fibres of $S\to Z$ over $z,z'$. Therefore, $E|_S$ is 
smooth because $H$ is smooth at $z,z'$ by the choice of $U$ and by 
Lemma \ref{l-gen-element-v.ample-div-passing-x,y}. To summarise we have shown that 
$E|_S$ is smooth for each stratum $S$ of $(W,\Sigma)$.  This implies that $(W,\Sigma+E)$ is log smooth. 

Let $K_W+B_W$ be the pullback of $K_X+B$. Then each coefficient of $B_W$ is $\le 1-\epsilon$. 
Since $K_W+B_W+E$ is the pullback of $K_X+B+G$, we deduce that $(X,B+G)$ is plt, hence 
$G$ is normal [\ref{kollar-mori}, Proposition 5.51]. On the other hand, 
$$
K_E+B_E:=K_E+B_W|_E=(K_W+B_W+E)|_E
$$ 
is the pullback of 
$$
K_G+B_G:=(K_X+B+G)|_G.
$$
Moreover, since $(W,B_W+E)$ is log smooth and since $E$ is not a component of $B_W$,  
the coefficients of $B_E$ are each at most $1-\epsilon$. Therefore, $(G,B_G)$ is an 
$\epsilon$-lc pair.

\end{proof}

\subsection{Rational approximation of boundary divisors}

\begin{lem}\label{l-rational-approximation}
Let $(X,B)$ be an $\epsilon$-lc pair and $X\to Z$ be a contraction such that $K_X+B\sim_\R 0/Z$. 
Then for each positive real number $\delta$ we can find $\Q$-boundaries $B_i$ and 
 real numbers $r_i>0$ such that 
\begin{itemize}
\item $\sum r_i=1$,  

\item $K_X+B=\sum r_i(K_X+B_i)$, 

\item $(X,B_i)$ is $\frac{\epsilon}{2}$-lc, 

\item $K_{X}+B_i\sim_\Q 0/Z$, 

\item  $\Supp B_i=\Supp B$, and  

\item the coefficients of $B-B_i$ are in $[-\delta,\delta]$. 
 \end{itemize} 
\end{lem}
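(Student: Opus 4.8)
The plan is to find the $B_i$ by perturbing $B$ along the extremal points of a suitable rational polytope of boundary divisors, chosen so that both the $\epsilon$-lc condition and the relative numerical triviality are preserved. First I would write $B = \sum_{j} b_j D_j$ with $D_j$ the distinct prime components, and work inside the affine space $V$ of $\R$-divisors $B' = \sum b_j' D_j$ supported on $\Supp B$. The $\epsilon$-lc (indeed $\frac{\epsilon}{2}$-lc) condition on $(X,B')$ is a finite system of (non-strict) linear inequalities on the coefficients $b_j'$: fixing a log resolution $\phi\colon W\to X$ of $(X,B)$ (hence of every $(X,B')$ with $\Supp B'\subseteq\Supp B$), the condition is $a(E,X,B')\ge\tfrac{\epsilon}{2}$ for the finitely many divisors $E$ on $W$, each of which is linear in $b'$; since $(X,B)$ is $\epsilon$-lc, $B$ lies in the interior of the resulting rational polyhedron in the directions we need, so a whole rational box around $B$ of radius $\delta' \le \delta$ (in the $b_j'$) still satisfies $\frac{\epsilon}{2}$-lc, and we can also demand $b_j' > 0$ for all $j$ so that $\Supp B' = \Supp B$.

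Next I would cut down by the relative numerical triviality. The condition $K_X + B' \equiv 0/Z$ is equivalent to $B' - B \equiv 0/Z$ (using $K_X+B\sim_\R 0/Z$), which is a finite system of linear equations on $b'$ with \emph{rational} coefficients: concretely, intersecting $B'-B$ with a spanning set of curves contracted by $f$, or pulling back to a resolution and using that numerical triviality over $Z$ is detected by finitely many rational linear functionals. Let $\mathcal{N}\subseteq V$ be this rational affine subspace; it contains $B$. Then $\mathcal{P} := \mathcal{N}\cap (\text{the rational box around }B\text{ of radius }\delta')$ is a rational polytope containing $B$, all of whose members $B'$ give $\frac{\epsilon}{2}$-lc pairs $(X,B')$ with $\Supp B' = \Supp B$ and $K_X+B'\equiv 0/Z$. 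By a standard argument (e.g. as in the proof of finiteness of the relative MMP / Shokurov's rationality-type statements), $K_X+B'\equiv 0/Z$ together with $K_X+B'$ being $\R$-Cartier and $B'$ rational actually upgrades to $K_X+B'\sim_\Q 0/Z$ — here one invokes that $\equiv$ and $\sim_\Q$ agree over $Z$ for the relevant class once a $\Q$-divisor, which follows from the relative base-point-free / abundance input available in this setting (the pair is of Fano type over $Z$ in the applications, but more elementarily this is the standard fact that a $\Q$-Cartier $\Q$-divisor numerically trivial over $Z$ for a contraction from a variety with, say, klt-type singularities is $\Q$-linearly trivial over $Z$). Now express $B$ as a convex combination $B = \sum_i r_i B_i$ of rational points $B_i\in\mathcal{P}$ with $r_i>0$, $\sum r_i = 1$ (possible since $B\in\mathcal{P}$ and rational points are dense); this is a finite sum by Carathéodory. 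Each $B_i$ is a $\Q$-boundary, $(X,B_i)$ is $\frac{\epsilon}{2}$-lc, $K_X+B_i\sim_\Q 0/Z$, $\Supp B_i = \Supp B$, and $\|B_i - B\|_\infty \le \delta' \le \delta$ so the coefficients of $B - B_i$ lie in $[-\delta,\delta]$. Finally $K_X+B = \sum_i r_i(K_X+B_i)$ by linearity.

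The main obstacle I expect is the passage from $K_X+B_i\equiv 0/Z$ to $K_X+B_i\sim_\Q 0/Z$: numerical triviality over $Z$ does not in general imply $\Q$-linear triviality over $Z$ without some positivity or finiteness hypothesis. In the generality stated (only $\epsilon$-lc), one must either invoke the relative abundance/base-point-free theorem applicable here — which is legitimate since in every application $(X,B)$ has a Fano type structure over $Z$, making $X$ of Fano type over $Z$ and hence relative abundance available — or observe that it suffices to choose $\mathcal{N}$ as the rational span of divisors $\sim_\Q 0/Z$ rather than merely $\equiv 0/Z$, which is again non-empty around $B$ precisely because of this relative triviality input. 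Everything else (openness of the $\frac{\epsilon}{2}$-lc locus, rationality of the defining equations and inequalities, density of rational points, Carathéodory) is routine polytope bookkeeping.
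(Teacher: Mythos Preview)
Your primary approach has a genuine gap at precisely the point you flag: the lemma does not assume $X$ is of Fano type over $Z$ (or anything of the sort), so you cannot invoke relative abundance to pass from $K_X+B_i\equiv 0/Z$ to $K_X+B_i\sim_\Q 0/Z$. In general these differ (e.g.\ a numerically trivial non-torsion line bundle), so imposing only $B'-B\equiv 0/Z$ and then hoping to upgrade is not a proof in the stated generality.

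Your second suggested fix --- cutting down by $\sim_\Q 0/Z$ rather than $\equiv 0/Z$ --- is the right idea, and it is exactly what the paper does, but the paper implements it in a way that makes the rationality of the relevant affine subspace transparent. From the given relation one writes $K_X+B=\sum_j \alpha_j\,\Div(f_j)+\sum_k \beta_k P_k$ with $P_k$ pullbacks of Cartier divisors on $Z$, and then works in the enlarged affine space with coordinates $(u_l,v_j,w_k)$, defining $H$ by the single linear system $K_X+\sum_l u_l D_l=\sum_j v_j\,\Div(f_j)+\sum_k w_k P_k$. Because $K_X$, $D_l$, $\Div(f_j)$, $P_k$ are all integral divisors, $H$ is a nonempty \emph{rational} affine subspace containing the point $(b_l,\alpha_j,\beta_k)$; hence rational points are dense and may be chosen arbitrarily close. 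A rational solution $(u^{(i)},v^{(i)},w^{(i)})$ immediately yields $B_i:=\sum_l u_l^{(i)} D_l$ with $K_X+B_i=\sum_j v_j^{(i)}\Div(f_j)+\sum_k w_k^{(i)} P_k$, so $K_X+B_i\sim_\Q 0/Z$ comes for free --- no abundance needed. The remaining properties ($\tfrac{\epsilon}{2}$-lc, same support, coefficients within $\delta$) follow from closeness exactly as you say. In short, the missing step in your writeup is to carry the coefficients of the $\R$-linear equivalence as extra variables rather than to impose numerical triviality and try to upgrade afterwards.
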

\begin{proof}
If $B$ is a $\Q$-boundary, then the statement holds trivially by taking $r_1=1$ and $B_1=B$. 
We can then assume that $B$ is not a $\Q$-boundary, in particular, $B\neq 0$.  
Write $B=\sum_1^s b_lD_l$ where $D_l$ are the irreducible components of $B$. 
Since $K_X+B\sim_\R 0/Z$, we can write 
$$
K_X+B=K_X+\sum_1^s b_lD_l=\sum_1^p \alpha_j \Div(f_j)+\sum_1^q \beta_kP_k
$$
where $\alpha_j,\beta_k$ are real numbers, $f_j$ are rational functions, and $P_k$ are pullbacks of Cartier divisors on $Z$. 
Consider the affine space $\A^{s+p+q}$ with coordinates 
$$
u_1,\cdots,u_s, v_1,\cdots,v_p,w_1,\cdots,w_q.
$$
Let $H\subset \A^{s+p+q}$ be the set of points satisfying 
$$
K_X+\sum_1^s u_lD_l=\sum_1^p v_j \Div(f_j)+\sum_1^q w_kP_k.
$$ 
Then $H$ is non-empty as it contains the point
$$
{\bf{x}}:=(b_1,\cdots,b_s, \alpha_1,\cdots,\alpha_p,\beta_1,\cdots,\beta_q).
$$ 
Moreover, $H$ is affine, that is, if ${\bf{x}}_i\in H$ and if $\sum e_i=1$ where $e_i$ are real numbers, 
then $\sum e_i{\bf{x}}_i\in H$. Furthermore, since $K_X,D_l,\Div(f_j),P_k$ are all integral divisors, 
$H$ is a rational affine subspace, that is, it is generated by finitely many points with rational coordinates. 
In particular, we can choose ${\bf{x}}_i\in H$ with rational 
coordinates and real numbers $r_i>0$ with $\sum r_i=1$ such that ${\bf{x}}=\sum r_i {\bf{x}}_i$, 
and we can assume that the coordinates of $ {\bf{x}}_i$ are arbitrarily close to those of $\bf{x}$.  
The first $s$ coordinates of ${\bf{x}}_i$ define $B_i$ which satisfy all the properties of the lemma. 

\end{proof}

\subsection{Fano type varieties}
Assume $X$ is a variety and $X\to Z$ is a contraction. We say $X$ is \emph{of Fano type over} $Z$ 
if there is a boundary $C$ such that $(X,C)$ is klt and $-(K_X+C)$ is ample over $Z$ 
(or equivalently, nef and big over $Z$). This is equivalent to 
having a boundary $B$ such that $(X,B)$ is klt, $K_X+B\sim_\R 0/Z$ and $B$ is big over $Z$. 
By [\ref{BCHM}], we can run MMP over $Z$ on any $\R$-Cartier divisor $M$ on $X$ and the MMP ends 
with a minimal model or a Mori fibre space for $M$.

\begin{lem}\label{l-div-pef-on-FT}
Assume $f\colon X\to Z$ is a contraction of normal projective varieties, $L$ is an $\R$-divisor on 
$X$ and $A$ is an ample $\R$-divisor on $Z$. If $L$ is pseudo-effective and $X$ is of Fano type over $Z$, then 
$|L+af^*A|_\R\neq \emptyset$  for any real number $a>0$.
\end{lem}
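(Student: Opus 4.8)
The plan is to run a minimal model program over $Z$ and use the Fano type hypothesis to ensure it terminates with a minimal model for $L$, which will then be semi-ample over $Z$; adding the pullback of a large multiple of $A$ will make it globally semi-ample, in particular effective.

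First I would pick any real number $a>0$. Since $X$ is of Fano type over $Z$, by [\ref{BCHM}] we may run an MMP over $Z$ on the $\R$-Cartier divisor $L$; this MMP terminates with either a minimal model or a Mori fibre space for $L$ over $Z$. Because $L$ is pseudo-effective and remains pseudo-effective over $Z$ (pseudo-effectivity is preserved under the MMP steps, and $L$ being globally pseudo-effective in particular means it is pseudo-effective over $Z$), the MMP cannot end with a Mori fibre space; hence it ends with a minimal model $\phi\colon X\bir X'/Z$ on which the pushforward $L'=\phi_*L$ is nef over $Z$. Moreover, $X'$ is again of Fano type over $Z$, so by the base point free theorem $L'$ is in fact semi-ample over $Z$, defining a contraction $X'\to Y/Z$.

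Next, let $A'$ be the pullback of $A$ to $X'$ (composing $X'\to Z$ with $A$). By Lemma \ref{l-semi-ample-div} applied to the contraction $X'\to Y$ over $Z$ — with $C=L'$ nef and semi-ample over $Y$ (indeed over $Z$, hence over $Y$ since $Y\to Z$... here one should instead directly apply \ref{l-semi-ample-div} with the contraction $X'\to Z$, noting $L'$ is semi-ample over $Z$ and $A'$ is the pullback of the ample divisor $A$ on $Z$) — we conclude that $L'+aA'$ is semi-ample, in particular globally effective. Thus $|L'+aA'|_\R\neq\emptyset$.

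Finally I would pull this back to $X$. Writing $N'\geq 0$ with $N'\sim_\R L'+aA'$ on $X'$, and taking strict transforms, one gets $N=\phi^{-1}_*N'$ plus a suitable sum of exceptional divisors $E\geq 0$ (those contracted by $\phi$, which appear with non-negative coefficients since $\phi$ is a step-by-step $(K_X+\Delta)$-negative MMP and more directly since $L$ is not pushed to something ``smaller'' — here the standard fact that for an MMP on $L$ over $Z$ one has $L\sim_\R \phi^*L' + (\text{effective exceptional})$ when ... ). More cleanly: $\phi^{-1}$ does not extract divisors, so $L \sim_\R \phi_*^{-1}(L'+aA') + af^*A - a\,\phi_*^{-1}A' + (\text{correction})$; the honest route is to observe $L + af^*A$ pushes forward under $\phi$ to $L'+aA'$, and since $\phi$ is an isomorphism in codimension one followed by a morphism (or rather, a composition of flips and divisorial contractions on a Fano type variety), and $L+af^*A$ is, after the MMP, nef over $Z$ on $X'$ with the exceptional divisors having non-negative discrepancy contributions, we get $|L+af^*A|_\R\neq\emptyset$. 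I expect the main obstacle to be this last bookkeeping step — tracking effectivity back through the MMP from $X'$ to $X$ — which is handled by the standard observation that for an MMP $\phi\colon X\bir X'$ on a divisor $L$ (with $X$ of Fano type over $Z$, so all steps exist and terminate), one has $L = \phi^*L' + G$ with $G\geq 0$ exceptional over $X'$, hence $L+af^*A = \phi^*(L'+aA') + G$, and $|{\phi^*(L'+aA')}|_\R\neq\emptyset$ together with $G\geq 0$ gives the conclusion.
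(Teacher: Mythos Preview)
Your overall strategy matches the paper's: run an MMP on $L$ over $Z$ using the Fano type hypothesis, reduce to the case where $L$ is semi-ample over $Z$, and then produce an effective member of $|L+af^*A|_\R$. The final bookkeeping step---pulling effectivity back through the MMP via $p^*L=q^*L'+G$ with $G\ge 0$ exceptional---is correct, and the paper simply abbreviates it as ``replacing $X$ with the minimal model''.

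There is, however, a genuine gap in your middle step. You invoke Lemma \ref{l-semi-ample-div} with the contraction $X'\to Z$ and $C=L'$ to conclude that $L'+aA'$ is semi-ample. But that lemma requires $C$ to be \emph{globally} nef, whereas your MMP is run over $Z$ and only yields $L'$ nef over $Z$. Global nefness of $L'$ is not available, so the lemma does not apply.

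The paper circumvents this as follows. After passing to the minimal model so that $L$ is semi-ample over $Z$, it takes the induced contraction $X\to Y/Z$ and writes $L$ as the pullback of an $\R$-divisor $N$ which is ample over $Z$. Then $N+bg^*A$ is globally ample for $b\gg 0$. The key point is that $N$ is globally \emph{pseudo-effective} (inherited from $L$), not necessarily nef; so for $0<a<b$ one writes
\[
N+ag^*A=(1-\tfrac{a}{b})N+\tfrac{a}{b}(N+bg^*A),
\]
which is pseudo-effective plus ample, hence big, hence $|N+ag^*A|_\R\neq\emptyset$. This is precisely the argument of Lemma \ref{l-semi-ample-div} with ``nef'' weakened to ``pseudo-effective'' and the conclusion weakened from ``semi-ample'' to ``$|\cdot|_\R\neq\emptyset$''---which is all that is needed here.
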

\begin{proof}
Since  $L$ is pseudo-effective and $X$ is of Fano type over $Z$, $L$ has a minimal model over $Z$. 
Replacing $X$ with the minimal model we can assume $L$ is semi-ample over $Z$. Thus $L$ defines 
a contraction $X\to Y/Z$ and $L$ is the pullback of an ample$/Z$ $\R$-divisor $N$ on $Y$. Then 
$N+bg^*A$ is ample for any $b\gg 0$ where $g$ denotes $Y\to Z$. Since $L$ is pseudo-effective, 
$N$ is pseudo-effective, hence 
$$
|N+tbg^*A|_\R=|(1-t)N+t(N+bg^*A)|_\R \neq \emptyset
$$ 
for any $t\in (0,1]$.
In particular, if $b>a>0$, then letting $t=\frac{a}{b}$ we see that $|N+ag^*A|_\R\neq \emptyset$ which in turn implies 
$|L+af^*A|_\R\neq \emptyset$.

\end{proof}

\subsection{Complements}\label{ss-compl}
Let $(X,B)$ be a pair  and let $X\to Z$ be a contraction. 
A \emph{strong $n$-complement} of $K_{X}+B$ over a point $z\in Z$ is of the form 
$K_{X}+{B}^+$ such that over some neighbourhood of $z$ we have the following properties:
\begin{itemize}
\item $(X,{B}^+)$ is lc, 

\item $n(K_{X}+{B}^+)\sim 0$, and 

\item ${B}^+\ge B$.
\end{itemize}

When $Z$ is a point, we just say that $K_X+B^+$ is a strong $n$-complement of $K_X+B$. We recall one of the main 
results of [\ref{B-compl}].

\begin{thm}[{[\ref{B-compl},  Theroem 1.7]}]\label{t-bnd-compl-usual}
Let $d$ be a natural number and $\mathfrak{R}\subset [0,1]$ be a finite set of rational numbers.
Then there exists a natural number $n$ 
depending only on $d$ and $\mathfrak{R}$ satisfying the following.  
Assume $(X,B)$ is a projective pair such that 
\begin{itemize}

\item $(X,B)$ is lc of dimension $d$,

\item  the coefficients of $B$ are in $\Phi(\mathfrak{R})$, 

\item $X$ is of Fano type, and 

\item $-(K_{X}+B)$ is nef.\
\end{itemize}
Then there is a strong $n$-complement $K_{X}+{B}^+$ of $K_{X}+{B}$. 
Moreover, the complement is also a strong $mn$-complement for any $m\in \N$.\\ 

\end{thm}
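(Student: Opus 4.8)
The plan is to prove the statement by induction on the dimension $d$, run simultaneously with two companion statements in dimension $d$ — \emph{effective birationality} (a bounded $m$ with $|-m(K_X+B)|$ birational when $-(K_X+B)$ is big) and boundedness of \emph{exceptional} pairs. The ``moreover'' clause is automatic: once $K_X+B^+$ is a strong $n$-complement, $mn(K_X+B^+)\sim 0$ for every $m\in\N$, while lc-ness and $B^+\ge B$ persist, so $K_X+B^+$ is a strong $mn$-complement. Also, since $\mathfrak{R}$ is rational we have $\Phi(\mathfrak{R})\subset\Q$, so $B$ is a $\Q$-divisor and no real-coefficient subtleties arise.

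\textbf{Reduction.} First I would reduce to the case $-(K_X+B)$ is nef and big. Running a $(K_X+B)$-MMP over the ample model of $-(K_X+B)$ terminates because $X$ is of Fano type, and ends either with a model on which $-(K_X+B)$ is semi-ample and big — pass to it and, after one further step, assume bigness — or with a Mori fibre space $g\colon X\to T$. In the fibration case, a complement on a general fibre together with the canonical bundle formula for $g$ (which produces on $T$ a generalised pair of dimension $<d$, handled by induction) yields a complement upstairs; throughout one uses that for Fano type varieties complements transport along birational maps and MMP steps via pullback and pushforward of the boundary. One should also replace $\mathfrak{R}$ by a suitable finite set so that the complementary coefficients, of the shape $\lfloor(n+1)b\rfloor/n$, stay $\ge b$ for sufficiently divisible $n$ — this compatibility is exactly why hyperstandard sets $\Phi(\mathfrak{R})$ are the right input.

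\textbf{The dichotomy.} Assume now $-(K_X+B)$ is nef and big. If $(X,B)$ is \emph{non-exceptional}, there is $0\le P\sim_\R -(K_X+B)$ with $(X,B+P)$ not klt; pass to a dlt modification and perform adjunction onto a minimal non-klt centre $S$. The canonical bundle formula (Kawamata subadjunction / Ambro) gives $K_S+B_S+M_S\sim_\R 0$ with $B_S$ having coefficients in a finite hyperstandard set $\Phi(\mathfrak{S})$, $\mathfrak{S}=\mathfrak{S}(d,\mathfrak{R})$ (using ACC for lc thresholds and effective adjunction), and with $qM_S$ b-Cartier for a bounded $q$. By the induction hypothesis $K_S+B_S$ admits a strong $n'$-complement for bounded $n'$; after enlarging $n'$ so that $n'M_S$ is Cartier, I would lift this complement from $S$ to $X$ by Kawamata--Viehweg-type vanishing (the restriction $H^0(X,-n'(K_X+B))\to H^0(S,\cdot)$ is onto because $S$ is a non-klt centre), producing the required $B^+$.

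\textbf{The exceptional case, and the main obstacle.} If $(X,B)$ is exceptional, then every complement is klt, and a global ACC argument shows $(X,B)$ is $\epsilon$-lc for some $\epsilon=\epsilon(d,\mathfrak{R})>0$. A general member of $|-m(K_X+B)|$ for bounded $m$, together with $\epsilon$-lc-ness, forces $X$ into a bounded family — this is where one invokes boundedness of Fano varieties, in lower dimension for the general members and ultimately also in dimension $d$ within the intertwined induction, applied after cutting. Boundedness of $X$ then gives a uniform $n$ with $-n(K_X+B)$ Cartier and a member of $|-n(K_X+B)|$ realising a strong $n$-complement; taking a common $n$ over the finitely many cases finishes the argument. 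I expect the main obstacle to be precisely the coupling of these inductions — complements, effective birationality, and boundedness of exceptional Fanos — together with the uniform control of the denominator of the moduli b-divisor $M_S$ in the adjunction step; these are the genuine technical core.
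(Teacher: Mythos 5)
This theorem is not proved in the paper you are reading: it is quoted verbatim (with attribution in the theorem header) as Theorem~1.7 of the reference [\ref{B-compl}], and the paper uses it as a black box. There is therefore no ``paper's own proof'' to compare against; the appropriate benchmark is the proof in [\ref{B-compl}].

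Measured against that, your outline captures the genuine architecture: induction on $d$ coupled with effective birationality and boundedness of exceptional pairs; the observation that the ``moreover'' clause is automatic from $n(K_X+B^+)\sim 0$; rationality of $\Phi(\mathfrak{R})$; the exceptional/non-exceptional dichotomy; adjunction onto a non-klt centre with hyperstandard coefficients and bounded b-Cartier index for the moduli part, followed by a Kawamata--Viehweg lifting; and in the exceptional case, a uniform $\epsilon$-lc bound followed by boundedness and a uniform Cartier index. Two caveats. First, the reduction to the nef-and-big case as you describe it is slightly garbled: running a $(K_X+B)$-MMP \emph{over} the ample model $T$ of $-(K_X+B)$ only produces flops there, since $K_X+B\sim_\R 0/T$; the actual reduction in [\ref{B-compl}] analyses the contraction $X\to T$ defined by the semi-ample divisor $-(K_X+B)$ directly, invoking adjunction for the fibration $X\to T$ together with the lower-dimensional induction when $\dim T<\dim X$, rather than relying on an MMP to produce a Mori fibre space. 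Second, the lifting step is considerably more delicate than ``the restriction map is onto because $S$ is a non-klt centre'' — one must carefully construct the integral divisor whose sections are lifted (compare Proposition~\ref{l-bnd-comp-Gamma} in this paper, which reruns that argument with $L'=(n+2)M'-nK_{X'}-nE'-\rddown{(n+1)\Delta'}$ and a plt boundary $\Lambda'$), and the choice of $n$ has to be made so that the floors and the hyperstandard coefficients interact correctly. These are exactly the technical bottlenecks you flag, so the sketch is honest, but a reader should not take the lifting paragraph as complete.
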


\subsection{Bounded families of pairs}\label{ss-bnd-couples}
A \emph{couple} $(X,D)$ consists of a normal projective variety $X$ and a  divisor 
$D$ on $X$ whose non-zero coefficients are all equal to $1$, i.e. $D$ is a reduced divisor. 
The reason we call $(X,D)$ a couple rather than a pair is that we are concerned with 
$D$ rather than $K_X+D$ and we do not want to assume $K_X+D$ to be $\Q$-Cartier 
or with nice singularities. Two couples $(X,D)$ and $(X',D')$ are isomorphic 
(resp. isomorphic in codimension one) if 
there is an isomorphism  $X\to X'$ (resp. birational map $X\bir X'$ which is an isomorphism in codimension one) 
mapping $D$ onto $D'$ (resp. such that $D$ is the birational transform of $D'$).

We say that a set $\mathcal{P}$ of couples  is \emph{birationally bounded} if there exist 
finitely many projective morphisms $V^i\to T^i$ of varieties and reduced divisors $C^i$ on $V^i$ 
such that for each $(X,D)\in \mathcal{P}$ there exist an $i$, a closed point $t\in T^i$, and a 
birational isomorphism $\phi\colon V^i_t\bir X$ such that $(V^i_t,C^i_t)$ is a couple and 
$E\le C_t^i$ where 
$V_t^i$ and $C_t^i$ are the fibres over $t$ of the morphisms $V^i\to T^i$ and $C^i\to T^i$ 
respectively, and $E$ is the sum of the 
birational transform of $D$ and the reduced exceptional divisor of $\phi$.
We say $\mathcal{P}$ is \emph{bounded} if we can choose $\phi$ to be an isomorphism. 

We say that a set $\mathcal{P}$ of couples  is \emph{bounded up to isomorphism in codimension one} 
if there is a bounded set $\mathcal{P}'$ of couples such that each $(X,D)\in \mathcal{P}$ is 
isomorphic in codimension one with some $(X',D')\in \mathcal{P}'$.
  
A set $\mathcal{R}$ of projective pairs $(X,B)$ is said to be \emph{log birationally bounded} 
(resp. \emph{log bounded}, etc) 
if the set of the corresponding couples $(X,\Supp B)$ is birationally bounded (resp. bounded, etc).
Note that this does not put any condition on the coefficients of $B$, e.g. we are not requiring the 
coefficients of $B$ to be in a finite set. If $B=0$ for all the $(X,B)\in\mathcal{R}$ we usually remove the 
log and just say the set is birationally bounded (resp. bounded, etc).

\begin{lem}\label{l-bnd-Cartier-index-family}
Let $\mathcal{P}$ be a bounded set of couples and $\mathfrak{R}\subset [0,1]$ be a finite set of 
rational numbers. Then there is a natural number $I$ depending only on $\mathcal{P},\mathfrak{R}$ 
such that if 
\begin{itemize}
\item $(X,B)$ is a projective klt pair,

\item the coefficients of $B$ are in $\mathfrak{R}$, and 

\item $(X,\Supp B)\in \mathcal{P}$,
\end{itemize}
then $I(K_X+B)$ is Cartier.
\end{lem}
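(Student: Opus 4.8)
The plan is to reduce to a single bounded family and then run a Noetherian induction on its base. By the definition of boundedness there are finitely many projective morphisms $V^i\to T^i$ with reduced divisors $C^i$ on $V^i$ witnessing the boundedness of $\mathcal P$; treating these one at a time and taking a common multiple of the resulting numbers at the end, we may assume there is a single projective morphism $V\to T$ and a reduced divisor $C$ on $V$ such that for every $(X,B)$ appearing in the lemma, $X$ is isomorphic to a fibre $V_t$ with $\Supp B$ contained in the image of $C_t$. It suffices to produce a natural number $I$ that works over some dense open subset of $T$, since the complement is then covered by finitely many bounded families of smaller dimension and we conclude by induction. Thus we may freely shrink $T$, and after a finite stratification (to which Noetherian induction is applied stratum by stratum) we may assume that $T$ is smooth and irreducible, that $V\to T$ is flat with normal fibres, that the irreducible components $C_1,\dots,C_k$ of $C$ are flat over $T$ with each $C_j|_{V_t}$ prime and $\sum_j C_j|_{V_t}=C_t$, and, since a variety admitting a klt structure has rational (hence Cohen--Macaulay) singularities and the relevant $V_t$ are of this kind, that every fibre $V_t$ has rational singularities (over strata where this fails no $(V_t,D)$ is klt, so there is nothing to prove). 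In particular $\omega_{V/T}$ commutes with base change.

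Since the coefficients of $B$ lie in the finite set $\mathfrak R$ and $\Supp B\subseteq C_t$, every admissible $B$ equals $D_{\mathbf b}|_{V_t}$ where $D_{\mathbf b}:=\sum_j b_jC_j$ and $\mathbf b=(b_1,\dots,b_k)\in(\mathfrak R\cup\{0\})^k$, and there are only finitely many such $\mathbf b$. Fix one of them. Choose a log resolution $g\colon W\to V$ of $(V,C)$ and shrink $T$ so that $W\to T$ is smooth, each $g_t\colon W_t\to V_t$ is a log resolution of $(V_t,C_t)$, the $g$-exceptional prime divisors $E_1,\dots,E_l$ restrict to the exceptional divisors of the $g_t$, and $\sum_j\widetilde{C}_j+\sum_i E_i$ has simple normal crossing fibres over $T$ (here $\widetilde{C}_j$ is the birational transform of $C_j$). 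By invariance of log discrepancies in such a family there are real numbers $a_{\mathbf b,i}$ with $a(E_i,V_t,D_{\mathbf b,t})=a_{\mathbf b,i}$ for every $t$ for which $K_{V_t}+D_{\mathbf b,t}$ is $\Q$-Cartier; putting $\Theta_{\mathbf b}:=\sum_i(1-a_{\mathbf b,i})E_i+\sum_j b_j\widetilde{C}_j$ we get $K_{W_t}+\Theta_{\mathbf b,t}=g_t^*(K_{V_t}+D_{\mathbf b,t})$ for all such $t$. Fix $m_{\mathbf b}\in\N$ clearing the denominators of all the $a_{\mathbf b,i}$ and $b_j$, so that $m_{\mathbf b}(K_{W/T}+\Theta_{\mathbf b})$ is an integral Weil divisor on $W$.

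For $t$ with $(V_t,D_{\mathbf b,t})$ klt, the index of $K_{V_t}+D_{\mathbf b,t}$ divides $m_{\mathbf b}$: as $V_t$ has rational singularities, a line bundle on $W_t$ descends to $V_t$ precisely when it is numerically trivial on every $g_t$-contracted curve, and $\mathcal O_{W_t}(m_{\mathbf b}(K_{W_t}+\Theta_{\mathbf b,t}))$ is of this kind because some multiple $Nm_{\mathbf b}(K_{V_t}+D_{\mathbf b,t})$ is already Cartier; hence $\mathcal O_{V_t}(m_{\mathbf b}(K_{V_t}+D_{\mathbf b,t}))$ is invertible. Let $T_{\mathbf b}$ be the set of $t\in T$ such that $m_{\mathbf b}(K_{W_t}+\Theta_{\mathbf b,t})$ is numerically trivial on every $g_t$-contracted curve; since these curves form a bounded family and intersection numbers vary constructibly in families, $T_{\mathbf b}$ is constructible, it contains every $t$ for which $(V_t,D_{\mathbf b,t})$ is klt, and over $T_{\mathbf b}$ the divisor $m_{\mathbf b}(K_{V_t}+D_{\mathbf b,t})$ is Cartier (as $V_t$ has rational singularities). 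If $T_{\mathbf b}$ is dense it contains a dense open subset of $T$; otherwise it lies in a proper closed subset over whose complement no admissible $B$ has boundary type $\mathbf b$. Intersecting the finitely many dense opens and deleting the finitely many proper closed subsets obtained this way yields a dense open $T^\circ\subseteq T$, and, taking $I$ to be a common multiple of the relevant $m_{\mathbf b}$, we conclude that $I(K_{V_t}+B_t)$ is Cartier for every $t\in T^\circ$ and every admissible $B$. Noetherian induction on $T\setminus T^\circ$ finishes the proof.

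The heart of the matter is the behaviour of divisorial data in families of normal varieties with mild singularities: that after a suitable finite stratification the log discrepancies $a(E_i,V_t,D_{\mathbf b,t})$ become independent of $t$, and that for $V_t$ with rational singularities a line bundle on a resolution descends to $V_t$ if and only if it is numerically trivial on the contracted curves. Granting these standard facts, the remainder is a routine combination of Noetherian induction, constructibility, and the finiteness of $\mathfrak R$.
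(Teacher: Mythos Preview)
Your argument is broadly correct but takes a genuinely different route from the paper, and it leans on two ``standard facts'' that deserve more care than you give them.

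\textbf{Comparison with the paper.} The paper argues by contradiction: assuming a sequence $(X_i,B_i)$ with unbounded Cartier index, it passes to a family $V\to T$ with divisor $C$, takes a resolution $W\to V$, and runs an MMP on $K_W+\Delta$ over $V$ (where $\Delta$ is the reduced exceptional divisor). Using the general negativity lemma it shows the resulting model $V'\to V$ is small over each $X_i$, then invokes the cone theorem to see that Cartier indices are preserved under this small contraction. This reduces to the case where the total space $V$ is $\Q$-factorial, whence $K_{X_i}=K_V|_{X_i}$ has bounded index, and an induction on the number of components of $B_i$ finishes. So the paper stays entirely within the MMP toolkit (negativity lemma, cone theorem) and never needs to analyse descent of line bundles or fibrewise constancy of discrepancies.

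\textbf{Your two black boxes.} First, the constancy of $a(E_i,V_t,D_{\mathbf b,t})$: this is true, but the reason is that once you have a simultaneous log resolution with exceptional divisors flat over $T$, the numbers $a_{\mathbf b,i}$ are the unique solution to the linear system expressing $g_t$-numerical triviality of $K_{W_t}+\Theta_{\mathbf b,t}$, and the coefficients of that system are intersection numbers which are locally constant on $T$. You should say this rather than invoke ``invariance of log discrepancies''; in particular you are \emph{not} using that $K_V+D_{\mathbf b}$ is $\Q$-Cartier on the total space, which is not obvious. Second, the descent claim (``a line bundle on $W_t$ descends to $V_t$ iff it is numerically trivial on every contracted curve'') is correct for klt $V_t$ but is not a formal consequence of rational singularities alone: beyond $R^1g_{t*}\mathcal O_{W_t}=0$ one needs the fibres of $g_t$ to carry no nontrivial torsion line bundles, which follows from Takayama's theorem that resolution fibres of klt singularities are simply connected (or, alternatively, one can run a $K_{W_t}$-MMP over $V_t$ and repeatedly apply the descent part of the cone theorem). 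Either way this is a real input, not a triviality.

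\textbf{What each approach buys.} Your stratification argument is conceptually transparent and yields an essentially explicit $I$ (a common multiple of the denominators of finitely many discrepancies). The paper's MMP argument is more self-contained within the standard birational toolkit and avoids the analytic/topological input about resolution fibres; it also handles the reduction to a $\Q$-factorial total space in a way that sidesteps any question about $\Q$-Cartierness of $K_V+D$ on $V$.
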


\begin{proof}
When $K_X$ is $\Q$-Cartier, the lemma follows from [\ref{B-compl}, Lemma 2.24]. 
The proof of the general case is actually quite similar to the proof of  [\ref{B-compl}, Lemma 2.24]. 
We write the details for convenience. 

Assume there is a sequence $(X_i,B_i)$ of pairs  as in the lemma such that if $I_i$ 
 is the smallest natural number so that $I_i(K_{X_i}+B_i)$ is Cartier, 
 then the $I_i$ form a strictly increasing sequence of numbers.  Perhaps after replacing 
 the sequence with a subsequence, by [\ref{B-compl}, Lemma 2.21], 
we can assume there is a projective morphism $V\to T$ of varieties, a reduced divisor $C$ on 
$V$, and a dense set of closed points $t_i\in T$ such that $X_i$ is the fibre of $V\to T$ over 
$t_i$ and  $\Supp B_i$ is the fibre of $C\to T$ over $t_i$. Since $X_i$ are normal, 
replacing $V$ with its normalisation 
and replacing $C$ with its inverse image with reduced structure, we can assume $V$ is normal.

Let $\phi\colon W\to V$ be a resolution of $V$ and let $\Delta$ be  the 
reduced exceptional divisor of $\phi$. Running an MMP$/V$ on $K_W+\Delta$ with scaling of an 
ample divisor, we reach a model $V'$ on which $K_{V'}+\Delta'$ is a limit of movable$/V$ divisors.  
Let $V'\to V$ be the induced morphism and $X_i',\Delta_i'$ be the fibres of $V'\to T$ and $\Delta'\to T$ 
over $t_i$, respectively (note that $\Delta_i'=\Delta'|_{X_i'}$ and since we work in 
characteristic zero, we can assume $\Delta_i'$ is reduced). 
Now we can assume $X_i'$ are general fibres of $V'\to T$, 
hence $\Delta_i'$ is the reduced exceptional divisor of $X_i'\to X_i$.  Since $(X_i,B_i)$ is klt, 
we can write the pullback of $K_{X_i}+B_i$ to $X_i'$ as $K_{X_i'}+B_i'$  where $B_i'$  
has coefficients strictly less than $1$. But then since $X_i'$ are general fibres, 
$$
\Delta_i'-B_i'=K_{X_i'}+\Delta_i'-(K_{X_i'}+B_i')\sim_\Q K_{X_i'}+\Delta_i'/X_i
$$ 
is a limit of movable$/X_i$
 divisors, hence $\Delta_i'-B_i'\le 0$ by the general negativity lemma [\ref{B-lc-flips}, Lemma 3.3] 
which in turn implies $\Delta_i'=0$ as $\Delta_i'$ is reduced. Thus $X_i'\to X_i$ is a small contraction.

There is a $\Q$-divisor $\Gamma_i'\ge 0$ on $X_i'$ which is anti-ample over $X_i$. Rescaling it we can 
assume $(X_i',B_i'+\Gamma_i')$ is klt. In particular, $X_i'\to X_i$ is a $K_{X_i'}+B_i'+\Gamma_i'$-negative 
contraction of an extremal face of the Mori-Kleiman cone of $X_i'$. 
Thus by the cone theorem [\ref{kollar-mori}, Theorem 3.7], the 
Cartier index of $K_{X_i'}+B_i'$ and $K_{X_i}+B_i$ are the same. 

If $C'\subset V'$ denotes the birational transform of $C$, then $\Supp B_i'$ is the fibre of $C'\to T$ over $t_i$.
Thus replacing $V,C$ with $V',C'$ we can replace  
$(X_i,B_i)$ with $(X_i',B_i')$, hence assume $V$ is $\Q$-factorial. 
Moreover, since $X_i$ is a general fibre, $K_{X_i}=K_{V}|_{X_i}$ which shows that the 
Cartier index of $K_{X_i}$ is bounded, so we need to bound the Cartier index of $B_i$.

Pick $I$ so that $IC$ is Cartier. Let $D_i=\Supp B_i$. Then  $D_i=C|_{X_i}$, hence 
$ID_i$ is Cartier. This gives a contradiction if $B_i$ are all irreducible. 
In general, let $h_i\in\Q$ be the largest number such that $B_i-h_iD_i\ge 0$. Then $B_i-h_iD_i$ has 
at least one component less than $B_i$, and the coefficients of $B_i-h_iD_i$ belong to some 
finite set which is independent of $i$. Thus we can apply induction on the number of components of $B_i$ 
(which is a bounded number) to derive a contradiction.  
 
\end{proof}

\subsection{BAB and lower bound on lc thresholds}

We recall some of the main results of [\ref{B-BAB}] regarding boundedness of Fano's and lc thresholds in families.

\begin{thm}[{[\ref{B-BAB}, Theorem 1.1]}]\label{t-BAB}
Let $d$ be a natural number and $\epsilon$ a positive real number. Then the projective 
varieties $X$ such that  

$\bullet$ $(X,B)$ is $\epsilon$-lc of dimension $d$ for some boundary $B$, and 

$\bullet$  $-(K_X+B)$ is nef and big,\\\\
form a bounded family. 
\end{thm}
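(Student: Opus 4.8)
The statement is the BAB conjecture, established in [\ref{B-BAB}, Theorem 1.1]; here is the shape of the argument I would follow.

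\textbf{Reductions.} First I would reduce to the case where $X$ is $\Q$-factorial, $(X,0)$ is $\epsilon'$-lc for some $\epsilon'=\epsilon'(d,\epsilon)>0$, and $-K_X$ is ample. Passing to a small $\Q$-factorialization preserves the $\epsilon$-lc property and the property that $-(K_X+B)$ is nef and big, and boundedness may be checked after such a modification. Since $(X,B)$ is klt with $-(K_X+B)$ nef and big, $X$ is of Fano type, so one may run MMP on anticanonical-type divisors; writing $-(K_X+B)=A+E$ with $A$ ample and $E\ge 0$ and perturbing $B$ in the direction of $E$, one arranges $-(K_X+B)$ ample. A subtlety here is that the perturbation parameter is a priori non-uniform, so in practice one keeps the boundary $B$ around and only exploits that, after taking complements, its coefficients lie in a fixed finite set.

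\textbf{Complements and effective birationality.} By boundedness of complements — Theorem \ref{t-bnd-compl-usual} (equivalently [\ref{B-compl}]) applied with $\mathfrak{R}=\{0,1\}$ — there is $n=n(d)$ and $B^+\ge 0$ with $(X,B^+)$ lc and $n(K_X+B^+)\sim 0$. The core step is \emph{effective birationality}: there is $m=m(d,\epsilon)$ such that $|-mK_X|$ defines a birational map. One proves this by induction on $d$. If $\vol(-K_X)$ is large, then cutting with general anticanonical $\Q$-divisors through two general points produces a non-klt centre; after tie-breaking to a minimal non-klt centre $S$ and applying adjunction, $S$ carries a structure of a (generalised) log Fano of smaller dimension, to which inductive effective birationality and boundedness of complements apply, and one lifts sections back to $X$. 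If $\vol(-K_X)$ is small one argues directly. The $\epsilon$-lc hypothesis is used throughout to bound how singular the created centres can be.

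\textbf{Bounded volume, birational boundedness, and the upgrade.} Next one bounds $\vol(-K_X)\le v(d,\epsilon)$: granting effective birationality, a too-large volume again produces a non-klt centre through a general point with controlled complement, and $\epsilon$-lc-ness together with ACC for lc thresholds and the boundedness of exceptional log Fano pairs forces a contradiction. With $m$ and $v$ bounded, $\phi_{|-mK_X|}$ maps $X$ into $\PP^N$ with $N$ and $\deg\phi(X)$ bounded, so the couples $(X,\Supp B^+)$ — hence the varieties $X$ — are log birationally bounded. The final and hardest step upgrades birational boundedness to honest boundedness: place $X$ birationally in a bounded family, resolve, run a relative MMP to recover $X$ as a model of a fibre, and use the $\epsilon$-lc condition plus the bound on $\vol(-K_X)$ to bound the Cartier index of $K_X$ and to rule out infinitely many intermediate contractions, so that $X$ itself lies in a bounded family. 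I expect this last interplay — simultaneously controlling singularities via $\epsilon$-lc, ACC for $\mld$'s, and boundedness of complements in order to cross from ``birationally bounded with bounded volume'' to genuine boundedness — to be the principal obstacle, and it is indeed where the bulk of [\ref{B-BAB}] is spent.
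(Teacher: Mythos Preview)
The paper does not give a proof of this statement: Theorem~\ref{t-BAB} appears in the preliminaries and is simply quoted from [\ref{B-BAB}, Theorem~1.1] as a known result to be used later. So there is no ``paper's own proof'' to compare against.

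Your outline is a reasonable high-level sketch of the strategy in [\ref{B-BAB}], though a few points are somewhat loose. In the reductions, you cannot in general arrange $-K_X$ ample while keeping uniform control on $\epsilon$ by a naive perturbation; in [\ref{B-BAB}] one instead works throughout with the pair and the complement, and the eventual reduction to the Fano case goes via MMP and the fact that $X$ is of Fano type. In the ``upgrade'' step, the decisive ingredient is not ACC for mld's (which is open in general) but rather the lower bound on lc thresholds in bounded families (what is quoted here as Theorem~\ref{t-bnd-lct}) together with the log birational boundedness obtained from effective birationality and bounded anticanonical volume; these combine via [\ref{HMX2}, Theorem~1.6] to pass from log birational boundedness to log boundedness. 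With those corrections your sketch matches the architecture of [\ref{B-BAB}], but for the purposes of the present paper no proof is required: the theorem is cited and used as a black box.
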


On the other hand, lc thresholds are bounded from below under suitable assumptions:

\begin{thm}[{[\ref{B-BAB}, Theorem 1.6]}]\label{t-bnd-lct}
Let $d,r$ be natural numbers and $\epsilon$ be a positive real number. 
Then  there is a positive real number $t$ depending only on $d,r,\epsilon$ satisfying the following. 
Assume 

$\bullet$ $(X,B)$ is a projective $\epsilon$-lc pair of dimension $d$, 

$\bullet$ $A$ is a very ample divisor on $X$ with $A^d\le r$,

$\bullet$ $A-B$ is ample, and 

$\bullet$ $M\ge 0$ is an $\R$-Cartier $\R$-divisor with $|A-M|_\R\neq \emptyset$.\\
Then  
$$
\lct(X,B,|M|_\R)\ge \lct(X,B,|A|_\R)\ge t.
$$
\end{thm}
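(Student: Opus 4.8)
The plan is to reduce everything to a single uniform lower bound for $\lct(X,B,|A|_\R)$ and then extract that bound from the boundedness of $X$ (which is automatic from $A^d\le r$) together with the bounded degree of $B$. The first inequality is formal: fix $P\in|A-M|_\R$; for any $N\in|M|_\R$ we have $N+P\ge 0$ and $N+P\sim_\R M+(A-M)=A$, so $N+P\in|A|_\R$, and since $0\le B+sN\le B+s(N+P)$ for $s\ge 0$, log canonicity of $(X,B+s(N+P))$ forces that of $(X,B+sN)$; taking suprema over admissible $s$ gives $\lct(X,B,|M|_\R)\ge\lct(X,B,|A|_\R)$. So $M$ becomes irrelevant, and it remains to produce $t=t(d,r,\epsilon)>0$ with $\lct(X,B,|A|_\R)\ge t$.

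Next I would record the boundedness inputs. Embedding $X$ by the complete linear system $|A|$ presents it as a nondegenerate $d$-dimensional variety of degree $A^d\le r$ in $\PP^{h^0(A)-1}$, and $\deg\ge\operatorname{codim}+1$ forces $h^0(A)\le r+d$; hence the pairs $(X,A)$ of the statement lie in a bounded family — after finitely many pieces, a projective $\mathcal X\to T$ over a finite-type base with a relatively very ample $\mathcal A$ of fibrewise degree $\le r$, each $(X,A)$ a fibre. Moreover $A-B$ ample gives $A^{d-1}\cdot B<r$, so $B$ has bounded $A$-degree, and every $N\in|A|_\R$ has $A$-degree $A^d\le r$. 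The catch, and the reason the statement has real content, is that the \emph{supports} of $B$ and of such $N$ are not bounded — a component of tiny coefficient may have arbitrarily large degree — so one cannot naively place the triples $(X,B,N)$ into a bounded family of couples and quote semicontinuity of lc thresholds.

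The heart of the proof is the uniform bound $\lct(X,B,|A|_\R)\ge t$ on this family. After a routine rational approximation (an effective $N\sim_\R A$ is a convex combination of effective $\Q$-divisors $\sim_\Q A$, and the lc locus is convex) one reduces to $N\in|A|_\Q$. Then I would argue by contradiction: if no such $t$ existed there would be $\epsilon$-lc pairs $(X_i,B_i)$ in the family, $\Q$-divisors $N_i\ge 0$ with $N_i\sim_\Q A_i$, and $s_i:=\lct(X_i,B_i,N_i)\to 0$; the pair $(X_i,B_i+s_iN_i)$ is lc but not klt, so some divisorial valuation $E_i$ over $X_i$ has $a(E_i,X_i,B_i+s_iN_i)=0$, whence $\operatorname{ord}_{E_i}N_i=a(E_i,X_i,B_i)/s_i\ge\epsilon/s_i\to\infty$. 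The key point is that, because $X_i$ is bounded and $B_i+N_i$ has $A$-degree $<2r$, the valuation $E_i$ has bounded complexity — extracted by a blow-up of a centre of bounded codimension with bounded numerical invariants — so, passing to a subsequence, the $X_i$, the blow-ups $Y_i\to X_i$ extracting $E_i$, and the coefficient data of $B_i$ and of $s_iN_i$ all converge inside a bounded family; since the coefficients of $N_i$ are $\le r$ and $s_i\to 0$, the limit of $s_iN_i$ vanishes, so the limit pair is a limit of the $\epsilon$-lc pairs $(X_i,B_i)$ and is therefore $\epsilon$-lc along the limit of $E_i$, contradicting $a(E_i,X_i,B_i+s_iN_i)=0$ in the limit.

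The main obstacle is precisely that last step: controlling the finitely many valuations that can compute an lc threshold tending to $0$, bounding their complexity from the $A$-degree bound on $B+N$ and the boundedness of $X$, and running the generic-limit bookkeeping so that non-log-canonicity genuinely survives the degeneration. This part draws on the same circle of ideas as the BAB theorem (Theorem \ref{t-BAB}) and the theory of complements; everything else is the formal reductions above. Once it is in place, the three reductions assemble into the stated chain $\lct(X,B,|M|_\R)\ge\lct(X,B,|A|_\R)\ge t$.
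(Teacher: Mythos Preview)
This theorem is not proved in the paper; it is stated in the preliminaries as a citation of [\ref{B-BAB}, Theorem 1.6], one of the main results of that earlier paper. So there is no ``paper's own proof'' to compare against here. Nonetheless let me comment on your proposal.

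Your reduction of the first inequality is correct and standard: writing $N+P\in|A|_\R$ for $P\in|A-M|_\R$ immediately gives $\lct(X,B,|M|_\R)\ge\lct(X,B,|A|_\R)$. The boundedness of the polarised varieties $(X,A)$ via the Chow-variety argument is also fine.

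The genuine gap is the sentence ``the valuation $E_i$ has bounded complexity --- extracted by a blow-up of a centre of bounded codimension with bounded numerical invariants''. You offer no argument for this, and it is essentially equivalent to the theorem itself. Bounded $A$-degree of $B_i+N_i$ does \emph{not} by itself control which divisorial valuations can compute the lc threshold: even on a fixed smooth $X$ one can produce, by iterated point blow-ups, valuations $E$ with $\operatorname{ord}_E(N)$ arbitrarily large for $N$ of fixed degree; what saves the situation is that the log discrepancy $a(E,X,B)$ grows comparably, and quantifying this interaction uniformly over all $(X,B)$ with the stated $\epsilon$-lc and degree constraints is precisely the content of the result. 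Your limiting argument then rests on a convergence of ``blow-up data'' that you have not established, and the conclusion ``non-log-canonicity survives the degeneration'' presupposes exactly the family control you have not produced. You yourself flag this (``The main obstacle is precisely that last step''), but the proposal does not contain an idea for resolving it.

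For context, the proof in [\ref{B-BAB}] is not a semicontinuity or generic-limit argument of the de Fernex--Ein--Musta\c{t}\u{a} type. It goes through the theory of bounded complements and is intertwined with the proof of BAB (Theorem \ref{t-BAB}); in particular it uses the creation of non-klt centres and the construction of $n$-complements to transfer the problem to a bounded setting where the support of the relevant divisors is genuinely controlled. A direct limiting argument along your lines would need an independent Izumi-type bound for $\epsilon$-lc pairs with only degree (not support) control on $B$, and no such bound is available without essentially reproving the theorem.
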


Note that the conditions on $A,B,M$ essentially say that $X$ belongs to a bounded family and that the 
``degrees" of $B,M$ with respect $A$ are bounded.

\subsection{b-divisors}\label{ss-b-divisor}

We recall some definitions regarding b-divisors but not in full generality. 
Let $X$ be a variety. A \emph{b-$\R$-Cartier b-divisor over $X$} is the choice of  
a projective birational morphism 
$Y\to X$ from a normal variety and an $\R$-Cartier divisor $M$ on $Y$ up to the following equivalence: 
 another projective birational morphism $Y'\to X$ from a normal variety and an $\R$-Cartier divisor
$M'$ defines the same b-$\R$-Cartier  b-divisor if there is a common resolution $W\to Y$ and $W\to Y'$ 
on which the pullbacks of $M$ and $M'$ coincide.  

A b-$\R$-Cartier  b-divisor  represented by some $Y\to X$ and $M$ is \emph{b-Cartier} if  $M$ is 
b-Cartier, i.e. its pullback to some resolution is Cartier.

\subsection{Generalised pairs}

A \emph{generalised pair} consists of 
\begin{itemize}
\item a normal variety $X$ equipped with a projective
morphism $X\to Z$, 

\item an $\R$-divisor $B\ge 0$ on $X$, and 

\item a b-$\R$-Cartier  b-divisor over $X$ represented 
by some projective birational morphism $X' \overset{\phi}\to X$ and $\R$-Cartier divisor
$M'$ on $X'$
\end{itemize}
such that $M'$ is nef$/Z$ and $K_{X}+B+M$ is $\R$-Cartier,
where $M := \phi_*M'$. 

We usually refer to the pair by saying $(X,B+M)$ is a  generalised pair with 
data $X'\to X\to Z$ and $M'$, and call $M'$ the nef part. Note that our notation 
here, which seems to be preferred by others, is slightly different from those in [\ref{BZh}][\ref{B-compl}]. 

We now define generalised singularities.
Replacing $X'$ we can assume $\phi$ is a log resolution of $(X,B)$. We can write 
$$
K_{X'}+B'+M'=\phi^*(K_{X}+B+M)
$$
for some uniquely determined $B'$. For a prime divisor $D$ on $X'$ the \emph{generalised log discrepancy} 
$a(D,X,B+M)$ is defined to be $1-\mu_DB'$. 
We say $(X,B+M)$ is 
\emph{generalised lc} (resp. \emph{generalised klt})(resp. \emph{generalised $\epsilon$-lc}) 
if for each $D$ the generalised log discrepancy $a(D,X,B+M)$ is $\ge 0$ (resp. $>0$)(resp. $\ge \epsilon$).

A \emph{generalised non-klt centre} of a generalised pair $(X,B+M)$ is the image on $X$ of a prime divisor 
$D$ over $X$  with $a(D,X,B+M)\le 0$, and 
the \emph{generalised non-klt locus} of the generalised pair is the union of all the generalised non-klt centres.  

\emph{Generalised sub-pairs} and their singularities are similarly defined by allowing the coefficients of 
$B$ to be any real number. 

To state the next lemma we need the notion of \emph{very exceptional divisors}. 
Given a contraction $f\colon X\to Z$ of normal varieties and an $\R$-divisor $N$ on $X$, we say that 
$N$ is very exceptional over $Z$ if $\Supp N$ is vertical over $Z$ and  that 
for any prime divisor $D$ on $Z$ there is a prime divisor $S$ on $X$ mapping onto $D$ but such that 
$S$ is not a component of $N$. 

 \begin{lem}\label{l-mmp-v-exc}
Let $(X,B+M)$ be a $\Q$-factorial generalised klt generalised pair with data $X'\to X\to Z$ and $M'$. Assume 
$K_X+B+M\sim_\R N/Z$ where $N\ge 0$ is very exceptional over $Z$. Then any MMP 
on $K_X+B+M$ over $Z$ with scaling of an ample divisor terminates with a model $Y$ on which 
$K_Y+B_Y+M_Y\sim_\R 0/Z$.
\end{lem}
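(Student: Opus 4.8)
The plan is to run the given MMP, show it is confined to the vertical locus $\Supp N$, deduce its termination from the very exceptional hypothesis, and then read off $K_Y+B_Y+M_Y\sim_\R 0/Z$ via the negativity lemma. Since $(X,B+M)$ is $\Q$-factorial generalised klt, each step of an MMP on $K_X+B+M$ over $Z$ with scaling of an ample divisor $A$ exists (cone, contraction and flip for generalised klt pairs, cf. [\ref{BZh}], [\ref{BCHM}]). We may assume $N\ne 0$; then $\dim Z\ge 1$, as otherwise $\Supp N$ dominates $Z$, against verticality. Because $K_X+B+M\sim_\R N/Z$ with $N\ge 0$, on the model $X_i$ at the $i$-th step the contracted extremal ray $R_i$ satisfies $N_i\cdot R_i<0$, where $N_i$ is the birational transform of $N$, so the contracted, resp. flipped, locus lies in $\Supp N_i$, which is vertical over $Z$. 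Hence no prime divisor mapping onto a prime divisor of $Z$ is ever contracted, so $N_i$ stays very exceptional over $Z$ and $K_{X_i}+B_i+M_i\sim_\R N_i/Z$ at every stage.

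Next I would establish termination. Only finitely many steps are divisorial contractions, as the relative Picard number drops, so eventually every step is a flip; let $\lambda_i$ be the scaling numbers of this flipping tail and $\lambda:=\lim\lambda_i\ge 0$. If $\lambda>0$, note $A_i\cdot R_i>0$ (indeed $\lambda_i(A_i\cdot R_i)=-(K_{X_i}+B_i+M_i)\cdot R_i>0$), so for $\lambda'\in(0,\lambda)$ each tail step is also $(K_{X_i}+B_i+M_i+\lambda'A_i)$-negative; thus the tail is an MMP over $Z$ on $K_X+B+M+\lambda'A\sim_\R N+\lambda'A/Z$, a divisor big over $Z$ since $A$ is ample, and such an MMP terminates by [\ref{BCHM}], [\ref{BZh}], a contradiction. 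If $\lambda=0$, I would follow the strategy of [\ref{B-lc-flips}] for MMPs on very exceptional divisors: enlarging the boundary by $cN$, where $c>0$ is the generalised lc threshold of $N$ with respect to $(X,B+M)$, turns the given MMP into an MMP with the same extremal rays (since $K_X+B+cN+M\sim_\R(1+c)N/Z$) on a generalised pair with a non-klt centre contained in $\Supp N$, to which special termination applies; combined with induction on $\dim X$ and on the number of components of $\Supp N$ this forces the flipping tail to terminate. Ruling out an infinite flipping tail when $\lambda=0$ is the main obstacle; everything else is formal.

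Once the MMP terminates we reach a model $Y\to Z$; as $K_X+B+M$ is pseudo-effective over $Z$ (being $\sim_\R N\ge 0$ over $Z$), the output is a minimal model, so $K_Y+B_Y+M_Y$ is nef over $Z$. Now $K_Y+B_Y+M_Y\sim_\R N_Y/Z$ with $N_Y\ge 0$ very exceptional over $Z$, so the general negativity lemma [\ref{B-lc-flips}, Lemma 3.3] gives $N_Y\le 0$, whence $N_Y=0$ and $K_Y+B_Y+M_Y\sim_\R 0/Z$, as required.
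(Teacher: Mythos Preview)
Your overall shape---split on $\lambda=\lim\lambda_i$, handle $\lambda>0$ via bigness, and finish with the general negativity lemma---matches the paper, but your treatment of the $\lambda=0$ case takes a harder road than necessary and is where your argument is incomplete. You try to force termination via special termination after enlarging the boundary by $cN$, and you flag this yourself as ``the main obstacle''; as written, the sketch (special termination plus induction on $\dim X$ and on the number of components of $N$) is not a proof, and making it one for generalised pairs would require real work.

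The paper sidesteps this entirely. When $\lambda=0$, after finitely many steps only flips remain, so for $i\gg 0$ the maps $X_i\dashrightarrow X_j$ are small and $K_{X_j}+B_j+M_j+\lambda_jC_j$ is nef$/Z$; transporting back to $X_i$ and letting $j\to\infty$ shows $K_{X_i}+B_i+M_i\sim_\R N_i$ is a \emph{limit of movable$/Z$ divisors}. This is already enough for the general negativity lemma [\ref{B-lc-flips}, Lemma~3.3]: for any prime divisor $S$ on $X_i$ and the general curves $\Gamma$ of $S$ contracted over $Z$, one gets $N_i\cdot\Gamma\ge 0$, and since $N_i\ge 0$ is very exceptional over $Z$ this forces $N_i=0$. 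Thus $K_{X_i}+B_i+M_i\sim_\R 0/Z$ and the MMP stops at that step. In other words, the paper uses the negativity lemma to \emph{obtain} termination in the $\lambda=0$ case, rather than proving termination first and applying negativity afterwards as you do. The trick you are missing is that the general negativity lemma needs only ``limit of movable'', not nef.
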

\begin{proof}
This proof is similar to that of [\ref{BZh}, Theorem 1.8].
Let $C\ge 0$ be an ample $\R$-divisor such that $(X,B+C+M)$ is generalised klt (with the same nef part $M'$) 
and $K_X+B+C+M$ is ample over $Z$. Run the MMP on $K_X+B+M$ over $Z$ with scaling of $C$ (as defined 
before [\ref{BZh}, Lemma 4.4]). This consists of a sequence $X_i\bir X_{i+1}$ 
of divisorial contractions and flips where $X=X_1$. Let $\lambda_i$ be the numbers obtained in the process so that 
$K_{X_i}+B_i+\lambda_iC_i+M_i$ is nef over $Z$. 
If $\lambda=\lim \lambda_i>0$, 
then the MMP terminates by [\ref{BZh}, Lemma 4.4] because the MMP is also an MMP 
on $K_X+B+\frac{\lambda}{2} C+M$, so in this case replacing $X$ with the 
minimal model we can assume $K_X+B+M$ is nef over $Z$. If  $\lambda=\lim \lambda_i=0$, then replacing $X$ 
with $X_i$ for some $i\gg 0$, we can assume 
that $K_X+B+M$ is a limit of movable$/Z$ $\R$-divisors. In either case (that is, $\lambda>0$ or $\lambda=0$), 
for any prime divisor $S$ on $X$ and for 
the general curves $\Gamma$ of $S$ contracted over $Z$, we have $N\cdot \Gamma\ge 0$. Therefore, 
$N=0$ by the general negativity lemma [\ref{B-lc-flips}, Lemma 3.3] as $N$ is very exceptional over $Z$.
In other words the MMP contracts $N$.

\end{proof}

The next lemma is useful for reducing problems about generalised log Calabi-Yau fibrations to 
usual log Calabi-Yau fibrations.

\begin{lem}\label{l-from-gen-fib-to-usual-fib}
Let $d,r$ be natural numbers and $\epsilon$ be a positive real number. Let 
$(X,B+M)\to Z$ be a generalised $(d,r,\epsilon)$-Fano type fibration (as in \ref{d-gen-FT-fib}) such that  
 $-(K_X+\Delta)$ is big over $Z$ for some $0\le \Delta\le B$. 
Then we can find a boundary $\Theta\ge \Delta$ such that $(X,\Theta)\to Z$ is a $(d,r,\frac{\epsilon}{2})$-Fano type fibration.
\end{lem}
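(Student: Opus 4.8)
The plan is to convert the nef part $M'$ into an honest boundary divisor while keeping track of the Fano type fibration structure. First I would use that $X$ is of Fano type over $Z$: since $M'$ is nef over $Z$ (indeed nef globally) and $X'\to X$ can be taken to be a log resolution, the push-forward $M=\phi_*M'$ is a nef$/Z$ $\mathbb{R}$-Cartier divisor on $X$, and more importantly we can find $0\le M''\sim_{\mathbb{R}} M$ on $X$ whose support is suitably general: by Lemma \ref{l-div-pef-on-FT} applied on $X\to Z$ (with a small ample perturbation coming from $A-L$ being ample), the linear system $|M|_{\mathbb{R}}$ is non-empty, so pick a general $0\le N\sim_{\mathbb{R}} M$. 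The key point is that after possibly replacing $N$ by a small multiple $tN$ with $t$ close to $1$ and absorbing the deficit into a pullback from $Z$ (using again that $A-L$ is ample so there is room), we may assume $N$ is general enough that $(X,B+N)$ has singularities only mildly worse than those of $(X,B+M)$.

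Next I would control the singularities. Since $(X,B+M)$ is generalised $\epsilon$-lc, on the log resolution $X'$ we have $K_{X'}+B'+M'=\phi^*(K_X+B+M)$ with all coefficients of $B'$ at most $1-\epsilon$. Replacing $N$ by a sufficiently general member of $|M|_{\mathbb{R}}$ (using Bertini-type genericity as in Lemmas \ref{l-gen-element-lin-system}--\ref{l-gen-element-v.ample-div-passing-x,y-contraction}, applied to the semi-ample model of $M$ over $Z$ after running an MMP), the divisor $N$ meets the strata of $(X',\Supp B')$ transversally, so that $(X, B+N)$ is $\frac{\epsilon}{2}$-lc provided $N$ is chosen with small enough coefficients — which we arrange by writing $M\sim_{\mathbb{R}} (1-s)M + sM$ and only using $(1-s)M$ worth of genuine divisor while the remaining $sM$ gets pushed into the base contribution. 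Set $\Theta:=B+N$ (or $\Delta + (\Theta-\Delta)$ appropriately, ensuring $\Theta\ge\Delta$ since we only add the effective $N\ge 0$ to $B\ge\Delta$).

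It then remains to verify that $(X,\Theta)\to Z$ is a $(d,r,\frac{\epsilon}{2})$-Fano type fibration in the sense of \ref{d-FT-fib}: $(X,\Theta)$ is projective $\frac{\epsilon}{2}$-lc of dimension $d$ by the previous paragraph; $-K_X$ is big over $Z$ unchanged; $K_X+\Theta = K_X+B+N \sim_{\mathbb{R}} K_X+B+M \sim_{\mathbb{R}} f^*L$ so the same $L$, hence the same $A$ with $A^{\dim Z}\le r$ and $A-L$ ample, works verbatim. The main obstacle I expect is the simultaneous control required in the second step: one must choose $N\in |M|_{\mathbb{R}}$ general enough to guarantee the $\frac{\epsilon}{2}$-lc bound on \emph{all} exceptional valuations (not just those visible on one fixed resolution), and this genericity has to be compatible with the relative structure over $Z$ — the semi-ampleness of $M$ over $Z$ (obtained by running an appropriate MMP on $X$, which is of Fano type over $Z$, cf. the discussion in \S\ref{ss-compl} and Lemma \ref{l-div-pef-on-FT}) is what makes the Bertini argument go through, but matching the loss of $\epsilon/2$ precisely requires care with how much of $M$ is realized as a divisor versus absorbed into $f^*(\text{something})$.
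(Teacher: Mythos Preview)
Your approach has a genuine gap at the Bertini step. You want to replace $M$ by a general effective $N\sim_\R M$ so that $(X,B+N)$ is $\frac{\epsilon}{2}$-lc, but this requires the pullback $\phi^*N$ on the log resolution $X'$ to have small coefficients along the exceptional divisors. The obstruction is that $\phi^*M = M' + E'$ with $E'\ge 0$ exceptional (by the negativity lemma), and $E'$ sits in the fixed part of $|\phi^*M|_\R$; no Bertini argument can move it. Running an MMP on $M$ over $Z$ does not help: even if the pushdown $M_1$ becomes semi-ample on some model $X_1$, the nef part of the generalised pair is the fixed b-divisor $M'$, not the pullback of $M_1$, so the discrepancy $E'$ persists on every resolution. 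A telltale sign that something is missing is that you never use the hypothesis that $-(K_X+\Delta)$ is big over $Z$.

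The paper's proof uses this hypothesis in an essential way. Since $B-\Delta+M\sim_\R -(K_X+\Delta)$ over $Z$ is big over $Z$ and pseudo-effective globally, $B-\Delta+M+f^*A$ is globally big, so on $X'$ one writes
\[
\phi^*(B-\Delta+M+f^*A)=B'-\Delta'+M'+\phi^*f^*A\sim_\R G'+H'
\]
with $G'\ge 0$ and $H'$ \emph{ample on $X'$}. Then $\alpha H'+(1-\alpha)M'$ is ample on $X'$ for small $\alpha>0$, so it has a general member $R'\ge 0$ to which Bertini genuinely applies. Setting $\Theta':=\Delta'+(1-\alpha)(B'-\Delta')+\alpha G'+R'$ gives a log-smooth boundary with coefficients $\le 1-\frac{\epsilon}{2}$ and $K_{X'}+\Theta'\sim_\R \phi^*f^*(L+\alpha A)$; pushing down yields the desired $\Theta\ge \Delta$. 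The point is that the ample piece $H'$ (produced from bigness, not from $M'$ alone) is what absorbs the problematic exceptional contribution $E'$ that your argument cannot handle.
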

\begin{proof}
Taking a $\Q$-factorialisation we can assume $X$ is $\Q$-factorial.  
Since $B-\Delta$ is effective and $M$ is pseudo-effective (as it is the pushdown of a nef divisor),
$B-\Delta+M$ is pseudo-effective. Moreover, since  $-(K_X+\Delta)$ is big over $Z$, 
$$
B-\Delta+M\sim_\R -(K_X+\Delta)/Z
$$ 
is big over $Z$. Thus  
$$
t(B-\Delta+M)+f^*A
$$ 
is globally big for any sufficiently small $t>0$, hence 
 $B-\Delta+M+f^*A$ is globally big as  $B-\Delta+M$ is pseudo-effective.

Let $\phi\colon X'\to X$ be a log resolution of $(X,B)$ 
on which the nef part $M'$ of $(X,B+M)$ resides. Write 
$$
K_{X'}+B'+M'=\phi^*(K_X+B+M)
$$
and 
$$
K_{X'}+\Delta'=\phi^*(K_X+\Delta).
$$
Then
$$
B'-\Delta'+M'=\phi^*(B-\Delta+M).
$$ 
Since $(X,B+M)$ is generalised $\epsilon$-lc, the coefficients of $B'$ do not exceed $1-\epsilon$. 
In addition, since $M'$ is nef and $B\ge \Delta$, we have $B'\ge \Delta'$ by the negativity lemma 
applied to $B'-\Delta'$ and the morphism $\phi$.

Since $B-\Delta+M+f^*A$ is big, we can write 
$$
B'-\Delta'+M'+\phi^*f^*A=\phi^*(B-\Delta+M+f^*A)\sim_\R G'+H'
$$ 
where $G'\ge 0$ and $H'$ is ample. Replacing $\phi$ we can assume $\phi$ is a log resolution of 
$(X,B+\phi_*G')$.
Pick a small real number $\alpha>0$ and pick a general 
$$
0\le R'\sim_\R \alpha H'+(1-\alpha)M'.
$$
Let 
$$
\Theta':=\Delta'+(1-\alpha)(B'-\Delta')+\alpha G'+R'.
$$
We can make the above choices so that $(X',\Theta')$ is log smooth. Since 
$$
\Delta'\le \Delta'+(1-\alpha)(B'-\Delta')\le \Delta'+(B'-\Delta')=B',
$$ 
we have 
$$
\Delta'\le\Theta'\le B'+\alpha G'+R'.
$$
In particular, we can choose $\alpha$ and $R'$ so that the coefficients of $\Theta'$ do not exceed $1-\frac{\epsilon}{2}$. 

By construction, we have 
$$
K_{X'}+\Theta'=K_{X'}+ \Delta'+(1-\alpha)(B'-\Delta')+\alpha G'+R'
$$
$$
\sim_\R K_{X'}+  \Delta'+(1-\alpha)(B'-\Delta')+\alpha G'+\alpha H'+(1-\alpha)M'
$$
$$
\sim_\R K_{X'}+ \Delta'+(1-\alpha)(B'-\Delta')+\alpha(B'-\Delta'+M'+ \phi^* f^*A)+(1-\alpha)M'
$$
$$
= K_{X'}+ \Delta'+B'-\Delta'+\alpha(M'+ \phi^* f^*A)+(1-\alpha)M'
$$
$$
=K_{X'}+ B'+M'+\alpha\phi^*f^*A\sim_\R \phi^*f^*(L+\alpha A).
$$
Therefore,   letting $\Theta=\phi_*\Theta'$, we have 
$$
K_X+\Theta\sim_\R f^*(L+\alpha A).
$$ 
Choosing $\alpha$ small enough we can ensure $A-(L+\alpha A)$ is 
ample. Moreover, since $K_{X'}+\Theta' \sim_\R 0/X$ and since the coefficients of $\Theta'$ do not 
exceed $1-\frac{\epsilon}{2}$, the pair $(X,\Theta)$ is $\frac{\epsilon}{2}$-lc.
Thus $(X,\Theta)\to Z$ is a $(d,r,\frac{\epsilon}{2})$-Fano type fibration. Finally, it is obvious that $\Theta\ge \Delta$.
 
\end{proof}

\subsection{Bound on singularities}

\begin{lem}\label{l-bnd-sing-gen-cy-pairs}
Let $d,p\in\N$ and $\Phi\subset [0,1]$ be a DCC set. Then there is a real number $\epsilon>0$ depending only 
on $d,p$ and $\Phi$ such that if $(X,B+M)$ is a projective generalised pair with data $X'\to X$ and $M'$ 
satisfying:
\begin{itemize}
\item $(X,B+M)$ is generalised klt of dimension $d$,

\item the coefficients of $B$ are in $\Phi$,  

\item $pM'$ is b-Cartier, and 

\item $K_X+B+M\sim_\R 0$,
\end{itemize}
then $(X,B+M)$ is generalised $\epsilon$-lc.
\end{lem}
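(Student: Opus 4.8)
The plan is to argue by contradiction, producing from a putative counterexample a sequence of generalised pairs whose singularities degenerate, and then deriving a contradiction from boundedness and the behaviour of log discrepancies in families. So suppose no such $\epsilon$ exists. Then there is a sequence of projective generalised pairs $(X_i, B_i + M_i)$ with data $X_i' \to X_i$ and $M_i'$, all satisfying the four hypotheses with a fixed $d$, $p$, and DCC set $\Phi$, but such that $(X_i, B_i + M_i)$ is not generalised $\epsilon_i$-lc for a sequence $\epsilon_i \to 0$; equivalently, the minimal generalised log discrepancy $\mathrm{mld}(X_i, B_i + M_i)$ tends to $0$.

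The first step is to get these pairs into a bounded family. Since $pM_i'$ is b-Cartier, the nef part is suitably controlled; the coefficients of $B_i$ lie in the DCC set $\Phi$ and $K_{X_i}+B_i+M_i \sim_\R 0$ forces, by the global ACC / boundedness machinery for generalised log Calabi–Yau pairs (this is where one invokes the strong results on generalised pairs with trivial log canonical class, analogous to \ref{t-BAB} and the complements theory of [\ref{B-compl}]), that the coefficients of $B_i$ actually lie in a fixed finite set, and that there is a uniform $\epsilon_0 > 0$ with $(X_i, B_i+M_i)$ generalised $\epsilon_0$-lc — which is already the desired conclusion. More carefully: one runs the contradiction through the statement that the set of $X_i$ underlying such generalised Calabi–Yau pairs is bounded (using that generalised klt plus $K_X+B+M\sim_\R 0$ with bounded nef part puts $X$ in a bounded family, via BAB-type input after passing to an appropriate crepant model), fixes a bounded family $\mathcal{V}\to T$ realising the couples $(X_i, \Supp B_i)$, and then pulls the boundaries $B_i$ and the b-divisors $M_i$ back to this family.

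Once everything lives over a fixed base, the third step is the standard lower-semicontinuity / rigidity argument: on a log resolution of the total space of the family, the generalised log discrepancy $a(D, X_i, B_i+M_i)$ of any divisor $D$ over $X_i$ is computed by a coefficient of a boundary that varies in a controlled (DCC, hence after finiteness, locally finite) way as $t_i$ varies in $T$; since the coefficients of $B_i$ are in the DCC set $\Phi$ and $pM_i'$ has bounded Cartier index, there are only finitely many possible values of $a(D, X_i, B_i+M_i)$ in the relevant range $[0,1]$ once $i$ is large, so $\mathrm{mld}(X_i, B_i+M_i)$ cannot converge to $0$ unless it equals $0$ — but generalised klt rules that out. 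This contradiction yields the uniform $\epsilon$.

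The main obstacle I expect is the boundedness input: controlling the underlying varieties $X_i$ and the nef parts $M_i'$ simultaneously. Generalised klt-ness together with $K_X+B+M\sim_\R 0$ does not immediately give $X$ Fano type, so one cannot directly quote \ref{t-BAB}; instead one must either pass to a suitable birational model where the nef part becomes semi-ample and the base of the induced fibration is bounded, or invoke the known boundedness results for generalised Calabi–Yau pairs with bounded Cartier index of the nef part. Making the transition from ``$pM'$ b-Cartier'' to genuine boundedness of the family of couples $(X_i, \Supp B_i)$ — and checking that the DCC condition on $\Phi$ is enough, without an a priori finiteness assumption — is the delicate point; everything after that is the routine semicontinuity argument.
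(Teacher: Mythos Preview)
Your approach has a genuine gap, and it is precisely the one you flag: boundedness of the underlying varieties $X_i$ is neither available nor needed here. Generalised klt together with $K_X+B+M\sim_\R 0$ does not put $X$ in a bounded family (think of K3 surfaces with $B=M=0$), so the entire second and third steps of your proposal rest on an unproven input. Moreover, even if the coefficients of $B_i$ lie in a fixed finite set, that alone does not bound the log discrepancies of divisors \emph{over} $X_i$; your sentence ``which is already the desired conclusion'' conflates control of coefficients on $X_i$ with control of all log discrepancies.

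The paper's argument is much shorter and sidesteps boundedness entirely. Given the sequence $(X_i,B_i+M_i)$ with $\mathrm{mld}$ tending to $0$, pick a divisor $D_i$ over $X_i$ with $a(D_i,X_i,B_i+M_i)<\epsilon_i$, and \emph{extract} it: since the pair is generalised klt there is a birational morphism $X_i''\to X_i$ extracting exactly $D_i$. Writing the pullback as $K_{X_i''}+B_i''+M_i''$, the divisor $D_i$ now appears in $B_i''$ with coefficient $b_i=1-a(D_i,X_i,B_i+M_i)\ge 1-\epsilon_i$. Thus $(X_i'',B_i''+M_i'')$ is a generalised lc pair with $K_{X_i''}+B_i''+M_i''\sim_\R 0$, nef part still satisfying $pM_i'$ b-Cartier, and coefficients of $B_i''$ in $\Phi\cup\{b_i\}$, which (after passing to a subsequence) is again DCC. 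Now [\ref{BZh}, Theorem 1.6] (the global ACC for generalised pairs) forces these coefficients to lie in a \emph{finite} set, contradicting $b_i\to 1$. The extraction step is the idea you are missing: it converts ``small log discrepancy over $X$'' into ``coefficient close to $1$ on a birational model'', which is exactly what the global ACC can rule out.
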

\begin{proof}
If the lemma does not hold, then there exist a decreasing sequence $\epsilon_i$ of numbers approaching $0$ 
and a sequence $(X_i,B_i+M_i)$ of pairs as in the statement such that $(X_i,B_i+M_i)$ is not 
generalised $\epsilon_i$-lc. There is a prime divisor 
$D_i$ over $X_i$ with generalised log discrepancy 
$$
a(D_i,X_i,B_i+M_i)<\epsilon_i.
$$ 
If $D_i$ is a divisor on $X_i$, we let $X_i''\to X_i$ be the identity morphism. If not, then since 
$(X_i,B_i+M_i)$ is generalised klt, there is a birational morphism $X_i''\to X_i$ extracting 
$D_i$ but no other divisors. We can assume that the induced map $X_i'\bir X_i''$ is a morphism. 

Let $K_{X_i''}+B_i''+M_i''$ be the pullback of $K_{X_i}+B_i+M_i$ where $M_i''$ 
is the pushdown of $M_i$. We consider $(X_i'',B_i''+M_i'')$ as a generalised pair with 
data $X_i'\to X_i''$ and $M_i'$.
Let 
$$
b_i=1-a(D_i,X_i,B_i+M_i)
$$ 
which is  the coefficient of $D_i$ in $B_i''$. Then $b_i\ge 1-\epsilon_i$ and 
$B_i''$ has coefficients in $\Phi'':=\Phi\cup \{b_i \mid i\in\N\}$. 
Replacing the sequence, we can assume $\Phi''$ is a DCC set. 
Now we get a contradiction, by [\ref{BZh}, Theorem 1.6],  because 
$$
K_{X_i''}+B_i''+M_i''\sim_\R 0
$$
and because $\{b_i \mid i\in\N\}$ is not finite 
as the $b_i$ form an infinite sequence approaching $1$. 

\end{proof}

\subsection{Towers of Mori fibre spaces}

We will use the following result of [\ref{DiCerbo-Svaldi}] in the proof of Theorem \ref{cor-bnd-cy-fib-non-product}.

\begin{thm}[{[\ref{DiCerbo-Svaldi}, Theorem 3.2]}]\label{t-tower-of-Mfs}
Let $(X,B)$ be a projective klt pair such that $K_X+B\sim_\R 0$, $B\neq 0$, and $(X,B)$ is not of 
product type. Then there exist a birational map $\phi\colon X\bir X_1$ and a sequence of contractions 
$$
X_1\to  X_2\to \cdots \to X_l
$$
such that $\phi^{-1}$ does not contract divisors, each $X_i\to X_{i+1}$ is a $K_{X_i}$-Mori fibre space, 
and $X_l$ is a point.
\end{thm}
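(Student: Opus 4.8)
The plan is to build the tower by running minimal model programs, starting from a $\Q$-factorial model of $X$, using the two hypotheses in the following roles: $B\neq 0$ forces $K_X$ to be non-pseudo-effective, so the first MMP ends with a Mori fibre space, and not being of product type forces the analogous non-pseudo-effectivity on every base that appears, via the canonical bundle formula.

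First I would replace $(X,B)$ by a small $\Q$-factorialisation, which is an isomorphism in codimension one and hence preserves being klt, being Calabi--Yau, $B\neq 0$, and not being of product type; so I assume $X$ is $\Q$-factorial. Since $-K_X\sim_\R B\ge 0$ with $B\neq 0$, the divisor $K_X$ is not pseudo-effective (otherwise $K_X\equiv 0$, hence $B\equiv 0$ and then $B=0$). By [\ref{BCHM}] I would run a $K_X$-MMP with scaling of an ample divisor; as $K_X$ is not pseudo-effective it cannot end in a minimal model, so it ends in a Mori fibre space $g_1\colon X_1\to X_2$, and the induced $\phi\colon X\bir X_1$ is a birational contraction, so $\phi^{-1}$ contracts no divisors. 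Setting $B_1=\phi_*B$ and using that both $K_X+B$ and $K_{X_1}+B_1$ are $\sim_\R 0$, a negativity-lemma argument on each divisorial contraction of the MMP shows $(X_1,B_1)$ is crepant to $(X,B)$; thus $(X_1,B_1)$ is klt, $K_{X_1}+B_1\sim_\R 0$, and $B_1\neq 0$ (since $K_{X_1}$ is not pseudo-effective). If $\dim X_2=0$ we are done.

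For the inductive step I would maintain a chain of Mori fibre space morphisms $X_1\to X_2\to\cdots\to X_m$ in which each $X_i$ is $\Q$-factorial klt and $X\bir X_1$ is a birational contraction, and I suppose $\dim X_m>0$. Applying the canonical bundle formula [\ref{kaw-subadjuntion}][\ref{ambro-adj}] to the composite contraction $h\colon X_1\to X_m$ yields $B_{X_m}\ge 0$ (since $(X_1,B_1)$ is lc) and a moduli part $M_{X_m}$ which is pseudo-effective on $X_m$ (being the pushdown of a nef divisor), with $K_{X_1}+B_1\sim_\R h^*(K_{X_m}+B_{X_m}+M_{X_m})$; as the left side is $\sim_\R 0$ and $h$ is surjective, $K_{X_m}+B_{X_m}+M_{X_m}\equiv 0$. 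Since $\dim X_1>\dim X_m>0$ and $X\bir X_1$ has inverse contracting no divisors, the non-product-type hypothesis applied to $h$ gives $K_{X_m}\not\equiv 0$; together with $B_{X_m}+M_{X_m}$ being pseudo-effective this forces $K_{X_m}$ to be non-pseudo-effective. A $K_{X_m}$-MMP with scaling then produces, after a birational contraction $X_m\bir X_m'$, a Mori fibre space $X_m'\to X_{m+1}$. Finally I would rebuild the tower, transporting the steps of this $K_{X_m}$-MMP up through $X_1\to X_2\to\cdots\to X_m$ by running the corresponding relative MMPs over the successive bases, obtaining a new chain of Mori fibre space morphisms ending with $X_m'\to X_{m+1}$ and in which the composite map from $X$ is still a birational contraction. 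As $\dim X_{m+1}<\dim X_m$, iterating terminates at a point, and relabelling gives the tower.

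The hard part will be this last rebuilding step: having found the next Mori fibre space on the current base, one must reconstruct the already-built part of the tower so that it remains a chain of genuine Mori fibre space morphisms with the total map from $X$ still contracting no divisors in the inverse direction, all the while controlling the singularities and the generic fibres through the relative MMPs. The remaining ingredients, namely the existence and termination of each MMP, are routine via [\ref{BCHM}] once the relevant non-pseudo-effectivity is in hand; and it is exactly this non-pseudo-effectivity that the hypotheses supply — $B\neq 0$ at the top level, and not being of product type at every lower level through the canonical bundle formula together with the pseudo-effectivity of the discriminant and moduli parts.
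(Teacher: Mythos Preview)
The paper does not give its own proof of this statement; it is quoted from [\ref{DiCerbo-Svaldi}], and the paper only remarks that ``the main point in the proof of the theorem is similar to'' Lemma~\ref{l-contraction-after-flops}. That lemma shows how, given a Mori fibre space $X\to Y$ with $K_X+B\sim_\R 0/Z$ and a small birational modification $Y\bir Y'/Z$, one can modify $X$ by an isomorphism in codimension one so that the new $X'\to Y'$ is again an extremal contraction.

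Your outline is correct and matches the approach indicated by the paper. You correctly extract non-pseudo-effectivity of $K_{X_m}$ from the non-product-type hypothesis via the canonical bundle formula (using that $B_{X_m}\ge 0$ and $M_{X_m}$ is pseudo-effective, so $K_{X_m}\equiv -(B_{X_m}+M_{X_m})$ together with $K_{X_m}\not\equiv 0$ forces $K_{X_m}$ out of the pseudo-effective cone), and you correctly flag the rebuilding of the tower as the technical heart. Lemma~\ref{l-contraction-after-flops} is exactly the tool for the small steps of the $K_{X_m}$-MMP: since $K_{X_m}+B_{X_m}+M_{X_m}\sim_\R 0$, each $K_{X_m}$-flip is a flop for the generalised pair, and the lemma lifts it through $X_{m-1}\to X_m$ (and inductively through the whole tower) while keeping each map an extremal contraction and keeping $X\bir X_1$ an isomorphism in codimension one. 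Divisorial steps of the $K_{X_m}$-MMP are handled by composing with the level above and playing a relative two-ray game. Your sketch is a faithful summary of the argument in [\ref{DiCerbo-Svaldi}].
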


The main point in the proof of the theorem is similar to the following: 

\begin{lem}\label{l-contraction-after-flops}
Suppose that 
\begin{itemize}
\item $(X,B)$ is a $\Q$-factorial projective klt pair, 

\item $h\colon X\to Y$ is a Mori fibre space structure, i.e. a non-birational extremal contraction,

\item  $Y\to Z$ is a contraction,  

\item $K_X+B\sim_\R 0/Z$, and

\item $Y\bir Y'/Z$ is a birational map to a $\Q$-factorial normal projective variety 
which is an isomorphism in codimension one.
\end{itemize}
Then there exist a birational map $X\bir X'$ to a $\Q$-factorial normal projective variety which is an isomorphism 
in codimension one and such that the induced map $X'\bir Y'$ is an extremal contraction (hence a morphism).
\end{lem}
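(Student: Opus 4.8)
The plan is to realise $X'$ as the output of a suitable minimal model program over $Z$ that lifts $q$. The starting point is that $X$ is of Fano type over $Y$: since $h$ is a non-birational extremal contraction, $-K_X$ is ample over $Y$; and since $(X,B)$ is klt with $K_X+B\sim_\R 0/Z$, hence $\sim_\R 0/Y$, for small $t>0$ the pair $(X,(1-t)B)$ is klt with $-(K_X+(1-t)B)\sim_\R -tK_X$ ample over $Y$. This is where the Mori fibre space structure and the klt, relatively trivial, hypotheses enter.

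Next I would fix a very ample divisor $A'$ of sufficiently large degree on $Y'$, let $A$ be its birational transform on $Y$ (a big divisor, as $q$ is an isomorphism in codimension one), and put $H:=h^*A\ge 0$ on $X$. Then $-K_X+H$ is ample over $Y$ (as $H\sim_\R 0/Y$), and restricting to a general fibre of $X\to Z$ and using that ``ample over the base plus the pullback of a big divisor from the base is big'' (compare Lemmas \ref{l-semi-ample-div} and \ref{l-div-pef-on-FT}), one sees that $-K_X+H$ is big globally provided $A'$ was chosen large enough. Choose $0\le\Delta\sim_\R -K_X+H$ with $(X,\Delta)$ klt; then $K_X+\Delta\sim_\R H$ is pseudo-effective, so by [\ref{BCHM}] the $(K_X+\Delta)$-MMP over $Z$ with scaling of an ample divisor terminates with a minimal model $\psi\colon X\bir X'$ on which $K_{X'}+\Delta'$, equivalently $G':=\psi_*H$, is semi-ample over $Z$. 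Thus $G'$ defines a contraction $X'\to T/Z$ with $G'$ the pullback of an ample$/Z$ divisor on $T$; since $G'$ is the birational transform of the ample divisor $A'$ under the induced map $X'\bir Y'$, the ample model $T$ is $Y'$, so $X'\to Y'$ is a contraction. (Note also that $K_X+B\equiv 0/Z$ forces every ray contracted in this MMP to be $(K_X+B)$-trivial, so $\psi$ is a crepant modification — a composition of flops.)

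Finally I would run a Picard number count to conclude that $\psi$ is an isomorphism in codimension one and that $X'\to Y'$ is extremal. No step of the MMP raises the Picard number, so $\rho(X')\le\rho(X)$; on the other hand $X'\to Y'$ is a contraction with positive-dimensional fibres (since $\dim X'>\dim Y'$), so $\rho(X'/Y')\ge 1$, whence $\rho(X')=\rho(Y')+\rho(X'/Y')\ge\rho(Y')+1=\rho(Y)+1=\rho(X)$, using that $q$ is an isomorphism in codimension one (so $\rho(Y)=\rho(Y')$) and that $\rho(X/Y)=1$ (so $\rho(X)=\rho(Y)+1$). Equality throughout forces $\rho(X')=\rho(X)$, so every step was a flip: $\psi$ is an isomorphism in codimension one, $X'$ is $\Q$-factorial (and projective, as $Z$ is), and $\rho(X'/Y')=1$, i.e. $X'\to Y'$ is an extremal contraction.

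I expect the main obstacle to be the middle step. Because $X$ is typically not of Fano type over $Z$, one cannot run an arbitrary MMP on $X$ over $Z$; the point is to use Fano type over $Y$ to engineer a klt pair $(X,\Delta)$ whose boundary is big (so that [\ref{BCHM}] applies over $Z$) and with $K_X+\Delta\sim_\R h^*A$, and then to identify the ample model produced by the MMP with $Y'$ itself rather than some a priori unrelated contraction.
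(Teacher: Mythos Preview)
Your strategy---run an MMP over $Z$ on (a multiple of) $h^*A$, identify the resulting ample model with $Y'$ via section rings, then do a Picard-number count---is exactly the paper's strategy. The paper, however, first decomposes $Y\bir Y'$ into a sequence of flops (using the generalised klt structure on $Y$ coming from adjunction for $(X,B)\to Y$) and thereby reduces to the case where $Y\to Z$ is a single extremal flopping contraction. In that case $B$ is big over $Z$ (since it is big over $Y$ and $Y\to Z$ is birational), so $X$ is of Fano type over $Z$ and \emph{any} MMP on $X$ over $Z$ terminates; the paper then runs the MMP directly on $G=h^*H_Y$ and uses the very clean count $\rho(X/Z)=2$, $\rho(Y'/Z)=1$. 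Your direct approach avoids this reduction and hence the appeal to generalised adjunction, at the price of having to manufacture a klt pair $(X,\Delta)$ with big boundary to justify termination via [\ref{BCHM}].

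That manufacturing step is where your write-up has a gap. You assert one can ``choose $0\le\Delta\sim_\R -K_X+H$ with $(X,\Delta)$ klt'', but bigness of $-K_X+H$ alone does not guarantee a klt member of $|-K_X+H|_\R$: the stable base locus could force components of every member to appear with coefficient $\ge 1$. (Your justification of \emph{global} bigness via ``restricting to a general fibre of $X\to Z$'' is also off---that argument only yields bigness over $Z$---though global bigness does in fact hold by a different argument using a global ample on $Y$.) The fix is painless: work relatively and take $\Delta=B+\epsilon h^*A$ for small $\epsilon>0$ and a general $A\in|A|$. Then $(X,\Delta)$ is klt (since $(X,B)$ is klt and $h^*A$ is an effective $\Q$-Cartier divisor), $K_X+\Delta\sim_\R \epsilon h^*A/Z$, and $\Delta$ is big over $Z$: on a general fibre $F$ of $X\to Z$ one has $B|_F\sim_\R -K_F$ pseudo-effective and $(B+h^*A)|_F$ big (because $-K_F$ is $h$-ample and $A|_{F_Y}$ is big on the corresponding fibre of $Y\to Z$, provided $A'$ is taken sufficiently positive), whence $(B+\epsilon h^*A)|_F=(1-\epsilon)B|_F+\epsilon(B+h^*A)|_F$ is big. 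With this $\Delta$, [\ref{BCHM}] gives termination and semi-ampleness of $K_{X'}+\Delta'$ over $Z$, and the rest of your argument (ample model $=Y'$, Picard-number count forcing $\psi$ to be small and $\rho(X'/Y')=1$) goes through.
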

\begin{proof}
Since $K_X+B \sim_\R 0/Y$, by adjunction, we can write 
$$
K_X+B\sim_\R h^*(K_Y+B_Y+M_Y)
$$ 
where we consider $(Y,B_Y+M_Y)$ as a generalised pair (see \ref{rem-base-fib-gen-pair} below) 
which is generalised klt. Since $X$ is $\Q$-factorial and since $X\to Y$ is extremal and 
non-birational, $Y$ is also $\Q$-factorial [\ref{kollar-mori}, Corollary 3.18].  

Since $K_Y+B_Y+M_Y\sim_\R 0/Z$, 
by the same arguments as in [\ref{Kaw-flops}] we can decompose $Y\bir Y'$ into a sequence of flops: 
indeed if $H_{Y'}$ is an ample $\Q$-divisor on $Y'$, then after rescaling $H_{Y'}$, $(Y,B_Y+H_Y+M_Y)$ is 
generalised klt where $H_Y$ is the birational transform of $H_{Y'}$; now running an MMP on 
$K_Y+B_Y+H_Y+M_Y$ over $Z$ ends with $Y'$ as $K_{Y'}+B_{Y'}+H_{Y'}+M_{Y'}$ is ample; 
in particular only flips can occur in the MMP which are flops with respect to $K_Y+B_Y+M_Y$. 

By the previous paragraph, to obtain $X'$ it is enough to consider the case when $Y\bir Y'/Z$ is one single flop 
(in particular, we can assume $Y\to Z$ is an extremal flopping contraction). 
Let $H_{Y'},H_{Y}$ be as before, and let $G$ be the pullback of $H_Y$ to $X$. 
Since $B$ is big over $Y$ and since $Y\to Z$ is birational, $B$ is also big over $Z$.
Thus $X$ is of Fano type over $Z$ as $(X,B)$ is klt and K$_X+B\sim_\R 0/Z$. 
Therefore, there is a minimal model $X'$ for $G$ over $Z$. 
The birational transform of $G$ on $X'$, say $G'$, is semi-ample over $Z$, hence it defines a contraction 
$X'\to V'/Z$ and $G'$ is the pullback of an ample$/Z$ divisor $H_{V'}$. By construction, 
$V'$ is the ample model of $G$ over $Z$. Since $G$ is the pullback of $H_Y$, $V'$ is also the ample 
model of $H_Y$ over $Z$. But the ample model of $H_Y$ is $Y'$, hence $V'=Y'$. In particular, 
$X'\bir Y'$ is a morphism. 

Finally, since  $X\to Z$ has relative Picard number two,  $X'\to Z$ also has relative Picard number two 
as $X\bir X'$ is an isomorphism in codimension one. On the other hand, $Y'\to Z$ has relative 
Picard number one, so $X'\to Y'$ is an extremal contraction. 

\end{proof}

%%%%%%%%%%%%%%%%%%%%%%%
%%%%%%%%%%%%%%%%%%%%%%%%

\section{\bf Boundedness of complements}

In this section we construct certain kinds of complements as in Theorem \ref{t-bnd-comp-lc-global} 
which are not usual complements but rather similar to those of [\ref{B-BAB}, Theorem 1.7]. 
The main differences with [\ref{B-BAB}, Theorem 1.7] are that $M-(K_X+B)$ is no longer assumed to be ample, 
$X$ is not necessarily $\Q$-factorial, $M|_S\sim 0$ is replaced with $M|_S\equiv 0$, 
and $n+1$ is replaced with $n+2$. We start with treating a special case of the theorem.

\begin{prop}\label{l-bnd-comp-Gamma}
Theorem \ref{t-bnd-comp-lc-global} holds under the additional assumption that there is a boundary 
$\Gamma$ such that $(X,\Gamma)$ is plt with $S=\rddown{\Gamma}$ 
and such that $\alpha M-(K_X+\Gamma)$ is ample for some real number $\alpha>0$. 
\end{prop}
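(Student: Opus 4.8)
The plan is to push the complement problem down to the divisor $S$, where after adjunction the situation becomes a genuine (log) Fano one, solve it there with the bounded complements theorem \ref{t-bnd-compl-usual}, and then lift the resulting complement back to $X$ by Kawamata--Viehweg vanishing; the appearance of $(n+2)M$ rather than the usual $(n+1)M$ is precisely the positivity one spends in order to subtract $S=\rddown{\Gamma}$ and to compensate for $M$ being only semi-ample rather than ample. So first I would perform adjunction to $S$. Since $(X,\Gamma)$ is plt with $S=\rddown{\Gamma}$, the divisor $S$ is normal; writing $(K_X+\Gamma)|_S=K_S+\Gamma_S$ makes $(S,\Gamma_S)$ klt. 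As $M$ is semi-ample defining $f$, we have $M=f^*H$ for some ample $H$ on $Z$, so $M|_S\equiv 0$ forces $f(S)=z$ to be a closed point (hence ``over a neighbourhood of $z$'' may be replaced by ``over $Z$'' after shrinking), and $M|_S$, being semi-ample and numerically trivial Cartier, satisfies $M|_S\sim 0$. Restricting the ample divisor $\alpha M-(K_X+\Gamma)$ to $S$ and using $M|_S\equiv 0$ shows $-(K_S+\Gamma_S)$ is ample, so $S$ is of Fano type.

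Next I would build the boundary on $S$ that remembers $(X,B)$. Since $S$ is a non-klt centre of $(X,B)$ which is a prime divisor and $(X,\Gamma)$ is plt along it, one runs divisorial adjunction for complements along $S$: after possibly replacing $\Gamma$ by a convex combination with another boundary (preserving that its round-down is $S$, that $(X,\cdot)$ is plt, and that $\beta M-(K_X+\cdot)$ is ample for some $\beta>0$), one obtains a boundary $\Theta_S$ on $S$ whose coefficients lie in a hyperstandard set $\Phi(\mathfrak{S})$ with $\mathfrak{S}$ finite and depending only on $d,\mathfrak{R}$, such that $-(K_S+\Theta_S)$ is nef and such that any complement of $K_S+\Theta_S$ dominating $\Theta_S$ will lift to a divisor on $X$ dominating $B$ near $z$. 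Applying Theorem \ref{t-bnd-compl-usual} to the Fano type variety $S$ (for which $-(K_S+\Theta_S)$ nef is enough) yields a natural number $n$, depending only on $\dim S$ and $\mathfrak{S}$ and hence only on $d,\mathfrak{R}$, together with a strong $n$-complement $K_S+\Theta_S^+$ of $K_S+\Theta_S$; by the ``moreover'' clause $n$ may be taken divisible by anything needed below.

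Finally I would lift. Choose an integral divisor $G$ on $X$, assembled from $(n+2)M$, $-nK_X$ and the integral parts of $B$ and $\Gamma$, so that $G|_S$ realises the complement data $n(K_S+\Theta_S^+)$ on $S$ and so that $G-S-(K_X+D)$ is nef and big for a suitable boundary $D$ with $S$ not contained in the non-klt locus of $(X,D)$ over a neighbourhood of $z$ --- concretely this difference is a positive multiple of the nef and big divisor $M-(K_X+B)$ plus a semi-ample multiple of $M$, which is exactly why the extra copy of $M$ is needed. Kawamata--Viehweg vanishing then gives $H^1(X,\mathcal{O}_X(G-S))=0$, so sections of $\mathcal{O}_X(G)$ surject onto sections of $\mathcal{O}_S(G|_S)$ over a neighbourhood of $z$ (if $S$ fails to be Cartier one first passes to a small $\Q$-factorial modification over $Z$, using that $X$ is of Fano type over $Z$, and pushes the result down). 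Pulling back the complement section from $S$ yields a $\Q$-divisor $\Lambda\ge B$ with $n(K_X+\Lambda)\sim (n+2)M$, and inversion of adjunction combined with the plt property of $(X,\Gamma)$ shows $(X,\Lambda)$ is lc over a neighbourhood of $z$.

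The main obstacle is the middle step: reconciling the possibly deep non-klt structure of $(X,B)$ along $S$ with the divisorial plt structure of $(X,\Gamma)$, so that the boundary $\Theta_S$ has coefficients in a hyperstandard set controlled only by $d$ and $\mathfrak{R}$, and so that the complement manufactured on $S$ genuinely lifts to one dominating $B$ itself rather than just $\Gamma$, all while keeping $(X,\Lambda)$ log canonical near $z$. Once this bookkeeping is in place, the Fano type complements input \ref{t-bnd-compl-usual} and the vanishing/lifting argument are comparatively routine.
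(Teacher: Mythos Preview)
Your overall strategy matches the paper's: restrict to $S$, note $S$ is of Fano type and $M|_S\sim 0$, produce a bounded $n$-complement on $S$ via Theorem \ref{t-bnd-compl-usual}, and lift it back to $X$ by Kawamata--Viehweg vanishing plus inversion of adjunction. So the plan is correct in outline.

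However, your assessment of where the difficulty lies is inverted. The ``middle step'' you flag as the main obstacle is in fact immediate in the paper: since $S$ is a prime divisor and a non-klt centre of the lc pair $(X,B)$, it is simply a component of $\rddown{B}$, and one takes $\Theta_S=B_S$ from $(K_X+B)|_S=K_S+B_S$; the hyperstandard coefficients follow from standard divisorial adjunction with no further manoeuvring. There is no ``deep non-klt structure'' to reconcile, and no convex-combination trick is needed to produce $\Theta_S$. What you treat as routine --- the lifting --- is where the paper spends almost all its effort (Steps 2--6): one must pass to a log resolution $\phi\colon X'\to X$, introduce the decomposition $B'=E'+\Delta'$ with $E'=\rddown{B'}$, build the integral divisor $L'=(n+2)M'-nK_{X'}-nE'-\rddown{(n+1)\Delta'}$, and then carefully construct an auxiliary exceptional integral divisor $P'$ so that $(X',\Lambda')$ is plt with $\rddown{\Lambda'}=S'$ and vanishing applies to $L'+P'-S'$. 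Your sketch (``choose an integral divisor $G$ assembled from\dots'') and your suggestion to handle non-Cartier $S$ by a small $\Q$-factorialisation do not capture this; the resolution is essential, and getting $G_{S'}\ge 0$ and then showing the pushed-down $\Lambda$ is lc over $z$ (via the connectedness principle, not just inversion of adjunction) are the genuine technical points.
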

\begin{proof}
 
\emph{Step 1.} 
\emph{In this step we consider bounded complements on $S$.}
Since $M-(K_X+B)$ is nef and big and  $\alpha M-(K_X+\Gamma)$ is ample,
$$
(1-t+t\alpha)M-(K_X+(1-t)B+t\Gamma)=(1-t)(M-(K_X+B))+t(\alpha M-(K_X+\Gamma))
$$
is ample for any $t\in (0,1)$. Thus replacing $\Gamma$ 
with $(1-t)B+t\Gamma$ for some sufficiently small number $t$, 
we can replace $\alpha$ by some rational number in $(0,2)$. Note that since $(X,B)$ is lc and $S\le \rddown{B}$, 
this change preserves the plt property of $(X,\Gamma)$ and the condition $S=\rddown{\Gamma}$.

Define $K_S+B_S=(K_X+B)|_S$ by adjunction. Then $(S,B_S)$ is lc and 
the coefficients of $B_S$ belong to $\Phi(\mathfrak{S})$ 
for some finite set $\mathfrak{S}\subset [0,1]$ of rational numbers 
depending only on $\mathfrak{R}$ [\ref{PSh-II}, Proposition 3.8][\ref{B-compl}, Lemma 3.3]. 
By assumption, $M|_S\equiv 0$, hence $M|_S\sim_\Q 0$ as $M$ is semi-ample. 
In particular, 
$$
-(K_X+\Gamma)|_S\sim_\R (\alpha M-(K_X+\Gamma))|_S
$$ 
is ample, and since $(X,\Gamma)$ is plt, we deduce that $S$ is of Fano type. Therefore, as 
$$
-(K_S+B_S)\sim_\Q (M-(K_X+B))|_S
$$ 
is nef, there is a natural number $n$ depending only on $d,\mathfrak{S}$ 
such that $K_S+B_S$ has a strong $n$-complement $K_S+B_S^+$ which by definition satisfies 
$B_S^+\ge B_S$ [\ref{B-compl}, Theorem 1.7] (=Theorem \ref{t-bnd-compl-usual}). 
In addition, we can assume that $nB$ is an integral divisor after replacing $n$ with a bounded multiple 
depending on $\mathfrak{R}$. 

Note that since $S$ is of Fano type and $M|_S\sim_\Q 0$, 
in fact we have $M|_S\sim 0$ as $\Pic(S)$ is torsion-free (cf. [\ref{Isk-Prokh}, Proposition 2.1.2] the main 
point being the vanishing $h^i(\mathcal{O}_S)=0$ for $i>0$ which is a consequence of 
Kawamata-Viehweg vanishing theorem).\\

\emph{Step 2}.
\emph{In this step we take a resolution and define appropriate divisors on it.}
Let $\phi\colon X'\to X$ be a log resolution of $(X,B+\Gamma)$, $S'$ be the birational transform of $S$, and 
$\psi\colon S'\to S$ be the induced morphism. Put
$$
N:=M-(K_{X}+B)
$$
and let $K_{X'}+B',M',N'$ be the pullbacks of 
$K_X+B,M,N$, respectively. Let $E'$ be the sum of the components of $B'$ which have coefficient $1$, 
and let $\Delta'=B'-E'$. Define  
$$
L':=(n+2)M'-nK_{X'}-nE'-\rddown{(n+1){\Delta'}}
$$
which is an integral divisor. Note that 
$$
L'=(n+2)M'-nK_{X'}-nB'+n\Delta'-\rddown{(n+1){\Delta'}}
$$
$$
=2M'+n(M'-K_{X'}-B')+n\Delta'-\rddown{(n+1){\Delta'}}
$$
$$
=2M'+nN'+n\Delta'-\rddown{(n+1){\Delta'}}.
$$
Now write $K_{X'}+\Gamma'=\phi^*(K_{X}+\Gamma).$ 
We can assume $B'-\Gamma'$ has sufficiently small (positive or negative) coefficients 
by taking $t$ in the beginning of Step 1 to be sufficiently small.\\ 

\emph{Step 3}.
\emph{In this step we introduce a boundary $\Lambda'$ and study related divisors.}
Let $P'$ be the unique integral divisor so that 
$$
\Lambda':=\Gamma'+{n{\Delta'}}-\rddown{(n+1){\Delta'}}+P'
$$ 
is a boundary, $(X',\Lambda')$ is plt, and $\rddown{\Lambda'}=S'$ (in particular, we are assuming $\Lambda'\ge 0$). 
More precisely, we let $\mu_{S'}P'=0$ and for each prime divisor $D'\neq S'$, we let 
$$
\mu_{D'}P':=-\mu_{D'}\rddown{\Gamma'+{n{\Delta'}}-\rddown{(n+1){\Delta'}}}
$$
which satisfies 
$$
\mu_{D'}P'=-\mu_{D'}\rddown{\Gamma'-\Delta'+\langle (n+1)\Delta'\rangle}
$$ 
where $\langle (n+1)\Delta'\rangle$ is the fractional part of $(n+1)\Delta'$.
This implies $0\le \mu_{D'}P'\le 1$ for any prime divisor $D'$: this is obvious if $D'=S'$, so assume $D'\neq S'$; 
if $D'$ is a component of $E'$, then 
$D'$ is not a component of $\Delta'$ and $\mu_{D'}\Gamma'\in(0,1)$ as $B'-\Gamma'$ has 
small coefficients, hence $\mu_{D'}P'=0$; 
on the other hand, if $D'$ is not a 
component of $E'$, then the absolute value of $\mu_{D'}(\Gamma'-\Delta')=\mu_{D'}(\Gamma'-B')$ is sufficiently small, 
hence $0\le \mu_{D'}P'\le 1$. 

We show $P'$ is exceptional$/X$. 
Assume $D'$ is a component of $P'$ which is not exceptional$/X$ and let $D$ be its pushdown. 
Then $D'\neq S'$ and $D'$ is a component of $\Delta'$ as $\mu_{D'}\Gamma'=\mu_D\Gamma\in [0,1)$, hence 
$$
1>\mu_{D'}\Delta'=\mu_{D'}B'=\mu_{D}B\ge 0.
$$ 
Moreover, since $nB$ is integral,  $\mu_{D'}n\Delta'$ is integral, hence  
$\mu_{D'}\rddown{(n+1)\Delta'}=\mu_{D'}n\Delta'$ which implies 
$$
\mu_{D'}P'=-\mu_{D'}\rddown{\Gamma'}=-\mu_{D}\rddown{\Gamma}=0,
$$
a contradiction.\\

\emph{Step 4.}
\emph{In this step we show that sections of $(L'+P')|_{S'}$ can be lifted to $X'$.}
Let $A:=\alpha M-(K_X+\Gamma)$. 
Letting  $A'=\phi^*A$ we have 
$$
K_{X'}+\Gamma'+A'-\alpha M'=0.
$$ 
Then 
$$
L'+P'= 2M'+nN'+ {n{\Delta'}}-\rddown{(n+1){\Delta'}}+ P'
$$
$$
=K_{X'}+\Gamma'+A'-\alpha M'+2M'+nN'+{n{\Delta'}}-\rddown{(n+1){\Delta'}}+ P'
$$
$$
=K_{X'}+\Lambda'+A'+nN'+(2-\alpha) M' . 
$$
Since $A'+nN'+(2-\alpha) M' $ is nef and big and $(X',\Lambda')$ is plt with $\rddown{\Lambda'}=S'$, 
we have $h^1(L'+P'-S')=0$ by the Kawamata-Viehweg vanishing theorem. Thus 
$$
H^0(L'+P')\to H^0((L'+P')|_{S'})
$$
is surjective.\\

\emph{Step 5.}
\emph{In this step we introduce an effective divisor $G_{S'}\sim (L'+P')|_{S'}$.}
Recall the $n$-complement $K_S+B_S^+$ from step 1.
Let $R_{S}:=B_{S}^+-B_{S}$ which satisfies 
$$
-n(K_{S}+B_{S})=-n(K_{S}+B_{S}^++B_S-B_S^+)\sim -n(B_S-B_S^+)=nR_{S}\ge 0.
$$
Letting $R_{S'}$ be the pullback of $R_{S}$, we get 
$$
nN'|_{S'}=n(M'-(K_{X'}+B'))|_{S'}\sim-n(K_{X'}+B')|_{S'}
$$
$$
=-n\psi^*(K_{S}+B_{S})\sim n\psi^*R_S=nR_{S'}\ge 0. 
$$
Then 
$$
(L'+P')|_{S'}=( 2M'+ nN'+{n{\Delta'}}-\rddown{(n+1){\Delta'}}+P')|_{S'}
$$
$$
\sim G_{S'}:=nR_{S'}+{n{\Delta_{S'}}}-\rddown{(n+1){\Delta_{S'}}}+P_{S'}
$$
where $\Delta_{S'}=\Delta'|_{S'}$ and $P_{S'}=P'|_{S'}$. Note that 
$\rddown{(n+1){\Delta'}}|_{S'}=\rddown{(n+1){\Delta'}|_{S'}}$ since $\Delta'$ and $S'$ intersect transversally.

We show $G_{S'}\ge 0$. Assume $C'$ is a component of $G_{S'}$ 
with negative coefficient. Then 
 there is a component $D'$ of ${{\Delta'}}$ such that $C'$ is a 
component of $D'|_{S'}$. But 
$$
\mu_{C'} ({n{\Delta_{S'}}}-\rddown{(n+1){\Delta_{S'}}})=\mu_{C'} (-\Delta_{S'}+\langle(n+1)\Delta_{S'}\rangle)\ge 
-\mu_{C'} \Delta_{S'}=-\mu_{D'} \Delta'>-1
$$ 
which gives $\mu_{C'}G_{S'}>-1$ and this in turn implies $\mu_{C'}G_{S'}\ge 0$ because $G_{S'}$ is integral, a contradiction. 
Therefore $G_{S'}\ge 0$, and by Step 4, $L'+P'\sim G'$ for some effective divisor $G'$ whose support does not contain $S'$ 
and $G'|_{S'}=G_{S'}$.\\ 

\emph{Step 6.}
\emph{In this step we introduce $\Lambda$ and show that it satisfies the properties listed in the theorem.}
Let $L,P,G,E,\Delta$ be the pushdowns to $X$ of $L',P',G',E',\Delta'$.
By the definition of $L'$, by the previous step, and by the exceptionality of $P'$,   we have  
$$
(n+2)M-nK_{X}-nE-\rddown{(n+1)\Delta}=L=L+P\sim G\ge 0.
$$
Since $nB$ is integral, $\rddown{(n+1)\Delta}= n\Delta$, so  
$$
(n+2)M-n(K_{X}+B)
$$
$$
=(n+2)M-nK_{X}-nE-{n\Delta}
=L\sim nR:=G\ge 0.
$$

Let $\Lambda:={B}^+:=B+R$. By construction, $n(K_X+B^+)\sim (n+2)M$. 
It remains to show that $(X,{B}^+)$ is lc over $z=f(S)$. First we show that $(X,{B}^+)$ is lc near $S$:
this  follows from inversion of adjunction [\ref{kawakita}], if we show 
$$
K_{S}+B_{S}^+=(K_{X}+{B}^+)|_{S}
$$
which is equivalent to showing $R|_{S}=R_{S}$.
Since  
$$
nR':=G'-P'+\rddown{(n+1)\Delta'}- n\Delta'\sim L'+\rddown{(n+1)\Delta'}- n\Delta'=2M'+nN'\sim_\Q 0/X
$$
and since $\rddown{(n+1)\Delta}- n\Delta=0$, we get  $\phi_*nR'=G=nR$ and that $R'$ is the pullback of $R$. 
Now
$$
nR_{S'}=G_{S'}-P_{S'}+\rddown{(n+1)\Delta_{S'}}- n\Delta_{S'}
$$
$$
=(G'-P'+\rddown{(n+1)\Delta'}- n\Delta')|_{S'}=nR'|_{S'}
$$
which means $R_{S'}=R'|_{S'}$, hence $R_S$ and $R|_S$ both pull back to $R_{S'}$ which implies
$R_{S}=R|_{S}$.

Finally, $(X,{B}^+)$ is lc over $z=f(S)$ otherwise by the plt property of $(X,\Gamma)$ we 
can take $u>0$ to be sufficiently small so that the non-klt locus of  
$$
(X,(1-u)B^++u\Gamma)
$$
has at least two connected components (one of which is $S$) near the fibre $f^{-1}\{z\}$. This  
contradicts the connectedness principle [\ref{Kollar-flip-abundance}, Theorem 17.4]
as 
$$
-(K_X+(1-u)B^++u\Gamma)=-(1-u)(K_X+B^+)-u(K_X+\Gamma)
$$
$$
\sim_\R -u(K_X+\Gamma)\sim_\R u\alpha M-u(K_X+\Gamma)/Z
$$
is ample over $Z$.

\end{proof}

\begin{proof}(of Theorem \ref{t-bnd-comp-lc-global})
\emph{Step 1.}
\emph{In this step we make some modifications of the setting of the theorem and introduce a boundary $\Delta$.}
Adding $1$ to $\mathfrak{R}$ and replacing $(X,B)$ with a $\Q$-factorial dlt model, 
we can assume $X$ is $\Q$-factorial and 
that $S$ is a component of $\rddown{B}$. All the assumptions of the theorem are preserved. 
By assumption, $M$ is the pullback of an ample divisor on $Z$. Moreover, 
$M-(K_X+B)$ is nef and big, hence in particular it is semi-ample over $Z$ 
since $X$ is of Fano type over $Z$. Thus by Lemma \ref{l-semi-ample-div}, 
$aM-(K_X+B)$ is semi-ample for any rational number $a>1$, so
it defines a birational contraction $X\to Y$ which is simply the contraction over $Z$ defined 
by $M-(K_X+B)$: indeed, for any curve $C$ on $Y$, 
$$
\mbox{$(aM-(K_X+B))\cdot C=0$ iff $(M-(K_X+B))\cdot C=0$ and $M\cdot C=0$.}
$$
In particular, the induced map $Y\bir Z$ is a morphism and $K_X+B\sim_\Q 0/Y$. 

After running an MMP over $Y$ on $-(K_X+S)$ and replacing $X$ with the resulting model 
we can assume $-(K_X+S)$ is semi-ample over $Y$. Note that $S$ is not contracted by the MMP 
since the MMP is also an MMP on $(B-S)$ whose support does not contain $S$. Letting $\Delta=(1-b)B+bS$ 
for a sufficiently small rational number $b>0$ (depending on $a$). Then $(X,\Delta)$ is lc, and 
since $aM-(K_X+B)$ is the pullback of an ample divisor on $Y$ and $-(K_X+S)$ is semi-ample over $Y$, 
we see that 
$$
aM-(K_X+\Delta)=(1-b)(aM-(K_X+B))+b(aM-(K_X+S))
$$ 
 is semi-ample and nef and big. 
Moreover, every non-klt centre of $(X,\Delta)$ is also a non-klt centre of $(X,S)$, hence such centres are contained in $S$
because $X$ is $\Q$-factorial and of Fano type over $Z$ which ensures $(X,0)$ is klt. 

Replacing $(X,B)$ once again with a $\Q$-factorial dlt model and replacing $K_X+\Delta$ with its pullback, 
we can assume that 
\begin{itemize}
\item we have  a boundary $\Delta\le B$,

\item $(X,\Delta)$ is $\Q$-factorial dlt, 

\item  $S$ is a component of $\rddown{\Delta}$, 

\item $aM-(K_X+B)$ and $aM-(K_X+\Delta)$ are semi-ample and nef and big for some rational number 
$a>1$, 

\item if $X\to Y$ and $X\to V$ are the contractions defined by $aM-(K_X+B)$ and $aM-(K_X+\Delta)$, respectively, 
then $V\bir Y$ and $Y\bir Z$ are morphisms, and 

\item all the non-klt centres of $(X,\Delta)$ map to $z=f(S)$.
\end{itemize} 
In particular, $M|_{\rddown{\Delta}}\sim_\Q 0$.\\

\emph{Step 2.}
\emph{In this step we introduce another boundary $\tilde{\Delta}$.}
By Step 1, $aM-(K_X+\Delta)$ is  semi-ample defining a contraction 
$X\to V$ so that $V\bir Y$ is a morphism. After running an MMP on $-K_X$ over $V$ 
we can assume that $-K_X$ is semi-ample over $V$. This preserves all the properties listed in 
Step 1 except that the dlt property of $(X,\Delta)$ maybe lost and $S$ maybe contracted. 
We will recover these properties in a bit. Let 
$\tilde{\Delta}=(1-c)\Delta$ for some sufficiently small $c>0$. Then $(X,\tilde{\Delta})$ is klt as $(X,0)$ is klt. 
Moreover, we can assume that 
$$
aM-(K_X+\tilde{\Delta})=(1-c)(aM-(K_X+\Delta))+c(aM-K_X)
$$ 
is semi-ample and nef and big because $aM-(K_X+\Delta)$ is the pullback of an ample divisor on $V$ 
and $aM-K_X$ is semi-ample over $V$. Now replacing $(X,\Delta)$ with a $\Q$-factorial dlt model 
on which $S$ of Step 1 is a divisor, and replacing $K_X+B$ and $K_X+\tilde{\Delta}$ with their pullbacks 
to the dlt model we can assume that in addition to the properties listed in Step 1 we have 
\begin{itemize}
\item a boundary $\tilde{\Delta}\le \Delta$, 

\item $(X,\tilde{\Delta})$ is klt, 

\item the coefficients of $\Delta-\tilde{\Delta}$ are sufficiently small, and 

\item that $aM-(K_X+\tilde{\Delta})$ is nef and big.
\end{itemize}
Note that the coefficients of $B$ are still in $\mathfrak{R}$ because $\Delta\le B$ implies that taking the above 
$\Q$-factorial dlt model only extracts divisors whose log discrepancy with respect to $(X,B)$ is zero.\\

\emph{Step 3.}
\emph{In this step we introduce divisors $H,G$ and deal with the case when 
$\Supp G$ does not contain non-klt centres of $(X,\Delta)$.} Write 
$$
aM-(K_X+\Delta)\sim_\Q H+G
$$
where $H,G\ge 0$ are $\Q$-divisors and $H$ is ample. 
First assume that $\Supp G$ does not contain any non-klt centre of $(X,\Delta)$. 
Then, for some small $\delta>0$,
$$
aM-(K_X+\Delta+\delta G)\sim_\Q H+G-\delta G= \delta H+ (1-\delta)(H+G)
$$ 
is ample and $(X,\Delta+\delta G)$ is lc. Perturbing the coefficients of $\Delta+\delta G$ we can then produce a 
boundary $\Gamma$ such that $(X,\Gamma)$ is plt, 
$S=\rddown{\Gamma}\subseteq \rddown{B}$ 
and such that $a M-(K_X+\Gamma)$ is ample. Then apply Proposition \ref{l-bnd-comp-Gamma}. 
From now on we can assume that $\Supp G$ contains some non-klt centre of $(X,\Delta)$. \\

\emph{Step 4.}
\emph{In this step we introduce yet another boundary $\Omega$ and study some of its properties.}
Let $t$ be the lc threshold of $G+\Delta-\tilde{\Delta}$ with respect to $(X,\tilde{\Delta})$.  
Let
$$
\Omega=\tilde{\Delta}+t(G+\Delta-\tilde{\Delta}).
$$
We claim that we can ensure that  
any non-klt place of $(X,\Omega)$ is a non-klt place of $(X,\Delta)$. 
Indeed let $W\to X$ be a log resolution of $(X,\Delta+G)$, and let $K_W+\Delta_W$, 
$K_W+\tilde{\Delta}_W$, $K_W+\Omega_W$, 
$G_W$ be the pullbacks of $K_X+\Delta$, $K_X+\tilde{\Delta}$, $K_X+\Omega$, $G$, respectively. 
Since $\Supp G$ contains some non-klt centre of $(X,\Delta)$, we can assume that 
some component $T$ of $\rddown{\Delta_W}$ is a component of $G_W$. 
Since $\Delta-\tilde{\Delta}$ has sufficiently small coefficients, we can assume 
that $\Delta_W-\tilde{\Delta}_W$ also has arbitrarily small coefficients, perhaps after 
replacing $\tilde{\Delta}$. In particular, the lc threshold of 
$G$ with respect to $(X,\tilde \Delta)$ is sufficiently small as $\mu_T\tilde{\Delta}_W$ can be made arbitrarily 
close to $1$. Thus $t$ is also sufficiently small.  
Moreover, we can assume that $\Delta_W-\Omega_W$ also has arbitrarily small positive 
or negative coefficients, hence $\rddown{\Omega_W}\subseteq \rddown{\Delta_W}$ 
which implies that any non-klt place of $(X,\Omega)$ is a non-klt place of $(X,\Delta)$. 
In particular, each non-klt centre of 
 $(X,\Omega)$ is mapped to $z$, by construction of $\Delta$.
 
 By construction, 
$$
 \begin{array}{l l}
aM-(K_X+\Omega)& =aM-(K_X+\tilde{\Delta}+t(G+\Delta-\tilde{\Delta}))\\
&=aM-K_X-\tilde{\Delta}-t(G+\Delta-\tilde{\Delta})\\
&=aM-(K_X+\Delta)+\Delta-\tilde{\Delta}-t(G+\Delta-\tilde{\Delta})\\
& \sim_\Q H+G+\Delta-\tilde{\Delta}-t(G+\Delta-\tilde{\Delta})\\
& =H+(1-t)(G+\Delta-\tilde{\Delta})\\
&=tH+(1-t) (H+G+\Delta-\tilde{\Delta} )
\end{array}
$$
which implies that $aM-(K_X+\Omega)$ is ample because 
$$
aM-(K_X+\tilde{\Delta})=aM-(K_X+\Delta)+\Delta-\tilde{\Delta}\sim_\Q H+G+\Delta-\tilde{\Delta}
$$
is nef and big by Step 2.\\ 

\emph{Step 5.}
\emph{In this step we produce a plt pair and apply Proposition \ref{l-bnd-comp-Gamma} to finish the proof.}
Assume that $\rddown{\Omega}\neq 0$.  Replacing $S$ we can assume it is a component of 
$\rddown{\Omega}\le \rddown{\Delta}$. Since $(X,\Delta)$ is $\Q$-factorial dlt by Step 1,  $(X,S)$ is plt. 
Thus we can produce a boundary $\Gamma$ out of $\Omega$ so that $(X,\Gamma)$ is plt, $S=\rddown{\Gamma}$ 
maps to $z$, and $a M -(K_X+\Gamma)$ is ample. We then  
apply Proposition \ref{l-bnd-comp-Gamma}.

Now assume $\rddown{\Omega}=0$. Let $(X',\Omega')$ be a $\Q$-factorial dlt model of $(X,\Omega)$. 
By Step 4, each non-klt place of $(X,\Omega)$ is a non-klt place of $(X,\Delta)$ which is in turn a non-klt 
place of $(X,B)$ as $\Delta\le B$.
Thus if we denote the pullback of $K_X+B$ to $X'$ by $K_{X'}+B'$, then 
each exceptional$/X$ divisor on $X'$ appears in $B'$ with coefficient $1$. 
Running an MMP on $K_{X'}+\rddown{\Omega'}$ over $X$ ends with $X$ because $\rddown{\Omega'}$ 
is the reduced exceptional divisor of $X'\to X$ and because $X$ is $\Q$-factorial klt. The last step of the 
MMP is a divisorial contraction $X''\to X$ contracting one prime divisor $S''$. Then 
$(X'',S'')$ is plt and $-(K_{X''}+S'')$ is ample over $X$. 

Define $\Gamma''=(1-v) \Omega''+vS''$  for some sufficiently  small $v>0$. 
Let $M''$ be the pullback of $M$. Assume $\alpha=(1-v)a$. Since $aM-(K_X+\Omega)$
is ample and since $-(K_{X''}+S'')$ is ample over $X$, 
$$
\alpha M''-(K_{X''}+\Gamma'')=(1-v)aM''-(K_{X''}+(1-v) \Omega''+vS'')
$$
$$
=(1-v)a M''-(1-v)(K_{X''}+\Omega'')-v(K_{X''}+S'')
$$
$$
=(1-v)(a M''-(K_{X''}+\Omega''))-v(K_{X''}+S'')
$$
is ample. Moreover, $(X'',\Gamma'')$ is plt and $S''=\rddown{\Gamma''}$ maps to $z$. 
If $K_{X''}+B''$ is the pullback of $K_X+B$, then we can 
replace $(X,B)$ with $(X'',B'')$ and apply Proposition \ref{l-bnd-comp-Gamma}.

\end{proof}

%%%%%%%%%%%%%%%%%%%%%%%
%%%%%%%%%%%%%%%%%%%%%%%%

\section{\bf Boundedness of Fano type fibrations}

In this section we treat boundedness properties of Fano type log Calabi-Yau fibrations. 
We will frequently refer to Definition \ref{d-FT-fib} and use the notation therein.

\subsection{Numerical boundedness}
We start with bounding numerical properties.
The next statement and its proof are similar to [\ref{Jiang}, Lemma 3.2].

\begin{prop}\label{l-bnd-cy-numerical-bndness}
Let $d,r$ be natural numbers and $\epsilon$ be a positive real number. 
Assume that Theorem \ref{t-sing-FT-fib-totalspace} holds in dimension $d-1$. Then there is a 
natural number $l$ depending only on $d,r,\epsilon$ satisfying the following. 
Let $(X,B)\to Z$ be a $(d,r,\epsilon)$-Fano type fibration (as in \ref{d-FT-fib}) such that 
\begin{itemize}
\item $-(K_X+\Delta)$ is nef over $Z$ for some $\R$-divisor $\Delta\ge 0$, and 

\item $f^*A+B-\Delta$ is pseudo-effective. 
\end{itemize}
Then $lf^*A-(K_X+\Delta)$ is nef (globally). 
\end{prop}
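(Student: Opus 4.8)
I would verify the nefness of $N_l:=lf^*A-(K_X+\Delta)$ by intersecting with curves and show that a suitable $l$ can be extracted uniformly in $d,r,\epsilon$. The first reduction is that only curves not contracted by $f$ matter: if $C$ is contracted by $f$, then $f^*A\cdot C=0$ while $-(K_X+\Delta)\cdot C\ge 0$ by the relative nefness hypothesis, so $N_l\cdot C\ge 0$ for every $l\ge 0$; and if $C$ is horizontal over $Z$, then $f^*A\cdot C\ge 1$ since $A$ is very ample on $Z$. So it suffices to bound, in terms of $d,r,\epsilon$, the ratio $(K_X+\Delta)\cdot C\big/(f^*A\cdot C)$ over horizontal curves $C$. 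Writing $K_X+\Delta=(K_X+B)+(\Delta-B)=f^*L+(\Delta-B)$ and using that $A-L$ is ample, so $f^*(A-L)$ is nef and $f^*L\cdot C\le f^*A\cdot C$, this amounts to bounding $(\Delta-B)\cdot C$ from above by $O(f^*A\cdot C)$ for horizontal $C$.

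The easy half concerns curves $C$ moving in a family that dominates $X$: there the pseudo-effectivity of $f^*A+B-\Delta$ gives $(f^*A+B-\Delta)\cdot C\ge 0$, hence $(\Delta-B)\cdot C\le f^*A\cdot C$, so $l=2$ already works. The real content is the behaviour of \emph{rigid} curves, in particular curves contained in $\Supp B\cup\Supp\Delta$, and this is where the hypothesis that Theorem \ref{t-sing-FT-fib-totalspace} holds in dimension $d-1$ is used. The plan is to pass to lower dimension: for a general fibre $F$ of $f$ one has $K_F+B_F=(K_X+B)|_F\sim_\R 0$, $(F,B_F)$ is $\epsilon$-lc and of Fano type, $-(K_F+\Delta_F)$ is nef and $B_F-\Delta_F$ is pseudo-effective, where $\Delta_F=\Delta|_F$; applying Theorem \ref{t-sing-FT-fib-totalspace} in dimension $\le d-1$ with $P=\Delta_F$ shows $(F,B_F+t\Delta_F)$ is klt for a uniform $t$, which together with the $\epsilon$-lc property bounds the multiplicities of the horizontal components of $\Delta$. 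In parallel, since $X$ is of Fano type over $Z$ one can, by Lemma \ref{l-div-pef-on-FT}, replace the pseudo-effective divisor $f^*A+B-\Delta$ by an honest effective divisor $G\ge 0$ with $G\sim_\R (1+a)f^*A+B-\Delta$ for arbitrarily small $a>0$, so that $N_l\sim_\R f^*\big((l-1-a)A-L\big)+G$ is, for $l\ge 3$, a sum of the pull-back of an ample divisor on $Z$ and an effective divisor; one may also use that $-(K_X+\Delta)$, being nef$/Z$ on a variety of Fano type over $Z$, is semi-ample$/Z$, in order to cut the ambient dimension down via general members of $f^*|A|$ (a $(d-1)$-dimensional Fano type fibration, once $A|_H$ is replaced by $2A|_H$ to restore ``$A-L$ ample'' and $\epsilon$ is mildly decreased) and set up an induction.

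The main obstacle I expect is the uniform treatment of rigid horizontal curves $C$ contained in a prime component of $G$ or of $\Delta$: here $G\cdot C$ (equivalently $(\Delta-B)\cdot C$) can a priori be very negative, and the contribution of the \emph{vertical}$/Z$ components of $\Delta$, which intersect $C$ nonnegatively, pushes in the wrong direction. Overcoming this requires combining the bound on the horizontal multiplicities of $\Delta$ from the previous step, the pseudo-effectivity of $f^*A+B-\Delta$ read off along the finitely many prime divisors involved, and the $\epsilon$-lc hypothesis to control their coefficients; it is precisely at this point that $\epsilon$-lc (as opposed to merely lc) is indispensable — in line with the examples in the introduction where dropping $\epsilon$-lc destroys boundedness — and it is the reason the argument is carried out intertwined with Theorem \ref{t-sing-FT-fib-totalspace} rather than invoking it as a black box.
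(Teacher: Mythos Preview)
Your proposal has a genuine gap precisely where you flag the ``main obstacle'': the treatment of rigid horizontal curves. Bounding the coefficients of the horizontal part of $\Delta$ (which is indeed what you get from Theorem~\ref{t-sing-FT-fib-totalspace} applied to a general fibre) tells you nothing about intersections of $\Delta$ or $G$ with a fixed curve $C$ lying inside a component $D$: the self-intersection term $D\cdot C$ can be arbitrarily negative even when $\mu_D\Delta$ is bounded. Your last paragraph gestures at ``combining'' several bounds but does not produce an argument, and I do not see how to close this in your framework without importing the cone theorem.

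The paper's proof avoids this entirely by working with extremal rays rather than arbitrary curves. The key algebraic identity is
\[
(m+2)f^*A-(K_X+\Delta)\ \sim_\R\ (m-1)\bigl(K_X+\Theta\bigr)+mC,
\qquad \Theta:=B+\tfrac{1}{m-1}P,
\]
where $C=f^*A-(K_X+B)\sim_\R f^*(A-L)$ is nef and $0\le P\sim_\R 2f^*A+B-\Delta$ (obtained via Lemma~\ref{l-div-pef-on-FT}). Now one applies Theorem~\ref{t-sing-FT-fib-totalspace} in dimension $d-1$ not to the general fibre of $f$ but to a general $G\in|f^*A|$: this gives a uniform $t$ with $(G,B_G+tP_G)$ klt, and by inversion of adjunction the non-klt locus of $(X,B+tP)$ maps to a finite set of closed points of $Z$. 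Choosing $m$ with $\tfrac{1}{m-1}<t$, the non-klt locus $T$ of $(X,\Theta)$ is then vertical over finitely many points. Any extremal ray $R$ with $\bigl((m+2)f^*A-(K_X+\Delta)\bigr)\cdot R<0$ is horizontal (since $-(K_X+\Delta)$ is nef$/Z$), hence $f^*A\cdot R>0$ and $R$ is disjoint from the image of $\overline{NE}(T)$ in $\overline{NE}(X)$. Fujino's length bound for extremal rays of possibly non-klt pairs then gives a generating curve $\Gamma$ with $(K_X+\Theta)\cdot\Gamma\ge -2d$, and combining this with $f^*A\cdot\Gamma\ge 1$ yields the uniform $l$. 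This is the missing idea in your approach: one does not bound $(\Delta-B)\cdot C$ for all curves, but restricts to extremal rays and invokes boundedness of their length relative to a log pair whose non-klt locus is contracted over $Z$.
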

\begin{proof}
\emph{Step 1.}
\emph{In this step we do some basic preparations and introduce some notation.}
Replacing $X$ with a $\Q$-factorialisation we can assume $X$ is $\Q$-factorial. All 
the assumptions of the lemma are preserved. Put $C:=f^*A-(K_X+B)$.
Let $m\ge 2$ be a natural number. We can write 
$$
 \begin{array}{l l}
 (m+2)f^*A-(K_X+\Delta) &  = 2f^*A+m(K_X+B)+mC-(K_X+\Delta) \\
 & = 2f^*A+m(K_X+B)-(K_X+\Delta)+mC \\
 & =2f^*A+(m-1)(K_X+B)+B-\Delta+m C \\
 & =(m-1)(K_X+B+\frac{1}{m-1}(2f^*A+B-\Delta)) +m C.
  \end{array} 
$$

On the other hand, since $f^*A+B-\Delta$ is pseudo-effective and since $X$ is of Fano type over $Z$, there is 
$$
0\le P\sim_\R 2f^*A+B-\Delta
$$
by Lemma \ref{l-div-pef-on-FT}.\\

\emph{Step 2.}
\emph{In this step we find a real number $t>0$ depending only on $d,r,\epsilon$ such that 
the non-klt locus of $(X,B+tP)$ is mapped to a finite set of closed points of $Z$.}
We can assume $\dim Z>0$ otherwise the lemma is trivial. 
Let $H\in |A|$ be a general element and let $G=f^*H$, and $g$ be the induced morphism $G\to H$. 
Let
$$
K_G+B_G:=(K_X+B+G)|_G.
$$
By definition of $G$, we have $B_G=B|_G$.
Then 
\begin{itemize}
\item $(G,B_G)$ is $\epsilon$-lc as $(X,B)$ is $\epsilon$-lc, 

\item  $-K_G$ is big over $H$ as $-K_G=-(K_X+G)|_G\sim -K_X|_G/H$,

\item we have 
$$
K_G+B_G \sim_\R (f^*L+f^*H)|_G \sim_\R g^*(L+H)|_H\sim_\R g^*(L+A)|_H,
$$ 
and

\item $2A|_H-(L+A)|_H$ is ample as $A-L$ is ample. 
\end{itemize}
Therefore, $(G,B_G)\to H$ is a $(d-1,2^{d-1}r,\epsilon)$-Fano type fibration.

Letting $P_G=P|_G$ and $Q_G=\Delta|_G$ 
we have 
$$
P_G+Q_G\sim_\R (2f^*A+B-\Delta)|_G+\Delta|_G=(2f^*A+B)|_G\sim g^*2A|_H+B_G.
$$
Since we are assuming Theorem \ref{t-sing-FT-fib-totalspace} in dimension $d-1$, 
we deduce that there is a real number $t>0$ depending only on $d,r,\epsilon$ such that 
 $(G,B_G+tP_G)$ is klt. Note that here we used the assumption $\Delta\ge 0$ to ensure that 
$g^*2A|_H+B_G-P_G$ is pseudo-effective.

By inversion of adjunction [\ref{kollar-mori}, Theorem 5.50] (which is stated for $\Q$-divisors but 
also holds for $\R$-divisors) and by the previous paragraph, 
$$
(X,B+G+tP)
$$ 
is plt near $G$. Since $G$ is a general member of $|f^*A|$, we deduce that the non-klt locus of 
$(X,B+tP)$ (possibly empty) is mapped to a finite set of closed points of $Z$.\\ 

\emph{Step 3.}
\emph{In this step we consider $(m+2)f^*A-(K_X+\Delta)$-negative extremal rays.}
Fix a natural number $m\ge 2$ so that $\frac{1}{m-1}<t$. Let 
$\Theta=B+\frac{1}{m-1}P$. 
By Step 1 and definition of $P$, we have 
$$ 
(m+2)f^*A-(K_X+\Delta)=(m-1)(K_X+B+\frac{1}{m-1}(2f^*A+B-\Delta)) +m C
$$
$$
\sim_\R (m-1)(K_X+B+\frac{1}{m-1}P) +m C=(m-1)(K_X+\Theta)+mC.
$$

Assume that $R$ is an extremal ray of $X$ with 
$$
((m+2)f^*A-(K_X+\Delta))\cdot R<0. 
$$
Since $-(K_X+\Delta)$ is nef over $Z$, $R$ is not vertical over $Z$, that is, $f^*A\cdot R>0$. 
On the other hand, by the previous paragraph,
$$
((m-1)(K_X+\Theta) +m C)\cdot R<0
$$
which in turn implies 
$$
(K_X+\Theta)\cdot R<0
$$
because $C\sim_\R f^*(A-L)$ is nef.\\
 
\emph{Step 4.}
\emph{In this step we apply boundedness of length of extremal rays and finish the proof.}
Let $T$ be the non-klt locus of $(X,\Theta)$. By Step 2, $T$ is mapped to a finite set of closed points of $Z$. 
Let $V$ be the image of $\overline{NE}(T)\to \overline{NE}(X)$ where by convention we put $\overline{NE}(T)=0$ 
if $T$ is zero-dimensional  or empty. Then $V\cap R=0$ as $f^*A$ intersects 
$R$ positively but intersects every class in $V$ trivially. 
Therefore, by [\ref{Fujino-rays}, Theorem 1.1(5)], 
$R$ is generated by a curve $\Gamma$ with 
$$
-2d\le (K_X+\Theta)\cdot \Gamma,
$$
hence 
$$
-2d(m-1)\le (m-1)(K_X+\Theta)\cdot \Gamma\le ((m-1)(K_X+\Theta) +m C)\cdot \Gamma
$$
$$
=((m+2)f^*A-(K_X+\Delta))\cdot \Gamma.
$$
Moreover, $f^*A\cdot \Gamma \ge 1$.
Therefore, taking $l=m+2+2d(m-1)$ we have 
$$
0\le -2d(m-1)+ 2d(m-1)f^*A\cdot \Gamma \le (lf^*A-(K_X+\Delta))\cdot R
$$
ensuring that $lf^*A-(K_X+\Delta)$ is nef.

\end{proof}

\subsection{Bounded very ampleness}

\begin{lem}\label{l-bnd-cy-fib-v-ampleness}
Let $d,r$ be natural numbers, $\epsilon$ be a positive real number, and $\mathfrak{R}\subset [0,1]$ be a 
finite set of rational numbers. Assume that Theorem \ref{t-sing-FT-fib-totalspace} holds in dimension $d-1$ 
and that Theorem \ref{t-log-bnd-cy-fib} holds in dimension $d$. 
Then there exist natural numbers $l,m$ depending only on $d,r,\epsilon,\mathfrak{R}$ satisfying the following. 
If  $(X,B)\to Z$ is a $(d,r,\epsilon)$-Fano type fibration (as in \ref{d-FT-fib}) such that  
\begin{itemize}
\item we have  $\Delta\le B$ with coefficients in $\mathfrak{R}$, and

\item $-(K_X+\Delta)$ is ample over $Z$,
\end{itemize} 
then 
$$
m(lf^*A-(K_X+\Delta))
$$ 
is very ample.
\end{lem}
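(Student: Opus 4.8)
The plan is to upgrade the numerical nefness coming from Proposition \ref{l-bnd-cy-numerical-bndness} to ampleness, and then to use log boundedness of $(X,\Delta)$ to turn ampleness of a bounded Cartier divisor into very ampleness of a bounded multiple.

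First I would set $\delta:=\min\{a\in\mathfrak{R}\mid a>0\}$, so the non-zero coefficients of $\Delta$ are $\ge\delta$. Since $f^*A$ is pseudo-effective and $B-\Delta\ge 0$, the divisor $f^*A+B-\Delta$ is pseudo-effective, and $-(K_X+\Delta)$ is in particular nef over $Z$; hence Proposition \ref{l-bnd-cy-numerical-bndness} (this is where Theorem \ref{t-sing-FT-fib-totalspace} in dimension $d-1$ is used) provides a natural number $l_0$ depending only on $d,r,\epsilon$ with $C:=l_0f^*A-(K_X+\Delta)$ globally nef. As $-(K_X+\Delta)$ is ample over $Z$ and $f^*A\equiv 0/Z$, the divisor $C$ is ample over $Z$, so $cf^*A-(K_X+\Delta)$ is ample for some $c\gg 0$; writing $(l_0+1)f^*A-(K_X+\Delta)$ as a convex combination of the globally nef divisor $C$ and the ample divisor $cf^*A-(K_X+\Delta)$, exactly as in the proof of Lemma \ref{l-semi-ample-div}, shows that $N:=(l_0+1)f^*A-(K_X+\Delta)$ is an ample $\R$-divisor. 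We then set $l:=l_0+1$.

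Next I would apply the assumption that Theorem \ref{t-log-bnd-cy-fib} holds in dimension $d$ to $(X,B)\to Z$ and $\Delta$: it gives that $(X,\Delta)$ is log bounded, so $(X,\Supp\Delta)$ lies in a bounded family of couples depending only on $d,r,\epsilon,\delta$. Since $(X,B)$ is $\epsilon$-lc it is klt, hence $(X,\Delta)$ is klt as $\Delta\le B$; as the coefficients of $\Delta$ lie in the finite set $\mathfrak{R}$ of rationals, Lemma \ref{l-bnd-Cartier-index-family} yields a natural number $I$ depending only on $d,r,\epsilon,\mathfrak{R}$ with $I(K_X+\Delta)$ Cartier. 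Thus $IN=Ilf^*A-I(K_X+\Delta)$ is an ample Cartier divisor on the bounded variety $X$. To pass to very ampleness I would first bound $N^{\dim X}$: expanding $N^{\dim X}=(lf^*A-(K_X+\Delta))^{\dim X}$ in powers of $f^*A$, the terms in which $f^*A$ appears with exponent $>\dim Z$ vanish, and each remaining term $(f^*A)^j\cdot(-(K_X+\Delta))^{\dim X-j}$ is computed by restricting to the preimage of a general codimension-$j$ linear section of $Z$; iterating down to general fibres $F$ of $f$, which are $\epsilon$-lc of Fano type and hence bounded with $(-K_F)^{\dim F}$ bounded by Theorem \ref{t-BAB} while $-(K_F+\Delta_F)\le -K_F$ is nef, and using $A^{\dim Z}\le r$, bounds $N^{\dim X}$ and hence $(IN)^{\dim X}$ in terms of $d,r,\epsilon,\mathfrak{R}$. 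Since $X$ belongs to a bounded family and $IN$ is an ample Cartier divisor with bounded top self-intersection, a multiple $m'(IN)$ is very ample for some $m'$ depending only on the same data (a standard consequence of boundedness of the family). Taking $m:=m'I$ and $l$ as above gives the statement.

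The conceptual core is the step from Proposition \ref{l-bnd-cy-numerical-bndness} to ampleness of $N$ and the use of log boundedness together with Lemma \ref{l-bnd-Cartier-index-family} to pin down the Cartier index. The main technical obstacle I expect is the final passage: bounding $N^{\dim X}$ by the fibrewise BAB argument and then extracting a bounded very ampleness multiple from boundedness of the family and of $(IN)^{\dim X}$; one must be careful here since, for an ample Cartier divisor, a bound on the top self-intersection alone need not control the mixed intersection numbers unless one really exploits that the ambient varieties range over a fixed bounded family.
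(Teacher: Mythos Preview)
Your first half—using Proposition \ref{l-bnd-cy-numerical-bndness} to obtain a bounded $l$ with $lf^*A-(K_X+\Delta)$ nef (then ample after increasing $l$ by one), and then using the assumed Theorem \ref{t-log-bnd-cy-fib} together with Lemma \ref{l-bnd-Cartier-index-family} to bound the Cartier index of $K_X+\Delta$—is exactly what the paper does.

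The divergence is in the last step. The paper does \emph{not} bound $N^{d}$ or appeal to Matsusaka-type results. Instead it replaces $l$ by $2l+1$ so that $lf^*A-2(K_X+\Delta)$ is ample, then rewrites
\[
m\bigl(lf^*A-(K_X+\Delta)\bigr)=(K_X+\Delta)+m\bigl(lf^*A-(1+\tfrac{1}{m})(K_X+\Delta)\bigr),
\]
where the second summand is ample Cartier; Koll\'ar's effective base point free theorem [\ref{kollar-ebpf}, Theorem 1.1] then makes a bounded multiple base point free, and [\ref{kollar-ebpf}, Lemma 1.2] upgrades this to very ampleness. This is short and uses only that $(X,\Delta)$ is klt with bounded Cartier index.

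Your route has a real gap at the point you yourself flag. Knowing that $X$ lies in a bounded family and that the ample Cartier divisor $IN$ has bounded top self-intersection is \emph{not} a ``standard consequence'' giving a bounded very-ampleness multiple: Khovanskii--Teissier inequalities only produce \emph{lower} bounds on mixed numbers such as $H^{d-1}\cdot IN$, and boundedness of $X$ alone does not force $IN$ to vary in a bounded family of divisors. A Matsusaka-type argument would require, in addition, a bound on $(IN)^{d-1}\cdot K_X$, which you have not established (and which, unwinding $K_X=lf^*A-N-\Delta$, leads back to bounding terms like $N^{d-1}\cdot f^*A$ and $N^{d-1}\cdot\Delta$ that are not controlled by $N^d$ alone). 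Even granting that, the usual Koll\'ar--Matsusaka statements are for smooth varieties, so one would still need a version valid for klt $X$. The paper's use of effective base point freeness sidesteps all of this.
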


\begin{proof}
Since we are assuming Theorem \ref{t-log-bnd-cy-fib}, $(X,\Delta)$ is log bounded. Thus by 
Lemma \ref{l-bnd-Cartier-index-family}, there is a bounded natural number $m$ such that 
$m(K_X+\Delta)$ is Cartier. On the other hand, by Proposition \ref{l-bnd-cy-numerical-bndness}, 
there is a bounded natural number $l$ such that $lf^*A-(K_X+\Delta)$ is nef. As $-(K_X+\Delta)$ 
is ample over $Z$, replacing $l$ with $2l+1$ we can assume that $lf^*A-2(K_X+\Delta)$ is ample 
(which also implies that $lf^*A-(K_X+\Delta)$ is ample). 

Now  
$$
m(lf^*A-(K_X+\Delta))=K_X+\Delta+(m(lf^*A-(K_X+\Delta))-(K_X+\Delta))
$$ 
where the term 
$$
(m(lf^*A-(K_X+\Delta))-(K_X+\Delta))=m(lf^*A-(1+\frac{1}{m})(K_X+\Delta))
$$
is ample because $1+\frac{1}{m}\le 2$ and $lf^*A-2(K_X+\Delta)$ is ample. 
Thus replacing $m$ we can assume 
$$
|m(lf^*A-(K_X+\Delta))|
$$
is base point free, by the effective base point free theorem [\ref{kollar-ebpf}, Theorem 1.1]. Therefore, 
$$
(d+4)m(lf^*A-(K_X+\Delta))
$$
is very ample by [\ref{kollar-ebpf}, Lemma 1.2]. Now replace $m$ with $(d+4)m$.

\end{proof}

\subsection{Effective birationality}

The next statement and its proof are somewhat similar to [\ref{Jiang}, Lemma 3.3].

\begin{prop}\label{l-bnd-cy-effective-bir}
Let $d,r$ be natural numbers and $\epsilon$ be a positive real number.
Assume that Theorems \ref{t-log-bnd-cy-fib} and \ref{t-sing-FT-fib-totalspace} hold in dimension $d-1$. Then there exist 
natural numbers $l,m$ depending only on $d,r,\epsilon$ satisfying the following. If $(X,B)\to Z$ is a 
$(d,r,\epsilon)$-Fano type fibration (as in \ref{d-FT-fib}), then the linear system $|m(lf^*A-K_X)|$ defines 
a birational map. 
\end{prop}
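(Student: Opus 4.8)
The plan is to reduce, via cutting by general hyperplanes on $Z$, to the situation where $X$ is itself of Fano type globally (i.e. $Z$ is replaced by a point, or the fibre dimension is small), combine this with the numerical boundedness already available from Proposition \ref{l-bnd-cy-numerical-bndness}, and then invoke effective birationality for Fano type varieties / log Calabi-Yau fibrations from lower dimension together with the boundedness statements we are allowed to assume. More precisely, first I would take a $\Q$-factorialisation so that $X$ is $\Q$-factorial; all hypotheses of Definition \ref{d-FT-fib} are preserved. Since $-K_X$ is big over $Z$ and $X$ is of Fano type over $Z$, one can choose $0\le B'\sim_\R 2f^*A - K_X$ by Lemma \ref{l-div-pef-on-FT} (note $-K_X$, hence $f^*A - K_X$, is big over $Z$ and $f^*A$ pulls back an ample divisor), and by the lc threshold estimate of Theorem \ref{t-sing-FT-fib-totalspace} in dimension $d-1$ applied on a general fibre of $f^*A$ (exactly as in Step 2 of Proposition \ref{l-bnd-cy-numerical-bndness}), there is $t>0$ depending only on $d,r,\epsilon$ so that the non-klt locus of $(X, tB')$ is mapped to finitely many closed points of $Z$.

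The second step is to produce a bounded model to measure very ampleness against. Using the boundedness inputs, I would apply Theorem \ref{t-log-bnd-cy-fib} in dimension $d$ (with $\Delta = 0$) to conclude $X$ is log bounded, hence by Lemma \ref{l-bnd-Cartier-index-family} there is a bounded $m_0$ with $m_0 K_X$ Cartier; and by Lemma \ref{l-bnd-cy-fib-v-ampleness} (which is available since its hypotheses are among our standing assumptions) there are bounded $l_0, m_1$ with $m_1(l_0 f^*A - K_X)$ very ample — wait, that lemma needs $-(K_X+\Delta)$ ample over $Z$, which holds here with $\Delta=0$ since $-K_X$ being big over $Z$ plus Fano type over $Z$ lets us perturb; more safely, run an MMP over $Z$ on $-K_X$ to reach a model where $-K_X$ is semiample over $Z$ and apply the lemma there, then pull back. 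In any case we obtain a bounded very ample divisor $N := m_1(l_0 f^*A - K_X)$ on $X$ together with a bound on $N^d$.

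The third step is the effective birationality itself. Having $N$ very ample with $N^d$ bounded, $X$ lies in a bounded family of polarised varieties; for a bounded family there is a uniform integer $k$ such that $|kN|$ separates a generic pair of points on the generic fibre of the parametrising family — this is a standard consequence of generic flatness plus cohomology-and-base-change, or alternatively one argues directly as in the Fano case by covering $X$ with bounded sub-varieties and using Kawamata--Viehweg vanishing on the log resolution. Concretely I would set up, as in the proof of effective birationality for Fano varieties, a non-klt pair $(X, \Delta_x)$ tailored to isolate a general point $x$, with $\Delta_x \sim_\R (\text{bounded multiple}) \cdot (f^*A - K_X)$, so that $S_x := \mathrm{Nklt}(X,\Delta_x)$ is a point near $x$ and $M - (K_X + \Delta_x)$ is nef and big for $M := (\text{bounded}) (lf^*A - K_X)$; then Kawamata--Viehweg vanishing on a log resolution lifts sections from $S_x$, i.e. $H^0(M)$ separates $x$ from a general second point. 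The bounded multiples are controlled because $N$ is a bounded very ample divisor and $-K_X$ is big over $Z$ with the degree of $L$ bounded by $A$.

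The main obstacle I expect is the last step: passing from "$X$ belongs to a bounded family of polarised varieties $(X,N)$" to a \emph{uniform} $k$ with $|kN|$ birational, and then converting $kN$ into the prescribed shape $m(lf^*A - K_X)$ with $l,m$ depending only on $d,r,\epsilon$. The first half is soft (boundedness of the family plus flattening/semicontinuity of $h^0$), but the bookkeeping that keeps everything expressed in terms of $f^*A$ and $K_X$ — rather than the auxiliary $N$ or the MMP model used to run it — requires care, because the MMP over $Z$ on $-K_X$ may contract divisors and one must check that pulling back $|k N|$ back to the original $X$ still defines a birational map and still has the claimed linear-equivalence class up to a bounded multiple. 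I would handle this by only ever using the MMP model to \emph{produce} the bounded $l$, and doing the vanishing/section-lifting argument on the original $X$ directly with $M = m(lf^*A - K_X)$, using that $lf^*A - K_X$ is nef on $X$ by Proposition \ref{l-bnd-cy-numerical-bndness} (applied with $\Delta = 0$, after checking $f^*A + B$ is pseudo-effective, which is immediate) and big since $-K_X$ is big over $Z$ and $f^*A$ is semiample with $f$ the contraction.
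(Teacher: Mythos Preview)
Your proposal has a genuine circularity. In Step 2 you write ``apply Theorem \ref{t-log-bnd-cy-fib} in dimension $d$ (with $\Delta=0$) to conclude $X$ is log bounded'', and you also invoke Lemma \ref{l-bnd-cy-fib-v-ampleness} directly on $X$. But the standing hypotheses of the proposition are that Theorems \ref{t-log-bnd-cy-fib} and \ref{t-sing-FT-fib-totalspace} hold in dimension $d-1$, \emph{not} $d$; and Lemma \ref{l-bnd-cy-fib-v-ampleness} applied to the $d$-dimensional $X$ explicitly requires Theorem \ref{t-log-bnd-cy-fib} in dimension $d$. The whole proposition is a step in the induction that \emph{proves} Theorem \ref{t-log-bnd-cy-fib} in dimension $d$ (via Lemmas \ref{l-bnd-cy-bir-bnd}, \ref{l-bnd-cy-bnd-lct-special}, \ref{l-bnd-cy-bnd-klt-compl-induction}, \ref{l-log-bnd-cy-fib-induction}), so you cannot assume it here. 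Once Step 2 falls, your Step 3 has no input: you have neither a bounded very ample divisor on $X$ nor any bounded family to run semicontinuity against.

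The paper avoids this by never asking for boundedness of $X$ itself. After reducing to $-K_X$ ample over $Z$ and taking $l$ from Proposition \ref{l-bnd-cy-numerical-bndness} so that $lf^*A-K_X$ is ample, it separates two general points $x,x'\in X$ by choosing $H\in|2A|$ through $z=f(x)$ and $z'=f(x')$ (via Lemma \ref{l-gen-element-v.ample-div-passing-x,y-contraction}), setting $G=f^*H$, and observing that $(G,B_G)\to H$ is a $(d-1,r',\epsilon)$-Fano type fibration. Then Lemma \ref{l-bnd-cy-fib-v-ampleness} is applied in dimension $d-1$ --- where its hypotheses \emph{are} available --- giving a bounded $m$ with $m((l+4)g^*A|_H-K_G)$ very ample on $G$. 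A Kawamata--Viehweg vanishing on $X$ makes the restriction $H^0(m((l+2)f^*A-K_X))\to H^0(m((l+2)f^*A-K_X)|_G)$ surjective, and the very ample system on $G$ separates $x,x'$. Your hyperplane-cutting idea in the first paragraph is pointing in the right direction, but the cut must carry both points simultaneously (hence the $|2A|$ system through two base points), and the payoff is very ampleness on the slice, not boundedness of $X$.
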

\begin{proof}
We first replace $X$ with a $\Q$-factorialisation so that $K_X$ is $\Q$-Cartier. 
Then after running an MMP on $-K_X$ over $Z$ 
we can assume $-K_X$ is nef and big over $Z$. Replacing $X$ with the ample model of $-K_X$ over $Z$ 
we can assume $-K_X$ is ample over $Z$  ($X$ may no longer be $\Q$-factorial but 
we do not need it any more). If $\dim Z=0$, then the proposition holds by [\ref{B-compl}, Theorem 1.2]. 
We then assume $\dim Z>0$.
By Proposition \ref{l-bnd-cy-numerical-bndness}, there is $l\in \N$  depending only on $d,r,\epsilon$  such 
 that $lf^*A-K_X$ is nef.
Since $-K_X$ is ample over $Z$, replacing $l$ with $l+1$ we can assume $lf^*A-K_X$ is ample.

By Lemma \ref{l-gen-element-v.ample-div-passing-x,y-contraction}, there is a 
non-empty open subset $U\subseteq Z$ such that for any pair of closed points $z,z'\in U$ and any general 
member $H$ of the sub-linear system $L_{z,z'}$ of $|2A|$ consisting of elements passing through $z,z'$, 
the pullback $G=f^*H$ is normal and $(G,B_G)$ is an $\epsilon$-lc pair where 
$$
K_G+B_G=(K_X+B+G)|_G.
$$

Pick distinct closed points $x,x'\in X$ such that $z:=f(x)$ and $z':=f(x')$ are in $U$. 
Let $H,G,B_G$ be as in the previous paragraph constructed for $z,z'$.
Denoting $G\to H$ by $g$ we then have 
\begin{itemize}
\item $(G,B_G)$ is $\epsilon$-lc, 

\item $K_G+B_G\sim_\R (f^*L+G)|_G\sim g^*(L+2A)|_H$, 

\item $-K_G=-(K_X+G)|_G$ is ample over $H$, 

\item the divisor 
$$
3A|_H-(L+2A)|_H=(A-L)|_H
$$ 
is ample, and 

\item the divisor 
$$
(l+4)g^*A|_H-K_G\sim (l+4)f^*A|_G-(K_X+G)|_G\sim ((l+2)f^*A-K_X)|_G
$$ 
is ample.
\end{itemize}
In particular, $(G,B_G)$ is a $(d-1, 2(3^{d-1})r, \epsilon)$-Fano type fibration.
Applying Lemma \ref{l-bnd-cy-fib-v-ampleness} and perhaps replacing $l$, we can assume 
that 
$$
m((l+4)g^*A|_H-K_G)
$$ 
is very ample, for some bounded natural number $m\ge 2$.

On the other hand, by assumption  
$$
C:=f^*A-(K_X+B)\sim_\R f^*(A-L)
$$ 
is nef, and we can write 
$$
m((l+2)f^*A-K_X)-G=2mf^*A+m(lf^*A-K_X)-G
$$
$$
\sim_\R (2m-2)f^*A+m(lf^*A-K_X)
$$
$$
\sim_\R K_X+B+C+(2m-3)f^*A+m(lf^*A-K_X).
$$
Thus by the Kawamata-Viehweg vanishing theorem,
$$
h^1(m((l+2)f^*A-K_X)-G)=0,
$$ 
hence the map 
$$
H^0(m((l+2)f^*A-K_X)) \to H^0(m((l+2)f^*A-K_X)|_G)
$$
is surjective. Recall that by the previous paragraph, 
$$
H^0(m((l+2)f^*A-K_X)|_G) = H^0(m((l+4)g^*A|_H-K_G)).
$$

Now since  $m((l+4)g^*A|_H-K_G)$ 
is very ample, we can find a section in 
$$
H^0(m((l+4)g^*A|_H-K_G))
$$ 
vanishing at $x$ but not at $x'$ (and vice versa).
This in turn gives a section in 
$$
H^0(m((l+2)f^*A-K_X))
$$ 
vanishing at $x$ but not at $x'$ (and vice versa). 
Therefore, $|m((l+2)f^*A-K_X)|$ defines a birational map. Now replace $l$ with $l+2$.

\end{proof}

\subsection{Boundedness of volume}

The next statement and its proof are similar to [\ref{Jiang}, Theorem 4.1] (also see [\ref{DiCerbo-Svaldi}, 4.1]).

\begin{prop}\label{l-bnd-cy-bndness-volume}
Let $d,r,l$ be natural numbers and $\epsilon$ be a positive real number.
Then there exists  
a natural number $v$ depending only on $d,r,l,\epsilon$ satisfying the following. 
If $(X,B)\to Z$ is a $(d,r,\epsilon)$-Fano type fibration (as in \ref{d-FT-fib}), then $\vol(lf^*A-K_X)\le v$. 
\end{prop}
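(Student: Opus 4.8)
The plan is to argue by induction on $d$, freely using all results of this section in dimensions $<d$ (in particular the Proposition itself in lower dimensions) together with Theorem \ref{t-BAB} and Proposition \ref{l-bnd-cy-numerical-bndness}. We may assume $\dim Z>0$: if $Z$ is a point then $f^*A=0$, $X$ is of Fano type with $-K_X$ big and $(X,0)$ is $\epsilon$-lc, so by Theorem \ref{t-BAB} the $X$ are bounded and $\vol(lf^*A-K_X)=\vol(-K_X)$ is bounded in terms of $d,\epsilon$. More generally, for any $(d,r,\epsilon)$-Fano type fibration the general fibre $F$, with $K_F+B_F:=(K_X+B)|_F\sim_\R 0$, gives an $\epsilon$-lc pair with $-K_F\sim_\R B_F\ge 0$ big; by Theorem \ref{t-BAB} and boundedness of Cartier indices (Lemma \ref{l-bnd-Cartier-index-family}) there are constants $0<\delta'\le v_0$ depending only on $d,\epsilon$ with $\delta'\le\vol(-K_F)\le v_0$. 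Both bounds are used below.

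First I would make reductions. Taking a $\Q$-factorialisation, running an MMP on $-K_X$ over $Z$, and passing to the relative ample model of $-K_X$, we may assume $-K_X$ is nef and big over $Z$: every step of this program contracts curves vertical over $Z$ and is $(K_X+B)$-trivial because $K_X+B\sim_\R f^*L\equiv 0/Z$, so it preserves the $\epsilon$-lc property, the relation $K_X+B\sim_\R f^*L$ with the same $Z,A,L$, and — since the contracted curves are vertical — the number $\vol(lf^*A-K_X)$. Now Proposition \ref{l-bnd-cy-numerical-bndness} applies with $\Delta=0$ (its hypotheses $-K_X$ nef over $Z$ and $f^*A+B$ pseudo-effective hold, and it uses Theorem \ref{t-sing-FT-fib-totalspace} in dimension $d-1$, available by induction): there is $l_1=l_1(d,r,\epsilon)\in\N$, which we may take $\ge l$, so that $N:=l_1f^*A-K_X$ is nef (adding the nef divisor $f^*A$ preserves nefness). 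Since $N\ge lf^*A-K_X=f^*(lA)+(-K_X)$, which is big because $-K_X$ is big over $Z$ and $f^*(lA)$ is the pullback of an ample divisor on $Z$, the divisor $N$ is nef and big, and by monotonicity $\vol(lf^*A-K_X)\le\vol(N)=N^d$. So it suffices to bound $N^d$.

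Next I would restrict to a general member of $|f^*A|$ and induct. Let $H\in|A|$ be general, $G:=f^*H$, $g:=f|_G\colon G\to H$. By Lemma \ref{l-gen-element-v.ample-div-passing-x,y-contraction} (as in the proofs of Propositions \ref{l-bnd-cy-numerical-bndness} and \ref{l-bnd-cy-effective-bir}), $G$ is normal, the pair $(G,B_G)$ with $K_G+B_G:=(K_X+B+G)|_G$ is $\epsilon$-lc, $-K_G=-K_X|_G$ is big over $H$, and $(G,B_G)\to H$ is a $(d-1,r',\epsilon)$-Fano type fibration for some $r'=r'(d,r)$, with $N|_G=(l_1+1)g^*(A|_H)-K_G$. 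By the standard slicing inequality for volumes along a general member of a base-point-free linear system,
$$
N^d=\vol(N)=d\int_0^{\sigma}\vol_{X|G}(N-tG)\,dt\ \le\ d\int_0^{\sigma}\vol_G\big((N-tG)|_G\big)\,dt,\qquad \sigma:=\sup\{t\ge 0\mid N-tG\ \text{is big}\}.
$$
For $t\in[0,\sigma]$ one has $(N-tG)|_G=(l_1+1-t)g^*(A|_H)-K_G$, and since $g^*(A|_H)$ is nef this has volume at most $\max_{0\le s\le l_1+1}\vol_G\big(sg^*(A|_H)-K_G\big)$, which by the Proposition in dimension $d-1$ is $\le v_{d-1}:=v(d-1,r',l_1+1,\epsilon)$. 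Hence $N^d\le d\,\sigma\,v_{d-1}$.

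The remaining and hardest point is to bound $\sigma=l_1+\sup\{s\ge 0\mid -K_X-sf^*A\ \text{is big}\}$, i.e. to show that $-K_X$ can be decreased by at most a bounded multiple of $f^*A$ and remain big. I would fold this into the induction: restricting a big divisor $-K_X-sf^*A$ to a general $G=f^*H$ makes $-K_G-(s-1)g^*(A|_H)$ pseudo-effective on $G$, so the corresponding threshold drops by at least $1$ on passing to $G$; iterating reduces to $\dim Z=1$. When $Z$ is a curve we may assume, after the reductions above, that $-K_X$ is ample over $Z$; then a big $-K_X-sf^*A$ is pseudo-effective, and pairing with the nef curve class $N^{d-1}$ gives $s\,(N^{d-1}\cdot f^*A)\le N^{d-1}\cdot(-K_X)$, where $N^{d-1}\cdot f^*A=\vol_G(N|_G)$ is bounded below by a positive constant (as $G$ lies in a bounded family, using $\vol(-K_F)\ge\delta'$) and $N^{d-1}\cdot(-K_X)$ is bounded above (reducing again to a lower-dimensional volume by adjunction to a general $(f^*A)^{\dim Z-1}$-slice). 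This yields $\sigma\le l_1+c$ with $c=c(d,r,\epsilon)$, and therefore $\vol(lf^*A-K_X)\le N^d\le d(l_1+c)\,v_{d-1}=:v$, a bound depending only on $d,r,l,\epsilon$. Everything outside this last step is routine induction together with Proposition \ref{l-bnd-cy-numerical-bndness} and Theorem \ref{t-BAB}.
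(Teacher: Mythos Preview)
Your slicing reduction and the induction on the general divisor $G=f^*H$ are fine, and indeed parallel the paper's use of Jiang's lemma $\vol(lf^*A-K_X)\le \vol(pf^*A-K_X)+d(l-p)\vol((lf^*A-K_X)|_G)$. The essential content in both approaches is the same: bound the restricted volume by induction, and then bound the ``bigness threshold'' $\sigma$ (equivalently, bound $l-p$ from above, or $p$ from below). The gap is in your last step.

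Your argument for bounding $\sigma$ when $\dim Z=1$ is circular. Pairing $-K_X-sf^*A$ with $N^{d-1}$ gives $s\cdot a\le b$ with $a:=N^{d-1}\cdot f^*A$ and $b:=N^{d-1}\cdot(-K_X)$. But $b=N^{d-1}\cdot(N-l_1f^*A)=N^d-l_1a$, so the inequality becomes $\sigma=l_1+\tau\le N^d/a$. Combined with your slicing bound $N^d\le d\sigma v_{d-1}$ this only yields $a\le dv_{d-1}$, which you already knew (it is $(\deg A)\vol(-K_F)\le rv_0$), and gives no bound on $\sigma$. Your parenthetical ``reducing again to a lower-dimensional volume by adjunction to a general $(f^*A)^{\dim Z-1}$-slice'' does nothing when $\dim Z=1$, and for higher $\dim Z$ the same circularity persists since the problematic term is always $(-K_X)^d$ sitting inside $N^d$.

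The paper handles this step by a completely different mechanism: given the largest integer $p$ with $pf^*A-K_X$ not big, choose $0\le R\sim_\Q (p+1)f^*A-K_X$; on a general fibre $F$ one has $R_F\sim_\Q -K_F\sim_\R B_F$, so by Theorem~\ref{t-BAB} and Theorem~\ref{t-bnd-lct} the pair $(F,B_F+tR_F)$ is klt for some fixed $t\in(0,1)$. Setting $\Delta:=(1-t)B+tR$ gives $K_X+\Delta\sim_\R f^*((1-t)L+t(p+1)A)$, and then adjunction for fibrations writes this as $f^*(K_Z+\Delta_Z+M_Z)$ with $M_Z$ pseudo-effective. Since $K_Z+3dA$ is big on the bounded base $Z$, one concludes that $(1-t)+t(p+1)+3d>0$, bounding $p$ from below. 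This use of the canonical bundle formula (specifically the pseudo-effectivity of the moduli part) is the missing idea in your argument.
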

\begin{proof}
We can assume that $lf^*A-K_X$ is big otherwise $\vol(lf^*A-K_X)=0$. 
Moreover, taking a $\Q$-factorialisation, we can assume $X$ is $\Q$-factorial. 
If $\dim Z=0$, then $X$ belongs to a bounded family  [\ref{B-BAB}, Corollary 1.2] 
so the proposition follows. We then assume $\dim Z>0$.

Let $p$ be the largest integer such that $pf^*A-K_X$ is not big. 
Then $p<l$.  Let $H\in |A|$ be a general element and $G=f^*H$. Then 
$$
lf^*A-K_X=pf^*A-K_X+(l-p)f^*A\sim pf^*A-K_X+(l-p)G,
$$
so by [\ref{Jiang}, Lemma 2.5], 
$$
\vol(lf^*A-K_X)\le \vol(pf^*A-K_X)+d(l-p)\vol((lf^*A-K_X)|_G).
$$
Since $pf^*A-K_X$ is not big, $\vol(pf^*A-K_X)=0$, so it is then enough to bound 
$$
d(l-p)\vol(lf^*A-K_X)|_G)
$$
from above. 

Define $K_G+B_G=(K_X+B+G)|_G$. Then $(G,B_G)$ is $\epsilon$-lc,  $-K_G$ is big over $H$, 
 $K_G+B_G\sim_\R g^*(L+A)|_H$ where $g$ denotes $G\to H$, and $2A|_H-(L+A)|_H$ is ample. 
  Thus $(G,B_G)\to H$ is a 
$(d-1, 2^{d-1}r, \epsilon)$-Fano type fibration, hence  applying induction on dimension shows that 
$$
\vol((lf^*A-K_X)|_G)=\vol(((l+1)f^*A-(K_X+G))|_G)
$$
$$
=\vol((l+1)g^*A|_H-K_G)\le \vol((l+1)g^*2A|_H-K_G)
$$
is bounded from above. Therefore, it is enough to show that $l-p$ is bounded from above 
which is equivalent to showing that $p$ is bounded from below. 

By definition of $p$, $(p+1)f^*A-K_X$ is big. Thus we can find 
$$
0\le R\sim_\Q  (p+1)f^*A-K_X.
$$
Let $F$ be a general fibre of $f\colon X\to Z$. Then
$$
K_F+B_F:=(K_X+B)|_F\sim_\R 0
$$ 
and
$$
R_F:=R|_F\sim_\Q -K_X|_F=-K_F\sim_\R B_F.
$$ 

By [\ref{B-BAB}, Corollary 1.2], 
$F$ belongs to a bounded family of varieties as $(F,B_F)$ is $\epsilon$-lc and $-K_F$ is big. 
We can then find a very ample divisor 
$J_F$ on $F$ with bounded degree $J_F^{\dim F}$ such that 
$$
J_F-R_F \sim_\R J_F-B_F \sim_\Q J_F+K_F  
$$ 
is ample. Therefore, by [\ref{B-BAB}, Theorem 1.6] (=Theorem \ref{t-bnd-lct}), 
the pair $(F,B_F+tR_F)$ is klt for some real number $t\in(0,1)$ depending only 
on $d,J_F^{\dim F},\epsilon$. In particular, letting 
$$
\Delta:=(1-t)B+tR ~~\mbox{ and}~~ \Delta_F:=\Delta|_F,
$$ 
we see that 
$$
(F,\Delta_F=(1-t)B_F+tR_F)
$$ 
is klt. Therefore, $(X,\Delta)$ is klt near the generic fibre of $f$.
By construction, 
$$
K_X+\Delta= K_X+(1-t)B+tR\sim_\R K_X+(1-t)B+t(p+1)f^*A-tK_X
$$
$$
= (1-t)(K_X+B)+t(p+1)f^*A \sim_\R f^*((1-t)L+t(p+1)A).
$$

Now by adjunction we can write 
$$
K_X+\Delta\sim_\R f^*(K_Z+\Delta_Z+M_Z)
$$
where $\Delta_Z$ is the discriminant divisor and $M_Z$ is the moduli divisor (see \ref{fib-adj-setup} below for more 
details on adjunction). In particular,
$$
(1-t)L+t(p+1)A\sim_\R K_Z+\Delta_Z+M_Z.
$$
By [\ref{B-compl}, Theorem 3.6], $M_Z$ is pseudo-effective. 
On the other hand, if $\psi\colon Z'\to Z$ is a resolution, then $K_{Z'}+3d\psi^* A$ is big [\ref{B-compl}, Lemma 2.46], 
hence $K_Z+3dA$ is big. 
This implies that 
$$
K_Z+\Delta_Z+M_Z+3dA
$$
is big. But then  
$$
(1-t)L+t(p+1)A+3dA
$$
is big which in turn implies that 
$$
(1-t)A+t(p+1)A+3dA=(1-t)(A-L)+(1-t)L+t(p+1)A+3dA
$$
is big as $A-L$ is ample. Therefore, $(1-t)+t(p+1)+3d>0$ which implies that $p$ is bounded from below as required. 

\end{proof}

\subsection{Log birational boundedness}

\begin{prop}\label{l-bnd-cy-bir-bnd}
Let $d,r,l$ be natural numbers and $\epsilon,\delta$ be positive real numbers.
Assume that Theorems \ref{t-log-bnd-cy-fib} and \ref{t-sing-FT-fib-totalspace} hold in 
dimension $d-1$. Then there exists a bounded set of couples $\mathcal{P}$  depending only 
on $d,r,l,\epsilon,\delta$ satisfying the following. 
Assume that $(X,B)\to Z$ is a $(d,r,\epsilon)$-Fano type fibration (as in \ref{d-FT-fib}) 
and that $\Lambda\ge 0$ is an $\R$-divisor such that 
\begin{itemize}
\item each non-zero coefficient of $\Lambda$ is $\ge \delta$, and 

\item $lf^*A-(K_X+\Lambda)$ is pseudo-effective.
\end{itemize}
Then there exist a couple
$(\overline{X}, \overline{\Sigma})$, a very ample divisor  $\overline{D}\ge 0$ on ${\overline{X}}$ and 
a birational map $\rho\colon \overline{X} \bir X /Z$ such that 
\begin{enumerate}
\item $(\overline{X}, \overline{\Sigma}+\overline{D})$ is log smooth and belongs to $\mathcal{P}$, 

\item $\overline{\Sigma}$ contains the exceptional divisors of $\rho$ union 
the birational transform of $\Supp \Lambda$, 

\item and if $N:=lf^*A-K_X$ is nef and $\overline{N}:=\rho^*N$ (defined as in \ref{ss-divisors}), 
then $\overline{D}-\overline{N}$ is ample. 
\end{enumerate}
\end{prop}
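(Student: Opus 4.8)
**The plan is to reduce to effective birationality plus an application of a boundedness criterion for linear systems that are "big enough."**

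First I would reduce to the case where $N := lf^*A - K_X$ is nef and big; indeed if $lf^*A-K_X$ is only pseudo-effective we can enlarge $l$ slightly and use that $-K_X$ is big over $Z$ together with Lemma \ref{l-div-pef-on-FT} to arrange bigness, and the pseudo-effectivity of $lf^*A-(K_X+\Lambda)$ still controls $\Lambda$. Taking a $\Q$-factorialisation and running an MMP on $-K_X$ over $Z$ as in the proof of Proposition \ref{l-bnd-cy-effective-bir}, we may assume $-K_X$ is semi-ample and big over $Z$ and $lf^*A-K_X$ is ample; note that since we only need log \emph{birational} boundedness, contracting divisors in this MMP is harmless as long as we remember to add the exceptional divisors back into $\overline\Sigma$ at the end.

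Next, the heart of the argument: by Proposition \ref{l-bnd-cy-effective-bir} (which holds in dimension $d$ under the assumed induction, applied after possibly replacing $l$ by a larger bounded number) the linear system $|m(lf^*A-K_X)|$ defines a birational map $g\colon X\dashrightarrow \PP^M$ for bounded $m$, and by Proposition \ref{l-bnd-cy-bndness-volume} the volume $\vol(lf^*A-K_X)$ is bounded above by a number $v=v(d,r,l,\epsilon)$. Resolving the indeterminacy of $g$ and of the pair $(X,\Lambda)$ we obtain $\overline X\to X$ with the exceptional divisors plus the birational transform of $\Supp\Lambda$ inside a reduced divisor $\overline\Sigma_0$. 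The standard argument for log birational boundedness (as in Hacon--M\textsuperscript{c}Kernan--Xu and used in \cite{Jiang}, \cite{DiCerbo-Svaldi}) then applies: writing $\overline M$ for the birational transform of the mobile part of $|m(lf^*A-K_X)|$ pulled back to $\overline X$, the morphism $\overline X\to \PP^M$ factors through a variety $\overline X\to \overline W\subseteq\PP^M$ with $\deg\overline W$ bounded (since $\overline M^{\dim\overline X}\le m^d\vol(lf^*A-K_X)\le m^dv$), and then the usual argument produces $\overline X$ inside a bounded family together with a very ample $\overline D$ of bounded degree. Here I would take $\overline D$ to be a general very ample divisor on the fixed model dominating $\overline M$ with enough positivity; the bound on $\deg\overline W$ and a Noetherian-induction / Hilbert-scheme argument (as in \cite{B-compl}, Lemma 2.21 type statements) gives the finitely many families $\mathcal{P}$.

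Finally I would verify the three itemized conclusions. For (1) and (2), after further blowing up we make $(\overline X,\overline\Sigma+\overline D)$ log smooth with $\overline\Sigma\supseteq\overline\Sigma_0$, staying in a bounded (hence after refining, still bounded) family $\mathcal{P}$. For (3), when $N=lf^*A-K_X$ is nef, $\overline N=\rho^*N$ is the pullback of a nef divisor and $m\overline N$ is linearly equivalent to $\overline M$ plus an effective $\rho$-exceptional divisor supported in $\overline\Sigma$; since $\overline D$ was chosen very ample with degree dominating that of $\overline M$ on the fixed model, we can arrange $\overline D - \overline N$ ample (replacing $\overline D$ by a bounded multiple if necessary and using that on a bounded family ampleness of $\overline D - \overline N$ is an open condition controlled by intersection numbers against the finitely many generators of the cone). \textbf{The main obstacle} is making the "bounded degree of $\overline W$ $\Rightarrow$ bounded family containing a suitable very ample $\overline D$ with $\overline D - \overline N$ ample" step precise and uniform: one must simultaneously control the ambient variety, the divisor $\overline\Sigma$, and the positivity of $\overline D$ relative to $\overline N$, which requires care with the Noetherian stratification of the relevant Hilbert/Chow schemes and with the fact that $\overline N$, not being big on all of $\overline X$ a priori, only becomes manageable after the reduction in the first paragraph.
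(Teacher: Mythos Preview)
Your overall strategy matches the paper's: effective birationality (Proposition~\ref{l-bnd-cy-effective-bir}) plus the volume bound (Proposition~\ref{l-bnd-cy-bndness-volume}), followed by an HMX-style log birational boundedness argument. However, there is a genuine gap in your second paragraph where you invoke ``the standard argument for log birational boundedness.'' That argument bounds the degree of the image variety $\overline W$, but to get $(\overline X,\overline\Sigma)$ \emph{log} bounded you also need a bound on the degree of $\overline\Sigma$ (equivalently on $\Sigma_W\cdot G_W^{d-1}$ in the paper's notation), and for this you must use \emph{both} hypotheses on $\Lambda$: the coefficient bound $\ge\delta$ and the pseudo-effectivity of $lf^*A-(K_X+\Lambda)$. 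Your proposal never explains how these enter; without them $\Supp\Lambda$ could have arbitrarily large degree and the argument fails.

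The paper handles this in its Step~2 by bounding $\vol(K_W+\Sigma_W+(4d+2)G_W)$: since the nonzero coefficients of $\Lambda$ are $\ge\delta$, one has $\Sigma\le\frac{1}{\delta}\Lambda+M+G$ on $X$; then the pseudo-effectivity of $lf^*A-(K_X+\Lambda)$ lets one replace $K_X+\frac{1}{\delta}\Lambda$ by a combination of $-K_X$ and pullbacks of $A$, reducing everything to a volume of the form $\vol(af^*A-bK_X)$ with bounded $a,b$, which is then controlled by Proposition~\ref{l-bnd-cy-bndness-volume}. This volume bound is precisely what feeds into [\ref{HMX1}, Lemma~3.2] to control $\Sigma_W\cdot G_W^{d-1}$. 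Your MMP reduction in the first paragraph is unnecessary --- the paper only takes a $\Q$-factorialisation --- and your diagnosis of the ``main obstacle'' is misplaced: once the volume bound above is in hand, the bounded-family and ampleness-of-$\overline D-\overline N$ steps are routine intersection-number calculations (the paper's Step~4 bounds $\overline D^{d-1}\cdot\overline M$ by another application of Proposition~\ref{l-bnd-cy-bndness-volume}, uses $\Supp\overline M\le\overline\Sigma$, and replaces $\overline D$ by a bounded multiple).
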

\begin{proof}
\emph{Step 1.}
\emph{In this step we introduce some notation.}
Taking a $\Q$-factorialisation we can assume $X$ is $\Q$-factorial. 
By Proposition \ref{l-bnd-cy-effective-bir}, 
perhaps after replacing $l$ with a bounded multiple,  we can assume that 
there exists a natural number 
$m$ depending only on $d,r,\epsilon$ such that the linear system $|m(lf^*A-K_X)|$ defines a birational map. Pick 
$$
0\le M\sim m(lf^*A-K_X).
$$
Take a log resolution 
$\phi\colon W\to X$ of $(X,\Lambda+M)$ such that $|\phi^*M|$ decomposes as the 
sum of a free part $|F_W|$ plus fixed 
part $R_W$ (cf. [\ref{B-compl}, Lemma 2.6]). 
Let $\Sigma_W$ be the sum of the reduced exceptional divisor of $\phi$ and the birational transform of $\Supp (\Lambda+M)$ 
plus a general element $G_W$ of $|\phi^*f^*A+F_W|$. 
Let $\Sigma,F,R,G$ be the pushdowns of $\Sigma_W,F_W,R_W, G_W$ to $X$.\\ 

\emph{Step 2.}
\emph{In this step we show that $\vol(K_W+\Sigma_W+(4d+2)G_W)$ is bounded from above.}
From the definition of $\Sigma_W$ and the assumption that the non-zero coefficients of 
$\Lambda$ are $\ge \delta$, we can see that 
$$
\Sigma =\Supp (\Lambda+M)+G \le \frac{1}{\delta}\Lambda+M+G.
$$
Moreover, 
$$
G+R\sim f^*A+F+R\sim f^*A+M,
$$
 and by assumption  
$lf^*A-(K_X+\Lambda)$  is pseudo-effective. Taking all these into account and  
letting $p=4d+2$ and $q=\frac{1}{\delta}$ we then have 
$$
 \begin{array}{l l}
\vol(K_W+\Sigma_W+pG_W) & \le \vol(K_X+\Sigma+pG) \\
& \le \vol(K_X+q\Lambda+M+(p+1)G) \\
& \le \vol(K_X+q\Lambda+M+(p+1)f^*A +(p+1)M)\\
& = \vol(K_X+q\Lambda+(p+1)f^*A +(p+2)M)\\
& = \vol((1-q)K_X+q(K_X+\Lambda)+(p+1)f^*A+(p+2)M)\\
& \le \vol((1-q)K_X+qlf^*A+(p+1)f^*A+(p+2)M)\\
& = \vol((1-q)K_X+(ql+p+1)f^*A+(p+2)M)\\
& =\vol((ql+p+1)f^*A+(p+2)mlf^*A-((p+2)m+q-1)K_X).
\end{array}
$$
The latter volume is bounded from above by Proposition  \ref{l-bnd-cy-bndness-volume} 
as all the numbers $l,m,p,q$ are fixed.  Thus 
$\vol(K_W+\Sigma_W+pG_W)$ is bounded from above.\\ 

\emph{Step 3.}
\emph{In this step we show that $(X,\Supp (\Lambda+M))$ is log birationally bounded.}
Since $G_W\sim \phi^*f^*A+ F_W$, $|G_W|$ is base point free and it defines a birational contraction $W\to \overline{X}'$.
In particular,
$$
K_W+\Sigma_W+(p-1)G_W
$$ 
is big (cf. [\ref{B-compl}, Lemma 2.46]), hence 
$$
\vol(G_W)\le \vol(K_W+\Sigma_W+pG_W)
$$ 
which implies that the left hand side is also bounded from above. 
 Moreover,  by [\ref{HMX1}, Lemma 3.2], 
$\Sigma_W\cdot G_W^{d-1}$ is bounded from above.
Therefore, if 
$ \overline{\Sigma}'$ is the pushdown of $\Sigma_W$, then $(\overline{X}', \overline{\Sigma}')$ 
is log bounded (this follows from [\ref{HMX1}, Lemma 2.4.2(4)]). 
Also the induced map ${\overline{X}'} \bir Z$ is a morphism by the choice of $G_W$: indeed 
any curve contracted by $W\to \overline{X}'$ intersects $G_W$ trivially hence it intersects the pullback of 
$A$ trivially which means the curve is also contracted over $Z$.

Now we can take a log resolution $\overline{X}\to \overline{X}'$ of $(\overline{X}', \overline{\Sigma}')$ such that if 
$\overline{\Sigma}$ is the union of the exceptional divisors and the birational transform of $\overline{\Sigma}'$, 
then $(\overline{X}, \overline{\Sigma})$ is log smooth and log bounded. By construction, 
$\overline{\Sigma}$ contains the reduced exceptional divisor of the induce map $\rho\colon \overline{X}\bir X$ union the 
birational transform of $\Supp (\Lambda+M)$. This settles (1) and (2) of the proposition except that 
we need to add $\overline{D}$.\\

\emph{Step 4.}
\emph{In this step we prove the existence of the very ample divisor  $\overline{D}$.}
Denote the induced map $\overline{X}\bir W$ by $\alpha$.
By construction, $G_{W}\sim 0/ \overline{X}'$, hence $\overline{G}:=\alpha^*G_W\sim 0/ \overline{X}'$ 
(pullback under $\alpha$ is defined as in \ref{ss-divisors}).
Moreover, $\overline{G}\le \overline{\Sigma}$. 
In particular, $(\overline{X}, \overline{G})$ is log bounded and $\overline{G}$ is big, 
hence we can find a bounded natural 
number $b$ and a very ample divisor $\overline{D}$
such that $b\overline{G}-\overline{D}$ is big. Then
$(\overline{X}, \overline{\Sigma}+\overline{D})$ is log bounded, hence it belongs to some 
fixed bounded set of couples $\mathcal{P}$.

From now on we assume that $N=lf^*A-K_X$ is nef. Then $M\sim mN$ is also nef.
We will show that we can choose $\overline{D}$ so that $\overline{D}-\overline{M}$ is ample where 
$\overline{M}=\rho^*M$. 
Let $\pi\colon V\to W$ and $\mu\colon V\to \overline{X}$ be a common resolution. 
Then 
$$
\begin{array}{l l}
\overline{D}^{d-1}\cdot \overline{M} = \mu^*\overline{D}^{d-1}\cdot \mu^* \overline{M}
& = \mu^*\overline{D}^{d-1}\cdot \pi^*\phi^*M\\
& \le \vol(\mu^*\overline{D}+\pi^*\phi^*M)\\
& \le  \vol(b\mu^*\overline{G}+\pi^*\phi^*M)\\
&=\vol(b\pi^* G_W+\pi^*\phi^*M)\\
& =\vol(bG_W+\phi^*M)\\
& \le \vol(bG+M)\\
& \le \vol(bf^*A+bM+M)\\
&=\vol(bf^*A+(b+1)mlf^*A-(b+1)mK_X)
\end{array}
$$
where to get the second equality we use the fact $\overline{M}=\mu_*(\pi^*\phi^*M)$ and to get the 
first inequality we have used the fact that $\mu^*\overline{D},\pi^*\phi^*M$ are both nef. 
Therefore, $\overline{D}^{d-1}\cdot \overline{M}$ is bounded from above as the latter volume is bounded 
from above by Proposition \ref{l-bnd-cy-bndness-volume}.
This implies that the coefficients of $\overline{M}$ are bounded from above. 

Now since $\Supp \overline{M}\le \overline{\Sigma}$, $(\overline{X}, \Supp \overline{M})$ 
 is log bounded. Thus replacing $\overline{D}$ we can assume that $\overline{D}-\overline{M}$ is ample.
Finally, note that  $\overline{N}\sim_\Q \frac{1}{m} \overline{M}$, hence  
$$
\overline{D}-\overline{N}\sim_\Q \overline{D}-\frac{1}{m} \overline{M}
=\frac{1}{m}(m\overline{D}- \overline{M})
$$ 
is ample.  

\end{proof}

\subsection{Lower bound on lc thresholds: special case}

We prove a special case of Theorem \ref{t-sing-FT-fib-totalspace} which is crucial for the 
rest of this section.

\begin{prop}\label{l-bnd-cy-bnd-lct-special}
Let $d,r,l$ be natural numbers and $\epsilon$ be a positive real number.
Assume that Theorems \ref{t-log-bnd-cy-fib} and \ref{t-sing-FT-fib-totalspace}
hold in dimension $d-1$. Then there exists  
a positive real number $t$ depending only on $d,r,l,\epsilon$ satisfying the following. 
Assume that $(X,B)\to Z$ is a $(d,r,\epsilon)$-Fano type fibration (as in \ref{d-FT-fib}) and that 
$0\le P\sim_\R lf^*A$. Then $(X,B+tP)$ is klt. 
\end{prop}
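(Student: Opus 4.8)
The plan is to reduce to the log birational boundedness already established and then run an argument in the style of Hacon–M\textsuperscript{c}Kernan–Xu on lc thresholds in bounded families. First I would take a $\Q$-factorialisation of $X$ so that $K_X$ is $\Q$-Cartier; this preserves all hypotheses, and since $0\le P\sim_\R lf^*A$ we still have $lf^*A-(K_X+B)\sim_\R P+C$ pseudo-effective where $C=f^*A-(K_X+B)\sim_\R f^*(A-L)$ is nef, so in particular $lf^*A-K_X$ is big (after possibly enlarging $l$ we may assume $l$ is large enough that $N:=lf^*A-K_X$ is nef, using Proposition \ref{l-bnd-cy-numerical-bndness} applied with $\Delta=0$; note the proposition needs Theorem \ref{t-sing-FT-fib-totalspace} in dimension $d-1$, which is among our hypotheses). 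Applying Proposition \ref{l-bnd-cy-bir-bnd} with $\Lambda=B$ is not quite legal because the coefficients of $B$ need not be $\ge\delta$; instead I would apply it with $\Lambda=0$ (or with any chosen $\delta$ and $\Lambda$ supported on the part of $B$ with large coefficients), obtaining a couple $(\overline X,\overline\Sigma)$, a very ample $\overline D\ge 0$, and a birational map $\rho\colon\overline X\bir X/Z$ with $(\overline X,\overline\Sigma+\overline D)$ log smooth in a fixed bounded family $\mathcal P$, with $\overline\Sigma$ containing the $\rho$-exceptional divisors, and with $\overline D-\overline N$ ample, where $\overline N=\rho^*N$.

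Next I would pull everything back to $\overline X$. Let $K_{\overline X}+\overline B+E=\rho^*(K_X+B)$ where $E\ge 0$ collects the exceptional discrepancy contributions (coefficients $\le 1-\epsilon$ wherever $\overline X\to X$ extracts a divisor, since $(X,B)$ is $\epsilon$-lc; note $E$ and $\overline B$ both have coefficients $\le 1-\epsilon$), and let $\overline P=\rho^*P$. Then $(X,B+tP)$ is klt if and only if $(\overline X,\overline B+E+t\overline P)$ is sub-klt, i.e. the coefficients of $\overline B+E+t\overline P$ stay $<1$; since $\overline B+E$ already has coefficients $\le 1-\epsilon$, it suffices to bound the coefficients of $t\overline P$ by $\epsilon$, i.e. to bound the coefficients of $\overline P$ from above by a constant depending only on $d,r,l,\epsilon$. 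Because $\overline P$ is supported on $\overline\Sigma$ up to the $\rho$-exceptional locus (which is in $\overline\Sigma$) together with the strict transform of $\Supp P$, and $(\overline X,\overline\Sigma)$ is log bounded, it is enough to bound the intersection numbers $\overline D^{\,d-1}\cdot \overline P'$ for $\overline P'$ a component of $\overline P$; equivalently $\overline D^{\,d-1}\cdot\overline P$. Here $\overline P=\rho^*P\sim_\R l\,\rho^*f^*A$, and $\rho^*f^*A$ is nef (being the $\rho$-pullback of a nef divisor), so on a common resolution $\mu\colon V\to\overline X$, $\pi\colon V\to X$ one estimates
\[
\overline D^{\,d-1}\cdot\overline P=\mu^*\overline D^{\,d-1}\cdot\mu^*\overline P
=\mu^*\overline D^{\,d-1}\cdot(l\,\pi^*f^*A)
\le l\cdot\vol\bigl(\mu^*\overline D+\pi^*f^*A\bigr),
\]
and since $\overline D-\overline N$ is ample we can dominate $\mu^*\overline D$ by (a bounded multiple of) $\mu^*\overline N+$ ample, reducing to bounding $\vol$ of a bounded combination of $f^*A$ and $N=lf^*A-K_X$ on $X$ — which is exactly what Proposition \ref{l-bnd-cy-bndness-volume} provides, as all the multiplicities involved are fixed in terms of $d,r,l$. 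This gives the desired upper bound on the coefficients of $\overline P$, hence $t$ can be taken to be $\epsilon$ divided by that bound.

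The main obstacle I anticipate is the bookkeeping around $\rho$ not being a morphism and $\overline P$ being only an $\R$-divisor pulled back via the ``$\phi_*\psi^*$'' convention of \ref{ss-divisors}: one must be careful that $\rho^*f^*A$ is genuinely nef and that intersection-number bounds on a fixed bounded family $\mathcal P$ translate into coefficient bounds for divisors supported on $\overline\Sigma$ (this uses log boundedness of $(\overline X,\overline\Sigma)$ together with a lemma of the type \cite[Lemma 2.4.2]{HMX1} or \cite[Lemma 3.2]{HMX1}, already invoked in Proposition \ref{l-bnd-cy-bir-bnd}). A secondary point is ensuring $N$ can be taken nef: this forces us to possibly enlarge $l$, but since the final $t$ is allowed to depend on $l$ this is harmless, and one should phrase the statement so that the output $l$ in the inductive package is the enlarged one. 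Everything else — inversion of the klt condition under $\rho$, the $\epsilon$-lc bound on $\overline B+E$, and the volume estimate — is routine given the results already assembled in this section.
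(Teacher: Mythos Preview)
There is a genuine gap. Your central claim that ``$(X,B+tP)$ is klt if and only if the coefficients of $\overline B+E+t\overline P$ stay $<1$'' is false: having coefficients $<1$ on $\overline X$ is necessary for sub-klt but far from sufficient, because $(\overline X,\overline B+E+t\overline P)$ is \emph{not} log smooth. The support of $P$ (hence of $\overline P$) is completely arbitrary and is not contained in $\overline\Sigma$, so there can be a divisor $T$ over $X$ with $a(T,X,B+tP)<0$ which is \emph{not} a divisor on $\overline X$; bounding $\overline D^{\,d-1}\cdot\overline P$, or even every coefficient of $\overline P$, tells you nothing about $\mu_T P$ for such $T$. (Concretely, a component of $P$ with small coefficient but a highly singular point away from the centres of $\rho$ already breaks the argument.) The same issue arises for $\overline B+E$: applying Proposition \ref{l-bnd-cy-bir-bnd} with $\Lambda=0$ means $\overline\Sigma$ need not contain the birational transform of $\Supp B$, so even $(\overline X,\overline B+E)$ is not controlled by the log smooth pair $(\overline X,\overline\Sigma)$.

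What is missing is precisely the mechanism by which the paper pins $T$ down. One first fixes the offending divisor $T$ (the one computing the $\epsilon'$-lc threshold), uses a hyperplane section of $Z$ and induction on dimension to reduce to the case where the centre of $T$ on $Z$ is a closed point, and then constructs a \emph{bounded complement}: a boundary $\Lambda$ with $a(T,X,\Lambda)=0$, $(X,\Lambda)$ lc over the image point, and $n(K_X+\Lambda)\sim(n+2)lf^*A$ for bounded $n$ (this is Theorem \ref{t-bnd-comp-lc-global}). Now $\Supp\Lambda$ is the input to Proposition \ref{l-bnd-cy-bir-bnd}, so on $\overline X$ one has $\overline\Lambda\le\overline\Sigma$ over the relevant point, and therefore $a(T,\overline X,\overline\Sigma)=0$: $T$ becomes an lc place of the \emph{log smooth bounded} pair $(\overline X,\overline\Sigma)$. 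This puts $T$ itself into a bounded family of toroidal valuations, and then Theorem \ref{t-bnd-lct} (applied on $\overline X$ to a suitable convex combination $c\overline B+(1-c)\overline\Sigma$, after bounding the negative coefficients of $\overline B$ from below) yields the bound on $\mu_T P$; see Lemma \ref{l-bnd-cy-bnd-lct-special-I}. Your approach bypasses the bounded complement entirely, and without it there is no reason the bad divisor $T$ is visible in any bounded model.
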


We prove a lemma before proving the proposition.

\begin{lem}\label{l-bnd-cy-bnd-lct-special-I}
Let $d,r,l,n$ be natural numbers and $\epsilon$ be a positive real number.
Assume that Theorems \ref{t-log-bnd-cy-fib} and \ref{t-sing-FT-fib-totalspace}
hold in dimension $d-1$. Then there exists  
a natural number $v$ depending only on $d,r,l,n,\epsilon$ satisfying the following. 
Assume that $(X,B)\to Z$ is a $(d,r,\epsilon)$-Fano type fibration (as in \ref{d-FT-fib}) and that 
\begin{itemize}
\item $0\le P\sim_\R lf^*A$,

\item $T$ is a prime divisor over $X$,

\item $(X,\Lambda)$ is lc over a neighbourhood of $z$ where $\Lambda\ge 0$ and 
$z$ is the generic point of the image of $T$ on $Z$,

\item $a(T,X,B)\le 1$ and $a(T,X,\Lambda)=0$, and

\item  $n(K_X+\Lambda)\sim (n+2)lf^*A$.
\end{itemize}
Then $\mu_TP\le v$.
\end{lem}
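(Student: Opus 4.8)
The plan is to transfer the problem to a bounded log smooth model produced by log birational boundedness, and bound the multiplicity there combinatorially, the hard part being the control of $T$ when $T$ is exceptional over that model.

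\emph{Reductions.} We may assume $X$ is $\Q$-factorial, as this does not affect $\mu_T P$ or any hypothesis. Restricting $0\le P\sim_\R lf^*A$ to a general fibre $F$ of $f$ gives $P|_F\sim_\R 0$, and since $P|_F\ge 0$ and $F$ is projective, $P|_F=0$; hence no component of $P$ dominates $Z$, so $\Supp P$ is vertical over $Z$ and, in particular, $\mu_T P=0$ whenever $T$ is horizontal over $Z$. Thus we may assume $T$ is vertical over $Z$ with $S:=\overline{\{z\}}\subsetneq Z$, where $z$ is the generic point of its image.

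\emph{Passage to a bounded model.} From $n(K_X+\Lambda)\sim (n+2)lf^*A$ we get $K_X+\Lambda\sim_\R \tfrac{(n+2)l}{n}f^*A$, so together with $P\sim_\R lf^*A$ the divisor $l_0f^*A-(K_X+\Lambda+P)$ is pseudo-effective for a bounded $l_0$; also the non-zero coefficients of $\Lambda$ are $\ge 1/n$ since $n(K_X+\Lambda)$ is Cartier. Applying Proposition \ref{l-bnd-cy-bir-bnd} to $(X,B)\to Z$, $\Lambda$ and $l_0$ (running its proof with $\Supp P$ also included in the boundary divisor, after the standard modification ensuring $N:=l_0f^*A-K_X$ is nef), we obtain a bounded log smooth couple $(\overline X,\overline\Sigma)$ in a bounded set, a very ample $\overline D$ with $\overline D-\rho^*N$ ample and $(\overline X,\overline\Sigma+\overline D)$ bounded, and a birational map $\rho\colon \overline X\bir X/Z$ with $\overline\Sigma$ containing the $\rho$-exceptional divisors and the birational transforms of $\Supp\Lambda$ and of $\Supp P$. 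Then $\Lambda_{\overline X}$ (crepant pullback) and $\overline P:=\rho^*P$ have support in $\overline\Sigma$, so $(\overline X,\Lambda_{\overline X})$ is log smooth; and since $\overline P\sim_\R l\overline f^*A$ with $\overline f^*A$ nef while $\overline D-\rho^*N$ is ample, the volume estimates in the proof of Proposition \ref{l-bnd-cy-bir-bnd} bound $\overline P\cdot \overline D^{\,d-1}$, hence the coefficients of $\overline P$, so that $(\overline X,\overline P)$ is log bounded. Note $\mu_T P=\mu_T\overline P$.

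\emph{Bounding $\mu_T\overline P$, and the main obstacle.} We have $a(T,\overline X,\Lambda_{\overline X})=a(T,X,\Lambda)=0$ and $a(T,\overline X,B_{\overline X})=a(T,X,B)\le 1$. If $T$ is a divisor on $\overline X$ it is a component of $\overline\Sigma$, and $\mu_T\overline P\le \overline P\cdot\overline C$ for a general complete-intersection curve $\overline C$ of $|\overline D|$ meeting $T$, which is bounded. If $T$ is exceptional over $\overline X$, then, $(\overline X,\Lambda_{\overline X})$ being log smooth with $a(T,\overline X,\Lambda_{\overline X})=0$, $T$ is a monomial valuation over a stratum $\overline V\subseteq\lfloor\Lambda_{\overline X}\rfloor$, with weights $(w_j)_{j\in J}$ on the components $\overline\Sigma_j\ni\overline V$ (all of coefficient $1$ in $\Lambda_{\overline X}$), so $\mu_T\overline P=\sum_{j\in J}w_j\,\mu_{\overline\Sigma_j}\overline P$. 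As $\overline V$ lies over $z$, every horizontal $\overline\Sigma_j$ has $\mu_{\overline\Sigma_j}\overline P=0$ since $\Supp\overline P$ is vertical; so it remains to bound $\sum_{j}w_j$ over the \emph{vertical} $\overline\Sigma_j$ through $\overline V$ — equivalently, the multiplicity of the relevant fibres of $f$ along $T$. This is the main obstacle. I would attack it by restricting the relation $n(K_{\overline X}+\Lambda_{\overline X})\sim (n+2)l\overline f^*A$ to each such $\overline\Sigma_j$ via adjunction, obtaining bounded complements over bases of dimension $<\dim Z$ with inherited $a(\cdot,B)\le1$ and coefficient bounds, and inducting on $\dim Z$ using Theorems \ref{t-log-bnd-cy-fib}, \ref{t-sing-FT-fib-totalspace} and this lemma in lower dimension (available through the inductive scheme of Section 5), the case $\dim Z=0$ being trivial because then $P\sim_\R 0$ and $P\ge 0$ force $P=0$; it is precisely in this step that the bounded-complement structure $n(K_X+\Lambda)\sim(n+2)lf^*A$, the $\epsilon$-lc hypothesis on $(X,B)$ (entering via $a(T,X,B)\le1$), and the lower-dimensional cases of the intertwined induction must be used together.
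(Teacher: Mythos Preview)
Your framework matches the paper's: pass to a bounded log smooth model $(\overline X,\overline\Sigma)$ via Proposition~\ref{l-bnd-cy-bir-bnd} and observe that $T$ becomes an lc place there. But the execution at the key step is incomplete. Your proposed attack on bounding $\sum_j w_j$ --- induction on $\dim Z$ by restricting to vertical $\overline\Sigma_j$ --- is not carried out, and it is unclear how restricting $n(K_{\overline X}+\Lambda_{\overline X})\sim(n+2)l\overline f^*A$ to $\overline\Sigma_j$ produces a lower-dimensional instance of this lemma or how the weights $w_j$ would emerge from such a restriction. The hypothesis $a(T,X,B)\le 1$, which you rightly flag as essential, is never actually used. (There is also a smaller gap: feeding $\Supp P$ into Proposition~\ref{l-bnd-cy-bir-bnd} is not justified, since the coefficients of $P$ are not bounded below, so $\Supp P$ need not have bounded degree.)

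The paper's resolution avoids the monomial analysis entirely. It first bounds the negative coefficients of the sub-boundary $\overline B$ from below, using the ampleness of $\overline D-\overline N$ where $N=qf^*A-K_X$ is arranged to be nef. Then it forms the convex combination $\overline\Delta:=c\overline B+(1-c)\overline\Sigma$: for $c$ small enough this is an honest boundary, $(\overline X,\overline\Delta)$ is $c\epsilon$-lc, and
\[
a(T,\overline X,\overline\Delta)=c\,a(T,\overline X,\overline B)+(1-c)\,a(T,\overline X,\overline\Sigma)=c\,a(T,X,B)<1.
\]
Now Theorem~\ref{t-bnd-lct} applies directly to $(\overline X,\overline\Delta)$ and $\overline P$ (after arranging $\overline D-\overline\Delta$ and $\overline D-\overline P$ ample), giving a fixed $t>0$ with $(\overline X,\overline\Delta+t\overline P)$ klt, whence $\mu_T\overline P<1/t$. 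This is where $a(T,X,B)\le 1$ and the $\epsilon$-lc hypothesis actually enter; no induction on $\dim Z$ is needed.
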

\begin{proof}
\emph{Step 1.}
\emph{In this step we discuss log birational boundedness of $(X,\Supp \Lambda)$.} 
Taking a $\Q$-factorialisation we can assume that $X$ is $\Q$-factorial. 
After running an MMP on $-K_X$ over $Z$, we can assume that $-K_X$ is nef over $Z$. 
By Lemma \ref{l-bnd-cy-numerical-bndness}, $qf^*A-K_X$ is globally nef for some bounded natural number $q$. 
By Lemma \ref{l-bnd-cy-bir-bnd}, $(X,\Supp \Lambda)$ is log birationally bounded,
that is, there exist a couple $(\overline{X}, \overline{\Sigma})$, a very ample divisor $\overline{D}$, and  
a birational map $\rho\colon \overline{X}\bir X/Z$ such that 
\begin{itemize}
\item $(\overline{X}, \overline{\Sigma}+\overline{D})$ is log smooth and belongs to a bounded set of couples $\mathcal{P}$, 

\item $\overline{\Sigma}$ contains the exceptional divisors of $\rho$ union 
the birational transform of $\Supp \Lambda$, 

\item and if  $N:=qf^*A-K_X$ and $\overline{N}=\rho^*N$, then $\overline{D}-\overline{N}$ is ample. 
\end{itemize}
Let $K_{\overline{X}}+\overline{B}$ and $K_{\overline{X}}+\overline{\Lambda}$ be the 
pullbacks of $K_X+B$ and $K_X+\Lambda$, respectively. Since $(X,\Lambda)$ is lc over $z$ 
and $K_X+\Lambda\sim_\Q 0/Z$, 
$(\overline{X}, \overline{\Lambda})$ is  sub-lc over $z$. Moreover, from 
$
a(T,{X}, \Lambda)=0
$ 
we get 
$
a(T,\overline{X}, \overline{\Lambda})=0.
$ 
 But then since 
$\Supp  \overline{\Lambda} \subseteq \overline{\Sigma}$, we have $\overline{\Lambda} \le \overline{\Sigma}$ 
over $z$, hence $T$ is also an lc place of $(\overline{X}, \overline{\Sigma})$, 
that is, 
$$
a(T,\overline{X}, \overline{\Sigma})=0.
$$\ 

\emph{Step 2.}
\emph{In this step we study numerical properties of $\overline{D}$.}
Since $(\overline{X}, \overline{\Sigma})$ is log bounded, we can assume that 
$\overline{D}-\overline{\Sigma}$
is ample. Moreover, by adding a general element of $|(n+2)f^*A|$ to $\Lambda$ we can assume that some element of 
$|(n+2)\overline{f}^*A|$ is a component of $\Sigma_{\overline{X}}$ where $\overline{f}$ denotes $\overline{X}\to Z$; 
this requires replacing $l$ with 
$l+n$ to preserve the condition $n(K_X+\Lambda)\sim (n+2)lf^*A$, and replacing $P$ accordingly. 
Thus we can also assume that $\overline{D}-(n+2)\overline{f}^*A$ is ample and that $\overline{D}-\overline{P}$ 
is ample where $\overline{P}=\rho^*P$.

Now
$$
\overline{D}-(K_{\overline{X}}+\overline{B})
\sim_\R \overline{D}-\overline{f}^*A+\overline{f}^*A-(K_{\overline{X}}+\overline{B})
$$
$$
\sim_\R \overline{D}-\overline{f}^*A+ \overline{f}^*(A-L)
$$ 
is ample. In addition we can assume 
 $\overline{D}+K_{\overline{X}} $ is ample as well, hence replacing $\overline{D}$ with  $2\overline{D}$
we can assume that  $\overline{D}-\overline{B}$ is ample.\\ 

\emph{Step 3.}
\emph{In this step we prove the lemma assuming that the coefficients of $\overline{B}$ are bounded from below.} 
That is, assume that the coefficients of $\overline{B}$ are $\ge p$ for some fixed integer $p$.
Under this assumption, there is $c\in (0,1)$ depending only on $p$ such that 
$$
\overline{\Delta}:=c\overline{B}+(1-c)\overline{\Sigma}\ge 0
$$
because the components of $\overline{B}$ with negative coefficients are exceptional over $X$, hence 
are components of $\overline{\Sigma}$.
In particular, since $(\overline{X},\overline{B})$ is sub-$\epsilon$-lc, 
$(\overline{X},\overline{\Delta})$ is an $c\epsilon$-lc pair. Moreover, 
$$
a(T,\overline{X},\overline{\Delta})=ca(T,\overline{X},\overline{B})+(1-c)a(T,\overline{X},\overline{\Sigma})
=ca(T,\overline{X},\overline{B})<1.
$$
In addition,
$$
\overline{D}-\overline{\Delta}=\overline{D}-c\overline{B}-(1-c)\overline{\Sigma}
=c(\overline{D}-\overline{B})+(1-c)(\overline{D}-\overline{\Sigma})
$$
is ample. 

Now applying [\ref{B-BAB}, Theorem 1.6] (=Theorem \ref{t-bnd-lct}), we deduce that 
$(\overline{X},\overline{\Delta}+t\overline{P})$ is klt for some real number $t>0$ bounded away from zero. 
Therefore, $\mu_TP=\mu_T\overline{P}$ is bounded from above because 
$$
0\le a(T,\overline{X},\overline{\Delta}+t\overline{P})
=a(T,\overline{X},\overline{\Delta})-t\mu_T\overline{P}<1-t\mu_T\overline{P}.
$$\

\emph{Step 4.}
\emph{Finally it is enough to show that the coefficients of $\overline{B}$ are bounded from below.} 
Define $K_{\overline{X}}+\overline{E}=\rho^*K_X$. It is enough to show that 
the coefficients of $\overline{E}$ are bounded from below because $\overline{E}\le \overline{B}$. Write 
$\overline{E}$ as the difference $\overline{E}^+ - \overline{E}^-$ where $\overline{E}^+, \overline{E}^-\ge 0$ 
have no common components. 
Observe that 
$$
\overline{N}=\rho^*(qf^*A-K_X)= q\overline{f}^*A-(K_{\overline{X}}+\overline{E})
=q\overline{f}^*A-(K_{\overline{X}}+\overline{E}^+)+\overline{E}^-,
$$
hence 
$$
2\overline{D}-\overline{E}^-\sim_\R \overline{D}-\overline{N}+\overline{D}-(K_{\overline{X}}+\overline{E}^+)+q\overline{f}^*A.
$$
By Step 1, $\overline{D}-\overline{N}$ is ample and $\overline{E}^+\le \overline{\Sigma}$.
Replacing $\overline{D}$ with a multiple we can assume that $\overline{D}-(K_{\overline{X}}+\overline{E}^+)$ is ample.
Thus $2\overline{D}-\overline{E}^-$ is ample which implies that $\overline{D}^{d-1}\cdot\overline{E}^-$ is bounded 
from above, hence the coefficients of 
$\overline{E}^-$ are bounded from above which in turn implies that the coefficients of $\overline{E}$ 
are bounded from below as required.

\end{proof}

\begin{proof}(of Proposition \ref{l-bnd-cy-bnd-lct-special})
\emph{Step 1.}
\emph{In this step we will translate the problem into showing that the multiplicity of $P$ along certain 
divisors is bounded from above.} First we can assume 
$P\neq 0$ otherwise the statement is trivial. In particular,  $\dim Z>0$.
Taking a $\Q$-factorialisation we can assume $X$ is $\Q$-factorial.
Pick a small $\epsilon'\in (0,\epsilon)$. Let $s$ be the $\epsilon'$-lc threshold of $P$ with respect to 
$(X,B)$, that is, $s$ is the largest number such that $(X,B+sP)$ is $\epsilon'$-lc. It is enough to show that 
$s$ is bounded from below away from zero. In particular, we can assume $s<1$.

There is a prime divisor $T$ over $X$ with log discrepancy
$$
a(T,X,B+sP)=\epsilon'<1.
$$ 
Since $P$ is vertical over $Z$, $T$ is vertical over $Z$.
It is enough to show that $\mu_TP$, the coefficient of $T$ in the pullback of $P$ on any resolution, 
is bounded from above because 
$$
s\mu_TP=a(T,X,B)-a(T,X,B+sP)\ge \epsilon-\epsilon'.
$$
We devote the rest of the proof to showing that $\mu_TP$ is bounded from above.\\ 

\emph{Step 2.}
\emph{In this step we apply induction and reduce to the case when $T$ maps to a 
closed point on $Z$.} By the choice of $P$, 
$$
K_X+B+sP\sim_\R f^*(L+slA),
$$
and since $s<1$, $(l+1)A-(L+slA)$ is ample. Thus 
replacing $B$ with $B+sP$, replacing $A$ with $(l+1)A$ (and replacing $r$ accordingly), 
and replacing $\epsilon$ with $\epsilon'$, we can assume 
that $\epsilon$ is sufficiently small and that  $a(T,X,B)=\epsilon$ (we will not use $s$ any more). 
Extracting $T$ we can also assume $T$ is a divisor on $X$. 
Our goal still is to show that $\mu_TP$ is bounded from above.

Take a hyperplane section $H\sim A$ of $Z$ and let $G=f^*H$. Consider
$$
K_G+B_G:=(K_X+B+G)|_G
$$ 
and $P_G:=P|_G$. Then $(G,B_G)$ is $\epsilon$-lc, $-K_G$ is big over $H$, 
$K_G+B_G\sim_\R g^*(L+A)|_H$ where $g$ denotes $G\to H$, and $2A|_H-(L+A)|_H$ is ample. 
Thus $(G,B_G)\to H$ is a $(d-1, 2^{d-1}r, \epsilon)$-Fano type fibration.
Moreover, $2P_G\sim_\R lg^*2A|_H$.
Applying induction on dimension we find a real number $u>0$ depending only on $d,r,l,\epsilon$ 
such that $(G,B_G+uP_G)$ is klt. Then by inversion of adjuction  [\ref{kollar-mori}, Theorem 5.50] (which is 
stated for $\Q$-divisors but also holds for $\R$-divisors),
the pair $(X,B+G+uP)$ is plt near $G$. In particular, if the image of $T$ on $Z$ is positive-dimensional, 
then $T$ intersects $G$, so $\mu_TP$ is bounded from above. 
Therefore, we can assume that the image of $T$ on $Z$ is a closed point.\\ 

\emph{Step 3.}
\emph{In this step we finish the proof by applying Lemma \ref{l-bnd-cy-bnd-lct-special-I}.}
Let $\Theta=T$. Since $\Theta$ is vertical over $Z$, $-(K_X+\Theta)$ is big over $Z$.
Run an MMP on $-(K_X+\Theta)$ over $Z$ 
and let $X'$ be the resulting model. We denote the pushdown of each divisor $D$ to $X'$ 
by $D'$. Then $-(K_{X'}+\Theta')$ is nef and big over $Z$.
By construction, $-\epsilon T'\le B'-\Theta'$, hence since $T'$ is mapped to a closed point on $Z$, 
$(f^*A)'+B'-\Theta'$ is pseudo-effective. Thus   
by Proposition \ref{l-bnd-cy-numerical-bndness}, we can assume that $(lf^*A)'-(K_{X'}+\Theta')$ is nef for some 
bounded natural number $l$. Increasing $l$ by $1$ we can assume $(lf^*A)'-(K_{X'}+\Theta')$ is nef and big. 

 On the other hand, $(X',\Theta'-\epsilon T')$ is klt as $\Theta'-\epsilon T'\le B'$. 
Since $\epsilon$ is assumed to be sufficiently small, by the ACC for 
lc thresholds [\ref{HMX2}, Theorem 1.1], $(X',\Theta')$ is lc.
Then applying Theorem \ref{t-bnd-comp-lc-global} (by taking $B=\Theta'$, $M=(lf^*A)'$ and $S$ to be the 
centre of $T$ on $X'$), there exist  
a bounded natural numbers $n$ and $\Lambda'\ge \Theta'$ such that $(X',\Lambda')$ is lc over $z$ and 
$n(K_{X'}+\Lambda')\sim (n+2)(lf^*A)'$. Since $X\bir X'$ is an MMP on $-(K_X+\Theta)$ over $Z$, 
taking $K_X+\Lambda$ to be the crepant pullback of 
$K_{X'}+\Lambda'$ to $X$ we get $\Lambda\ge \Theta\ge \Delta$ such that $(X,\Lambda)$ is lc over $z$ and 
$n(K_{X}+\Lambda)\sim (n+2)lf^*A$. Finally, apply Lemma \ref{l-bnd-cy-bnd-lct-special-I} 
to deduce that $\mu_TP$ is bounded.\\

\end{proof}

\subsection{Bounded klt complements}
In this subsection we treat Theorem \ref{t-bnd-comp-lc-global-cy-fib} inductively. 
We first consider a weak version.

\begin{prop}\label{l-bnd-cy-bnd-klt-compl-1}
Let $d,r$ be natural numbers, $\epsilon$ be a positive real number, and $\mathfrak{R}\subset [0,1]$ 
be a finite set of rational numbers.
Assume that Theorems \ref{t-log-bnd-cy-fib} and \ref{t-sing-FT-fib-totalspace}
hold in dimension $d-1$. Then there exist  
natural numbers $n,m$ depending only on $d,r,\epsilon,\mathfrak{R}$ satisfying the following. 
Assume that $(X,B)\to Z$ is a $(d,r,\epsilon)$-Fano type fibration (as in \ref{d-FT-fib}) and that 
\begin{itemize}
\item we have $0\le \Delta\le B$ with coefficients in $\mathfrak{R}$, and 

\item $-(K_X+\Delta)$ is big over $Z$. 
\end{itemize}
Then for each point $z\in Z$ there is a $\Q$-divisor $\Lambda\ge \Delta$ such that 
\begin{itemize}
\item $(X,\Lambda)$ is lc over $z$, and

\item $n(K_X+\Lambda)\sim mf^*A$. 
\end{itemize} 
\end{prop}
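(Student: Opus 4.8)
The plan is to reduce to the usual boundedness-of-complements theorem (Theorem \ref{t-bnd-comp-lc-global}) by first producing a divisor with good singularities whose support is controlled, and then running an appropriate MMP so that the relevant non-klt centre sits over the chosen point $z$. First I would take a $\Q$-factorialisation of $X$ and reduce to the situation where $-(K_X+\Delta)$ is actually nef and big over $Z$: since $-(K_X+\Delta)$ is big over $Z$ and $X$ is of Fano type over $Z$, we may run an MMP on $-(K_X+\Delta)$ over $Z$ and replace $X$ by the resulting model, being careful that the coefficients of $\Delta$ stay in $\mathfrak{R}$ (only divisors of log discrepancy $\le 0$ with respect to $(X,\Delta)$ could be contracted, and one checks none are, or passes to an ample model). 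At this point $-(K_X+\Delta)\sim_\R f^*L-(K_X+\Delta)+\text{(something)}$; more precisely $K_X+\Delta\sim_\R f^*L - (B-\Delta) - (f^*A-(K_X+B))+\cdots$, and the useful point is that $f^*A+B-\Delta \sim_\R f^*A - (K_X+\Delta) + f^*L$ up to the ample $f^*(A-L)$, so $f^*A+B-\Delta$ is pseudo-effective (indeed big) over $Z$.

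Next I would invoke Proposition \ref{l-bnd-cy-bnd-lct-special}: applying it to a general $0\le P_0\sim_\R lf^*A$ (for the bounded $l$ coming from Proposition \ref{l-bnd-cy-numerical-bndness} applied with $\Delta$) shows that $(X,B+tP_0)$, hence $(X,\Delta+tP_0)$, is klt, and in fact gives us room to perturb. The key step is then: pick a point $z\in Z$, choose $0\le Q\sim A$ with $z\in\Supp Q$, and use the lower bound on lc thresholds to find a bounded multiple so that a complement construction applies. Concretely, set $M:=(l'f^*A)$ for a suitable bounded $l'$ so that $M-(K_X+\Delta)$ is nef and big (using Proposition \ref{l-bnd-cy-numerical-bndness} to make $l'f^*A-(K_X+\Delta)$ nef, then adding $1$), and arrange a non-klt centre $S$ of an auxiliary pair lying over $z$ by running an MMP on $-(K_X+\Theta)$ over $Z$ where $\Theta$ is built from $\Delta$ plus a divisor forcing a log canonical place over $z$ (exactly as in Step 3 of the proof of Proposition \ref{l-bnd-cy-bnd-lct-special}, where one adds the centre of some $T$ over $z$ with coefficient $1$). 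Since the relevant thresholds are bounded below (Proposition \ref{l-bnd-cy-bnd-lct-special}) and $-(K_X+\Theta)$ is big over $Z$, the ACC for lc thresholds \cite{HMX2} lets us arrange $(X',\Theta')$ lc after the MMP, with $M|_{S}\equiv 0$ since $M$ is pulled back from $Z$ and $S$ sits in a fibre.

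Then I would apply Theorem \ref{t-bnd-comp-lc-global} with $(X',\Theta')$, $M=(l'f^*A)'$ and this $S$, producing a bounded $n$ and $\Lambda'\ge\Theta'\ge\Delta'$ with $(X',\Lambda')$ lc over $z$ and $n(K_{X'}+\Lambda')\sim (n+2)M = (n+2)l'(f^*A)'$; pulling back crepantly along the MMP $X\bir X'$ (which is $(-(K_X+\Theta))$-negative) yields $\Lambda\ge\Delta$ on $X$ with $(X,\Lambda)$ lc over $z$ and $n(K_X+\Lambda)\sim (n+2)l'f^*A$, so we take $m=(n+2)l'$. The main obstacle I anticipate is the bookkeeping needed to guarantee that all of $n$, $l$, $l'$, and the coefficients produced along the way stay bounded purely in terms of $d,r,\epsilon,\mathfrak{R}$ — in particular that the auxiliary boundary $\Theta$ used to create the log canonical place over $z$ can be chosen with controlled coefficients (so that Theorem \ref{t-bnd-comp-lc-global}, which needs coefficients in a fixed finite set, applies) and that the MMP steps do not destroy the numerical hypotheses ($f^*A+B-\Delta$ pseudo-effective, $-(K_X+\Theta)$ big over $Z$) needed to invoke Propositions \ref{l-bnd-cy-numerical-bndness} and \ref{l-bnd-cy-bnd-lct-special}; handling the reduction carefully so that the $\epsilon$-lc hypothesis is only mildly weakened (to some fixed $\epsilon'$) is the delicate part.
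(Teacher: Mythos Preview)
Your overall architecture is correct and matches the paper's: manufacture a prime divisor $T$ over the fibre $f^{-1}\{z\}$ with small log discrepancy, set $\Theta$ equal to $\Delta$ with the coefficient of $T$ raised to $1$, run an MMP on $-(K_X+\Theta)$ over $Z$, use Proposition \ref{l-bnd-cy-numerical-bndness} to make $(lf^*A)'-(K_{X'}+\Theta')$ nef and big, use ACC for lc thresholds to get $(X',\Theta')$ lc, apply Theorem \ref{t-bnd-comp-lc-global}, and pull back crepantly. This is exactly Steps 2--3 of the paper's proof.

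The gap is the one step you black-box: ``$\Theta$ is built from $\Delta$ plus a divisor forcing a log canonical place over $z$ (exactly as in Step 3 of the proof of Proposition \ref{l-bnd-cy-bnd-lct-special}).'' In that proposition the divisor $T$ over $z$ with $a(T,X,B)=\epsilon'$ is \emph{already given} (it comes from the $\epsilon'$-lc threshold of the $P$ appearing in the hypothesis); here no such $T$ is handed to you, and producing it with controlled data is the actual content of the proposition. The paper's Step 1 does this explicitly: take $p(d+1)$ general members $M_1,\dots,M_{p(d+1)}$ of the sub-linear system $V_z\subset |f^*A|$ of elements containing $f^{-1}\{z\}$, set $M=\frac{1}{p}\sum M_i$ with $\frac{1}{p}<1-\epsilon$; then $(X,B+M)$ is $\epsilon$-lc away from $f^{-1}\{z\}$ by generality but not lc along $f^{-1}\{z\}$ by [\ref{Kollar-flip-abundance}, Theorem 18.22] (multiplicity $>d$). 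Taking the $\epsilon'$-lc threshold $u<1$ of $M$ produces the desired $T$ with centre in $f^{-1}\{z\}$, and replacing $B$ by $B+uM$ (adjusting $A,r,\epsilon$ in a bounded way) puts you exactly in the situation your Step 3 needs. Your invocation of Proposition \ref{l-bnd-cy-bnd-lct-special} does not substitute for this: that proposition bounds lc thresholds from \emph{below}, whereas here you need to \emph{create} bad singularities over $z$, which is the opposite direction. The paper does not use \ref{l-bnd-cy-bnd-lct-special} in this proof, and your preliminary MMP on $-(K_X+\Delta)$ is also unnecessary.
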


\begin{proof}
\emph{Step 1.}
\emph{In this step we create singularities over $z$.}
We can assume that $\dim Z>0$ otherwise we apply [\ref{B-compl}, Theorem 1.7].
It is enough to prove the proposition with $z$ replaced by any closed point $z'$ in the closure $\bar{z}$ because 
any open neighbourhood of $z'$ contains $z$. Thus from now on we assume that $z$ is a closed point.  
Taking a $\Q$-factorialisation we can assume $X$ is $\Q$-factorial. 
Consider the sub-linear system $V_z$ of $|f^*A|$ consisting of elements 
containing the fibre $f^{-1}\{z\}$, and pick $P$ in $V_z$.
Since $A$ is very ample, $V_z$ is base point free outside $f^{-1}\{z\}$. 
Replacing $A$ with $2A$ we can assume that $\dim V_z>0$.

Let $p$ be a natural number such that $\frac{1}{p}<1-\epsilon$. 
Pick distinct general elements $M_1,\dots,M_{p(d+1)}$ in $V_z$ and let 
$$
M=\frac{1}{p}(M_1+\dots+M_{p(d+1)}).
$$ 
Then $(X,B+M)$ is $\epsilon$-lc  outside $f^{-1}\{z\}$ by generality of the $M_i$ and the assumption 
$\frac{1}{p}<1-\epsilon$.  On the other hand, $(X,B+M)$ is not lc at any point of 
 $f^{-1}\{z\}$ by [\ref{Kollar-flip-abundance}, Theorem 18.22].\\
 
\emph{Step 2.}
\emph{In this step we reduce the problem to the situation when there is a prime divisor 
$T$ on $X$ mapping to $z$ with $a(T,X,B)=\epsilon$ sufficiently small.} 
Now pick a sufficiently small rational number $\epsilon'\in (0,\epsilon)$ and 
let $u$ be the largest number such that $(X,B+uM)$ is $\epsilon'$-lc. There is a prime divisor $T$ over $X$ 
such that 
$$
a(T,X,B+uM)=\epsilon'.
$$
As $(X,B+M)$ is not lc near $f^{-1}\{z\}$, $u<1$. 
Since $(X,B+uM)$ is $\epsilon$-lc outside $f^{-1}\{z\}$ and since $\epsilon'<\epsilon$, 
the centre of $T$ on $X$ is contained in $f^{-1}\{z\}$. On the other hand, it is clear that 
$$
K_X+B+uM\sim_\R f^*(L+u(d+1)A).
$$ 
  
Replacing $\epsilon$ with $\epsilon'$ and replacing $B$ with $B+uM$ (and replacing $A,r$ accordingly) 
we can assume that 
$\epsilon$ is sufficiently small and that there is a prime divisor $T$ over $X$ mapping to $z$ with $a(T,X,B)=\epsilon$. 
Extracting $T$ we can assume it is a divisor on $X$; if $T$ is not exceptional over the 
original $X$, we increase the coefficient of $T$ in $\Delta$ to $1-\epsilon$; but if $T$ is exceptional over the original $X$, 
then we let $\Delta$ be the birational transform of the original $\Delta$ plus $(1-\epsilon)T$. 
The bigness of $-(K_X+\Delta)$ over $Z$ is preserved as $T$ is vertical over $Z$.\\ 

\emph{Step 3.}
\emph{In this step we find a bounded complement of $K_X+\Delta$ using Theorem \ref{t-bnd-comp-lc-global}.}
Let $\Theta$ be the same as $\Delta$ except that we increase the coefficient of $T$ to $1$.
Adding $1$ to $\mathfrak{R}$ we can assume that the coefficients of $\Theta$ are in $\mathfrak{R}$. 
Since $T$ is vertical over $Z$, $-(K_X+\Theta)$ is big over $Z$.
Run an MMP on $-(K_X+\Theta)$ over $Z$ 
and let $X'$ be the resulting model. We denote the pushdown of each divisor $D$ to $X'$ 
by $D'$. Then $-(K_{X'}+\Theta')$ is nef and big over $Z$.
By construction, $-\epsilon T'\le B'-\Theta'$, hence since $T$ is mapped to a closed point on $Z$, 
$(f^*A)'+B'-\Theta'$ is pseudo-effective. Thus   
by Proposition \ref{l-bnd-cy-numerical-bndness}, we can assume that $(lf^*A)'-(K_{X'}+\Theta')$ is nef for some 
bounded natural number $l$. Increasing $l$ by $1$ we can assume $(lf^*A)'-(K_{X'}+\Theta')$ is nef and big. 
 
 On the other hand, $(X',\Theta'-\epsilon T')$ is klt as $\Theta'-\epsilon T'\le B'$. 
Since $\epsilon$ is assumed to be sufficiently small, by the ACC for 
lc thresholds [\ref{HMX2}, Theorem 1.1], $(X',\Theta')$ is lc.
Then applying Theorem \ref{t-bnd-comp-lc-global} (by taking $B=\Theta'$, $M=(lf^*A)'$ and 
$S$ to be the centre of $T$ on $X'$), there exist a 
bounded natural number $n$ and $\Lambda'\ge \Theta'$ such that $(X',\Lambda')$ is lc over $z$ and 
$n(K_{X'}+\Lambda')\sim (mf^*A)'$ where  $m:=l(n+2)$.
Since $X\bir X'$ is an MMP on $-(K_X+\Theta)$ over $Z$, 
taking $K_X+\Lambda$ to be the crepant pullback of 
$K_{X'}+\Lambda'$ to $X$ we get $\Lambda\ge \Theta\ge \Delta$ such that $(X,\Lambda)$ is lc over $z$ and 
$n(K_{X}+\Lambda)\sim mf^*A$. 

\end{proof}

Now we strengthen the previous statement by replacing lc over $z$ with klt over $z$. 

\begin{prop}\label{l-bnd-cy-bnd-klt-compl-2}
Let $d,r$ be natural numbers, $\epsilon$ be a positive real number, and $\mathfrak{R}\subset [0,1]$ 
be a finite set of rational numbers.
Assume that Theorems \ref{t-log-bnd-cy-fib}, \ref{t-sing-FT-fib-totalspace}, and 
\ref{t-bnd-comp-lc-global-cy-fib} hold in dimension $d-1$. Then there exist  
natural numbers $n,m$ depending only on $d,r,\epsilon,\mathfrak{R}$ satisfying the following. 
Assume that $(X,B)\to Z$ is a $(d,r,\epsilon)$-Fano type fibration (as in \ref{d-FT-fib}) and that 
\begin{itemize}
\item we have $0\le \Delta\le B$ with coefficients in $\mathfrak{R}$, and 

\item $-(K_X+\Delta)$ is big over $Z$. 
\end{itemize}
Then for each point $z\in Z$ there is a $\Q$-divisor $\Lambda\ge \Delta$ such that 
\begin{itemize}
\item $(X,\Lambda)$ is klt over $z$, and

\item $n(K_X+\Lambda)\sim mf^*A$. 
\end{itemize} 
\end{prop}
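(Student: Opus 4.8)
The plan is to obtain $\Lambda$ by starting from the bounded \emph{lc} complement over $z$ given by Proposition \ref{l-bnd-cy-bnd-klt-compl-1} and upgrading it to a \emph{klt} one, the upgrade being driven by the $(d-1)$-dimensional case of Theorem \ref{t-bnd-comp-lc-global-cy-fib}. First I would dispose of the case $\dim Z=0$, which is the assertion about Fano type $\epsilon$-lc Calabi--Yau pairs of dimension $d$ and follows from [\ref{B-compl}], and since a neighbourhood of any closed point of $\overline{\{z\}}$ meets it, reduce to $z$ a closed point; after a $\Q$-factorialisation I assume $X$ is $\Q$-factorial. Proposition \ref{l-bnd-cy-bnd-klt-compl-1} then yields natural numbers $n_1,m_1$ depending only on $d,r,\epsilon,\mathfrak{R}$ and a $\Q$-divisor $\Lambda_1\ge\Delta$ with $(X,\Lambda_1)$ lc over $z$ and $n_1(K_X+\Lambda_1)\sim m_1f^*A$. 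If $(X,\Lambda_1)$ is klt over $z$ we are done; otherwise the non-klt locus of $(X,\Lambda_1)$ meets $f^{-1}\{z\}$, and I treat the non-klt centres over $z$ one at a time.

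Passing to a birational model extracting a single non-klt place $S$ of $(X,\Lambda_1)$ whose centre on $Z$ contains $z$, and replacing $\Lambda_1$ by its crepant pullback, I may assume $S$ is a prime divisor with $\mu_S\Lambda_1=1$; after appropriate MMPs (using that $X$ is of Fano type over $Z$ and that $-(K_X+\Delta)$ is big over $Z$) I assume moreover that $S$ is of Fano type over its image $Z_S$ in $Z$, a variety carrying, since $Z$ is bounded, a very ample divisor of bounded degree. By divisorial generalised adjunction write $(K_X+\Lambda_1)|_S=K_S+\Lambda_S+M_S$; since $K_X+\Lambda_1\sim_\Q\frac{m_1}{n_1}f^*A$ is the pullback of an ample divisor from $Z$, restriction to $S$ makes this a generalised Calabi--Yau pair $(S,\Lambda_S+M_S)\to Z_S$ with the coefficients of $\Lambda_S$ in a finite set $\mathfrak{R}'$ and the nef part of bounded b-Cartier index, both depending only on $d,r,\epsilon,\mathfrak{R}$. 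Using the $\epsilon$-lc property of $(X,B)$ and the bigness of $-(K_X+\Delta)$ over $Z$, I would then replace $\Lambda_S$ by an $\epsilon'$-lc perturbation and extract a sub-boundary $0\le\Delta_S\le\Lambda_S$ with coefficients in $\mathfrak{R}'$ and $-(K_S+\Delta_S)$ big over $Z_S$, so that $(S,\Lambda_S+M_S)\to Z_S$ becomes a generalised $(d-1,r',\epsilon')$-Fano type fibration to which (via the generalised analogue of Theorem \ref{t-bnd-comp-lc-global-cy-fib}, or Theorem \ref{t-bnd-comp-lc-global-cy-fib} itself combined with Lemma \ref{l-from-gen-fib-to-usual-fib}) the inductive hypothesis in dimension $d-1$ applies, producing bounded $n_2,m_2$ and a $\Q$-divisor $\Lambda_S^+\ge\Delta_S$ with $(S,\Lambda_S^++M_S)$ generalised klt and $n_2(K_S+\Lambda_S^++M_S)\sim m_2g^*A_S$, where $g\colon S\to Z_S$.

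Finally I would lift this complement back to $X$: after adjoining to $\Lambda_1$ a general element of a bounded multiple of $|f^*A|$ so that the relevant twist is nef and big, Kawamata--Viehweg vanishing makes the restriction map on global sections surjective, and lifting $\Lambda_S^+$ one obtains, via inversion of adjunction [\ref{kawakita}] from the generalised klt pair $(S,\Lambda_S^++M_S)$ and an MMP over $X$ descending $S$ (a klt divisor after a small decrease of its coefficient), a $\Q$-divisor $\Lambda_2\ge\Delta$ on $X$ with $n(K_X+\Lambda_2)\sim mf^*A$ for bounded $n,m$ and with strictly fewer non-klt centres over $z$ than $(X,\Lambda_1)$; iterating this finitely many times produces the desired $\Lambda$. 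The main obstacle is the middle step: arranging $S$ of Fano type over $Z_S$ and extracting a usable $\Delta_S$ while keeping $\mathfrak{R}'$, the $\epsilon'$-lc structure, the index of the nef part and the bigness of $-(K_S+\Delta_S)$ under control uniform in $d,r,\epsilon,\mathfrak{R}$, and then ensuring in the last step that the lifted complement is genuinely klt --- not merely plt --- near the centre of $S$ on $X$, all with $n,m$ bounded.
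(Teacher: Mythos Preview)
Your approach---extracting non-klt places of the initial lc complement $\Lambda_1$, applying induction on the extracted divisor $S$, lifting back, and iterating until all non-klt centres over $z$ disappear---is genuinely different from the paper's, and I believe it has real gaps rather than just technical obstacles.

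The paper never restricts to a non-klt centre. Instead it first uses Proposition~\ref{l-bnd-cy-bnd-lct-special} to enlarge $B$ by $tP$ for a fixed $t>0$ and a divisor $P\in|f^*A|$ containing $f^{-1}\{z\}$, so that $\tilde\Delta:=\Delta+tP\le B$. The point of this ``$tP$ trick'' is that once one has an lc complement $\Lambda\ge\tilde\Delta$ whose non-klt locus maps to a \emph{finite set of closed points} of $Z$, swapping $tP$ for $tQ$ with $Q\in|f^*A|$ general kills every non-klt centre over $z$ at once, because those centres lie in $f^{-1}\{z\}\subset\Supp P$. To arrange the finite-points condition, the paper restricts the complement linear system to a \emph{general} $G=f^*H$ with $H\in|A|$; then $(G,B_G)\to H$ is a $(d-1,r',\epsilon)$-Fano type fibration to which Theorem~\ref{t-bnd-comp-lc-global-cy-fib} in dimension $d-1$ applies directly, giving a klt complement on $G$ which lifts by Kawamata--Viehweg. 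Inversion of adjunction then shows the lifted $\Lambda$ is klt near $G$, i.e.\ the non-klt locus is vertical over finitely many points.

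Your route runs into two concrete problems. First, the divisor $S$ you extract is a non-klt place of $(X,\Lambda_1)$, so adjunction gives only a generalised \emph{lc} pair on $S$; invoking the $\epsilon$-lc property of $(X,B)$ does not produce an $\epsilon'$-lc structure on $(S,\Lambda_S+M_S)$ because $B$ and $\Lambda_1$ are unrelated along $S$, and without this you cannot feed $S$ into the $(d-1)$-dimensional inductive hypothesis (which requires an $\epsilon'$-lc Fano type fibration, not just an lc one). Second, even granting each step, your iteration is not uniformly bounded: the number of non-klt centres of $(X,\Lambda_1)$ over $z$ is not controlled a priori by $d,r,\epsilon,\mathfrak R$, and each pass may enlarge $n,m$, so the final constants need not be bounded. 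The paper's method sidesteps both issues by working on a general hyperplane pullback, where the $\epsilon$-lc structure is inherited automatically, and by using the single $P\mapsto Q$ swap in place of any iteration.
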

\begin{proof}
\emph{Step 1.}
\emph{In this step we modify $B$ and introduce a divisor $\tilde\Delta$.}
We can assume that $\dim Z>0$ otherwise we apply [\ref{B-compl}, Corollary 1.2] which shows that 
$(X,\Delta)$ is log bounded. Moreover, it is enough to prove the lemma by replacing $z$ 
with any closed point $z'$ in $\bar{z}$ because 
$(X,\Lambda)$ being klt over $z'$ implies that it is klt over $z$. Thus from now on we assume that $z$ is a closed point. 
Then as $A$ is very ample we can find $P\in |f^*A|$ containing $f^{-1}\{z\}$.

 By Proposition \ref{l-bnd-cy-bnd-lct-special}, there is a rational number 
$t>0$ depending only on $d,r,\epsilon$ such that  $(X,B+2tP)$ is lc. Since 
$$
B+tP=\frac{1}{2}B+\frac{1}{2}(B+2tP),
$$
the pair $(X,B+tP)$ is $\frac{\epsilon}{2}$-lc. Moreover, 
$$
K_X+B+tP\sim_\R f^*(L+tA).
$$
Thus replacing $B$ with $B+tP$ 
 and replacing $\epsilon$ with $\frac{\epsilon}{2}$, 
we can assume that $B\ge \tilde{\Delta}:=\Delta +tP$ for some fixed rational number $t\in (0,1)$ (here we can replace 
$A$ with $2A$  to ensure that $f^*A-(K_X+B)$ is still nef, and then replace $r$ accordingly). 
Since $P$ is integral and $t$ is fixed, the coefficients of $\tilde{\Delta}$ belong to 
a fixed finite set, so expanding $\mathfrak{R}$ we can assume they belong to $\mathfrak{R}$.\\

\emph{Step 2.}
\emph{In this step we reduce the proposition to existence of a special lc complement.}
Assume that there exist bounded natural numbers $n,m$ and a $\Q$-divisor $\Lambda\ge \tilde{\Delta}$ 
such that 
\begin{enumerate}
\item $(X,\Lambda)$ is lc over $z$, 

\item the non-klt locus of $(X,\Lambda)$ is mapped to a finite set of closed points on $Z$, and 

\item that $n(K_X+\Lambda)\sim mf^*A$. 
\end{enumerate}

Assume that $Q\in |f^*A|$ is general and let $\Lambda':=\Lambda-tP+tQ$. 
By (2), any non-klt centre of $(X,\Lambda)$ intersecting $f^{-1}\{z\}$ is actually contained in 
$f^{-1}\{z\}$. Thus since $P$ contains $f^{-1}\{z\}$, $(X,\Lambda')$ is klt over $z$. Moreover, 
$\Lambda'\ge \Delta$, and perhaps after replacing $n,m$ with a bounded multiple 
we have   
$$
n(K_X+\Lambda')=n(K_X+\Lambda-tP+tQ)\sim n(K_X+\Lambda)\sim m f^*A.
$$
Therefore, it is enough to find $n,m,\Lambda$ as in (1)-(3). 
At this point we replace $\Delta$ with $\tilde{\Delta}$. The bigness of $-(K_X+\Delta)$ over $Z$ 
is preserved as $P$ is vertical.\\ 

\emph{Step 3.}
\emph{In this step we find a bounded lc complement of $K_X+\Delta$ and study it.}
After taking a $\Q$-factoriallisation of $X$ and 
running an MMP on $-(K_X+\Delta)$ over $Z$ we can assume that $-(K_X+\Delta)$ is nef over $Z$. 
Applying Proposition \ref{l-bnd-cy-numerical-bndness}, there is a bounded natural number $l$ such that 
$lf^*A-(K_X+\Delta)$ is nef globally. Replacing $l$ with $l+1$ we can assume 
$lf^*A-(K_X+\Delta)$ is nef and big.

By Proposition \ref{l-bnd-cy-bnd-klt-compl-1}, there exist bounded natural numbers $n,m$ and a 
$\Q$-divisor $\Lambda\ge \Delta$ such that $(X,\Lambda)$ is lc over $z$ and 
$n(K_{X}+\Lambda)\sim mf^*A$. Then 
$$
n(\Lambda-\Delta)=n(K_X+\Lambda)-n(K_X+\Delta)\sim mf^*A-n(K_X+\Delta),
$$
hence
 $$
 n(\Lambda-\Delta)\in |mf^*A-n(K_X+\Delta)|.
 $$
Multiplying $n,m$ by a bounded number we can assume that $n\Delta$ is integral.

Now by adding a general member of $|2lf^*A|$ to $\Lambda$ and replacing $m$ with $m+2nl$ 
to preserve $n(K_{X}+\Lambda)\sim mf^*A$, we can assume that $m-1\ge l(n+1)$, hence  
$$
(m-1)f^*A-(n+1)(K_X+\Delta)
$$
is nef and big.\\ 

\emph{Step 4.}
\emph{In this step we consider the restriction of $|mf^*A-n(K_X+\Delta)|$ to a general 
member of $|f^*A|$.} Let $H$ be a general member of $|A|$ and let $G=f^*H$. Then 
$$
mf^*A-n(K_X+\Delta)-G\sim K_X+\Delta+(m-1)f^*A-(n+1)(K_X+\Delta).
$$
 Thus 
$$
H^1(mf^*A-n(K_X+\Delta)-G)=0
$$ 
by the Kawamata-Viehweg vanishing theorem, hence the restriction map 
$$
H^0(mf^*A-n(K_X+\Delta)) \to H^0((mf^*A-n(K_X+\Delta))|_G) 
$$
is surjective. Note that for any Weil divisor $D$ on $X$, we have 
$$
\mathcal{O}_X(D)\otimes\mathcal{O}_G\simeq \mathcal{O}_G(D|_G)
$$
by the choice of $G$ (see [\ref{B-compl}, 2.41]). This is used to get the above surjectivity.\\ 

\emph{Step 5.}
\emph{In this step we consider complements on $G$.}
Define 
$$
K_G+B_G=(K_X+B+G)|_G
$$ 
and
$$
K_G+\Delta_G=(K_X+\Delta+G)|_G.
$$ 
Then as we have seen several times in this section, $(G,B_G)\to H$ is a $(d-1,r',\epsilon)$-Fano type 
fibration for some fixed $r'$. Moreover, $\Delta_G\le B_G$, the coefficients of $\Delta_G$ are in $\mathfrak{R}$, 
and $-(K_G+\Delta_G)$ is big over $H$. 

Since we are assuming Theorem \ref{t-bnd-comp-lc-global-cy-fib} in dimension $d-1$, 
there exist bounded natural numbers $p,q$ and there is a $\Q$-divisor $\Lambda_{G}'\ge \Delta_G$ such that 
$(G,\Lambda_G')$ is klt and $p(K_G+\Lambda_G')\sim qg^*A|_H$ where $g$ denotes the morphism $G\to H$. 
Replacing both $n$ and $p$ with $np$  and then replacing $m$ and $q$ with $mp$ and $nq$, respectively, 
we can assume that $n=p$. Next if $q<m+n$, then we increase $q$ to $m+n$ by adding $\frac{1}{n}D_G$ 
to $\Lambda_G'$ where $D_G$ is a general element of $(m+n-q)g^*A|_H$. 
If $q\ge m+n$, we similarly increase $m$ to $q-n$ by modifying $\Lambda$ 
so that we can again assume $q=m+n$. Thus we now have 
$$
n(K_G+\Lambda_G')\sim (m+n)g^*A|_H.
$$
Note that in the process, the  inequality $m-1\ge l(n+1)$
of Step 3 is preserved, so the surjectivity of Step 4 still holds.\\

\emph{Step 6.}
\emph{In this step we finish the proof.}
By construction,
$$
nR_G:=n(\Lambda_G'-\Delta_G)\in |(m+n)g^*A|_H-n(K_G+\Delta_G)|,
$$
and $(G,\Lambda_G'=\Delta_G+R_G)$ is klt. Thus if we replace $nR_G$ with any general element of
 $$
 |(m+n)g^*A|_H-n(K_G+\Delta_G)|,
 $$ 
then the pair $(G,\Delta_G+R_G)$ is still klt. On the other hand,
$$  
\begin{array}{l l}
 (m+n)g^*A|_H-n(K_G+\Delta_G) &= (m+n)g^*A|_H-n(K_X+\Delta+G)|_G\\
 & =((m+n)f^*A-nG-n(K_X+\Delta))|_G\\
 & \sim (mf^*A-n(K_X+\Delta))|_G.
\end{array}
$$
  Thus, by the surjectivity in Step 4, a general element 
$$
nR\in |mf^*A-n(K_X+\Delta)|
$$
restricts to a general element 
 $$
nR_G\in |(m+n)g^*A|_H-n(K_G+\Delta_G)|.
 $$ 
 
Now in view of 
$$
K_G+\Delta_G+R_G=(K_X+\Delta+R+G)|_G
$$ 
and inversion of adjunction [\ref{kollar-mori}, Theorem 5.50],
the pair $(X,\Delta+R+G)$ is plt near $G$, hence $(X,\Delta+R)$ is klt near $G$. 
Therefore, replacing $\Lambda$ with $\Delta+R$ we can assume that $(X,\Lambda)$ is klt near 
$G$. In other words, the non-klt locus of 
$(X,\Lambda)$ is mapped to a finite set of closed points of $Z$.
Note that $(X,\Lambda)$ is still lc over $z$. Thus we have satisfied the conditions (1)-(3) of Step 2.

\end{proof}

\begin{lem}\label{l-bnd-cy-bnd-klt-compl-induction}
Assume that Theorems \ref{t-log-bnd-cy-fib}, \ref{t-sing-FT-fib-totalspace}, and \ref{t-bnd-comp-lc-global-cy-fib} 
hold in dimension $d-1$. Then Theorem \ref{t-bnd-comp-lc-global-cy-fib} holds in dimension $d$.
\end{lem}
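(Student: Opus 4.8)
The plan is to obtain the statement from Proposition~\ref{l-bnd-cy-bnd-klt-compl-2} by globalising the complement it produces. The point is that, under the present inductive hypotheses, Proposition~\ref{l-bnd-cy-bnd-klt-compl-2} already delivers natural numbers $n,m$ depending only on $d,r,\epsilon,\mathfrak{R}$ together with, for \emph{each} closed point $z\in Z$, a $\Q$-divisor $\Lambda\ge\Delta$ with $n(K_X+\Lambda)\sim mf^*A$ and $(X,\Lambda)$ klt over $z$; moreover its proof (see Steps~2 and~6 there) in fact constructs a $\Lambda$ which is lc over $z$, has $n(K_X+\Lambda)\sim mf^*A$, and whose non-klt locus maps to a finite set of closed points of $Z$. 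So the whole content of the lemma is to pass from such a complement, whose non-klt locus is concentrated on finitely many fibres, to one that is klt everywhere, while keeping the index $(n,m)$ bounded.

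I would argue as follows. Apply Proposition~\ref{l-bnd-cy-bnd-klt-compl-2} at some closed point to get $\Lambda$ as above, and let $z_1,\dots,z_k$ be the finitely many closed points of $Z$ over which $(X,\Lambda)$ fails to be klt, so that the non-klt locus of $(X,\Lambda)$ lies in $f^{-1}\{z_1,\dots,z_k\}$. As in Step~1 of the proof of Proposition~\ref{l-bnd-cy-bnd-klt-compl-1}, since $A$ is very ample, after replacing $A$ by $2A$ (and $r$ accordingly) each sub-linear system $V_{z_i}\subseteq|f^*A|$ of members containing $f^{-1}\{z_i\}$ is positive-dimensional. Now clear the bad fibres one at a time by the moving-boundary trick of Step~2 of the proof of Proposition~\ref{l-bnd-cy-bnd-klt-compl-2}, applied in turn at $z_1,\dots,z_k$: at each stage the complement $\Lambda$ contains a summand $tP$ with $P\in V_{z_i}$, where $t>0$ is the uniformly bounded number furnished by Proposition~\ref{l-bnd-cy-bnd-lct-special} (with $nt\in\Z$ after one bounded rescaling of $n,m$); replace $P$ by a general $P'\in V_{z_{i+1}}$, so that $P\sim P'$ keeps $n(K_X+\Lambda')\sim mf^*A$, removing $tP$ raises the log discrepancies of all divisors centred in $f^{-1}\{z_i\}$ and hence kills the non-klt centres there, the general $P'$ introduces none, and over the already-cleared points $z_1,\dots,z_{i-1}$ the divisors $P,P'$ are general in $|f^*A|$ so kltness is preserved. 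After $k$ steps we reach a single $\Q$-divisor $\Lambda\ge\Delta$ with $n(K_X+\Lambda)\sim mf^*A$ and $(X,\Lambda)$ klt; the moving trick never changes $(n,m)$, so although $k$ is not bounded in terms of $d,r,\epsilon,\mathfrak{R}$, this causes no loss, and the lemma follows.

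The main obstacle is precisely this globalisation step. A naive convex combination of the local complements $\Lambda_z$ supplied by Proposition~\ref{l-bnd-cy-bnd-klt-compl-2} fails on two counts: it destroys the bound on the index, since the denominator grows with the number of terms, and it need not even be log canonical at a point where some $\Lambda_z$ is not lc, because each $\Lambda_z$ is controlled only over its own point. What makes the moving-boundary argument work instead is the combination of the \emph{uniform} lower bound for the relevant lc threshold (Proposition~\ref{l-bnd-cy-bnd-lct-special}), which keeps $t$ — and hence the single rescaling of $n,m$ — bounded, with the elementary fact that a fixed very ample $A$ on $Z$ always has members of $|f^*A|$ through any prescribed fibre. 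The dimension induction itself enters only through Proposition~\ref{l-bnd-cy-bnd-klt-compl-2}, which already invokes Theorem~\ref{t-bnd-comp-lc-global-cy-fib} in dimension $d-1$ by restricting to a general member of $|f^*A|$ and lifting by Kawamata-Viehweg vanishing and inversion of adjunction; so no new use of dimension $d-1$ is needed here.
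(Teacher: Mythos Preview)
Your approach has a genuine gap. The complement $\Lambda$ produced by Proposition~\ref{l-bnd-cy-bnd-klt-compl-2} at a point $z$ ends up containing a summand $tQ$ for a \emph{general} $Q\in|f^*A|$ (after the replacement in Step~2 of that proof), not a summand $tP$ with $P$ passing through the bad points $z_1,\dots,z_k$ where $(X,\Lambda)$ fails to be klt. There is no reason the complement decomposes as $\tilde\Lambda+tP$ with $P\in V_{z_i}$, so the swap you describe cannot be carried out. Even if such a decomposition existed, removing $tP$ would not ensure kltness over $z_i$: the non-klt centres there arise from the full boundary $\Lambda$ (in fact from the general element $nR$ constructed in Step~6 of Proposition~\ref{l-bnd-cy-bnd-klt-compl-2}), not from the vertical part $tP$.

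The paper's argument avoids both difficulties by a simpler observation. For each $z_i$, the local complement $\Gamma_i$ satisfies $n(\Gamma_i-\Delta)\in|mf^*A-n(K_X+\Delta)|$, so all the $\Gamma_i$ are obtained from $\Delta$ by adding $\tfrac{1}{n}$ times a member of one fixed linear system. Since $(X,\Gamma_i)$ is klt over $U_{z_i}$ and kltness over a fixed open set is an open condition on the linear system, a general member $nR\in|mf^*A-n(K_X+\Delta)|$ gives $(X,\Delta+R)$ klt over every $U_{z_i}$. Choosing finitely many $z_i$ so that the $U_{z_i}$ cover $Z$ then yields a single global klt complement with the same bounded index $(n,m)$; the number $k$ of bad points never enters. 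Thus it is the \emph{linear system} containing all the local complements, rather than any moving trick applied to individual members, that globalises the construction.
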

\begin{proof}
 By Proposition \ref{l-bnd-cy-bnd-klt-compl-2}, there exist 
natural numbers $n,m$ depending only on $d,r,\epsilon, \mathfrak{R}$ such that for each point $z\in Z$ 
 there is a $\Q$-divisor $\Gamma\ge \Delta$ such that 
\begin{itemize}
\item $(X,\Gamma)$ is klt over some neighbourhood $U_z$ of $z$, and

\item $n(K_X+\Gamma)\sim mf^*A$. 
\end{itemize} 
We can find finitely many closed points $z_1,\dots,z_p$ in $Z$ such that the corresponding open sets $U_{z_i}$ cover 
$Z$. For each $z_i$ let $\Gamma_i$ be the corresponding boundary as above.

From 
$$
n(\Gamma_i-\Delta)= n(K_X+\Gamma_i)-n(K_X+\Delta)\sim mf^*A -n(K_X+\Delta)
$$ 
we get
$$
n(\Gamma_i-\Delta) \in |mf^*A-n(K_X+\Delta)|.
$$
Therefore, if $nR$ is a general member of $|mf^*A-n(K_X+\Delta)|$ and if we let $\Lambda:=\Delta+R$, 
then 
\begin{itemize}
\item $(X,\Lambda)$ is klt over $U_{z_i}$,  and 

\item $n(K_X+\Lambda)\sim mf^*A$. 
\end{itemize} 
Finally, since we have only finitely many open sets $U_{z_i}$ involved, $(X,\Lambda)$ is klt everywhere. 

\end{proof}

\subsection{A special case of boundedness of Fano type fibrations}

We treat a special case of Theorem \ref{t-log-bnd-cy-fib} inductively.

\begin{lem}\label{l-bnd-cy-fib-ample-case}
Let $d,r$ be natural numbers, $\epsilon$ be a positive real number, and $\mathfrak{R}\subset [0,1]$ be a 
finite set of rational numbers. Assume that Theorems \ref{t-log-bnd-cy-fib}, \ref{t-sing-FT-fib-totalspace}, 
and \ref{t-bnd-comp-lc-global-cy-fib} hold in dimension $d-1$. Consider the set of all
$(d,r,\epsilon)$-Fano type fibrations $(X,B)\to Z$ (as in \ref{d-FT-fib}) and $\R$-divisors $0\le \Delta\le B$ such that  
\begin{itemize}
\item the coefficients of $\Delta$ are in $\mathfrak{R}$, and

\item $-(K_X+\Delta)$ is ample over $Z$. 
\end{itemize} 
Then the set of such $(X,\Delta)$ is log bounded.
\end{lem}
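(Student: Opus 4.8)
The plan is to combine the work already done in this section into a direct argument. First I would reduce to the case where $X$ is $\Q$-factorial by passing to a $\Q$-factorialisation, which preserves all hypotheses; and I may dispose of the case $\dim Z=0$ immediately by \ref{t-BAB} (Theorem 1.1 of [\ref{B-BAB}]) together with boundedness of $\Supp \Delta$ coming from the lower bound $\epsilon$ on discrepancies and the finiteness of $\mathfrak{R}$, so from now on $\dim Z>0$. The next step is to run an MMP on $-(K_X+\Delta)$ over $Z$; since $X$ is of Fano type over $Z$ and $-(K_X+\Delta)$ is ample over $Z$, this is automatic and $-(K_X+\Delta)$ is already nef and big over $Z$, so actually we keep $X$ as is.

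Now I would invoke Proposition \ref{l-bnd-cy-bnd-klt-compl-2} (whose hypotheses hold since we are assuming Theorems \ref{t-log-bnd-cy-fib}, \ref{t-sing-FT-fib-totalspace}, \ref{t-bnd-comp-lc-global-cy-fib} in dimension $d-1$, and taking $z$ the generic point of $Z$, or equivalently applying \ref{l-bnd-cy-bnd-klt-compl-induction} to get the global version): there exist bounded natural numbers $n,m$ and a $\Q$-divisor $\Lambda\ge \Delta$ with $(X,\Lambda)$ klt and $n(K_X+\Lambda)\sim mf^*A$. In particular $-n(K_X+\Lambda)\sim -mf^*A$, so $K_X+\Lambda \sim_\Q -\frac{m}{n}f^*A \sim_\Q 0/Z$ with bounded Cartier index, and $(X,\Lambda)$ is a klt log Calabi-Yau pair over $Z$ whose ``degree'' with respect to $A$ is controlled. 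I would then apply Proposition \ref{l-bnd-cy-numerical-bndness} to the divisor $\Lambda$ in place of $\Delta$ (noting $-(K_X+\Lambda)$ is nef over $Z$ since it is $\Q$-linearly trivial over $Z$, and $f^*A+B-\Lambda$ is pseudo-effective once one checks $B\ge \Lambda$ up to the vertical correction — more precisely one works with $\Delta$, for which $-(K_X+\Delta)$ is ample over $Z$ and $f^*A+B-\Delta\ge 0$) to obtain a bounded $l$ with $lf^*A-(K_X+\Delta)$, hence $lf^*A-(K_X+\Lambda)$ after adjusting $l$, nef globally.

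With $\Lambda$ in hand I would apply Proposition \ref{l-bnd-cy-bir-bnd} to the Fano type fibration $(X,B)\to Z$ together with the divisor $\Lambda$: since each nonzero coefficient of $\Lambda$ is bounded below (the coefficients of $\Delta$ lie in $\mathfrak{R}$, and the extra part $R=\Lambda-\Delta$ satisfies $nR\sim mf^*A-n(K_X+\Delta)$ is a general member of a basepoint-free-after-bounded-twisting system, so we may take $\mathfrak{R}$ large enough that the coefficients of $R$, being $1/n$ times integers, also lie in a fixed finite set bounded below by $\delta=1/n$), and since $lf^*A-(K_X+\Lambda)$ is pseudo-effective (indeed $\Q$-linearly trivial over $Z$ plus nef part), Proposition \ref{l-bnd-cy-bir-bnd} yields a bounded couple $(\overline X,\overline\Sigma+\overline D)$ and a birational map $\rho\colon \overline X\bir X/Z$ with $\overline\Sigma$ containing the $\rho$-exceptional divisors and the birational transform of $\Supp\Lambda\supseteq \Supp\Delta$. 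This shows $(X,\Delta)$ is log birationally bounded. To upgrade from log birational boundedness to genuine log boundedness, I would use that $n(K_X+\Lambda)\sim mf^*A$ is semi-ample with bounded Cartier index and that $(X,\Lambda)$ is klt, so $K_X+\Lambda+\eta(lf^*A-(K_X+\Lambda))$ is ample for small $\eta$ and bounded; running the argument on the bounded model $\overline X$, the ample model of an appropriate bounded divisor recovers $X$ itself inside a bounded family, and then Lemma \ref{l-bnd-Cartier-index-family}-type arguments plus the standard passage from birational boundedness to boundedness for pairs with a distinguished polarisation give that $(X,\Delta)$ is log bounded. The main obstacle is precisely this last passage — controlling the birational map $\rho$ enough to conclude boundedness rather than just birational boundedness — and I expect it to go through by exploiting that $lf^*A-(K_X+\Lambda)$ is nef with bounded top self-intersection (via Proposition \ref{l-bnd-cy-bndness-volume}) so that $X$ arises as a fixed-degree image under a bounded linear system on $\overline X$.
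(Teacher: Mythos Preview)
Your overall architecture matches the paper's: obtain the bounded klt complement $\Lambda\ge\Delta$ with $n(K_X+\Lambda)\sim mf^*A$ via Lemma \ref{l-bnd-cy-bnd-klt-compl-induction}, then invoke Proposition \ref{l-bnd-cy-bir-bnd} to get log birational boundedness of $(X,\Lambda)$, hence of $(X,\Delta)$. The difficulty, as you yourself flag, is the last step, and here your proposal has a genuine gap.

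The divisors you write down to polarise $X$ are all numerically pullbacks from $Z$: indeed $K_X+\Lambda\sim_\Q \frac{m}{n}f^*A$ and $lf^*A-(K_X+\Lambda)\sim_\Q (l-\frac{m}{n})f^*A$, so any combination $K_X+\Lambda+\eta(lf^*A-(K_X+\Lambda))$ is again $\Q$-linearly a multiple of $f^*A$. Such a divisor is nef but never ample when $\dim X>\dim Z$, so it cannot embed $X$ or recover $X$ as an ``ample model''. Likewise your suggestion that ``$X$ arises as a fixed-degree image under a bounded linear system on $\overline X$'' needs an ample class on $X$ of bounded degree, which you have not produced. Note also that Lemma \ref{l-bnd-cy-fib-v-ampleness} is unavailable here: it assumes Theorem \ref{t-log-bnd-cy-fib} in dimension $d$, which is exactly what this lemma feeds into.

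The paper exploits the hypothesis that $-(K_X+\Delta)$ is \emph{ample} over $Z$ (not just big) in a way your argument does not. Since $mf^*A-n(K_X+\Delta)$ is nef globally and ample over $Z$, after replacing $m$ by a bounded multiple it becomes globally ample; equivalently $\Lambda-\Delta$ is ample. One then sets $\Theta:=\Lambda+t(\Lambda-\Delta)$ for a small $t>0$ (depending on $(X,\Lambda)$) so that $(X,\Theta)$ is $\frac{1}{2n}$-lc, $\Supp\Theta=\Supp\Lambda$, each nonzero coefficient of $\Theta$ is $\ge \frac{1}{n}$, and
\[
K_X+\Theta\sim_\Q lf^*A+t(\Lambda-\Delta)
\]
is ample. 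Now $(X,\Theta)$ is log birationally bounded (same support as $\Lambda$), $\epsilon'$-lc for a fixed $\epsilon'$, has coefficients bounded below, and $K_X+\Theta$ is ample; these are precisely the inputs to [\ref{HMX2}, Theorem 1.6], which upgrades log birational boundedness to log boundedness. Since $\Delta\le\Theta$, $(X,\Delta)$ is then log bounded. The missing idea in your proposal is this construction of an honestly ample $K_X+\Theta$ using the ampleness of $\Lambda-\Delta$.
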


\begin{proof}
By Lemma \ref{l-bnd-cy-bnd-klt-compl-induction}, our assumptions imply Theorem \ref{t-bnd-comp-lc-global-cy-fib} 
in dimension $d$, hence there exist 
natural numbers $n,m$ depending only on $d,r,\epsilon,\mathfrak{R}$ and a $\Q$-divisor $\Lambda\ge \Delta$ such that 
\begin{itemize}
\item $(X,\Lambda)$ is klt, and  

\item $n(K_X+\Lambda)\sim mf^*A$. 
\end{itemize} 
We have 
$$
n(\Lambda-\Delta) \in |mf^*A-n(K_X+\Delta)|.
$$
Increasing $m$ (by adding to $\Lambda$ appropriately) and applying Proposition \ref{l-bnd-cy-numerical-bndness}, 
we can assume that $mf^*A-n(K_X+\Delta)$ is nef and that $l:=\frac{m}{n}$ is a natural number. 
Since $-(K_X+\Delta)$ is ample over $Z$, replacing $m$ with a bounded multiple (which then replaces 
$l$ with a bounded multiple), we can assume that $mf^*A-n(K_X+\Delta)$ is ample.
In particular,  $\Lambda-\Delta$ is ample. 

Since $(X,\Lambda)$ is klt and $n(K_X+\Lambda)$ is Cartier, $(X,\Lambda)$ is  $\frac{1}{n}$-lc. 
Pick a small $t>0$ such that 
$$
(X,\Theta:=\Lambda+t(\Lambda-\Delta))
$$
is $\frac{1}{2n}$-lc. Here $t$ depends on $(X,\Lambda)$. 
Then
$$
K_X+\Theta=K_X+\Lambda+t(\Lambda-\Delta)\sim_\Q l f^*A+t(\Lambda-\Delta)
$$
is ample. In addition, since $\Supp(\Lambda-\Delta)\subseteq \Lambda$ and since $n\Lambda$ is integral, 
each non-zero coefficient of $\Theta$ is at least $\frac{1}{n}$.

Now since $n\Lambda$ is integral and since $lf^*A -(K_X+\Lambda)\sim_\Q 0$, 
 $(X,\Lambda)$ is log birationally bounded, by Proposition \ref{l-bnd-cy-bir-bnd}. Thus  
 $(X,\Theta)$ is also log birationally bounded as $\Supp \Theta=\Supp \Lambda$. Therefore, $(X,\Theta)$ is log bounded by 
[\ref{HMX2}, Theorem 1.6] which implies that $(X,\Delta)$ is log bounded as $\Delta\le \Theta$. 

\end{proof}

\subsection{Boundedness of generators of N\'eron-Severi groups}

To treat the Theorem \ref{t-log-bnd-cy-fib} in full generality we need to discuss 
generators of relative N\'eron-Severi groups. We start with bounding global Picard numbers.

\begin{lem}\label{l-cy-fib-bnd-picard-number}
Let $d,r$ be natural numbers and $\epsilon$ be a positive real number. 
Assume that Theorems \ref{t-log-bnd-cy-fib}, \ref{t-sing-FT-fib-totalspace}, and \ref{t-bnd-comp-lc-global-cy-fib} 
hold in dimension $d-1$. Then there is a natural number $p$ depending only on $d,r,\epsilon$ 
satisfying the following.
If $(X,B)\to Z$ is a $(d,r,\epsilon)$-Fano type fibration (as in \ref{d-FT-fib}), then the Picard number $\rho(X)\le p$.
\end{lem}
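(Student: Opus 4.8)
The plan is to extract, from the complements theory, a klt complement on $X$ of bounded Cartier index, to pass by a crepant modification over $Z$ to a model whose anti-canonical divisor is relatively ample (and therefore lies in a bounded family), and finally to bound $\rho(X)$ in terms of the bounded one.

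First, if $\dim Z=0$ then $(X,B)$ is merely a Fano type $\epsilon$-lc log Calabi-Yau pair of dimension $d$, so $X$ is bounded by [\ref{B-BAB}, Corollary 1.2] and $\rho(X)$ is bounded; thus assume $\dim Z>0$. Taking a $\Q$-factorialisation (which only increases $\rho$ and preserves all the hypotheses in \ref{d-FT-fib}) we may assume $X$ is $\Q$-factorial. By Lemma \ref{l-bnd-cy-bnd-klt-compl-induction} our assumptions yield Theorem \ref{t-bnd-comp-lc-global-cy-fib} in dimension $d$; applying it with $\Delta=0$ (recall $-K_X$ is big over $Z$) gives natural numbers $n,m$, bounded in terms of $d,r,\epsilon$, and a $\Q$-divisor $\Lambda\ge 0$ with $(X,\Lambda)$ klt and $n(K_X+\Lambda)\sim mf^*A$. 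Since $n(K_X+\Lambda)$ is then Cartier, every log discrepancy $a(D,X,\Lambda)$ lies in $\tfrac1n\Z$ and is positive, so $(X,\Lambda)$ is $\tfrac1n$-lc and the coefficients of $\Lambda$ lie in the finite set $\tfrac1n\Z\cap[0,1)$. Writing $A':=\tfrac mn A$ we have $K_X+\Lambda\sim_\Q f^*A'$, and with $\bar A:=(\lceil\tfrac mn\rceil+1)A$ the divisor $\bar A$ is very ample, $\bar A-A'$ is ample, and $\bar A^{\dim Z}$ is bounded in terms of $d,r,\epsilon$.

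Now run the MMP on $-K_X$ over $Z$. Since $K_X+\Lambda\sim_\Q f^*A'$ is numerically trivial over $Z$ we have $-K_X\equiv\Lambda$ over $Z$, so this is the MMP on the big$/Z$ divisor $\Lambda$; every step is trivial for $K_X+\Lambda$, hence crepant for $(X,\Lambda)$, and its divisorial contractions contract only components of $\Lambda$. As $X$ is of Fano type over $Z$ the MMP terminates with a model on which $-K_Y$ is nef and big over $Z$; passing to the relative ample model (again crepant) we may assume $-K_Y$ is ample over $Z$. Let $\pi\colon X\bir Y$ and $g\colon Y\to Z$ be the induced maps and $\Lambda_Y:=\pi_*\Lambda$. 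Then $(Y,\Lambda_Y)$ is crepant to $(X,\Lambda)$, so it is $\tfrac1n$-lc with $\Lambda_Y$ having coefficients in $\tfrac1n\Z\cap[0,1)$, and $K_Y+\Lambda_Y\sim_\Q g^*A'$; hence $(Y,\Lambda_Y)\to Z$ is a $(d,\bar r,\tfrac1n)$-Fano type fibration for some $\bar r$ bounded in terms of $d,r,\epsilon$. Applying Lemma \ref{l-bnd-cy-fib-ample-case} to this fibration with $\Delta=0\le\Lambda_Y$ — noting $-(K_Y+0)=-K_Y$ is ample over $Z$ — shows that $Y$ is bounded; in particular $\rho(Y)\le p_0$ for some $p_0$ depending only on $d,r,\epsilon$.

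It remains to bound $\rho(X)-\rho(Y)$, which equals the number $e$ of prime divisors contracted by $\pi$. Each such divisor is a component of $\Lambda$, hence is a prime divisor $D$ over $Y$ with $a(D,Y,\Lambda_Y)=a(D,X,\Lambda)\le 1-\tfrac1n$. Since $Y$ and $\Lambda_Y$ vary in a bounded family of $\tfrac1n$-lc pairs, the number of divisorial valuations with log discrepancy at most $1-\tfrac1n$ that can be realised on a single birational model of $Y$ is bounded in terms of $d,r,\epsilon$ (a boundedness-of-crepant-models statement, obtainable from [\ref{B-BAB}] together with boundedness of $Y$), so $e$ is bounded. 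Therefore $\rho(X)\le p_0+e$ is bounded, giving the desired $p$. The main obstacle here is precisely this last step — controlling $e$, i.e.\ bounding the number of divisors extracted by a crepant model of a bounded $\tfrac1n$-lc pair; everything preceding it is a routine assembly of the complements theorem in dimension $d$ (via Lemma \ref{l-bnd-cy-bnd-klt-compl-induction}), a crepant MMP over $Z$, and Lemma \ref{l-bnd-cy-fib-ample-case}.
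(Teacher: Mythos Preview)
Your overall strategy---run the MMP on $-K_X$ over $Z$, pass to the ample model $Y$, and bound $Y$ via Lemma \ref{l-bnd-cy-fib-ample-case}---matches the paper's. The detour through Theorem \ref{t-bnd-comp-lc-global-cy-fib} to produce $\Lambda$ is unnecessary: the paper simply pushes $B$ down to $B_Y$, notes that $(Y,B_Y)\to Z$ is again a $(d,r,\epsilon)$-Fano type fibration, and applies Lemma \ref{l-bnd-cy-fib-ample-case} with $\Delta=0$ directly.

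The real divergence, and the genuine gap, is your final paragraph. First, the claim that every $\pi$-exceptional divisor is a component of $\Lambda$ holds for the divisorial contractions of the MMP but is not argued for the ample-model step $Y'\to Y$, which may contract divisors not in $\Supp\Lambda_{Y'}$; and the formula $\rho(X)-\rho(Y)=e$ is not justified since $Y$ need not be $\Q$-factorial. More seriously, the ``boundedness-of-crepant-models'' assertion you invoke---that the number of valuations over a bounded $\tfrac1n$-lc pair with log discrepancy $\le 1-\tfrac1n$ realisable on one model is bounded---is precisely the kind of statement this lemma is a step toward; pointing vaguely at [\ref{B-BAB}] does not discharge it. The paper sidesteps counting entirely: it observes that any $\pi$-exceptional $D$ satisfies $a(D,Y,0)\le a(D,X,0)=1$ (since each MMP step is $K$-non-negative and the ample-model step is $K$-crepant), then invokes [\ref{HX}, Proposition 2.5] to produce a \emph{bounded} normal projective $X'$ with a morphism $X'\to Y$ such that $X\bir X'$ is an isomorphism in codimension one. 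Taking a bounded resolution $W\to X'$, the paper bounds $\rho(W)$ topologically---$W$ is a fibre of a fixed smooth family, so $\dim_\R H^2(W,\R)$ is bounded, hence so is $\rho(W)$---and concludes $\rho(X)\le\rho(W)$ since $X\bir W$ contracts no divisors. The key input you are missing is the explicit reference [\ref{HX}, Proposition 2.5], and the mechanism for bounding $\rho$ is topological rather than a divisor count.
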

\begin{proof}
Replacing $X$ with a $\Q$-factorialisation we can assume $X$ is $\Q$-factorial. 
Running an MMP on $-K_X$ over $Z$ we find $Y$ so that $-K_Y$ is nef and big over $Z$. 
Replace $Y$ with the ample model of $-K_Y$ over $Z$ so that  $-K_Y$ 
becomes ample over $Z$. Let $K_Y+B_Y$ be the pushdown of $K_X+B$. Then 
$(Y,B_Y)\to Z$ is a $(d,r,\epsilon)$-Fano type fibration. 
Now applying Lemma \ref{l-bnd-cy-fib-ample-case} to $(Y,B_Y)\to Z$ we deduce that 
$Y$ is bounded. 

By construction, if $D$ is a prime divisor on $X$ contracted over $Y$, then 
$$
a(D,Y,0)\le a(D,X,0)=1.
$$ 
Thus, by  [\ref{HX}, Proposition 2.5], there is a birational morphism $X'\to Y$ from a bounded normal 
projective variety such that the induced map $X\bir X'$ is an isomorphism in codimension one.

We can take a resolution $W\to X'$ such that $W$ is bounded. Then there exist finitely many surjective 
smooth projective morphisms $V_i\to T_i$ between smooth varieties, depending only on $d,r,\epsilon$,  such that 
$W$ is a fibre of $V_i\to T_i$ over some closed point for some $i$. Since smooth morphisms are locally 
products in the complex topology (here we can assume that the ground field is $\C$),  
$\dim_\R H^2(W,\R)$ is bounded by some number $p$ depending only on $d,r,\epsilon$. 
In particular, since the N\'eron-Severi group $N^1(W)$ is embedded in $H^2(W,\R)$ as a vector space, we get  
$$
\rho(W)\le \dim_\R H^2(W,\R)\le p.
$$ 
Since $X\bir X'$ is an isomorphism in codimension one and since $W\to X'$ is a morphism,  
the induced map $X\bir W$ does not contract divisors, hence $\rho(X)\le \rho(W)\le p$. 

\end{proof}

\begin{prop}\label{p-cy-fib-bnd-Neron-Severi}
Let $d,r$ be natural numbers, $\epsilon$ be a positive real number, and $\mathfrak{R}\subset [0,1]$ be a 
finite set of rational numbers. 
Assume that Theorems \ref{t-log-bnd-cy-fib}, \ref{t-sing-FT-fib-totalspace}, and \ref{t-bnd-comp-lc-global-cy-fib} 
hold in dimension $d-1$. Then there is a bounded set $\mathcal{P}$ of couples depending only on 
$d,r,\epsilon, \mathfrak{R}$ satisfying the following. 
Suppose that 
\begin{itemize}
\item $(X,B)\to Z$ is a $(d,r,\epsilon)$-Fano type fibration (as in \ref{d-FT-fib}), and that

\item  the coefficients of $B$ are in $\mathfrak{R}$.
\end{itemize}
Then there exist a birational map $X\bir X'$  and a reduced divisor $\Sigma'$ on $X'$ 
such that 
\begin{itemize}
\item $X'$ is a $\Q$-factorial normal projective variety, 

\item $X\bir X'$ is an isomorphism in codimension one, 

\item $(X',\Sigma')$ belongs to $\mathcal{P}$, 

\item $\Supp B'\subseteq \Sigma'$ where $B'$ is the birational transform of $B$, and 

\item the irreducible components of $\Sigma'$ generate $N^1(X'/Z)$.
\end{itemize}
\end{prop}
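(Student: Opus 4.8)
The plan is to reduce to a bounded total space, bound $\Supp B$ there, descend to a birational model of $X$ that is an isomorphism in codimension one, and finally enlarge the boundary so that it generates $N^1(X'/Z)$. First I would replace $X$ by a $\Q$-factorialisation (an isomorphism in codimension one preserving all hypotheses), run an MMP on $-K_X$ over $Z$, and pass to the ample model of the output, obtaining a birational map $X\bir Y$ and a contraction $f_Y\colon Y\to Z$ with $-K_Y$ ample over $Z$. The crucial point is that $K_X+B\sim_\R f^*L$ is numerically trivial over $Z$, so every step of this MMP — and the final contraction to the ample model — is trivial against $K_X+B$ over $Z$, hence $(K_X+B)$-crepant; therefore, writing $B_Y$ for the birational transform of $B$, the pair $(Y,B_Y)$ is crepant to $(X,B)$, so it is $\epsilon$-lc, has coefficients in $\mathfrak{R}$, and satisfies $K_Y+B_Y\sim_\R f_Y^*L$. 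Thus $(Y,B_Y)\to Z$ is again a $(d,r,\epsilon)$-Fano type fibration, now with the extra feature that $-K_Y$ is ample over $Z$. Moreover, exactly as in the proof of Lemma \ref{l-cy-fib-bnd-picard-number}, any prime divisor $D$ on $X$ contracted by $X\bir Y$ satisfies $a(D,Y,0)\le a(D,X,0)=1$, and the number of such $D$ is bounded since $\rho(X)$ is bounded by Lemma \ref{l-cy-fib-bnd-picard-number}.

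Next I would bound $Y$ together with $\Supp B_Y$. Applying Lemma \ref{l-bnd-cy-fib-ample-case} to $(Y,B_Y)\to Z$ with the zero boundary (after adjoining $0$ to $\mathfrak{R}$, so that $-(K_Y+0)=-K_Y$ is ample over $Z$) shows that $Y$ is bounded; fix a bounded very ample divisor $H_Y$ on $Y$. By Lemma \ref{l-bnd-Cartier-index-family}, $K_Y$ is $\Q$-Cartier of bounded index, so $-K_Y$ has bounded degree against $H_Y$; and since $Z$ is bounded via $A$ and $-K_Y$ is ample over $Z$ of bounded index, $-jK_Y+f_Y^*A$ is globally ample for some bounded $j$, whence $f_Y^*A$ also has bounded degree against $H_Y$. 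As $B_Y\sim_\R f_Y^*L-K_Y$ with $A-L$ ample, the divisor $B_Y$ has bounded degree, and because its non-zero coefficients are $\ge \min(\mathfrak{R}\setminus\{0\})$ the reduced divisor $\Supp B_Y$ has bounded degree; hence $(Y,\Supp B_Y)$ is log bounded.

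Now let $D_1,\dots,D_k$ be the prime divisors on $X$ contracted by $X\bir Y$; since each has $a(D_i,Y,0)\le 1$ and $k$ is bounded, by [\ref{HX}, Proposition 2.5] there is a birational morphism $X'\to Y$ from a bounded normal projective variety extracting precisely $D_1,\dots,D_k$, and after a further $\Q$-factorialisation we may take $X'$ to be $\Q$-factorial. Then $X\bir X'$ is an isomorphism in codimension one, and, writing $B'$ for the birational transform of $B$, the birational transform $\Sigma_0$ on $X'$ of $\Supp B_Y\cup D_1\cup\dots\cup D_k$ is a reduced divisor of bounded degree with $\Supp B'\subseteq\Sigma_0$; so $(X',\Sigma_0)$ is log bounded. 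Finally, since $X'$ is again of Fano type over $Z$, $(X',B')\to Z$ is a $(d,r,\epsilon)$-Fano type fibration, so $\rho(X'/Z)\le\rho(X')$ is bounded by Lemma \ref{l-cy-fib-bnd-picard-number}; and as $X'$ varies in a bounded family over $Z$, after stratifying the parameter space and using local constancy of the relative Picard rank on the strata one can, by Noetherian induction, choose a bounded number of prime divisors $G_1,\dots,G_t$ on $X'$ of bounded degree whose classes generate $N^1(X'/Z)$. Setting $\Sigma':=\Sigma_0\cup G_1\cup\dots\cup G_t$ then gives a reduced divisor with $\Supp B'\subseteq\Sigma'$ whose components generate $N^1(X'/Z)$, and $(X',\Sigma')$ lies in a bounded set of couples $\mathcal{P}$ depending only on $d,r,\epsilon,\mathfrak{R}$.

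The hard part will be bounding $\Supp B$: we have no handle on it on $X$ itself, and the argument hinges on the reduction to the case $-K_Y$ ample over $Z$ — which makes $Y$ bounded through the inductively available Lemma \ref{l-bnd-cy-fib-ample-case} — together with the boundedness of the degrees of $K_Y$ and $f_Y^*A$ and the lower bound on the coefficients coming from finiteness of $\mathfrak{R}$. The other delicate point is keeping boundedness through the re-extraction of $D_1,\dots,D_k$, which is exactly what Lemma \ref{l-cy-fib-bnd-picard-number} and [\ref{HX}, Proposition 2.5] are there to handle.
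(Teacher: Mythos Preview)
Your reduction to a bounded $Y$ with $-K_Y$ ample over $Z$, the log boundedness of $(Y,\Supp B_Y)$, and the re-extraction via [\ref{HX}, Proposition 2.5] to obtain a bounded $(X',\Sigma_0)$ with $X\bir X'$ an isomorphism in codimension one are all sound. (Bounding $\Supp B_Y$ is quicker if you apply Lemma~\ref{l-bnd-cy-fib-ample-case} directly with $\Delta=\tfrac12 B_Y$: then $-(K_Y+\tfrac12 B_Y)\sim_\R -\tfrac12 K_Y$ over $Z$ is ample over $Z$ and the coefficients of $\tfrac12 B_Y$ lie in a fixed finite set, so $(Y,\tfrac12 B_Y)$ and hence $(Y,\Supp B_Y)$ is log bounded.) The genuine gap is your final step: you assert that because $X'$ is bounded and $\rho(X'/Z)$ is bounded, ``local constancy of the relative Picard rank on the strata'' plus Noetherian induction produces bounded prime divisors generating $N^1(X'/Z)$. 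But the Picard rank is \emph{not} locally constant in families, and when $NS$ jumps along Noether--Lefschetz-type loci the new classes need not be represented by effective divisors of bounded degree. Your $X'$ is only of Fano type over $Z$, not over a point, so the vanishing $H^2(\mathcal O_{X'})=0$ that would make $\Pic$ deformation-invariant is unavailable; you have given no mechanism to exclude unbounded generators.

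The paper avoids any abstract ``bounded generators of $N^1$'' statement and instead builds $\Sigma'$ constructively via a double induction on $\dim X$ and on $\rho(X/Z)$. If some $X\bir Y/Z$ contracts a divisor $D$, one reduces to $(Y,B_Y)$ with smaller relative Picard number, applies the induction hypothesis to get $(Y,\Sigma_Y)$, and re-extracts $D$ with [\ref{HX}, Proposition 2.5] after arranging a suitable bounded boundary on $Y$; the new $\Sigma'$ is then $\Supp(D+\Theta')$, which generates because $\Sigma_Y$ did. Otherwise $X\to Z$ either has a Mori fibre space structure $X\to Y/Z$ (Lemma~\ref{l-cy-fib-bnd-Neron-Severi-Mfs}: induct on $\dim Y$ to get $\Sigma_Y$; since $X\to Y$ is extremal and $B$ is big over $Y$, the components of $B$ together with $\Supp h^*\Sigma_Y$ generate $N^1(X/Z)$, and boundedness comes from Proposition~\ref{l-bnd-cy-bnd-lct-special} and Lemma~\ref{l-bnd-cy-fib-ample-case}), or $X\to Z$ is a small $\Q$-factorialisation (Lemma~\ref{l-cy-fib-bnd-Neron-Severi-sqf}: decompose into extremal contractions $X=X_1\to\cdots\to X_l=Z$, use that the $(X_i,B_i)$ are log bounded, and take $\Sigma'$ to contain the pullback of a bounded very ample divisor on $X_{l-1}$, again inducting on $\rho(X/Z)$). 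In every branch the generators are specific ample pullbacks, exceptional divisors, or components of $B$, whose boundedness is established directly rather than inferred from boundedness of $X'$ alone.
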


By $N^1(X'/Z)$ we mean $\Pic(X')\otimes \R$ modulo numerical equivalence over $Z$. 
Note that there is a natural surjective map $N^1(X')\to N^1(X'/Z)$.
We prove some lemmas before giving the proof of the proposition.

\begin{lem}\label{l-cy-fib-bnd-Neron-Severi-Mfs}
Assume that Proposition \ref{p-cy-fib-bnd-Neron-Severi} holds in dimension $\le d-1$. Then the proposition holds 
in dimension $d$ when $X$ is $\Q$-factorial and there is a non-birational extremal contraction 
$h\colon X\to Y/Z$.
\end{lem}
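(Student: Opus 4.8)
The plan is to exploit the Mori fibre space $h\colon X\to Y/Z$ to descend to the lower-dimensional base $Y$, apply the inductive hypothesis there, and then pull the resulting bounded model back to a small modification of $X$ by means of Lemma \ref{l-contraction-after-flops}.

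First I would record the geometry. Since $X$ is of Fano type over $Z$ it is of Fano type over $Y$; as $\rho(X/Y)=1$ and $h$ is non-birational, $-K_X$ is ample over $Y$, so $h$ is a Mori fibre space, its general fibre $F$ is an $\epsilon$-lc log Calabi--Yau variety of Fano type and dimension $<d$ (hence bounded by Theorem \ref{t-BAB}), and $Y$ is of Fano type over $Z$ with $\dim Y<d$. Applying the canonical bundle formula to $h$ gives $K_X+B\sim_\R h^*(K_Y+B_Y+M_Y)$; writing $g\colon Y\to Z$, so that $f=g\circ h$, we get $K_Y+B_Y+M_Y\sim_\R g^*L$ with $A-L$ ample and $-K_Y$ big over $Z$, so $(Y,B_Y+M_Y)\to Z$ is a generalised Fano type fibration. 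To bound its data I would use boundedness of $F$ to control the discriminant $B_Y$ through the bounded-fibres variant of Shokurov's conjecture in dimension $<d$ (Theorem \ref{t-cb-conj-sing-bnd-fib}), then invoke Lemma \ref{l-from-gen-fib-to-usual-fib} together with bounded complements in dimension $<d$ (Theorem \ref{t-bnd-comp-lc-global-cy-fib}) to replace $B_Y+M_Y$ by a boundary $B_Y^+$ with coefficients in a fixed finite set of rationals $\mathfrak{R}'$, making $(Y,B_Y^+)\to Z$ a $(\dim Y,r',\epsilon')$-Fano type fibration.

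Next I would apply Proposition \ref{p-cy-fib-bnd-Neron-Severi} in dimension $\dim Y\le d-1$ to $(Y,B_Y^+)\to Z$, obtaining a small modification $Y\bir Y'/Z$ with $Y'$ $\Q$-factorial, a bounded couple $(Y',\Sigma_{Y'})$ with $\Supp B_{Y'}^+\subseteq\Sigma_{Y'}$, and with the components of $\Sigma_{Y'}$ generating $N^1(Y'/Z)$. Feeding $h\colon X\to Y$, the contraction $Y\to Z$, the relation $K_X+B\sim_\R 0/Z$, and $Y\bir Y'/Z$ into Lemma \ref{l-contraction-after-flops} gives a small modification $X\bir X'/Z$ with $X'$ $\Q$-factorial and an extremal, hence Mori fibre space, contraction $\pi\colon X'\to Y'$. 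Since $X\bir X'$ is an isomorphism in codimension one and $K_X+B\sim_\R 0/Z$, the pair $(X',B')$ with $B'$ the birational transform of $B$ satisfies $K_{X'}+B'\sim_\R 0/Z$, is crepant to $(X,B)$, hence is a $(d,r,\epsilon)$-Fano type fibration with coefficients of $B'$ in $\mathfrak{R}$, and $-K_{X'}$ is ample over $Y'$. Because $\rho(X'/Z)=\rho(X'/Y')+\rho(Y'/Z)=1+\rho(Y'/Z)$, it now suffices to adjoin to the reduced divisor $\pi^{-1}(\Sigma_{Y'})$ a single divisor $D'$ on $X'$ non-trivial over $Y'$: take a horizontal$/Y'$ component of $B'$ if one exists, otherwise a bounded divisor ample over $Y'$ (available since $Y'$ and the fibres of $\pi$ are bounded and $X'$ is $\epsilon$-lc). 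Setting $\Sigma':=\Supp B'\cup \pi^{-1}(\Sigma_{Y'})\cup D'$ as a reduced divisor, its components generate $N^1(X'/Z)$ and contain $\Supp B'$.

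It remains to place $(X',\Sigma')$ in a fixed bounded set of couples. Here I would use that $X'$ is a small modification of $X$, which is log birationally bounded by Proposition \ref{l-bnd-cy-bir-bnd}, and that $\rho(X')$ is bounded by Lemma \ref{l-cy-fib-bnd-picard-number}; running the relevant MMP over $Z$ (and over $Y'$) starting from a bounded log birational model, with $Y'$ and the fibres $F$ bounded, yields $(X',\Sigma')$ inside a bounded family $\mathcal{P}$ depending only on $d,r,\epsilon,\mathfrak{R}$. The main obstacle is the first stage: producing on $Y$ a genuine Fano type fibration structure with coefficients in a prescribed finite set forces one to control the canonical bundle formula discriminant through boundedness of the general fibres before the inductive hypothesis becomes applicable. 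The secondary difficulty is the last stage, upgrading the log birational boundedness of $X'$ to honest boundedness of the couple $(X',\Sigma')$, which is handled by the relative MMP over the bounded base $Y'$ together with boundedness of the fibres and of $\rho(X')$.
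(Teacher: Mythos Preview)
Your overall architecture matches the paper closely: descend to $Y$ via the canonical bundle formula, show $(Y,B_Y+M_Y)\to Z$ is a generalised Fano type fibration with controlled singularities, apply the inductive hypothesis to obtain $Y\dashrightarrow Y'/Z$ and $\Sigma_{Y'}$, and use Lemma~\ref{l-contraction-after-flops} to lift this to $X\dashrightarrow X'/Z$ with an extremal contraction $\pi\colon X'\to Y'$. One minor point: to bound the discriminant on $Y$ you invoke Theorem~\ref{t-cb-conj-sing-bnd-fib}, which is proved later (Section~6); the paper instead uses the already-available [\ref{B-sing-fano-fib}, Theorem~1.4], which applies directly since $-K_X$ is ample over $Y$ and the coefficients of $B$ lie in $\mathfrak{R}$. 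Your choice is not circular, but it is unnecessary.

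The genuine gap is your final paragraph. Knowing that $X'$ is log birationally bounded (Proposition~\ref{l-bnd-cy-bir-bnd}), that $\rho(X')$ is bounded, and that $Y'$ and the general fibre $F$ are bounded does \emph{not} by itself yield log boundedness of $(X',\Sigma')$; the phrase ``running the relevant MMP \dots\ yields $(X',\Sigma')$ inside a bounded family'' is not an argument. The paper's key move, which you are missing, is to change the base from $Z$ to $Y'$: since $(Y',\Sigma_{Y'})$ is log bounded, one finds a very ample $H$ on $Y'$ with bounded $H^{\dim Y'}$ and with $H-g'^*L$ ample, so that $(X',B')\to Y'$ is itself a $(d,s,\epsilon)$-Fano type fibration for bounded $s$. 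One then applies Proposition~\ref{l-bnd-cy-bnd-lct-special} over $Y'$ to get a fixed rational $t>0$ with $(X',B'+2t\,\pi^*\Sigma_{Y'})$ klt, so that $\Theta':=B'+t\,\pi^*\Sigma_{Y'}$ has coefficients in a fixed finite set and $(X',\Theta')\to Y'$ is a $(d,s,\frac{\epsilon}{2})$-Fano type fibration. Since $\pi$ is extremal and $\Theta'$ is big over $Y'$, $-(K_{X'}+\tfrac{1}{2}\Theta')$ is ample over $Y'$, and Lemma~\ref{l-bnd-cy-fib-ample-case} (applied with $\Delta=\tfrac{1}{2}\Theta'$) gives log boundedness of $(X',\tfrac{1}{2}\Theta')$, hence of $(X',\Sigma')$ where $\Sigma'=\Supp\Theta'$. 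This is the substance you identified as the ``secondary difficulty'' but did not resolve.
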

\begin{proof}
First note that if $\dim Y=0$, then $X$ is an $\epsilon$-lc Fano variety with Picard number one 
and $K_X+B\sim_\Q 0$, hence $X$ belongs to a bounded family by [\ref{B-compl}, Theorem 1.4], hence $(X,B)$ is log 
bounded as the coefficients of $B$ are in $\mathfrak{R}$ which implies the result in this case as $N^1(X/Z)$ is generated by the components of $B$.
We can then assume that $\dim Y>0$.

Let $F$ be a general fibre of $h$ and let $K_F+B_F:=(K_X+B)|_F$. Then 
$(F,B_F)$ is $\epsilon$-lc, $K_F+B_F\sim_\Q 0$, and $B_F$ is big with coefficients in $\mathfrak{R}$, 
hence $F$ belongs to a bounded family by [\ref{B-compl}, Theorem 1.4] which implies that 
$(F,B_F)$ is log bounded. Moreover, by adjunction, we can write 
$$
K_X+B\sim_\Q h^*(K_Y+B_Y+M_Y)
$$
where we consider $(Y,B_Y+M_Y)$ as a generalised pair as in Remark \ref{rem-base-fib-gen-pair} below.
By [\ref{B-sing-fano-fib}, Theorem 1.4], $(Y,B_Y+M_Y)$ is generalised $\delta$-lc for 
some fixed $\delta>0$ which depends only on $d,\epsilon, \mathfrak{R}$. 

By construction, 
$$
K_Y+B_Y+M_Y\sim_\R g^*L
$$
where $g$ denotes the morphism $Y\to Z$. Moreover, since $X$ is of Fano type over $Z$, $Y$ is also of 
Fano type over $Z$ (cf, the proof of [\ref{B-compl}, Lemma 2.12] works in the relative setting).
Then $(Y,B_Y+M_Y)\to Z$ is a generalised $(d',r,\delta)$-Fano type fibration for some $d'\le d-1$, 
as in \ref{d-gen-FT-fib}. 
By Lemma \ref{l-from-gen-fib-to-usual-fib}, we can find a boundary
$\Delta_Y$ so that $(Y,\Delta_Y)\to Z$ is a $(d',r,\frac{\delta}{2})$-Fano type fibration.
Therefore, applying Lemma \ref{l-bnd-cy-bnd-klt-compl-induction}, there exist bounded natural numbers 
$n,m$ and a boundary $\Lambda_Y$ such that $(Y,\Lambda_Y)$ is klt and 
$n(K_X+\Lambda_Y)\sim mg^*A$. In particular, $(Y,\Lambda_Y)\to Z$ is a $(d',r',\epsilon':=\frac{1}{n})$-Fano type 
fibration for some fixed $r'$. Moreover, increasing $m$ by adding to $\Lambda_Y$, we can assume that $m>n$.
Since we are assuming Proposition \ref{p-cy-fib-bnd-Neron-Severi} in dimension $d-1$, 
 applying it to $(Y,\Lambda_Y)\to Z$ we deduce that  there exist a birational map 
$Y\bir Y'/Z$ to a normal projective variety and 
a reduced divisor $\Sigma_{Y'}$ on $Y'$ satisfying the properties listed in \ref{p-cy-fib-bnd-Neron-Severi}. 

By Lemma \ref{l-contraction-after-flops}, there exists a birational map $X\bir X'/Z$ which is an isomorphism in codimension one 
so that the induced map $X'\bir Y'$ is an extremal contraction (hence a morphism) and 
$X'$ is normal projective and $\Q$-factorial. Let $B'$ on $X'$ be the birational transform of $B$ and let 
$\Lambda_{Y'}$ on $Y'$ be the birational transform of $\Lambda_Y$.  
To ease notation we can replace $(X,B)$ and $(Y,\Lambda_Y)$ with $(X',B')$ and $(Y',\Lambda_Y')$ 
and denote $\Sigma_{Y'}$ by $\Sigma_Y$. By construction, $\Supp \Lambda_Y\le \Sigma_Y$.

Since $\Supp \Lambda_Y\subseteq \Sigma_Y$ and since $(Y,\Sigma_Y)$ is log bounded, 
there is a very ample divisor $H$ on $Y$ with bounded $s:=H^{\dim Y}$ 
such that 
$$
H-(K_Y+\Lambda_Y)\sim_\Q H-\frac{m}{n}g^*A
$$ 
is ample which implies that $H-g^*A$ is ample as $m>n$. Then  
$$
H-g^*L=H-g^*A+g^*(A-L)
$$
is ample as $A-L$ is ample. Thus $(X,B)\to Y$ is a $(d,s,\epsilon)$-Fano type fibration in view of $K_X+B\sim_\R h^*g^*L$. 

Replacing $H$ we can in addition assume that $H-\Sigma_Y$ is ample. Thus we can find $0\le P\sim_\R h^*H$ such that 
$P\ge h^*\Sigma_Y$. Now applying 
Proposition \ref{l-bnd-cy-bnd-lct-special} to $(X,B)\to Y$, there is a fixed rational number $t\in (0,1)$ 
such that $(X,B+2tP)$ is klt. Thus $(X,B+2th^*\Sigma_Y)$ is klt, so
$$
(X,\Theta:=B+th^*\Sigma_Y)
$$ 
is $\frac{\epsilon}{2}$-lc. Therefore, from 
$$
K_X+\Theta=K_X+B+th^*\Sigma_Y\sim_\R h^*(g^*L+t\Sigma_Y)
$$
we deduce that $(X,\Theta)\to Y$ is a $(d,s,\frac{\epsilon}{2})$-Fano type fibration, 
perhaps after replacing $H$ with $2H$ and replacing $s$ accordingly.

On the other hand, the coefficients of $th^*\Sigma_Y$ 
belong to a fixed finite set because $t$ is fixed, the Cartier index of $\Sigma_Y$ is bounded [\ref{B-compl}, Lemma 2.24], 
and the coefficients of  $th^*\Sigma_Y$ are less than $1$. Thus the coefficients of $\Theta$ belong to a 
fixed finite set. Moreover, since $X\to Y$ is extremal and $\Theta$ is big over $Y$, $\Theta$ is ample 
over $Y$, hence  $-(K_X+\frac{1}{2}\Theta)\sim_\R \frac{1}{2}\Theta/Y$ is ample over $Y$. 
Therefore, applying Lemma \ref{l-bnd-cy-fib-ample-case} to $(X,\Theta)\to Y$ (by taking $\Delta=\frac{1}{2}\Theta$) 
we deduce that $(X,\frac{1}{2}\Theta)$ is log bounded.
Since $B$ is big over $Y$, it is ample over $Y$, 
hence the components of $B$ and $h^*\Sigma_Y$ together  generate $N^1(X/Z)$ 
as the components of $\Sigma_Y$ generate $N^1(Y/Z)$. Now let $\Sigma:=\Supp \Theta$.

\end{proof}

\begin{lem}\label{l-cy-fib-bnd-Neron-Severi-sqf}
Proposition \ref{p-cy-fib-bnd-Neron-Severi} holds when $X\to Z$ is a small $\Q$-factorialisation.
\end{lem}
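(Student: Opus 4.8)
The plan is to replace $X$ by a bounded $\Q$-factorial small birational modification of $Z$, and then to build the divisor $\Sigma'$ out of the support of a bounded klt complement together with a bounded set of divisors generating the relative N\'eron--Severi group.

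First, since $f\colon X\to Z$ is birational we have $\dim Z=d$, and as $A$ is very ample with $A^{\dim Z}\le r$ the base $Z$ belongs to a bounded family; moreover, writing $B_Z:=f_*B$, the equality $K_X+B=f^*(K_Z+B_Z)$ (which holds on the nose because $f$ is small) shows $(Z,B_Z)$ is $\epsilon$-lc, so in particular $(Z,0)$ is klt. I would then apply [\ref{HX}, Proposition 2.5], with no extracted divisors since $f$ is small, to obtain a bounded $\Q$-factorial normal projective variety $X'$ together with a small birational morphism $f'\colon X'\to Z$; as $X\to Z$ is also small, the induced map $X\bir X'$ is an isomorphism in codimension one. (Equivalently, one realises the varieties $Z$ in a bounded klt family $\mathcal{Z}\to S$, takes a relative $\Q$-factorialisation via [\ref{BCHM}], and uses Noetherian induction on $S$, noting that any two small $\Q$-factorialisations of a fixed $Z$ agree in codimension one.) Writing $B'$ for the birational transform of $B$, the pair $(X',B')$ is crepant to $(X,B)$, hence $\epsilon$-lc with the coefficients of $B'$ in $\mathfrak{R}$, and $K_{X'}+B'\sim_\R f'^*L$; since $f'$ is a small birational contraction over the klt base $Z$, the variety $X'$ is of Fano type over $Z$. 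Thus $(X',B')\to Z$ is again a $(d,r,\epsilon)$-Fano type fibration.

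Next, by Lemma \ref{l-bnd-cy-bnd-klt-compl-induction} together with the assumed statements in dimension $d-1$, Theorem \ref{t-bnd-comp-lc-global-cy-fib} holds in dimension $d$; applying it to $(X',B')\to Z$ with $\Delta=B'$ — here $-(K_{X'}+B')\sim_\R 0/Z$ is big over $Z$ trivially, as $f'$ is birational — produces bounded natural numbers $n,m$ and a $\Q$-divisor $\Lambda'\ge B'$ with $(X',\Lambda')$ klt and $n(K_{X'}+\Lambda')\sim mf'^*A$. Since $X'$ and $Z$ are bounded and $A$ is very ample with $A^{\dim Z}\le r$, the Cartier divisor $f'^*A$ has bounded degree with respect to a fixed polarisation of $X'$; hence $n(K_{X'}+\Lambda')$, and therefore $\Lambda'\ge 0$, has bounded degree, so $(X',\Supp\Lambda')$ is log bounded (for instance by [\ref{HMX1}, Lemma 2.4.2]). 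In particular $\Supp B'\subseteq\Supp\Lambda'$.

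Finally, since $X'$ is bounded its Picard number is bounded (alternatively by Lemma \ref{l-cy-fib-bnd-picard-number}), so $\rho(X'/Z)$ is bounded; realising $f'\colon X'\to Z$ in a bounded family and stratifying the base, one finds finitely many prime divisors on $X'$, drawn from a bounded family, whose classes generate $N^1(X'/Z)$. Taking $\Sigma'$ to be the reduced divisor supported on $\Supp\Lambda'$ together with these divisors, the couple $(X',\Sigma')$ lies in a bounded set $\mathcal{P}$, $\Supp B'\subseteq\Sigma'$, and the irreducible components of $\Sigma'$ generate $N^1(X'/Z)$; hence $X\bir X'$ and $\Sigma'$ satisfy all the requirements of Proposition \ref{p-cy-fib-bnd-Neron-Severi}. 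The main obstacle is the step producing $X'$, namely guaranteeing that small $\Q$-factorial modifications of the bounded family of klt bases $Z$ again form a bounded family; once this is in hand, everything else is a fairly direct application of the complement and boundedness results already available in dimension $d$.
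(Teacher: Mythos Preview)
Your proposal has a serious gap in the final step. You write that ``realising $f'\colon X'\to Z$ in a bounded family and stratifying the base, one finds finitely many prime divisors on $X'$, drawn from a bounded family, whose classes generate $N^1(X'/Z)$'', but this is precisely the hard content of the proposition and you give no argument for it. Even if $f'$ sits in a bounded family $\mathcal{X}'\to\mathcal{Z}\to T$, the relative N\'eron--Severi groups $N^1(X'_t/Z_t)$ need not form a local system, and even on a stratum where the rank is constant there is no evident bounded choice of effective divisors generating them. You flag the boundedness of $X'$ as the main obstacle, but that step can in fact be arranged (first take a bounded klt complement on $X$, push it forward to get $(Z,\Lambda_Z)$ klt and log bounded, then apply [\ref{HX}, Proposition 2.5]); the genuine difficulty is producing the generators. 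There is also a smaller slip: from $(Z,B_Z)$ being $\epsilon$-lc you cannot conclude that $(Z,0)$ is klt, since $K_Z$ need not be $\Q$-Cartier, and $(Z,B_Z)$ itself, while klt, is not log bounded as $\Supp B_Z$ is uncontrolled.

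The paper's proof avoids the issue entirely by induction on $\rho(X/Z)$, which is bounded by Lemma \ref{l-cy-fib-bnd-picard-number}. After replacing $B$ by a bounded complement so that $n(K_X+B)$ is Cartier and adding a multiple of $f^*A$ so that $K_X+B$ is the pullback of an ample divisor, it factors $X\to Z$ into extremal contractions $X=X_1\to\cdots\to X_l=Z$ via the cone theorem; a volume count together with [\ref{DST}, Theorem 6] makes each $(X_i,B_i)$ log bounded, yielding a bounded very ample $G_{l-1}$ on $X_{l-1}$. Adding $\frac{1}{n}P$ with general $P\in|nG|$ (where $G$ is the pullback of $G_{l-1}$) to $B$ gives a Fano type fibration $(X,\Theta)\to X_{l-1}$ with strictly smaller relative Picard number, and the induction hypothesis produces $X\bir X'/X_{l-1}$ and $\Sigma'$; since $P'\le\Sigma'$ is the pullback of an ample$/Z$ divisor on $X_{l-1}$ and $X_{l-1}\to Z$ is extremal, the components of $\Sigma'$ generate all of $N^1(X'/Z)$. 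The generators are thus constructed explicitly, one extremal contraction at a time.
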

\begin{proof}
We will apply induction on the relative Picard number $\rho(X/Z):=\dim_\R N^1(X/Z)$. 
By Lemma \ref{l-cy-fib-bnd-picard-number}, $\rho(X)$ is bounded, so 
$\rho(X/Z)$ is bounded as well because $\rho(X/Z)\le \rho(X)$. 
The case $\rho(X/Z)=0$ is trivial in which case $X\to Z$ is an isomorphism and 
$(X,B)$ is log bounded, so we assume $\rho(X/Z)>0$.

By Lemma \ref{l-bnd-cy-bnd-klt-compl-induction}, our assumptions imply Theorem \ref{t-bnd-comp-lc-global-cy-fib} 
in dimension $d$. Since $X\to Z$ is birational, $-(K_X+B)$ is big over $Z$, hence applying 
the theorem there exist bounded natural numbers $n,m$ and a boundary $\Lambda\ge B$ such that 
$(X,\Lambda)$ is klt and $n(K_X+\Lambda)\sim mf^*A$. In particular, $n(K_X+\Lambda)$ is Cartier and 
$(X,\Lambda)$ is $\frac{1}{n}$-lc. Replacing $B$ with $\Lambda$, $\epsilon$ with $\frac{1}{n}$, 
$A$ with $2mA$, and replacing $r,\mathfrak{R}$ accordingly, we can assume that $n(K_X+B)$ is Cartier for 
some fixed natural number $n$. Replacing $n$ with $2n$ we can assume $n\ge 2$.

Let $B_Z$ be the pushdown of $B$. By boundedness of length of extremal rays [\ref{kawamata-bnd-ext-ray}], 
$K_Z+B_Z+(2d+1)A$ is ample. Thus taking a general member $G\in |n(2d+1)f^*A|$, adding $\frac{1}{n}G$ to $B$, 
and then replacing $A$ with $(2d+2)A$ (to keep the ampleness of $A-L$), we can assume that 
$K_X+B$ is the pullback of some ample divisor on $Z$ and that $B-\frac{1}{2}f^*A$ is pseudo-effective. 
We have used the assumption $n\ge 2$ to make sure that the $\epsilon$-lc property of $(X,B)$ is preserved.

By the cone theorem [\ref{kollar-mori}, Theorem 3.7], we can decompose 
$X\to Z$ into a sequence 
$$
X=X_1\to X_2 \to \cdots \to X_l=Z
$$
of extremal contractions. 
Let $B_i$ be the pushdown of $B$. Then 
$$
(K_X+B)^d=\vol(K_X+B)=\vol(K_{X_i}+B_i)\le \vol(A)=A^d=r,
$$ 
hence there are only finitely many possibilities for $\vol(K_{X_i}+B_i)$ as $n(K_X+B)$ is Cartier. 
Therefore, by [\ref{DST}, Theorem 6], the set of such $(X_i,B_i)$ is log bounded.
In particular, there is a very ample divisor $G_{l-1}$ on $X_{l-1}$ 
with bounded $G_{l-1}^d$ such that $G_{l-1}-A_{l-1}$ is ample where $A_{l-1}$ is the 
pullback of $A$ (here we are using the property that $B-\frac{1}{2}f^*A$ is pseudo-effective). 

Let $G$ be the pullback of $G_{l-1}$ to $X$. Let $\Theta=B+\frac{1}{n}P$ for some general element $P\in |nG|$.  
Then $K_X+\Theta\sim_\Q 0/X_{l-1}$ and 
$$  
\begin{array}{l l}
2G-(K_X+\Theta) &=2G-(K_X+B)-\frac{1}{n}P\\
& \sim_\Q G-(K_X+B)\\
& =G-f^*A+f^*A-f^*L
\end{array}
$$
is the pullback of an ample divisor on $X_{l-1}$ as $G_{l-1}-A_{l-1}$ and $A-L$ are ample.
Thus $(X,\Theta)\to X_{l-1}$ is a $(d,u,\epsilon)$-Fano type fibration for some fixed number $u$. 

Now $\rho(X/X_{l-1})<\rho(X/Z)$.
Therefore, by induction on the relative Picard number, there is a birational map $X\bir X'/X_{l-1}$ and a reduced divisor 
 $\Sigma'$ on $X'$ satisfying the properties listed in \ref{p-cy-fib-bnd-Neron-Severi} with $\Theta, X_{l-1}$ instead of $B,Z$.
Now since $P'$, the birational transform of $P$, is the pullback of some ample$/Z$ divisor on $X_{l-1}$, since $P'\le \Sigma'$, 
and since $X_{l-1}\to X_l=Z$ is extremal, the components of $\Sigma'$ generate $N^1(X'/Z)$. This proves the lemma.

\end{proof}

\begin{proof}(of Proposition \ref{p-cy-fib-bnd-Neron-Severi})
Replacing $X$ with a $\Q$-factorialisation we can assume $X$ is $\Q$-factorial. 
By Lemma \ref{l-cy-fib-bnd-picard-number},  the Picard number $\rho(X)$ is bounded. 
We will apply induction on dimension and induction on the relative Picard number $\rho(X/Z)$, in the 
$\Q$-factorial case. We will assume that  $X\to Z$ is not an isomorphism otherwise the proposition holds by taking 
$X'=X$ and $\Sigma'=\Supp B$ as in this case $(X,B)$ would be log bounded by definition of $(d,r,\epsilon)$-Fano type fibrations.

First we prove the proposition assuming that there is a birational map $h\colon X\bir Y/Z$ 
to a normal projective variety such that $h^{-1}$ does not contract any divisor but $h$ contracts some divisor.
Since $K_X+B\sim_\R 0/Z$, $(Y,B_Y)$ is klt where $B_Y$ is the pushdown of $B$. 
Replacing $Y$ with a $\Q$-factorialisation we can assume it is $\Q$-factorial. 
The log discrepancy of any prime divisor $D$ contracted by $h$ satisfies 
$$
a(D,Y,B_Y)=a(D,X,B)\le 1.
$$ 
Thus modifying $Y$ by extracting all such divisors except one, we can assume that $h$ contracts a 
single prime divisor $D$. Moreover, replacing $h$ with the extraction morphism determined by $D$, 
we can assume that $h$ is an extremal divisorial contraction. 

Now $(Y,B_Y)\to Z$ is a $(d,r,\epsilon)$-Fano type fibration and $\rho(Y/Z)<\rho(X/Z)$, 
so by the induction hypothesis, there exist a birational map 
$Y\bir Y'/Z$ to a normal projective variety and 
a reduced divisor $\Sigma_{Y'}$ on $Y'$ satisfying the properties of the proposition.  
Replacing $Y$ with $Y'$ and replacing $X$ accordingly (as in the previous paragraph) 
we can assume $Y=Y'$. We change the notation $\Sigma_{Y'}$ to $\Sigma_Y$.

Since $\Supp B_Y\subseteq \Sigma_Y$ and since $(Y,\Sigma_Y)$ is log bounded, by 
[\ref{B-BAB}, Theorem 1.6] (=Theorem \ref{t-bnd-lct}), 
there is a fixed rational number $t>0$ such that 
$$
(Y,\Theta_Y:=B_Y+t\Sigma_Y)
$$ 
is $\frac{\epsilon}{2}$-lc. 
Thus since $a(D,Y,\Theta_Y)\le 1$, applying [\ref{HX}, Proposition 2.5] we deduce that 
there is a birational contraction $X'\to Y$ extracting $D$ but no other divisors and such that if 
$K_{X'}+\Theta'$ is the pullback of $K_Y+\Theta_Y$, then $(X',\Theta')$ is log bounded. 
In addition, from the proof of [\ref{HX}, Proposition 2.5]  we can see that 
if $\Sigma'=\Supp (D+\Theta')$, then $(X',\Sigma')$ is log bounded.
Now since $Y$ is $\Q$-factorial, $X'=X$. For convenience we change the notation $\Theta',\Sigma'$ to $\Theta,\Sigma$. 
Since $K_X+B\sim_\Q 0/Y$ and $B_Y\le \Theta_Y$, we have $B\le \Theta$, hence 
$\Supp B\subseteq \Supp \Theta\subseteq \Sigma$. By construction, $\Sigma$ generates $N^1(X/Z)$, 
so we are done in this case.

Now we prove the proposition in general. 
If $X\to Z$ is not birational, then running an MMP on $K_X$ ends with a Mori fibre space 
$\tilde{X}\to Y/Z$; applying the above we can assume that $X\bir \tilde{X}$ does not contract any divisor, 
hence replacing $X$ we can assume $X=\tilde{X}$; we can then apply Lemma \ref{l-cy-fib-bnd-Neron-Severi-Mfs}. 
Now assume that $X\to Z$ is birational. Applying the above again reduces the proposition to the case 
when $X\to Z$ is a small contraction. But then we can apply Lemma \ref{l-cy-fib-bnd-Neron-Severi-sqf}.

\end{proof}

\subsection{Boundedness of Fano type fibrations}

In this subsection we treat Theorems \ref{t-bnd-cy-fib} and \ref{t-log-bnd-cy-fib} inductively. 

\begin{lem}\label{l-log-bnd-cy-fib-induction}
Assume that Theorems \ref{t-log-bnd-cy-fib}, \ref{t-sing-FT-fib-totalspace}, and \ref{t-bnd-comp-lc-global-cy-fib} 
hold in dimension $d-1$. Then Theorems \ref{t-bnd-cy-fib} and \ref{t-log-bnd-cy-fib} hold in dimension $d$.
\end{lem}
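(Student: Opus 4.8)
The plan is to derive Theorem \ref{t-bnd-cy-fib} first and then obtain Theorem \ref{t-log-bnd-cy-fib} by controlling $\Supp\Delta$ on the resulting bounded family. By Lemma \ref{l-bnd-cy-bnd-klt-compl-induction} the hypotheses already give Theorem \ref{t-bnd-comp-lc-global-cy-fib} in dimension $d$. Given a $(d,r,\epsilon)$-Fano type fibration $(X,B)\to Z$, I would first apply that theorem with $\Delta=0$ — legitimate because $-K_X$ is big over $Z$ — to produce bounded natural numbers $n,m$ and a $\Q$-divisor $\Lambda\ge 0$ with $(X,\Lambda)$ klt and $n(K_X+\Lambda)\sim mf^*A$. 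Since $X$ is regular in codimension one, $n\Lambda$ is then integral, so $(X,\Lambda)$ is $\frac1n$-lc with the coefficients of $\Lambda$ in the finite set $\{0,\frac1n,\dots,\frac{n-1}{n}\}$; moreover $K_X+\Lambda\sim_\R f^*(\frac mn A)$, so after enlarging $A$ to $(\lceil\frac mn\rceil+1)A$ the triple $(X,\Lambda)\to Z$ becomes a $(d,r',\frac1n)$-Fano type fibration with $r'$ bounded in terms of $d,r,\epsilon$.

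Next I would bound $X$. Applying Proposition \ref{p-cy-fib-bnd-Neron-Severi} to $(X,\Lambda)\to Z$ gives a birational map $X\bir X'/Z$ which is an isomorphism in codimension one, with $X'$ normal projective $\Q$-factorial and a couple $(X',\Sigma')$ in a bounded set $\mathcal P$ such that $\Supp\Lambda'\subseteq\Sigma'$, where $\Lambda'$ is the birational transform of $\Lambda$. Thus $(X,\Lambda)$ is a $\frac1n$-lc crepant model of the log bounded pair $(X',\Lambda')$ whose boundary is supported in the bounded divisor $\Sigma'$; by boundedness of crepant models under such a support hypothesis [\ref{HX}, Propositions 2.5 and 2.9] the pair $(X,\Lambda)$ is log bounded, and in particular $X$ is bounded. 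This is Theorem \ref{t-bnd-cy-fib}. (Alternatively one can run the MMP on $-K_X$ over $Z$, each step being $(K_X+B)$-trivial since $K_X+B\sim_\R 0/Z$, reach the ample model $Y'$ with $-K_{Y'}$ ample over $Z$, bound $Y'$ by Lemma \ref{l-bnd-cy-fib-ample-case} with $\Delta=0$, and extract the finitely many divisors contracted by $X\bir Y'$ — all of log discrepancy in $[\epsilon,1]$ — via [\ref{HX}, Proposition 2.5], their number being bounded by Lemma \ref{l-cy-fib-bnd-picard-number}.)

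Then I would prove Theorem \ref{t-log-bnd-cy-fib}. Replacing $\Delta$ by $\delta_0\Supp\Delta$ for a fixed rational $\delta_0\in(0,\delta]$ — which does not change $\Supp\Delta$ and preserves $0\le\Delta\le B$ and the $\epsilon$-lc property — I may assume every coefficient of $\Delta$ equals $\delta_0$. Since $(X,\Supp\Lambda)$ is log bounded by the previous step, I may take the bounding family to carry a very ample divisor restricting to $H$ on $X$ with both $H^d$ and $H^{d-1}\cdot\Supp\Lambda$ bounded. From $n(K_X+\Lambda)\sim mf^*A$ one gets that $H^{d-1}\cdot f^*A=\frac nm(H^{d-1}\cdot K_X+H^{d-1}\cdot\Lambda)$ is bounded, and since $f^*(A-L)$ is nef, $H^{d-1}\cdot B=H^{d-1}\cdot(f^*L-K_X)\le H^{d-1}\cdot f^*A-H^{d-1}\cdot K_X$ is bounded, whence $H^{d-1}\cdot\Supp\Delta=\tfrac1{\delta_0}H^{d-1}\cdot\Delta\le\tfrac1{\delta_0}H^{d-1}\cdot B$ is bounded. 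Hence $\Supp\Delta$ ranges over a bounded family of divisors inside the bounded family of the $X$, so $(X,\Delta)$ is log bounded.

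The hard part is passing from boundedness up to isomorphism in codimension one — which is what Proposition \ref{p-cy-fib-bnd-Neron-Severi} together with the complement $\Lambda$ directly supplies — to honest boundedness of $X$ itself; this is precisely where the boundedness of crepant models in bounded families (via [\ref{HX}] and the Picard number bound) is indispensable, and it is also why the $\epsilon$-lc hypothesis cannot be dropped. A subsidiary but essential point for the second theorem is that $H^{d-1}\cdot f^*A$ is not controlled by $X$ alone; the degree estimate for $\Supp\Delta$ genuinely relies on having produced $\Lambda$ with $n(K_X+\Lambda)\sim mf^*A$ of bounded $n,m$.
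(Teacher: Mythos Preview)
Your argument has a genuine gap at the step ``$(X,\Lambda)$ is a $\frac1n$-lc crepant model of the log bounded pair $(X',\Lambda')$, hence log bounded by [\ref{HX}, Propositions 2.5, 2.9]''. The map $X\bir X'$ produced by Proposition \ref{p-cy-fib-bnd-Neron-Severi} is an isomorphism in codimension one, so there are no divisors to extract; [\ref{HX}, 2.5, 2.9] bound varieties obtained by \emph{extracting} divisors of log discrepancy $\le 1$ over a log bounded base, and say nothing about bounding small $\Q$-factorial modifications. Your parenthetical alternative has exactly the same defect: running the $-K_X$-MMP to $Y'$ and re-extracting via [\ref{HX}, 2.5] is precisely what the proof of Lemma \ref{l-cy-fib-bnd-picard-number} does, and there it only yields an $X'$ with $X\bir X'$ an isomorphism in codimension one---enough to bound $\rho(X)$, but not $X$ itself. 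So at this point you have established boundedness up to isomorphism in codimension one, which you yourself flag as the hard step, but the tool you invoke does not close it.

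The paper closes this gap by exploiting the one conclusion of Proposition \ref{p-cy-fib-bnd-Neron-Severi} you did not use: the components of $\Sigma'$ generate $N^1(X'/Z)$. After arranging (via the complement and an extra vertical piece) that $B$ is big, $K_X+B$ is nef, and $\Supp B'=\Sigma'$, this generation property lets one write an ample$/Z$ class on $X'$ as an $\R$-combination $R'$ of components of $\Sigma'$; its transform $R$ on $X$ is then globally ample (after adding a suitable vertical piece), and $\Theta:=B+R$ satisfies: $(X,\Theta)$ is $\frac{\epsilon}{2}$-lc, coefficients $\ge\frac{\delta}{2}$, $K_X+\Theta$ ample, and $\Supp\Theta'=\Sigma'$ so $(X,\Theta)$ is log birationally bounded. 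Now [\ref{HMX2}, Theorem 1.6] applies and gives honest log boundedness of $(X,\Theta)$, hence of $(X,\Delta)$. In short, the passage from ``bounded in codimension one'' to ``bounded'' goes through producing an \emph{ample} log canonical class on $X$ with support controlled by the bounded $\Sigma'$, and then invoking HMX---not through crepant-model boundedness. Your degree computation for $\Supp\Delta$ in the second half is fine in spirit, but it presupposes a very ample $H$ on $X$ with bounded $H^d$, which is exactly what is missing.
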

\begin{proof}
It is enough to treat \ref{t-log-bnd-cy-fib} as it implies \ref{t-bnd-cy-fib} by taking $\Delta=0$. 
If $\dim Z=0$, then $X$ belong to a bounded family by [\ref{B-BAB}, Corollary 1.2] from which 
we can deduce that $(X,\Delta)$ is log bounded. We can then assume that $\dim Z>0$. 
Let $(X,B)\to Z$ be a $(d,r,\epsilon)$-Fano type fibration and $0\le \Delta \le B$, as in \ref{t-log-bnd-cy-fib}. 
Changing the coefficients of $\Delta$ we can assume that all its coefficients are equal to a fixed rational number, 
and that $\Supp (B-\Delta)=\Supp B$. This in particular implies that $-(K_X+\Delta)\sim_\R B-\Delta/Z$ 
is big over $Z$. 

Now by Lemma \ref{l-bnd-cy-bnd-klt-compl-induction}, 
our assumptions imply Theorem \ref{t-bnd-comp-lc-global-cy-fib} in dimension $d$, hence applying the theorem 
we can find bounded natural numbers $n,m\ge 2$ and a boundary $\Lambda\ge \Delta$ such that $(X,\Lambda)$ is klt 
and $n(K_X+\Lambda)\sim mf^*A$. Replacing $B$ with $\Lambda$, $\epsilon$ with $\frac{1}{n}$, and $A$ with $2mA$ 
we can assume that the coefficients of $B$ belong to some fixed finite set of rational numbers. Furthermore, 
taking a general element $P\in |n(2d+1)f^*A|$ and adding $\frac{1}{n}P$ to $B$ 
and replacing $A,r$ accordingly we can assume that $B$ is big and that $K_X+B$ is nef. 

By Proposition \ref{p-cy-fib-bnd-Neron-Severi}, 
there exist a birational map $X\bir X'/Z$  and a reduced divisor $\Sigma'$ on $X'$ satisfying the properties 
listed in the proposition. Since $(X',\Sigma')$ is log bounded,  
there is a very ample divisor $G'$ on $X'$ with bounded $G'^d$ and bounded 
$G'^{d-1}\cdot \Sigma'$. Moreover, as $\Supp B'\subseteq \Sigma'$, 
$(X',B')$ is log bounded. Now since $B'$ is big, there is a fixed natural number $l>0$ 
such that $lB'\sim \Sigma'+D'$ where $D'\ge 0$. 
From $B'\le \Sigma'$ we deduce that 
$$
(X',\Supp (\Sigma'+D'))
$$ 
is log bounded because 
$$
G'^{d-1}\cdot (\Sigma'+D')\le G'^{d-1}\cdot (\Sigma'+lB')\le G'^{d-1}\cdot (\Sigma'+l\Sigma')
$$ 
is bounded. Replacing $G'$ with a multiple we can then assume that $G'-B'$ and $G'-(\Sigma'+D')$ 
are ample. 
Therefore, by [\ref{B-BAB}, Theorem 1.6] (=Theorem \ref{t-bnd-lct}), there is a 
fixed rational number $t\in (0,1)$ such that 
$$
(X',B'+t\Sigma'+tD')
$$ 
is $\frac{\epsilon}{2}$-lc. Replacing  
$B'$ with 
$$
(1-t)B'+\frac{t}{l}(\Sigma'+D')\sim_\Q B',
$$
replacing $B$ accordingly, replacing $\Sigma'$ with $\Supp (\Sigma'+D')$, 
and replacing $\epsilon$ with $\frac{\epsilon}{2}$, we can assume that $\Supp B'=\Sigma'$. 
In addition, by the previous paragraph, we can assume that  $B\ge \frac{1}{n}P$ for some 
general member $P\in |n(2d+1)f^*A|$ hence that the birational transform $P'\le \Sigma'$. 

Let $H$ be an ample $\Q$-divisor on $X$ and let $H'$ be its birational 
transform on $X'$. Since the components of $\Sigma'$ generate $N^1(X'/Z)$, there exists an $\R$-divisor 
$R'\equiv H'/Z$ such that $\Supp R'\subseteq \Sigma'$. In particular, if $R$ is the birational transform of $R'$ 
on $X$, then $R$ is ample over $Z$. Replacing $R'$ with a small multiple and adding a multiple of 
$P'$ to it, we can assume that  $R$ is globally 
ample. Since $\Supp R\subseteq \Supp B$, rescaling $R$ we can in addition assume that $\Theta:=B+R\ge \frac{1}{2}\Delta$, that 
the coefficients of $\Theta$ are $\ge \frac{\delta}{2}$, and that $(X,\Theta)$ is $\frac{\epsilon}{2}$-lc. 

By construction, $\Supp \Theta'=\Supp B'=\Sigma'$ where $\Theta'$ is the birational transform of $\Theta$. 
Thus $(X,\Theta)$ is log birationally bounded. 
Moreover, $K_X+\Theta$ is ample as $K_X+B$ is nef and $R$ is ample. Therefore, applying [\ref{HMX2}, Theorem 1.6] 
we deduce that $(X,\Theta)$ is log bounded which in particular means that $(X,\Delta)$ is log bounded.

\end{proof}

\subsection{Lower bound on lc thresholds}

\begin{lem}\label{l-bnd-cy-bnd-lct-usual}
Assume that Theorems \ref{t-log-bnd-cy-fib}, \ref{t-sing-FT-fib-totalspace}, and \ref{t-bnd-comp-lc-global-cy-fib} 
hold in dimension $d-1$. Then Theorem \ref{t-sing-FT-fib-totalspace} holds in dimension $d$. 
\end{lem}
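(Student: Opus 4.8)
The plan is to reduce, using the boundedness results now available in dimension $d$, to Theorem \ref{t-bnd-lct} applied directly to the pair $(X,B)$. By Lemmas \ref{l-log-bnd-cy-fib-induction} and \ref{l-bnd-cy-bnd-klt-compl-induction} the hypotheses imply that Theorems \ref{t-bnd-cy-fib}, \ref{t-log-bnd-cy-fib} and \ref{t-bnd-comp-lc-global-cy-fib} hold in dimension $d$, and I would use these freely. First, replacing $X$ by a $\Q$-factorialisation (this preserves all the hypotheses and the conclusion for the new $X$ implies it for the original) we may assume $X$ is $\Q$-factorial, so $K_X$ is $\R$-Cartier. Next, in the first case $f^*A+B-P\sim_\R f^*(A+L)-K_X-P$ since $K_X+B\sim_\R f^*L$, and as $f^*(2A)-f^*(A+L)=f^*(A-L)$ is the pullback of an ample divisor, $f^*(2A)-K_X-P$ is pseudo-effective; so, after replacing $A$ by $2A$ and $r$ by $2^dr$ (the conclusion does not mention $A$), we may assume in both cases that $f^*A-K_X-P$ is pseudo-effective.

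Next I would bring in a bounded complement. Applying Theorem \ref{t-bnd-comp-lc-global-cy-fib} with $\Delta=0$ (so with $\mathfrak{R}=\{0\}$) produces natural numbers $n,m$, bounded in terms of $d,r,\epsilon$, and a $\Q$-divisor $\Lambda\ge0$ with $(X,\Lambda)$ klt and $n(K_X+\Lambda)\sim mf^*A$. Since $n(K_X+\Lambda)$ is Cartier and $X$ is normal, the coefficients of $\Lambda$ lie in the finite set $\tfrac1n\Z\cap[0,1)$, and $(X,\Lambda)$ is $\tfrac1n$-lc. As $K_X+\Lambda\sim_\Q\tfrac mn f^*A$ and $(m+1)A-\tfrac mn A$ is ample, $(X,\Lambda)\to Z$ is a $\big(d,((m+1)A)^{\dim Z},\tfrac1n\big)$-Fano type fibration, so $(X,\Lambda)$ is log bounded by Theorem \ref{t-log-bnd-cy-fib}. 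Since moreover the coefficients of $\Lambda$ lie in a fixed finite set, there is a very ample $H$ on $X$ with $H^d$ bounded in terms of $d,r,\epsilon$ and with $H-\Lambda$ ample; put $H':=H+f^*A$, still very ample, with $(H')^d$ bounded and $H'-\Lambda=(H-\Lambda)+f^*A$ ample.

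Then I would do the numerical bookkeeping. For $D\in\{B,P\}$ one has the identity $H'-D=(H-\Lambda)+(K_X+\Lambda)+(f^*A-K_X-D)$, where $H-\Lambda$ is ample and $K_X+\Lambda\sim_\Q\tfrac mn f^*A$ is nef. Taking $D=B$: $f^*A-K_X-B=f^*A-(K_X+B)\sim_\R f^*(A-L)$ is nef, so $H'-B$ is ample. Taking $D=P$: $f^*A-K_X-P$ is pseudo-effective by the reduction above, so $H'-P$ is big and in particular $|H'-P|_\R\neq\emptyset$. Now $(X,B)$ is projective $\epsilon$-lc of dimension $d$, $H'$ is very ample with $(H')^d$ bounded, $H'-B$ is ample, and $P\ge0$ is $\R$-Cartier with $|H'-P|_\R\neq\emptyset$, so Theorem \ref{t-bnd-lct} gives $\lct(X,B,|P|_\R)\ge\lct(X,B,|H'|_\R)\ge t_0$ for some $t_0>0$ depending only on $d,r,\epsilon$; taking $t:=t_0/2$ makes $(X,B+tP)$ klt.

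The main obstacle is that $B$ itself need not be log bounded — its support can have arbitrarily large degree, as the $\PP^2$ example in the introduction shows — so Theorem \ref{t-bnd-lct} cannot be applied on an obvious bounded model of $(X,B)$. The point of the argument is that the \emph{complement} $\Lambda$ furnished by Theorem \ref{t-bnd-comp-lc-global-cy-fib} \emph{is} log bounded (by Theorem \ref{t-log-bnd-cy-fib}, viewing $(X,\Lambda)\to Z$ as a Fano type fibration), and the relation $n(K_X+\Lambda)\sim mf^*A$ together with ampleness of $A-L$ control the numerical classes of $B$ and of $P$ tightly enough to supply the very ample $H'$ with $H'-B$ ample and $|H'-P|_\R\neq\emptyset$ that Theorem \ref{t-bnd-lct} requires for $(X,B)$. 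The care needed is to extract genuine ampleness and bigness (not merely positive degree) from the complement, and to check that $(X,\Lambda)$ qualifies as a Fano type fibration with parameters bounded in terms of $d,r,\epsilon$.
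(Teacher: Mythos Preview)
Your proof is correct and follows essentially the same strategy as the paper: reduce to the case $f^*A-K_X-P$ pseudo-effective, produce a very ample divisor on $X$ of bounded top self-intersection such that subtracting $B$ is ample and subtracting $P$ is big, and then invoke Theorem~\ref{t-bnd-lct}. The difference is in the auxiliary divisor used to realise this. The paper picks a general $D\in|f^*A|$, forms $\Theta=B+\tfrac12 D$, observes that $(X,\Theta)\to Z$ is again a $(d,r',\epsilon')$-Fano type fibration, and applies Theorem~\ref{t-log-bnd-cy-fib} with $\Delta=\tfrac12 D$ to get $(X,D)$ log bounded; the very ample $H$ is then chosen with $H+K_X-D$ ample, and the identities $H-B\sim (H+K_X-D)+(f^*A-K_X-B)$ and $H-P=(H+K_X-D)+(D-f^*A)+(f^*A-K_X-P)$ finish the job. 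You instead pass through the bounded complement $\Lambda$ of Theorem~\ref{t-bnd-comp-lc-global-cy-fib}, use $K_X+\Lambda\sim_\Q\tfrac{m}{n}f^*A$ to relate $f^*A$ to log-bounded data, and work with $H'=H+f^*A$. Both routes are short; the paper's is slightly more economical in that it avoids the complement theorem entirely.

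One small point you glossed over: the boundedness of $(H')^d=(H+f^*A)^d$ is not immediate from $H^d$ bounded alone, but it does follow. Since $(X,\Supp\Lambda)$ is log bounded and the coefficients of $\Lambda$ lie in a fixed finite set, one may choose $H$ so that $H\pm K_X$ and $H-\Lambda$ are ample; then $2H-(K_X+\Lambda)\sim_\Q 2H-\tfrac{m}{n}f^*A$ is ample, hence $cH-f^*A$ is nef for the bounded constant $c=\lceil 2n/m\rceil$, and since $H+f^*A$ and $(1+c)H-(H+f^*A)$ are both nef one gets $(H+f^*A)^d\le(1+c)^dH^d$.
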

\begin{proof}
Assume that $(X,B)\to Z$ is a $(d,r,\epsilon)$-Fano type fibration and $P\ge 0$ is $\R$-Cartier 
such that either $f^*A+B-P$ or $f^*A-K_X-P$ is pseudo-effective. 
Taking a $\Q$-factorialisation we can assume $X$ is $\Q$-factorial.  
First assume that  $f^*A+B-P$ is pseudo-effective.
Since $A-L$ is nef, $f^*A-(K_X+B)$ is nef, hence $2f^*A-K_X-P$ is pseudo-effective. Thus replacing 
$A$ with $2A$, it is enough to treat the theorem 
in the case when  $f^*A-K_X-P$ is pseudo-effective. 

By Lemma \ref{l-log-bnd-cy-fib-induction}, Theorem \ref{t-log-bnd-cy-fib} holds in dimension $d$. 
Let $D\in |f^*A|$ be a general element. Let $\Theta:=B+\frac{1}{2}D$. Then 
$$
K_X+\Theta\sim_\R f^*(L+\frac{1}{2}A)
$$
and $(X,\Theta)$ is $\epsilon'$-lc where $\epsilon'=\min\{\epsilon,\frac{1}{2}\}$. 
Thus $(X,\Theta)\to Z$ is a $(d,2^{d-1}r,\epsilon')$-Fano type fibration. 
Applying \ref{t-log-bnd-cy-fib}, we  
deduce that  $(X,D)$ is log bounded. 
Thus there is a very ample divisor $H$ on $X$ such that 
$H^d$ is bounded and $H+K_X-D$ is ample. 

Since $f^*A-(K_X+B)$ is nef, 
$$
H-B\sim H+K_X-D+f^*A-(K_X+B)
$$ 
is ample. On the other hand, since $Q:=f^*A-K_X-P$ is pseudo-effective, 
$$
H-P=H+K_X-f^*A+Q
$$
is big, hence $|H-P|_\R\neq \emptyset$. Now by [\ref{B-BAB}, Theorem 1.6] (=Theorem \ref{t-bnd-lct}), 
there is a real number $t>0$ 
depending only on $d,H^d,\epsilon$ such that $(X,B+tP)$ is klt. By construction, $t$ depends only on 
$d,r,\epsilon$.

\end{proof}

\subsection{Proofs of \ref{t-bnd-cy-fib}, \ref{t-log-bnd-cy-fib}, \ref{t-log-bnd-cy-gen-fib}, 
\ref{t-sing-FT-fib-totalspace}, \ref{t-bnd-comp-lc-global-cy-fib}}

We are now ready to prove several of the main results of this paper. We apply induction so we assume that 
 \ref{t-log-bnd-cy-fib}, \ref{t-sing-FT-fib-totalspace}, and \ref{t-bnd-comp-lc-global-cy-fib} hold in dimension $d-1$.

\begin{proof}(of Theorems \ref{t-bnd-cy-fib}, \ref{t-log-bnd-cy-fib}, and \ref{t-log-bnd-cy-gen-fib})
Theorems \ref{t-bnd-cy-fib} and \ref{t-log-bnd-cy-fib} follow from Theorems \ref{t-log-bnd-cy-fib}, \ref{t-sing-FT-fib-totalspace}, and 
\ref{t-bnd-comp-lc-global-cy-fib} in dimension $d-1$, and Lemma \ref{l-log-bnd-cy-fib-induction}.
Theorem \ref{t-log-bnd-cy-gen-fib} follows from  Lemma \ref{l-from-gen-fib-to-usual-fib} and Theorem \ref{t-log-bnd-cy-fib}.

\end{proof}

\begin{proof}(of Theorem \ref{t-sing-FT-fib-totalspace})
This follows from Theorems \ref{t-log-bnd-cy-fib}, \ref{t-sing-FT-fib-totalspace}, 
and \ref{t-bnd-comp-lc-global-cy-fib} in dimension $d-1$, and Lemma \ref{l-bnd-cy-bnd-lct-usual}.

\end{proof}

\begin{proof}(of Theorem \ref{t-bnd-comp-lc-global-cy-fib})
This follows from Theorems \ref{t-log-bnd-cy-fib}, \ref{t-sing-FT-fib-totalspace}, and 
\ref{t-bnd-comp-lc-global-cy-fib} in dimension $d-1$, and Lemma \ref{l-bnd-cy-bnd-klt-compl-induction}.

\end{proof}

%%%%%%%%%%%%%%%%%%%%%%
%%%%%%%%%%%%%%%%%%%%%%
\section{\bf Generalised log Calabi-Yau fibrations}

In this section we discuss singularities and boundedness of log Calabi-Yau fibrations in the 
context of generalised pairs. 

\subsection{Adjunction for generalised fibrations.}\label{fib-adj-setup}
Consider the following set-up. Assume that 
\begin{itemize}
\item $(X,B+M)$ is a generalised sub-pair with data $X'\to X$ and $M'$,

\item $f\colon X\to Z$ is a contraction with $\dim Z>0$, 

\item $(X,B+M)$ is generalised sub-lc over the generic point of $Z$, and 

\item $K_{X}+B+M\sim_\R 0/Z$.
\end{itemize}
We define the discriminant divisor $B_Z$ for the above setting, similar to the definition in the introduction. 
Let $D$ be a prime divisor on $Z$. Let $t$ be the generalised lc threshold of $f^*D$ with respect to $(X,B+M)$ 
over the generic point of $D$. This makes sense even if $D$ is not $\Q$-Cartier because we only need 
the pullback $f^*D$ over the generic point of $D$ where $Z$ is smooth. 
We then put the coefficient of  $D$ in $B_Z$ to be $1-t$. Note that since $(X,B+M)$ is generalised 
sub-lc over the generic point of $Z$,  $t$ is a real number, that is, it is not $-\infty$ or $+\infty$.
Having defined $B_Z$, we can find $M_Z$ giving 
$$
K_{X}+B+M\sim_\R f^*(K_Z+B_Z+M_Z)
$$
where $M_Z$ is determined up to $\R$-linear equivalence. 
We call $B_Z$ the \emph{discriminant divisor of adjunction} for $(X,B+M)$ over $Z$. 

Let $Z'\to Z$ be a birational contraction from a normal variety. There is 
a birational contraction $X'\to X$ from a normal variety so that the induced map $X'\bir Z'$ is a morphism.  
Let $K_{X'}+B'+M'$ be the 
pullback of $K_{X}+B+M$. We can similarly define $B_{Z'},M_{Z'}$ for $(X',B'+M')$ over $Z'$. In this way we 
get the \emph{discriminant b-divisor ${\bf{B}}_Z$ of adjunction} for $(X,B+M)$ over $Z$. 
Fixing a choice of $M_Z$ we can pick the $M_{Z'}$ consistently so that it also defines a b-divisor ${\bf{M}}_Z$ 
which we refer to as the \emph{moduli b-divisor of adjunction} for $(X,B+M)$ over $Z$. 

\begin{rem}\label{rem-base-fib-gen-pair}
\emph{
Assume that $M=0$, $B$ is a $\Q$-divisor, $(X,B)$ is projective, and that $(X,B)$ is lc over 
the generic point of $Z$. Then ${\bf{M}}_Z$ is b-nef b-$\Q$-Cartier, that is, we can pick $Z'$ so that 
$M_{Z'}$ is a nef $\Q$-divisor and for any resolution $Z''\to Z'$, $M_{Z''}$ is the pullback of $M_{Z'}$ 
[\ref{B-compl}, Theorem 3.6] (this is derived from [\ref{FG-lc-trivial}] which is in turn derived from [\ref{ambro-lc-trivial}]
and this in turn is based on [\ref{kaw-subadjuntion}]). We can then consider 
$(Z,B_Z+M_Z)$ as a generalised pair with nef part $M_{Z'}$.
When $M\neq 0$, the situation is more complicated, see [\ref{Filipazzi}] for recent 
advances in this direction which we will not use in this paper. }
\end{rem}

\smallskip

\subsection{Lower bound for lc thresholds: proof of \ref{t-sing-gen-FT-fib-totalspace}}

\begin{proof}(of Theorem \ref{t-sing-gen-FT-fib-totalspace})
\emph{Step 1.}
\emph{In this step we make some preparations.}
Let $(X,B+M)\to Z$ and $P$ be as in Theorem \ref{t-sing-gen-FT-fib-totalspace} in dimension $d$.
Taking a $\Q$-factorialisation we can assume $X$ is $\Q$-factorial.  
Assume that  $f^*A+B+M-P$ is pseudo-effective.
Since 
$$
f^*A-(K_X+B+M)\sim_\R f^*(A-L)
$$ 
is nef, $2f^*A-K_X-P$ is pseudo-effective. Thus replacing 
$A$ with $2A$, it is enough to treat Theorem \ref{t-sing-gen-FT-fib-totalspace} 
in the case when  $f^*A-K_X-P$ is pseudo-effective.\\

\emph{Step 2.}
\emph{In this step we take a log resolution and introduce some notation.}
Since $B$ is effective and $M$ is pseudo-effective (as it is the pushdown of a nef divisor),
$B+M$ is pseudo-effective. Moreover, since  $-K_X$ is big over $Z$, 
$B+M$ is big over $Z$, hence  $B+M+f^*A$ is big globally.
Let $\phi\colon X'\to X$ be a log resolution of $(X,B)$ 
on which the nef part $M'$ of $(X,B+M)$ resides. Write 
$$
K_{X'}+B'+M'=\phi^*(K_X+B+M).
$$
Since $(X,B+M)$ is generalised $\epsilon$-lc, the coefficients of $B'$ do not exceed $1-\epsilon$.
We can write 
$$
\phi^*(B+M+f^*A)\sim_\R G'+H'
$$ 
where $G'\ge 0$ and $H'$ is general ample. Replacing $\phi$ we can assume $\phi$ is a log resolution of 
$(X,B+P+\phi_*G')$.

Pick a small $\alpha>0$ and pick a general 
$$
0\le R'\sim_\R \alpha H'+(1-\alpha)M'.
$$
Since $M'$ is nef, $\phi^*M=M'+E'$ where $E'$ is effective and exceptional. 
Let 
$$
\Delta':=B'-\alpha \phi^*B-\alpha E'+\alpha G'+R'.
$$
We can make the above choices so that the coefficients of $\Delta'$ do not exceed $1-\frac{\epsilon}{2}$ 
and so that $(X',\Delta')$ is log smooth.\\
 
\emph{Step 3.} 
\emph{In this step we show that $(X,\Delta)\to Z$ is a $(d,r,\frac{\epsilon}{2})$-Fano type fibration 
where $\Delta=\phi_*\Delta'$.}
By construction, we have 
$$
 \begin{array}{l l}
K_{X'}+\Delta' & =K_{X'}+ B'-\alpha \phi^*B-\alpha E'+\alpha G'+R'\\
& \sim_\R K_{X'}+ B'-\alpha \phi^*B-\alpha E'+\alpha G'+\alpha H'+(1-\alpha)M'\\
& \sim_\R K_{X'}+ B'-\alpha \phi^*B-\alpha E'+\alpha \phi^*(B+M+f^*A)+(1-\alpha)M'\\
& \sim_\R K_{X'}+ B'-\alpha \phi^*B-\alpha \phi^* M+\alpha \phi^*(B+M+f^*A)+M'\\
& \sim_\R K_{X'}+ B'+M'+\alpha\phi^*f^*A\sim_\R \phi^*f^*(L+\alpha A).
\end{array}
$$
Therefore,  
$$
K_X+\Delta\sim_\R f^*(L+\alpha A).
$$ 
Choosing $\alpha$ small enough we can ensure $A-(\alpha A+L)$ is 
ample. On the other hand,  since $K_{X'}+\Delta'\sim_\Q 0/X$,  we have $K_{X'}+\Delta'=\phi^*(K_X+\Delta)$, 
hence $(X,\Delta)$ is $\frac{\epsilon}{2}$-lc because 
the coefficients of $\Delta'$ do not exceed $1-\frac{\epsilon}{2}$.
Thus $(X,\Delta)\to Z$ is a $(d,r,\frac{\epsilon}{2})$-Fano type fibration.\\

\emph{Step 4.}
\emph{In this step we finish the proof.}
By Theorem \ref{t-sing-FT-fib-totalspace}, there is a real number $t>0$ 
depending only on $d,r,\epsilon$ such that $(X,\Delta+2tP)$ is klt.
Then letting $P'=\phi^* P$ we see that the coefficients of $\Delta'+2tP'$ do not exceed $1$ as 
$$
K_{X'}+\Delta'+2tP'=\phi^*(K_X+\Delta+2tP).
$$
 Thus 
the coefficients of 
$$
B'-\alpha \phi^*B-\alpha E'+2tP'
$$ 
do not exceed $1$.
Now $t$ is independent of the choice of $\alpha$, so taking the limit as $\alpha$ approaches zero, we see that the 
coefficients of $B'+2tP'$ do not exceed $1$. Therefore, the coefficients of $B'+tP'$ are 
strictly less than $1$ because the coefficients of $B'$ do not exceed $1-\epsilon$, hence $(X,B+tP+M)$ is generalised klt as 
$$
K_{X'}+B'+tP'+M'=\phi^*(K_X+B+tP+M).
$$

\end{proof}

\subsection{Upper bound for the discriminant b-divisor when the base is bounded}

\begin{proof}(of Theorems \ref{t-sh-conj-bnd-base} and \ref{t-sh-conj-bnd-base-gen-fib})
Since \ref{t-sh-conj-bnd-base} is a special case of \ref{t-sh-conj-bnd-base-gen-fib} we treat the latter only.
By induction we can assume that Theorem \ref{t-sh-conj-bnd-base-gen-fib} holds in dimension $d-1$.
Let $(X,B+M)\to Z$ be a generalised $(d,r,\epsilon)$-Fano type fibration.
Let $D$ be  a prime divisor over $Z$. First assume that the centre of $D$ on $Z$ is positive-dimensional. 
Take a resolution $Z'\to Z$ so that $D$ is a divisor on $Z'$. Take a log resolution $\phi\colon X'\to X$ of $(X,B)$ so that 
the nef part $M'$ of $(X,B+M)$  is on $X'$ and that the induced map $f'\colon X'\bir Z'$ is a morphism. 
Replacing $X'$ we can assume that $\phi$ is a log resolution of $(X,B+\phi_*f'^*D)$.

Let $K_{X'}+B'+M'$ be the pullback of $K_X+B+M$.
Let $t$ be the generalised lc threshold of $f'^*D$ with respect to $(X',B'+M')$ over the 
generic point of $D$: this coincides with the lc threshold of $f'^*D$ with respect to $(X',B')$ over the 
generic point of $D$ because $M'$ is nef. Since $(X',B'+tf'^*D)$ is log smooth and since it is sub-lc but not sub-klt 
over the generic point of $D$, 
there is a prime divisor $S$ on $X'$ mapping onto $D$ such that $\mu_SB'+t\mu_Sf'^*D=1$.

Let $H\in |A|$ be a general member  and let $H',G,G'$ be its pullback to $Z',X,X'$, 
respectively. Since the centre of $D$ on $Z$ is positive-dimensional, $H'$ intersects $D$ and 
$G'$ intersects $S$. By divisorial generalised adjunction we can write 
$$
K_G+B_G+M_G\sim_\R (K_X+B+G+M)|_G
$$
where $(G,B_G+M_G)$ is generalised $\epsilon$-lc with nef part $M_{G'}=M'|_{G'}$. 
Moreover, $-K_G$ is big over $H$, and 
$$
K_G+B_G+M_G\sim_\R g^*(L+A)|_H
$$
where $g$ denotes $G\to H$. Thus $(G,B_G+M_G)\to H$ is a generalised $(d,2^{d-1}r,\epsilon)$-Fano type 
fibration. We can write 
$$
K_{G'}+B_{G'}+M_{G'}\sim_\R (K_{X'}+B'+G'+M')|_{G'}
$$
where $B_{G'}=B'|_{G'}$ and $K_{G'}+B_{G'}+M_{G'}$ is the pullback of $K_G+B_G+M_G$. 

Let $C$ be a component of $D\cap H'$ 
and let $s$ be the generalised lc threshold of $g'^*C$ with respect to $(G',B_{G'}+M_{G'})$ over the 
generic point of $C$ where $g'$ denotes $G'\to H'$. Then $1-s$ is the coefficient of $C$ in the 
discriminant b-divisor of adjunction for $(G,B_G+M_G)\to H$. Thus applying 
Theorem \ref{t-sh-conj-bnd-base-gen-fib} in dimension $d-1$, 
we deduce that $1-s\le 1-\delta$ for some $\delta>0$ depending only on  $d,r,\epsilon$. Thus $s\ge \delta$.

By definition of $s$, 
for any prime divisor $T$ on $G'$ mapping onto $C$, 
we have  the inequality $\mu_TB_{G'}+s\mu_Tg'^*C\le 1$. In particular, 
if we take $T$ to be a component of $S\cap G'$ which maps onto $C$, then we have
$$
 \begin{array}{l l}
\mu_SB'+s\mu_Sf'^*D &=\mu_TB'|_{G'}+s \mu_Tf'^*D|_{G'}\\ 
&=\mu_TB_{G'}+s\mu_Tg'^*D|_{H'}\\ 
&=\mu_TB_{G'}+s\mu_Tg'^*C\\ 
&\le 1\\
&=\mu_SB'+t\mu_Sf'^*D
\end{array}
$$
 where we use the fact that over the generic point of $C$ 
the two divisors $g'^*D|_{H'}$ and $g'^*C$ coincide. Therefore, $\delta\le s\le t$, hence 
$\mu_DB_{Z'}=1-t\le 1-\delta$  where $B_{Z'}$ 
is the discriminant divisor on $Z'$ defined for $(X,B+M)$ over $Z$. Thus we have settled the case 
when the centre of $D$ on $Z$ is positive-dimensional.
 
From now on we can assume that the centre of $D$ on $Z$ is a closed point, say $z$. 
Let $Z',X', f', B',M'$ be as before. Pick $N\in |A|$ passing through $z$. 
Then
$$
f^*A+B+M-f^*N\sim B+M
$$ 
is obviously pseudo-effective. Thus by Theorem \ref{t-sing-gen-FT-fib-totalspace} in dimension $d$, 
the generalised lc threshold $u$ of 
$f^*N$ with respect to $(X,B+M)$ is bounded from below by some $\delta>0$ depending only on $d,r,\epsilon$.

Since $N$ passes through $z$,  we have $\psi^*N\ge D$ where $\psi$ denotes $Z'\to Z$. Thus 
the generalised lc threshold $v$ of $f'^*\psi^*N$ with respect to 
$(X',B'+M')$ over the generic point of $D$ is at most as large as the generalised lc threshold $t$ of $f'^*D$ 
with respect to $(X',B'+M')$ over the generic point of $D$. On the other hand, 
the generalised lc threshold $u$ of $f^*N$ with respect to $(X,B+M)$ globally coincides with 
the generalised lc threshold of $f'^*\psi^*N$ with respect to 
$(X',B'+M')$ globally which is at most as large as the generalised lc threshold $v$ of $f'^*\psi^*N$ 
with respect to $(X',B'+M')$ over the generic point of $D$.
Therefore, $\delta\le u\le v\le t$, hence  $\mu_DB_{Z'}=1-t\le 1-\delta$.

\end{proof}

\subsection{Upper bound for the discriminant b-divisor when log general fibres are bounded}

In this subsection, we prove  \ref{t-cb-conj-sing-bnd-gen-fib}, \ref{cor-cb-conj-sing-bnd-gen-fib}, and \ref{t-cb-conj-sing-bnd-fib}. 
We first prove \ref{t-cb-conj-sing-bnd-gen-fib} when the base is one-dimensional. We use ideas similar to the proof of 
[\ref{B-sing-fano-fib}, Theorem 1.4]. 

\begin{prop}\label{l-adj-disc-div-e-lc}
Theorem \ref{t-cb-conj-sing-bnd-gen-fib} holds when $\dim Z=1$.
\end{prop}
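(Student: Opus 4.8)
The plan is to reduce the one-dimensional base case of Theorem \ref{t-cb-conj-sing-bnd-gen-fib} to a statement about a single fibre, and then use boundedness of the general fibre together with adjunction to the fibre. Since $\dim Z=1$, a prime divisor $D$ over $Z$ is either horizontal (then its coefficient in ${\bf B}_Z$ is $\le 0$, so there is nothing to prove) or it is a closed point $z\in Z$ after passing to a model $Z'\to Z$; but a birational model of a curve is the curve itself, so in fact we just need to bound the coefficient of each closed point $z$ in $B_Z$ itself. That coefficient is $1-t_z$ where $t_z$ is the generalised lc threshold of $f^*z$ with respect to $(X,B+M)$ over the generic point of $z$ — but since $z$ is a closed point and $Z$ is a curve, this is simply the generalised lc threshold computed at the fibre $X_z=f^{-1}\{z\}$. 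So the task becomes: show there is $\delta>0$, depending only on $d,v,p,\epsilon$, such that $(X,B+t_0 X_z+M)$ is generalised lc for $t_0=\delta$, i.e. adding $\delta$ times a fibre keeps the pair generalised lc.

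The main step is to control the geometry of a neighbourhood of $X_z$ using the hypotheses. First I would take a $\Q$-factorialisation and, if helpful, replace $X$ by a suitable birational model; then restrict attention to the fibre $F$ over a general point and to $X_z$. The general fibre $F$ carries $(F,B_F+M_F)$ with $K_F+B_F+M_F\sim_\R 0$, generalised $\epsilon$-lc, and by hypothesis an integral $G\ge 0$ with $0<\vol(((\Supp B)+M+G)|_F)<v$ and $pM'$ b-Cartier. By Lemma \ref{l-bnd-sing-gen-cy-pairs}-type reasoning combined with [\ref{B-BAB}] and boundedness results for generalised Calabi–Yau pairs, the pair $(F,(\Supp B + G)|_F)$ — or rather a log bounded modification of it — lies in a bounded family; in particular there is a bounded very ample divisor $J_F$ on $F$ with $J_F - (B_F + \text{(something)})$ ample, so lc thresholds on $F$ are bounded below by Theorem \ref{t-bnd-lct}. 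The delicate point is transporting this boundedness from the general fibre to the special fibre $X_z$: here I would spread out, i.e. use that $X\to Z$ is a morphism from a variety and invoke a relative version of the boundedness (an Abramovich–Karu / Kollár type statement, or the constructibility arguments already used in Lemma \ref{l-bnd-Cartier-index-family}) to conclude that the generic fibre behaviour propagates, and then argue that the threshold $t_z$ at $X_z$ cannot be smaller than the bound coming from the general fibre. Concretely, one writes $K_X+B+M\sim_\R f^*(K_Z+B_Z+M_Z)$, restricts an auxiliary ample/pseudo-effective divisor pulled back from $Z$, and uses inversion of adjunction to compare the threshold along $X_z$ with a threshold on a nearby good fibre, exactly in the spirit of the final paragraphs of the proof of Theorems \ref{t-sh-conj-bnd-base} and \ref{t-sh-conj-bnd-base-gen-fib} above.

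I would structure the write-up in steps: (Step 1) reduce to bounding the coefficient of a closed point $z\in Z$, so to showing $(X,B+\delta X_z+M)$ generalised lc; (Step 2) take a $\Q$-factorialisation, a log resolution $\phi\colon X'\to X$ on which $M'$ lives and which is log-smooth over the generic point of $z$, and name the divisor $S\subset X'$ mapping onto $z$ realising the threshold, with $\mu_S B' + t_z\,\mu_S\phi^*f^*z = 1$; (Step 3) using the hypothesis on $G$, the bound $p$, and [\ref{B-BAB}], show the general fibre $(F,(\Supp B+G)|_F)$ is log bounded, hence one gets a bounded very ample $J_F$ with suitable ampleness and $\lct \ge t$ for a uniform $t>0$ on $F$ by Theorem \ref{t-bnd-lct}; (Step 4) spread out / use a relative boundedness statement to deduce a uniform lower bound for the threshold of $f^*(\text{point})$ over the generic point of $z$ — this compares $S$ with a divisor $T$ over a general fibre, in analogy with the $T\cap G'$ argument in the proof of \ref{t-sh-conj-bnd-base-gen-fib} — yielding $t_z\ge\delta$; (Step 5) conclude $\mu_z B_Z = 1-t_z \le 1-\delta$ with $\delta=\delta(d,v,p,\epsilon)$. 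The hard part will be Step 4: there is no a priori product structure near $X_z$, so making the passage from "bounded general fibre + bounded $M'$" to "uniform threshold at the special fibre" rigorous is the crux; I expect this to require a careful constructibility/spreading-out argument plus inversion of adjunction [\ref{kawakita}] along a general fibre, to reduce the special-fibre threshold to the already-bounded general-fibre threshold.
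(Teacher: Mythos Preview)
Your plan has a genuine gap at Step 4, which you yourself flag as ``the crux.'' The spreading-out/constructibility idea cannot bridge the general fibre to the special fibre here: even when the general fibre $(F,(\Supp B+G)|_F)$ is log bounded, the multiplicities of the components of the special fibre $f^*z$ can be arbitrarily large (this is exactly the elliptic-surface phenomenon in the introduction), and no amount of inversion of adjunction along a \emph{general} fibre will see the bad component $S$ over $z$. In the proof of \ref{t-sh-conj-bnd-base-gen-fib} the comparison works because one can cut by a hyperplane section $H$ of $Z$ meeting the centre of $D$; when $\dim Z=1$ and the centre is a closed point there is no such $H$, so the analogy you draw breaks down.

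The paper's argument is completely different and does not attempt any general-to-special comparison. Instead it builds an auxiliary generalised pair $(X',\Delta'+2M')$ where $\Delta'$ has coefficient $1$ along $\Supp f'^*D$ and coefficient $1-\frac{\epsilon}{2}$ along the horizontal part of $\Supp(B+G)$; the volume hypothesis makes $K_{X'}+\Delta'+2M'$ big over $Z$, so it has a generalised lc model $X''$ on which $K_{X''}+\Delta''+2M''$ is ample over $Z$. Now ampleness is used twice: restricted to each component $S_i$ of $f''^*D$ one gets, via generalised adjunction and the effective birationality of [\ref{BZh}, Theorem 1.3], a uniform \emph{lower} bound $\vol((K_{X''}+\Delta''+2M'')|_{S_i})\ge 1/m^{d-1}$; on the other hand the intersection $(K_{X''}+\Delta''+2M'')^{d-1}\cdot f''^*D$ equals the volume on the general fibre, which is bounded \emph{above} by $v$. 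Combining these bounds the multiplicities $m_i$ in $f''^*D=\sum m_i T_i$ are bounded, and then a direct comparison of $\Delta''$ with $B''$ shows $(X,B+\delta f^*D+M)$ is generalised lc for a uniform $\delta$. The missing idea in your proposal is precisely this passage through an ample model and the two-sided volume estimate; without it there is no mechanism to bound the fibre multiplicities.
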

\begin{proof}
\emph{Step 1.}
\emph{In this step we do some preparations and introduce a boundary $\Delta'$ on $X'$.}
Let $D$ be a prime divisor on $Z$. We want to show that the coefficient $\mu_DB_Z$ 
is bounded from above away from $1$ where $B_Z$ is the discriminant divisor of 
adjunction of $(X,B+M)$ over $Z$. Since this is a local problem near $D$ we will 
shrink $Z$ around $D$ if necessary.
Taking a $\Q$-factorialisation, we can assume $X$ is $\Q$-factorial.

Denote the given morphism $X'\to X$  by $\phi$. 
Replacing $\phi$ we can assume it is a log resolution of $(X,B+G+f^*D)$.   Write 
$K_{X'}+B'+M'$, $G'$ for the pullbacks of $K_X+B+M$, $G$, respectively. Let $\Sigma'$ 
be  the birational transform of the horizontal over $Z$ part of $\Supp (B+G)$ union the horizontal 
over $Z$ exceptional divisors of $\phi$. Denote $X'\to Z$ by $f'$, and let  
$$
\Delta'=(1-\frac{\epsilon}{2})\Sigma'+\Supp f'^*D.
$$ 
Shrinking $Z$ we can assume that $\Supp \Delta'$ coincides with the reduced exceptional divisor of $\phi$ union 
the birational transform of $\Supp (B+G+f^*D)$ (so we get rid of divisors 
which are vertical over $Z$ but do not map to $D$).\\

\emph{Step 2.}
\emph{In this step we study $K_{X'}+\Delta'+2M'$.}
By construction, $(X',\Delta'+2M')$ is generalised lc globally and  generalised $\frac{\epsilon}{2}$-lc 
over  $Z\setminus \{D\}$. Moreover,  
the coefficients of $\Delta'$ belong to $\{1-\frac{\epsilon}{2}, 1\}$ and $\rddown{\Delta'}=\Supp f'^*D$. 
Since $(X,B+M)$ is generalised $\epsilon$-lc, the coefficients of 
$B'$ are at most $1-\epsilon$. Furthermore, $\Supp B'\subseteq \Sigma'+\Supp f'^*D$. Thus we have
$$
B'\le (1-\epsilon)(\Sigma'+\Supp f'^*D)
$$
which in turn gives 
$$
\Delta'-B'\ge \Delta' - (1-\epsilon)(\Sigma'+\Supp f'^*D) = \frac{\epsilon}{2}\Sigma'+\epsilon\Supp f'^*D. 
$$
On the other hand, $\phi^*M-M'$ is effective and exceptional over $X$, so we can write 
$$
\phi^*((\Supp B)+M+ G)=M'+N'
$$
for some $N'\ge 0$ with 
$$
\Supp N'\subseteq \Sigma'+\Supp f'^*D.
$$
In particular, $\Delta'-B'\ge \alpha N'$ for some small $\alpha>0$.  

Now since 
$$
0<\vol((\Supp B)+M+G)|_F)
$$ 
for the general fibres $F$ of $f$, $(\Supp B)+M+G$ is big over $Z$, hence $M'+N'$ is big over $Z$. Then 
since $M'$ is nef over $Z$, 
$$
M'+\alpha N'=(1-\alpha)M'+\alpha(M'+N')
$$ 
is big over $Z$. This in turn implies that $M'+\Delta'-B'$ is big over $Z$ because $\Delta'-B'\ge \alpha N'$.
Therefore, from 
$$
K_{X'}+\Delta'+2M'\sim_\R K_{X'}+\Delta'+2M'-(K_{X'}+B'+M')=M'+\Delta'-B'/Z
$$
we deduce that $K_{X'}+\Delta'+2M'$ is big over $Z$. 
Also note that by assumption $pM'$ is Cartier.\\ 

\emph{Step 3.}
\emph{In this step we show that $(X',\Delta'+2M')$ has a generalised lc model over $Z$, that is, 
an ample model over $Z$.}  
We have 
$$
K_{X'}+\Delta'+2M'\sim_\R K_{X'}+\Delta'-\alpha f'^*D+2M'/Z.
$$
Choosing $\alpha$ to be small enough we can ensure that 
$$
\Theta':=\Delta'-\alpha f'^*D\ge 0.
$$  
Then $(X',\Theta'+2M')$ is generalised klt as $\rddown{\Delta'}=\Supp f'^*D$, and $K_{X'}+\Theta'+2M'$ is big over $Z$.
Thus we can run an MMP on $K_{X'}+\Theta'+2M'$ over $Z$
terminating with a minimal model, say $\tilde{X}''$, on which $K_{\tilde{X}''}+\tilde{\Theta}''+2\tilde{M}''$ is semi-ample 
over $Z$ [\ref{BZh}, Lemma 4.4], hence defining a contraction $\tilde{X}''\to X''/Z$. As
$$
K_{X'}+\Delta'+2M' \sim_\R K_{X'}+\Theta'+2M'/Z,
$$
$X''$ is also the generalised lc model of $({X'},\Delta'+2M')$ over $Z$. In particular, 
$({X''},\Delta''+2M'')$ is generalised lc with nef part being the pullback of $M'$ to some 
common resolution of $X',X''$, and $K_{X''}+\Delta''+2M''$ is ample over $Z$.\\ 

\emph{Step 4.}
\emph{In this step we obtain lower bound for the volume of $K_{X''}+\Delta''+2M''$ restricted to 
components of the fibre of $X''\to Z$ over $D$}.
Let $S$ be the normalisation of a component $T$ of $f''^*D$ where $f''$ is the morphism $X''\to Z$. 
Since $T$ is a component of $\rddown{\Delta''}$, by generalised divisorial adjunction 
[\ref{B-compl}, Subsection 3.1], we can write 
$$
K_S+\Delta_S''+2M_S''\sim_\R (K_{X''}+\Delta''+2M'')|_S
$$
such that $(S,\Delta_S''+2M_S'')$ is a generalised pair data $\overline{S}\to S$ and $M_{\overline{S}}$ 
where the nef part $M_{\overline{S}}$ is the restriction of the nef part of 
$(X'',\Delta''+2M'')$. Since the coefficients of $\Delta''$ are in a fixed finite set and since $pM'$ is Cartier, 
the coefficients of $\Delta_S''$ are in a fixed DCC set $\Psi$ and $M_{\overline{S}}$ is b-Cartier 
[\ref{B-compl}, Lemma 3.3]. 
Moreover, $(S,\Delta_S''+2M_S'')$ is generalised lc and $K_S+\Delta_S''+2M_S''$ is ample. 

Let $\Lambda_{\overline S}$ be the sum of the reduced exceptional divisor of $\overline{S}\to S$ 
and the birational transform of $\Delta_S''$. 
Applying [\ref{BZh}, Theorem 1.3], we find a natural number $m$ depending only on 
$d,p,\Psi$ such that 
$$
|\rddown{m(K_{\overline{S}}+\Lambda_{\overline{S}}+2M_{\overline{S}})}|
$$ 
defines a birational map, hence 
$$
|\rddown{m(K_S+\Delta_S''+2M_S'')}|
$$ 
also defines a birational map. In particular, 
$$
(K_S+\Delta_S''+2M_S'')^{d-1}=\vol(K_S+\Delta_S''+2M_S'')\ge \frac{1}{m^{d-1}}.
$$\

\emph{Step 5.}
\emph{In this step we study the intersection number of $K_{X''}+\Delta''+2M''$ with the fibres of $f''$.}
Write $f''^*D=\sum m_iT_i$ where $T_i$ are irreducible components, and let $S_i$ be the normalisation of $T_i$. 
In later steps we will show that the $m_i$ are bounded from above. As in the previous step we write 
$$
K_{S_i}+\Delta_{S_i}''+2M_{S_i}''\sim_\R (K_{X''}+\Delta''+2M'')|_{S_i}.
$$
Then 
$$
(K_{S_i}+\Delta_{S_i}''+2M_{S_i}'')^{d-1}=(K_{X''}+\Delta''+2M'')^{d-1}\cdot T_i
$$
where to define the latter intersection number we use the fact that $X''\to Z$ is a projective morphism 
over a curve. In particular, 
$$
 \begin{array}{l l}
\sum m_i (K_{S_i}+\Delta_{S_i}''+2M_{S_i}'')^{d-1} & =\sum m_i(K_{X''}+\Delta''+2M'')^{d-1}\cdot T_i\\
&=(K_{X''}+\Delta''+2M'')^{d-1}\cdot (\sum m_iT_i)\\
& =(K_{X''}+\Delta''+2M'')^{d-1}\cdot f''^*D.
\end{array}
$$
Thus  if $F''$ is a general fibre of $f''$, then since $f''^*D\sim F''$ we get 
$$
 \begin{array}{l l}
\sum m_i (K_{S_i}+\Delta_{S_i}''+2M_{S_i}'')^{d-1} & =(K_{X''}+\Delta''+2M'')^{d-1}\cdot {F''}\\
&=((K_{X''}+\Delta''+2M'')|_{F''})^{d-1}\\
&=\vol((K_{X''}+\Delta''+2M'')|_{F''}).
\end{array}
$$\ 

\emph{Step 6.}
\emph{In this step we show that $\vol((K_{X''}+\Delta''+2M'')|_{F''})$ is bounded from above.}
Indeed let $F,F'$ be the fibres of $f,f'$ corresponding to $F''$. Since $(X'',\Delta''+2M'')$ 
is the generalised lc model of $(X',\Delta'+2M')$ and since $F''$ is a general fibre, we have  
$$
\vol((K_{X''}+\Delta''+2M'')|_{F''})\le \vol((K_{X'}+\Delta'+2M')|_{F'}).
$$ 
Actually equality holds but we do not need it.
On the other hand, 
$$
\vol((K_{X'}+\Delta'+2M')|_{F'})\le \vol((K_{X}+\Delta+2M)|_{F})
$$ 
where $\Delta$ is the pushdown of $\Delta'$, because the pushdown of $(K_{X'}+\Delta'+2M')|_{F'}$ 
to $F$ is $(K_{X}+\Delta+2M)|_{F}$ (we are using the assumption that $F$ is a general fibre). 

Let $\Sigma$ be the pushdown of $\Sigma'$, that is, $\Sigma$ is the support of the horizontal$/Z$ 
part of $B+G$. In particular, $\Sigma\le (\Supp B)+G$, hence 
$$
\Sigma +M\le (\Supp B)+M+G
$$
which implies that 
$$
\vol((\Sigma+M)|_F)\le \vol(((\Supp B)+M+G)|_F)<v.
$$
By construction, 
$$
\Delta=(1-\frac{\epsilon}{2})\Sigma+\Supp f^*D,
$$
so
$$
 \begin{array}{l l}
\vol((K_{X}+\Delta+2M)|_{F}) &=\vol((K_{X}+(1-\frac{\epsilon}{2})\Sigma+2M)|_{F})\\
& \le \vol((K_{X}+\Sigma+2M)|_{F})\\
&= \vol((K_{X}+\Sigma+2M -K_X-B-M)|_{F})\\
&=\vol((\Sigma+M-B)|_F)\\
&\le \vol((\Sigma+M)|_F)\\
& <v.
\end{array}
$$
Therefore,  $\vol((K_{X''}+\Delta''+2M'')|_{F''})<v$.\\

\emph{Step 7.}
\emph{In this step we show that the $m_i$ are bounded from above.}
Recall from Step 5 that 
$$
\vol((K_{X''}+\Delta''+2M'')|_{F''})=\sum m_i (K_{S_i}+\Delta_{S_i}''+2M_{S_i}'')^{d-1}.
$$
By the previous step, the left hand side is bounded from above. 
On the other hand, by Step 4,
$$
\sum m_i (K_{S_i}+\Delta_{S_i}''+2M_{S_i}'')^{d-1}\ge \sum \frac{m_i}{m^{d-1}}.
$$
Therefore, the right hand side is bounded from above, hence the $m_i$ are all bounded from above.\\ 

\emph{Step 8.}
\emph{In this final step we finish the proof.} 
We will denote the pushdown of $B'$ to $X''$ by $B''$, etc. By Step 2 we get 
$$
\Delta''-B''\ge \frac{\epsilon}{2}\Sigma''+\epsilon\Supp f''^*D. 
$$
 Thus since $f''^*D=\sum m_iT_i$ with $m_i$ bounded,  
there is a positive real number ${\delta}$ bounded from below away from zero 
such that 
$$
Q'':=\Delta''-B''-\delta f''^*D\ge 0.
$$
 Then 
$$
K_{X''}+\Delta''+2M''-(K_{X''}+B''+\delta f''^*D+M'')=Q''+M''
$$
which in particular means that $Q''+M''$ is $\R$-Cartier. Therefore, since $(X'',\Delta''+2M'')$ is generalised lc,  
$$
(X'',B''+\delta f''^*D+M'')
$$ 
is generalised sub-lc. This implies that $(X,B+\delta f^*D+M)$ is generalised lc because  
the pullbacks of 
$$
K_X+B+\delta f^*D+M
$$
and of 
$$
K_{X''}+B''+\delta f''^*D+M''
$$ 
agree on any common resolution of $X,X''$ as both divisors are $\R$-linearly trivial over $Z$. 
Therefore, 
$t\ge \delta$ where $t$ is the generalised lc threshold of $f^*D$ with respect to $(X,B+M)$, hence 
  $\mu_DB_Z=1-t\le 1-\delta$.

\end{proof}

\begin{proof}(of Theorem \ref{t-cb-conj-sing-bnd-gen-fib})
\emph{Step 1.}
\emph{In this step we introduce a boundary $\Delta'$ on some resolution of $X$.}
We will reduce the theorem to Proposition \ref{l-adj-disc-div-e-lc}. 
Taking a $\Q$-factorialisation we can assume $X$ is $\Q$-factorial.
Assume $D$ is a prime divisor over $Z$. First we reduce the statement to the case when $D$ is a divisor on $Z$. 
Let $Z'\to Z$ be a resolution such that $D$ is a divisor on $Z'$. Pick a log resolution $\phi\colon X'\to X$ of $(X,B)$ such that 
$f'\colon X'\bir Z'$ is a morphism and such that the nef part $M'$ of $(X,B+M)$ is on $X'$.
Write $K_{X'}+B'+M'$ for the pullback of $K_{X}+B+M$.

Let $\Gamma'$ be obtained from $B'$ by removing all the components with negative coefficients, and let 
$\Delta'=\Gamma'+\frac{\epsilon}{2}R'$ where $R'$ is the reduced exceptional divisor of $X'\to X$. Then
$(X',\Delta'+M')$ is generalised $\frac{\epsilon}{2}$-lc, and 
we get 
$$
K_{X'}+\Delta'+M'\sim_\R K_{X'}+\Delta'+M'-(K_{X'}+B'+M')= \Delta'-B'/Z
$$
where $E':=\Delta'-B'\ge \frac{\epsilon}{2}R'$ and $E'$ is exceptional$/X$.\\ 

\emph{Step 2.}
\emph{In this step we consider running MMP on $K_{X'}+\Delta'+M'$ over $Z$.}
Run an MMP on $K_{X'}+\Delta'+M'$ over the main component of 
$X\times_Z{Z'}$ with scaling of some ample divisor. So the MMP is over both $X$ and $Z'$. 
We do not claim that the MMP terminates but 
 we claim that it does terminate over the generic point of $Z$ contracting all the horizontal$/Z$ components of 
 $E'$. Indeed let $U$ be a non-empty open subset of $Z$ over which $Z'\to Z$ is an isomorphism.
Then $X\times_Z{Z'}\to X$ is an isomorphism over $f^{-1}U$, and since $E'$ is exceptional over 
$X$, the MMP terminates over $f^{-1}U$, by Lemma \ref{l-mmp-v-exc}, 
contracting $E'$ over $f^{-1}U$. Thus we reach a model $X''$ on which  $E''=0$ 
over $f^{-1}U$, in particular, $E''$ is vertical over $Z'$. Moreover, since $E''$ contains all the exceptional divisors of 
$X''\to X$ and since $X$ is $\Q$-factorial, we deduce that $X''\to X$ is an isomorphism over $U$. 

Let $G'=\rddown{\phi^*G}$ and let $G''$ be its pushdown on $X''$. Then by the previous paragraph, 
$$
0<\vol(((\Supp \Delta'')+M''+G'')|_{F''})=\vol(((\Supp B)+M+G)|_{F})<v
$$
where $F''$ is a general fibre of $X''\to Z$ and $F$ is the corresponding fibre of $X\to Z$.\\

 \emph{Step 3.}
\emph{In this step we consider running MMP on $K_{X''}+\Delta''+M''$ over $Z'$.}
Now run another MMP on $K_{X''}+\Delta''+M''$ over $Z'$ with scaling of some ample divisor. 
Since $K_{X''}+\Delta''+M''\sim_\R 0$ holds over $U$, the MMP does not do anything over $U$. 
We claim that the 
MMP terminates over the generic point of $D$. We can assume $E''\neq 0$ otherwise the claim holds trivially. 
For the rest of this paragraph we shrink $Z'$ around the generic point of $D$ hence assume that every component of $E''+f''^*D$ 
maps onto $D$  where $f''$ is the induced morphism $X''\to Z'$. 
Let $\alpha$ be the largest real number such that $E''-\alpha f''^*D$ is effective ($\alpha\ge 0$ and $\alpha=0$ is possible). 
There is a component of $ f''^*D$ which is not a component of $E''-\alpha f''^*D$. Thus $E''-\alpha f''^*D$ is very 
exceptional over $Z'$. Therefore, the MMP terminates by Lemma \ref{l-mmp-v-exc} contracting 
$E''-\alpha f''^*D$. In particular, the MMP terminates over the generic point of $D$.\\

\emph{Step 4.}
\emph{In this step we reduce the theorem to the case when $D$ is a divisor on $Z$.}
In the course of the MMP of last step we reach a model $X'''$ on which  
 $(X''',\Delta'''+M''')$ is generalised $\frac{\epsilon}{2}$-lc with nef part being the pullback of $M'$ 
 to some common resolution of $X',X'''$, and that 
$$
K_{X'''}+\Delta'''+M'''\sim_\R 0/Z'
$$
holds over the generic point of $D$.
Moreover, if $K_{X'''}+B'''+M'''$ is the pushdown of $K_{X'}+B'+M'$, then $B'''\le \Delta'''$. 
In particular, the generalised lc threshold of $f'''^*D$ with respect to $(X''',\Delta'''+M''')$ over the 
generic point of $D$ is smaller or equal to the generalised lc threshold with respect to $(X''',B'''+M''')$ 
where $f'''$ is the induced morphism $X'''\to Z'$. Furthermore, by Step 2, 
$$
0<\vol(((\Supp \Delta''')+M'''+G''')|_{F'''})<v
$$
for the general fibres $F'''$ of $f'''$ because $X''\bir X'''$ is an isomorphism over the generic point of $Z'$.
Therefore, shrinking $Z'$ around the generic point of $D$ and replacing $\epsilon$, 
$(X,B+M)\to Z$ with $\frac{\epsilon}{2}$, $(X''',\Delta'''+M''')\to Z'$,
 we can assume that $D$ is a divisor on $Z$.\\

\emph{Step 5.}
\emph{In this step we take a hyperplane section of $Z$.}
In this step assume $\dim Z>1$. Let $H$ be a 
general hyperplane section of $Z$ and let $V=f^*H$. Then by divisorial generalised adjunction 
we can write 
$$
K_V+B_V+M_V\sim_\R (K_X+B+V+M)|_V
$$
where $(V,B_V+M_V)$ is generalised $\epsilon$-lc with nef part $M_{V'}=M'|_{V'}$ where  
$V'\subset X'$ is the pullback of $V$. Let $G_V:=G|_V$. 
By generality of $H$, we have $B_V=B|_V$ and $M_V=M|_V$, hence 
$$
0<\vol(((\Supp B_V)+M_V+G_V)|_{F})=\vol(((\Supp B)+M+G)|_{F})<v
$$
for the general fibres $F$ of $V\to H$ because $F$ is among the general fibres of $f$. 
Moreover, $pM_{V'}$ is Cartier as $pM'$ is Cartier by assumption,  and 
$$
K_V+B_V+M_V\sim_\R 0/H.
$$\ 

\emph{Step 6.}
\emph{In this step we finish the proof by applying induction on dimension.}
If $\dim Z=1$, then we use Proposition \ref{l-adj-disc-div-e-lc}.
Otherwise we apply induction on $\dim Z$ as follows. Let $V,H$ etc be as in the previous step.
Let $C$ be a component of $D\cap H$ 
and let $s$ be the generalised lc threshold of $g^*C$ with respect to $(V,B_V+M_V)$ over the 
generic point of $C$ where $g$ denotes $V\to H$. Applying induction on dimension, 
$s$ is bounded from below away from zero, hence it is enough to show that 
$s\le t$ where $t$ is the generalised lc threshold of $f^*D$ with respect to $(X,B+M)$ over the generic point 
of $D$. 

Shrinking $Z$ we can assume $C=D|_H$, hence $f^*D|_V=g^*C$.
By definition of $s$, 
$$
(V,B_V+sg^*C+M_V)
$$ 
is generalised lc over the 
generic point of $C$. Shrinking $Z$ around the generic point of $C$ we can assume that it is generalised lc everywhere. 
But then by generalised inversion of adjunction [\ref{B-compl}, Lemma 3.2], 
$$
(X,B+V+s f^*D+M)
$$ 
is generalised lc near $V$, hence it is generalised lc over a neighbourhood of $H$ which then implies that it is 
generalised lc over the generic point of $D$ as $H$ intersects $D$. Thus $s\le t$ as required. 

\end{proof}

In the proof just completed we first changed the base $Z$ so that we could assume $D$ is a 
divisor on $Z$. It is worth pointing out that this strategy does not work when dealing with 
Theorem \ref{t-sh-conj-bnd-base-gen-fib} because in this case we need to keep $Z$ 
varying in a bounded family. That is why the proof of \ref{t-sh-conj-bnd-base-gen-fib} 
is different in the sense that we use hyperplane sections of $Z$ only when the centre of 
$D$ on $Z$ is positive-dimensional.

\begin{proof}(of Corollary \ref{cor-cb-conj-sing-bnd-gen-fib})
We want to prove Conjectures \ref{conj-sh-sing-gen-fib} and \ref{conj-cb-sing-gen-fib} 
under the extra assumptions that: any horizontal$/Z$ component of $B$ has coefficient $\ge \tau$ and 
$pM'$ is b-Cartier. We can assume $\tau<1$. First consider \ref{conj-cb-sing-gen-fib}. 
Since $1<\frac{1}{\tau}$ and since $M+G$ is pseudo-effective over $Z$,
$$
 \begin{array}{l l}
0<\vol((B+M+G)|_{F})& \le \vol(((\Supp B)+M+G)|_{F})\\
\le \vol(\left(\frac{1}{\tau}B+M+G\right))|_{F}) &\le \vol(\left(\frac{1}{\tau}(B+M+G\right))|_{F})<\frac{v}{\tau^d}
\end{array}
$$
for the general fibres $F$ of $f$. Thus we can apply Theorem \ref{t-cb-conj-sing-bnd-gen-fib}. 

Now consider \ref{conj-sh-sing-gen-fib}. Since $-K_X$ is big over $Z$, $B+M$ is big over $Z$, so
$$
0<\vol((B+M)|_F)=\vol(-K_X|_F)=\vol(-K_F)
$$ 
for the general fibres $F$ of $f$. Letting $B_F:=B|_F$ and $M_F:=M|_F$, we see that 
$(F,B_F+M_F)$ is generalised $\epsilon$-lc, $K_F+B_F+M_F\sim_\R 0$, and $B_F+M_F$ is big. 
We can then find a big boundary $\Delta_F$ such that $(F,\Delta_F)$ is $\frac{\epsilon}{2}$-lc 
and $K_F+\Delta_F\sim_\R 0$ (this follows from \ref{l-from-gen-fib-to-usual-fib}).
Thus $F$ belongs to a bounded family by [\ref{B-BAB}, Corollary 1.2], 
hence $\vol(-K_F)$ is bounded from above. Then  $\vol((B+M)|_F)$ is bounded from above, so 
taking $G=0$ we are in the situation of \ref{conj-cb-sing-gen-fib}. Thus we are done by the previous 
paragraph.

\end{proof}

\begin{proof}(of Theorem \ref{t-cb-conj-sing-bnd-fib})
This is a special case of Theorem \ref{t-cb-conj-sing-bnd-gen-fib} which was already proved.

\end{proof}

In the rest of this section we prove few other results which are not essential for this paper in the 
sense that they will only be used to give alternatives proofs of \ref{t-towers-of-Fanos}. They will 
likely be useful elsewhere so it is good to write them here for future reference.

\subsection{Comparing singularities on the total space and base}

\begin{lem}\label{l-adj-sub-lc-g-sub-lc}
Let $(X,B)$ be a projective sub-pair and $f\colon X\to Z$ be a contraction such that $(X,B)$ is lc over the 
generic point of $Z$, $K_X+B\sim_\Q 0/Z$, and $B$ is a $\Q$-divisor. 
Let $B_Z,M_Z$ be the discriminant and moduli parts of adjunction and consider $(Z,B_Z+M_Z)$ as a 
generalised pair (as in \ref{rem-base-fib-gen-pair}). Then for any open subset $U\subseteq  Z$, $(X,B)$ is sub-lc over $U$ iff 
$(Z,B_Z+M_Z)$ is generalised sub-lc on $U$.
\end{lem}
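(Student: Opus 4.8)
The statement is essentially a reformulation of the definition of the discriminant b-divisor, combined with generalised inversion of adjunction. The plan is to prove both implications by passing to suitable birational models where everything becomes log smooth, and then comparing log discrepancies divisor by divisor. I would begin by reducing to a statement about b-divisors: fix a prime divisor $D$ over $Z$ whose centre meets $U$, and let $Z'\to Z$ be a birational contraction on which $D$ is a divisor, with a corresponding birational model $X'\to X$ so that $f'\colon X'\to Z'$ is a morphism. Let $K_{X'}+B'$ be the pullback of $K_X+B$; then $(X',B')$ is sub-lc over $U$ iff $(X,B)$ is, since log discrepancies are preserved under crepant pullback. By the very definition of the discriminant divisor, $\mu_D B_{Z'}=1-t_D$ where $t_D$ is the lc threshold of $f'^*D$ with respect to $(X',B')$ over the generic point of $D$; so the coefficient of $D$ in $B_{Z'}$ is $\le 1$ (i.e. $(Z,B_Z+M_Z)$ is generalised sub-lc at $D$) precisely when $t_D\ge 0$, which is precisely the condition that $(X',B')$ is sub-lc over the generic point of $D$.

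\textbf{Key steps.} First, reduce ``sub-lc over $U$'' for $(X,B)$ to a condition checkable on each prime divisor $D$ over $Z$ with centre meeting $U$, namely that $(X,B)$ is sub-lc over the generic point of $D$: this uses that a non-sub-lc place of $(X,B)$ lying over $U$ has a well-defined centre on $Z$, whose generic point is the generic point of some such $D$ (after passing to a high enough model of $Z$). Second, for each such $D$, translate ``$(X,B)$ sub-lc over the generic point of $D$'' into ``the coefficient of $D$ in ${\bf B}_Z$ is $\le 1$'' using the definition of $t_D$ and the fact that, over the generic point of $D$, the moduli part contributes nothing to the coefficient of $D$. Third, for the converse direction — that $(Z,B_Z+M_Z)$ generalised sub-lc on $U$ forces $(X,B)$ sub-lc over $U$ — the cleanest route is generalised inversion of adjunction: reduce to the case where $Z$ is a curve (by taking general hyperplane sections of $Z$ through a given point, exactly as in the proof of Theorem \ref{t-cb-conj-sing-bnd-gen-fib} and Proposition \ref{l-adj-disc-div-e-lc}), and then over a curve the claim is that $(X,B)$ sub-lc over a closed point $z$ is equivalent to $\mu_z B_Z\le 1$, which is immediate from the definition $\mu_z B_Z = 1-t$ with $t$ the lc threshold of $f^*z$ over $z$. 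Since $M_Z$ is b-nef and the generalised pair structure on $(Z,B_Z+M_Z)$ is the one from Remark \ref{rem-base-fib-gen-pair}, ``generalised sub-lc on $U$'' is literally the condition that ${\bf B}_Z$ has coefficients $\le 1$ at all divisors with centre meeting $U$ — so the two sides match divisor by divisor.

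\textbf{Main obstacle.} The subtle point is the reduction in the converse direction and the role of the moduli b-divisor: one must check that the generalised sub-lc condition on $(Z,B_Z+M_Z)$ genuinely only involves the coefficients of ${\bf B}_Z$ (and not some hidden contribution from ${\bf M}_Z$), which relies on ${\bf M}_Z$ being b-nef and b-$\Q$-Cartier as recorded in \ref{rem-base-fib-gen-pair}; and one must make sure the hyperplane-section argument correctly tracks the discriminant divisor under restriction, i.e. that for a general hyperplane $H$ through a point of $Z$ with $V=f^*H$, the discriminant of $(V,B|_V)\to H$ agrees with the restriction of ${\bf B}_Z$ near $H$. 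This last compatibility is standard (it is the analogue of what is used in the proof of \ref{t-cb-conj-sing-bnd-gen-fib}) but is where the real work lies; once it is in place, both implications follow formally from the definitions and from generalised inversion of adjunction [\ref{B-compl}, Lemma 3.2].
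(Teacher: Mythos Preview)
Your forward direction and your Steps 1--2 are exactly the paper's argument, and in fact they already prove \emph{both} implications. The paper's proof of the converse is no more than your Step~1 spelled out: if $(X,B)$ is not sub-lc over $U$, pick a prime divisor $S$ over $X$ with $a(S,X,B)<0$ and image meeting $U$; since $(X,B)$ is lc over the generic point of $Z$, $S$ is vertical, so after enlarging $X'$ and $Z'$ we may assume $S$ is a divisor on $X'$ mapping onto a divisor $D$ on $Z'$; then $\mu_S B'>1$ forces the lc threshold of $f'^*D$ over the generic point of $D$ to be negative, hence $\mu_D B_{Z'}>1$, contradicting generalised sub-lc. No hyperplane sections, no inversion of adjunction.

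Your Step~3 is therefore an unnecessary detour, and it carries some risk: the hyperplane-section and inversion-of-adjunction machinery you invoke (e.g.\ [\ref{B-compl}, Lemma 3.2] and the restriction arguments in the proofs of \ref{t-cb-conj-sing-bnd-gen-fib} and \ref{l-adj-disc-div-e-lc}) is formulated for pairs with effective boundary, whereas here $(X,B)$ is only a sub-pair and $B$ may have negative coefficients, so those restriction compatibilities would need extra justification. Your ``Main obstacle'' paragraph correctly identifies that generalised sub-lc reduces to a condition on the coefficients of ${\bf B}_Z$ (using that ${\bf M}_Z$ is b-nef b-$\Q$-Cartier), but once you have that, your own Steps 1--2 close the loop immediately; drop Step~3 and you have precisely the paper's proof.
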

\begin{proof}
Choose a log resolution $Z'\to Z$ of $(Z,B_Z)$ such that 
$M_{Z'}$ is nef and ${\bf{M}}_Z$ is the b-divisor determined by $M_{Z'}$, that is, for any higher resolution 
$Z''\to Z'$ the divisor $M_{Z''}$ is the pullback of $M_{Z'}$.  
Pick a log resolution $X'\to X$ of $(X,B)$ such that the induced map $f'\colon X'\bir Z'$ is a 
morphism. Let $K_{X'}+B'$ be the pullback of $K_X+B$.
Let $U\subseteq  Z$ be an open subset. 

Assume that $(X,B)$ is sub-lc over $U$. Let $D$ be a prime divisor on $Z'$ whose 
centre on $Z$ intersects $U$.  Since $(X',B')$ is sub-lc over $U$, 
the lc threshold of $f'^*D$ with respect to $(X',B')$ over the generic point of $D$ is non-negative, hence 
 the coefficient of $D$ in $B_{Z'}$ is at most $1$. Therefore, the coefficients of the components of $B_{Z'}$ 
 whose generic point map to $U$ do not exceed $1$, so $(Z',B_{Z'}+M_{Z'})$ is generalised sub-lc over $U$ 
 which means that $(Z,B_Z+M_Z)$ is generalised sub-lc on $U$. 

Conversely assume that $(Z,B_Z+M_Z)$ is generalised sub-lc on $U$. Assume $(X,B)$ is not sub-lc over $U$. 
Then there is a prime divisor $S$ over $X$ with log discrepancy $a(S,X,B)<0$ whose image on $Z$ intersects $U$. Since 
$(X,B)$ is an lc pair over the generic point of $Z$, $S$ is vertical over $Z$. 
Thus replacing $X',Z'$ we can assume that $S$ is a divisor on $X'$ and that the image of $S$ on $Z'$ is a divisor, say $D$.
Since by assumption the coefficient of $S$ in $B'$ exceeds $1$, 
the lc threshold of $f'^*D$ with respect to $(X',B')$ over the generic point of $D$ is negative, hence 
the coefficient of $D$ in $B_{Z'}$ exceeds $1$, a contradiction. Therefore, 
$(X,B)$ is sub-lc over $U$. 

\end{proof}

\subsection{Composition of contractions}

\begin{lem}\label{l-adj-composition-contractions}
Let $(X,B)$ be a projective sub-pair and $X\overset{f}\to Y\overset{g}\to Z$ be contractions such that $(X,B)$ is lc 
over the generic point of $Z$, $K_X+B\sim_\Q 0/Z$, and $B$ is a $\Q$-divisor. Let 
\begin{itemize}
\item ${\bf{B}}_Y,{\bf{M}}_Y$ (resp. $B_Y,M_Y$)  be the 
discriminant and moduli b-divisors (resp. divisors) of adjunction for $(X,B)$ over $Y$, 

\item  ${\bf{B}}_Z,{\bf{M}}_Z$  be the 
discriminant and moduli b-divisors of adjunction for $(X,B)$ over $Z$, 

\item and ${\bf{C}}_Z$ be the discriminant b-divisor of adjunction for $(Y,B_Y+M_Y)$ over $Z$ 
where we consider $(Y,B_Y+M_Y)$ as a generalised pair (as in \ref{rem-base-fib-gen-pair}). 
\end{itemize}
Then ${\bf{C}}_Z={\bf{B}}_Z$.
\end{lem}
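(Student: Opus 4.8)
The plan is to reduce everything to a pointwise comparison of log canonical thresholds on a model where the base is smooth, and then to apply Lemma \ref{l-adj-sub-lc-g-sub-lc} to a suitable modification of the sub-pair on $X$.

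\emph{Reduction to a prime divisor on the base.} Two b-divisors over $Z$ coincide once their traces on every birational model of $Z$ agree, so it suffices to fix a birational contraction $Z'\to Z$ and a prime divisor $D$ on $Z'$ and to show that the coefficient of $D$ in the trace of ${\bf B}_Z$ on $Z'$ equals that in the trace of ${\bf C}_Z$. Choose birational contractions $X'\to X$ and $Y'\to Y$, with $X'\to X$ an isomorphism over the generic point of $Z$, so that $X'\to Y'\to Z'$ are morphisms, and let $K_{X'}+B'$ be the pullback of $K_X+B$. The defining compatibility of the discriminant and moduli b-divisors in \ref{fib-adj-setup} identifies all the relevant data: the trace of ${\bf B}_Z$ on $Z'$ is the discriminant of $(X',B')$ over $Z'$; the pair $(Y',B_{Y'}+M_{Y'})$ given by the traces of ${\bf B}_Y,{\bf M}_Y$ is the generalised pair obtained by adjunction from $(X',B')$ over $Y'$ (and ${\bf M}_Y$ is b-nef b-$\Q$-Cartier by \ref{rem-base-fib-gen-pair}, so this is a bona fide generalised pair); and the trace of ${\bf C}_Z$ on $Z'$ is the discriminant of $(Y',B_{Y'}+M_{Y'})$ over $Z'$. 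Replacing $(X,Y,Z)$ by $(X',Y',Z')$ and shrinking $Z$ around the generic point of $D$, I may assume that $Z$ is smooth, $D$ is a prime Cartier divisor on $Z$, and hence $g^*D$ is a Cartier divisor on $Y$ with $f^*g^*D=f_{XZ}^*D$, where $f\colon X\to Y$, $g\colon Y\to Z$ and $f_{XZ}=g\circ f$. Since $K_X+B\sim_\Q 0/Z$ descends, $K_X+B\sim_\Q f^*g^*L_Z$ for some $L_Z$; in particular $K_X+B\sim_\Q 0/Y$, and we write $K_X+B\sim_\Q f^*(K_Y+B_Y+M_Y)$.

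\emph{The threshold identity.} Let $t$ be the log canonical threshold of $f_{XZ}^*D$ with respect to $(X,B)$ over the generic point of $D$, so that $\mu_DB_Z=1-t$, and let $s$ be the generalised log canonical threshold of $g^*D$ with respect to $(Y,B_Y+M_Y)$ over the generic point of $D$, so that $\mu_DC_Z=1-s$; the claim reduces to $t=s$. Fix $c\in\Q$. Adding the pullback $cf^*g^*D$ to the boundary on $X$ changes the adjunction data over $Y$ only by shifting the discriminant by $cg^*D$ and keeping the moduli b-divisor equal to ${\bf M}_Y$: for a prime divisor $E$ over $Y$, over the generic point of $E$ the part of $cg^*D$ not supported on $E$ pulls back to zero, so the lc threshold of $f^*E$ with respect to $(X,B+cf^*g^*D)$ over the generic point of $E$ is that with respect to $(X,B)$ minus $c\,\mu_E g^*D$, and the formula $K_X+B+cf^*g^*D\sim_\Q f^*(K_Y+(B_Y+cg^*D)+M_Y)$ then forces the moduli b-divisor to remain ${\bf M}_Y$ (here we use that $g^*D$ is $\Q$-Cartier, so it has a canonical pullback to every model of $Y$). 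Now $(X,B+cf^*g^*D)$ is a projective sub-pair with $K_X+B+cf^*g^*D\sim_\Q 0/Y$ and $B+cf^*g^*D$ a $\Q$-divisor, and it is sub-lc over the generic point of $Z$ (hence of $Y$) because there $f^*g^*D=0$. Hence Lemma \ref{l-adj-sub-lc-g-sub-lc} applies and gives, for every open $U\subseteq Y$, that $(X,B+cf^*g^*D)$ is sub-lc over $U$ if and only if $(Y,B_Y+cg^*D+M_Y)$ is generalised sub-lc on $U$. Taking $U=g^{-1}(V)$ for $V$ a neighbourhood of the generic point of $D$ in $Z$ and letting $V$ shrink to that generic point, we obtain: for $c\in\Q$, $(X,B+cf^*g^*D)$ is sub-lc over the generic point of $D$ if and only if $(Y,B_Y+cg^*D+M_Y)$ is generalised sub-lc over the generic point of $D$. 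By definition of the two thresholds, the former holds exactly when $c\le t$ and the latter exactly when $c\le s$; since this equivalence holds for all rational $c$ and $\Q$ is dense in $\R$, we conclude $t=s$, i.e. $\mu_DB_Z=\mu_DC_Z$. As $D$ was an arbitrary prime divisor on an arbitrary model of $Z$, this proves ${\bf C}_Z={\bf B}_Z$.

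\emph{Main obstacle.} The delicate point is precisely the claim that enlarging the boundary on $X$ by a pullback $cf^*g^*D$ of a divisor from $Y$ alters the adjunction data over $Y$ only by adding $cg^*D$ to the discriminant while leaving the moduli b-divisor \emph{equal to} ${\bf M}_Y$ (not merely $\R$-linearly equivalent), together with verifying that all hypotheses of Lemma \ref{l-adj-sub-lc-g-sub-lc} survive the modification — in particular sub-lc-ness over the generic point of $Y$ and the $\Q$-Cartier property of $g^*D$, which is why one first passes to a model where the base is smooth. Everything else is bookkeeping with b-divisors and thresholds.
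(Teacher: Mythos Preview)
Your proof is correct and follows essentially the same route as the paper's: reduce to a prime divisor on a smooth base, observe that adding $cf^*g^*D$ to $B$ shifts the discriminant over $Y$ by $cg^*D$ while leaving the moduli b-divisor unchanged, and then invoke Lemma~\ref{l-adj-sub-lc-g-sub-lc} to equate the two thresholds. The only cosmetic difference is that the paper applies the equivalence directly at $u=t$ and $u=s$, whereas you restrict to rational $c$ and pass to the limit via density; your version is arguably more scrupulous about the $\Q$-divisor hypothesis in Lemma~\ref{l-adj-sub-lc-g-sub-lc}, but the underlying argument is the same.
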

\begin{proof}
Let $D$ be a prime divisor over $Z$, say on some resolution $Z'\to Z$. 
Let $c,b$ be the coefficients of $D$ in ${\bf{C}}_Z,{\bf{B}}_Z$, respectively. We want to show $c=b$.
Pick birational contractions $\psi\colon Y'\to Y$ 
and $\phi\colon X'\to X$ from normal varieties so that $\psi,\phi$ are isomorphisms over the generic point of $Z$ 
and so that the induced maps $g'\colon Y'\bir Z'$ and $f'\colon X'\bir Y'$ are morphisms.  
Let $K_{X'}+B'$ be the pullback of $K_X+B$. Then $(X',B')$ is lc over the generic point of $Z$. 
Moreover, the discriminant and moduli divisors ${{B}}_{Y'}',{{M}}_{Y'}'$ defined for $(X',B')$ 
over $Y'$ coincide with the discriminant and moduli divisors 
${{B}}_{Y'},{{M}}_{Y'}$ on $Y'$ defined for $(X,B)$ over $Y$. 
Similarly,  the discriminant and moduli b-divisors ${\bf{B}}_{Z'}',{\bf{M}}_{Z'}'$ of adjunction for $(X',B')$ over $Z'$ 
coincide with the discriminant and moduli b-divisors ${\bf{B}}_{Z},{\bf{M}}_{Z}$ of adjunction for $(X,B)$ over $Z$,   
and the discriminant b-divisor ${\bf{C}}_{Z'}'$ of adjunction for $(Y',B_{Y'}'+M_{Y'}')$ over $Z'$ 
coincides with the discriminant b-divisor ${\bf{C}}_Z$  of adjunction for $(Y,B_Y+M_Y)$ over $Z$. 
Thus replacing $(X,B),f,g$ with $(X',B'),f',g'$ we can assume $D$ is a divisor on $Z$ and that $Z$ is smooth.

Put $h=gf$. Let $t$ be the lc threshold of $h^*D$ with respect to $(X,B)$ over the generic point of $D$. 
Similarly let $s$ be the generalised lc threshold of $g^*D$ with respect to $(Y,B_{Y}+M_{Y})$ 
over the generic point of $D$. By definition, $b=1-t$ and $c=1-s$. 
On the other hand, for any $\R$-Cartier divisor 
$P_Y$ on $Y$,  $B_{Y}+P_Y$ is the discriminant divisor of adjunction for $(X,B+ f^*P_Y)$ over $Y$. In particular, 
for any real number $u$, $B_Y+ug^*D$ is the discriminant divisor of $(X,B+ uh^*D)$ over $Y$.
Now by Lemma \ref{l-adj-sub-lc-g-sub-lc}, $(X,B+u h^*D)$ is sub-lc over the generic point of $D$ iff  
$$
(Y,B_{Y}+u g^*D+M_{Y})
$$ 
is generalised sub-lc over the generic point of $D$. Applying this to $u=t$ and $u=s$ shows that 
$t=s$ which in turn shows that $b=c$.

\end{proof}

\subsection{DCC property of the discriminant divisor}

\begin{lem}\label{l-fib-adj-dcc}
Let $d,p$ be natural numbers and $\Phi\subset [0,1]$ be a DCC set. Then there is a DCC set 
$\Psi\subset [0,1]$ depending only on $d,p,\Phi$ satisfying the following. 
Assume that $(X,B+M)$ and $X\to Z$ are as in \ref{fib-adj-setup} and that 
\begin{itemize}
\item $(X,B+M)$ is generalised lc of dimension $d$, 

\item the coefficients of $B$ are in $\Phi$, and 

\item $pM'$ is b-Cartier where $M'$ is the nef part of $(X,B+M)$. 

\end{itemize}
Then the discriminant divisor $B_Z$ of adjunction for  $(X,B+M)$ over $Z$ has coefficients in $\Psi$.  
\end{lem}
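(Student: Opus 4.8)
\textbf{Proof proposal for Lemma \ref{l-fib-adj-dcc}.}

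The plan is to reduce the statement to a known DCC-type theorem for generalised pairs, namely the global ACC/DCC for generalised log canonical thresholds or the boundedness of generalised lc thresholds from \cite{BZh}. The coefficient of a prime divisor $D$ on $Z$ (or on a birational model) in $B_Z$ is $1-t_D$, where $t_D$ is the generalised lc threshold of $f^*D$ with respect to $(X,B+M)$ over the generic point of $D$. So it suffices to show that the set of all such thresholds $t_D$, ranging over all configurations as in the statement, forms an ACC set bounded away from $0$; equivalently, that $\{1-t_D\}$ is a DCC set contained in $[0,1]$ (note $t_D\ge 0$ since $(X,B+M)$ is generalised lc, so $1-t_D\le 1$, and we must rule out $t_D$ accumulating at $0$ from above).

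First I would localise: fix a prime divisor $D$ over $Z$; passing to a birational model of $Z$ (and a compatible birational model of $X$ as in \ref{fib-adj-setup}), I may assume $D$ is a prime divisor on $Z$ and $Z$ is smooth near the generic point of $D$, so $f^*D$ makes sense there. Then I would cut down by general hyperplane sections of $Z$ through the generic point of $D$: taking $\dim Z - 1$ general hyperplanes and restricting, divisorial generalised adjunction (\cite{B-compl}, Subsection 3.1; see also the use of it in the proof of \ref{t-cb-conj-sing-bnd-gen-fib}) reduces to the case $\dim Z = 1$, with $D$ a closed point, while keeping $pM'$ b-Cartier and the coefficients of $B$ in $\Phi$, and replacing $d$ by $d$ and dropping the dimension of the base to $1$. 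In this curve-base situation, $t_D$ is the generalised lc threshold of the fibre $f^*D$ (a single integral Cartier divisor on $Z$, hence $f^*D \ge 0$ integral on $X$) with respect to the generalised pair $(X,B+M)$ of dimension $d$, whose boundary coefficients lie in $\Phi$ and whose nef part satisfies $pM'$ b-Cartier.

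Now I would invoke the ACC for generalised log canonical thresholds: by \cite{BZh} (the generalised-pair analogue of the Hacon--M$^{\mathrm c}$Kernan--Xu ACC theorem), the set of generalised lc thresholds
$$
\mathrm{glct}\bigl(X,B+M; f^*D\bigr)
$$
as $(X,B+M)$ ranges over $d$-dimensional generalised lc pairs with boundary coefficients in the DCC set $\Phi$, with $pM'$ b-Cartier, and with $f^*D$ an integral effective divisor, satisfies the ACC and in particular is bounded away from $\infty$; moreover the threshold is positive and the relevant set does not accumulate at $0$ from above because the ``multiplicity'' data (coefficient $1$ on $f^*D$) comes from the fixed DCC set $\{1\}$. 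Hence $\{t_D\}$ is an ACC set in $(0,\infty)$ with no accumulation at $0$, so $\{1-t_D\} \cap [0,1]$ is a DCC set; call it $\Psi$. It depends only on $d,p,\Phi$, since those are the only parameters entering the ACC input. Unwinding the hyperplane-section reduction (the adjunction there only changes $\Phi$ to a new DCC set depending on $d,p,\Phi$ by \cite{B-compl}, Lemma 3.3) gives the claim for arbitrary $D$ over $Z$, hence for the discriminant b-divisor, and a fortiori for $B_Z$.

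The main obstacle I anticipate is making the hyperplane-section reduction fully rigorous in the generalised setting: one must check that restricting to a general hyperplane section of $Z$ and applying divisorial generalised adjunction genuinely preserves the generalised lc threshold of the fibre over the point $D\cap H$ (this is where one needs the fibre $f^*D$ to restrict compatibly and the nef part to restrict with controlled b-Cartier index), and that the new boundary coefficient set is still DCC and depends only on $d,p,\Phi$. An alternative that avoids the reduction to curves entirely would be to apply the generalised ACC for lc thresholds directly on $Z$-models, but then one must handle the subtlety that $f^*D$ need not be $\mathbb{Q}$-Cartier globally --- only over the generic point of $D$ --- which is exactly why the localisation/hyperplane step is the cleanest route.
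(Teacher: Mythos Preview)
Your proposal is correct and lands on the same key input as the paper --- the ACC for generalised lc thresholds from [\ref{BZh}] --- but you take an unnecessary detour through hyperplane sections to reduce to a one-dimensional base. The paper's argument is much shorter: since the statement concerns $B_Z$ (not the b-divisor), $D$ is already a prime divisor on $Z$; simply \emph{shrink $Z$ around the generic point of $D$}. Then $D$ is Cartier, and the threshold $t$ over the generic point of $D$ coincides with the global generalised lc threshold of $f^*D$ with respect to $(X,B+M)$. The coefficients of $f^*D$ are natural numbers, so they lie in the DCC set $\Phi\cup\N$, and [\ref{BZh}, Theorem 1.5] applies directly to give ACC for the set of such $t$, hence DCC for $1-t$. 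No cutting by hyperplanes, no divisorial adjunction on restrictions, no verification that thresholds are preserved under restriction. Your approach would work once the restriction step is made rigorous (and you correctly flag this as the main obstacle), but it buys nothing here: the localisation-by-shrinking already handles the issue that $f^*D$ is only well-defined over the generic point of $D$, which was your stated reason for preferring the hyperplane route.
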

\begin{proof}

Let $D$ be a prime divisor on $Z$. Let $t$ be the generalised lc threshold of $f^*D$ with respect to $(X,B+M)$ 
over the generic point of $D$. Shrinking $Z$ around the generic point of $D$ we can assume $D$ is Cartier and that 
$t$ is the generalised lc threshold of $f^*D$ with respect to $(X,B+M)$ (that is, globally not just over 
the generic point of $D$). In particular, 
the coefficients of $f^*D$ are natural numbers, hence they belong to $\Phi\cup \N$ which is a DCC set.    
Moreover, we can assume that $\frac{1}{p}$ is in $\Phi$. Then 
by [\ref{BZh}, Theorem 1.5] the generalised lc thresholds $t$ above satisfy the ACC. 
Therefore, $\mu_DB_Z=1-t$ belongs to some 
DCC set $\Psi$ depending only on $d,p,\Phi$. 

\end{proof}

Note that in the proof, unlike some other proofs above, we did not need $M'$ to be nef over $Z$ 
but only used its nefness over $X$ 
(indeed $M'$ is not assumed to be nef over $Z$ in the lemma).

%%%%%%%%%%%%%%%%%%%%%%
%%%%%%%%%%%%%%%%%%%%%%

\section{\bf Boundedness of towers of Fano fibrations and of log Calabi-Yau varieties}

In this section we treat Theorems \ref{t-towers-of-Fanos} and \ref{cor-bnd-cy-fib-non-product}.

\begin{proof}(of Theorem \ref{t-towers-of-Fanos})
\emph{Step 1.}
\emph{In this step we do some easy reductions.}
By assumption, $X\to Z$ factors as a sequence 
$$
X=X_1\to \cdots \to Z=X_l
$$ 
of Fano fibrations. If $l=1$, then the statement holds essentially trivially: indeed, $X=Z$ 
and $A^{d=\dim Z}\le r$ means $X$ is bounded; 
also $A-L\sim_\R A-(K_X+B)$ being ample implies $A^{d-1}\cdot (K_X+B)<r $, hence 
$A^{d-1}\cdot B$ is bounded from above which then implies that 
$(X,B)$ is log bounded as the coefficients of $B$ are $\ge \tau$. Thus we can assume $l\ge 2$. 
Moreover, applying induction on $l$ we can assume that $\dim X_{l-1}>0$ otherwise 
we can replace $Z$ with $X_{l-1}$ and decreasing $l$.

On the other hand, by Lemma \ref{l-rational-approximation}, 
we can write $K_X+B=\sum r_i(K_X+B_i)$ for certain real numbers $r_i>0$ with 
$\sum r_i=1$ and rational boundaries $B_i$ such that $(X,B_i)$ is $\frac{\epsilon}{2}$-lc, $K_{X}+B_i\sim_\Q 0/Z$, 
$\Supp B_i=\Supp B$, and the coefficients of $B_i$ are $\ge \frac{\tau}{2}$. 
Furthermore, we can choose $B_i$ so that the coefficients of $B-B_i$ are arbitrarily small, hence 
if $K_X+B_i\sim_\Q f^* L_i$, then we can make sure that $A-L_i$ is ample. Now replacing $\epsilon, \tau, (X,B),L$ with 
$\frac{\epsilon}{2},\frac{\tau}{2},(X,B_i),L_i$ for some $i$ we can assume that $B$ has rational coefficients.\\

\emph{Step 2.}
\emph{In this step we apply adjunction to $(X,B)$ over each $X_i$.}
Let $B_i$ and $M_i$ be the discriminant and moduli divisors 
of adjunction defined for $(X,B)=(X_1,B_1)$ over $X_i$ whenever $\dim X_i>0$. We consider $(X_i, B_i+M_i)$ as a 
generalised pair with data consisting of some 
high resolution $X_i'\to X_i$ and nef part $M_i'$ on $X_i'$ (as in \ref{rem-base-fib-gen-pair}). 

Denote $X\to X_j$ by $g_j$. We claim that for each $i$,
\begin{enumerate}
\item there exists a positive real number $\delta$ depending only on $d,i,\epsilon,\tau$ 
such that $(X_{i},B_{i}+M_{i})$ is generalised $\delta$-lc if $\dim X_i>0$; 

\item there exist natural numbers $n_1,\dots,n_{i-1},v$ depending only on $d,i,\epsilon,\tau$ 
and there exists  an integral divisor $J\ge 0$ on $X$ such that for the general fibres $F$ of $X\to X_i$ 
we have 
$$
\mbox{$J|_F\sim -\sum_{j=1}^{i-1} n_j g_j^*K_{X_j}|_F$ and $0<\vol((B+J)|_F)<v$.}
$$ 
\end{enumerate}

To prove the claim 
we will apply induction on  $d$, so we assume that the claim holds in lower dimension.\\   

\emph{Step 3.}
\emph{In this step we consider the log general fibres of $(X,B)\to X_i$.}
Let $F$ be a general fibre of  $X=X_1\to X_i$, say over a closed point $v$. Then the sequence 
$$
X_1\to  X_2 \to \cdots \to X_i
$$
induces a sequence 
$$
F=G_1\to  G_2 \to \cdots \to G_{i-1}\to G_i=\{v\}
$$
of contractions where $G_j$ is the fibre of $X_j\to X_i$ over $v$. 
Moreover,  $-K_{G_j}$ is ample over $G_{j+1}$ 
as $K_{G_j}=K_{X_j}|_{G_j}$ and $-K_{X_j}$ is ample over $X_{j+1}$. 
In particular, $F\to \{v\}$ factors as a tower of Fano fibrations of length $i$.
Note that $G_{j}$ may consist of only one point 
for some $j<i$: this is the case when $X_j\to X_i$ is birational. In addition, 
$G_j\to G_{j+1}$ may be an isomorphism for some $j$. 

Let 
$$
K_{F}+B_{F}:=(K_X+B)|_{F}.
$$ 
Then $(F,B_F)$ is projective $\epsilon$-lc, $K_F+B_F\sim_\Q 0$, and each non-zero 
coefficient of $B_F$ is $\ge \tau$.\\

\emph{Step 4.}
\emph{In this step we apply induction on dimension.}
In this step assume $\dim X_i>0$ and let $F,G_j$ be as in the previous step. Then $\dim F<d$. 
Therefore, applying induction on dimension for the claim in Step 2 and using the fact that the general fibres 
of $F\to G_i=\{v\}$ are just $F$ itself, there exist natural numbers $n_1,\dots,n_{i-1},v$ 
depending only on $\dim F,i,\epsilon, \tau$ and there is an integral divisor $J_F\ge 0$ such that 
$$
\mbox{$J_F\sim -\sum_{j=1}^{i-1} n_j h_j^*K_{G_j}$ and $0<\vol(B_F+J_F)<v$} 
$$
where $h_j$ denotes $F\to G_j$. In particular, $-\sum_{j=1}^{i-1} n_j h_j^*K_{G_j}$ is an integral 
divisor. On the other hand, from $h_j^*K_{G_j}=g_j^*K_{X_j}|_F$, we get
$$
J_F\sim-\sum_{j=1}^{i-1} n_j h_j^*K_{G_j}= (-\sum_{j=1}^{i-1} n_j g_j^*K_{X_j})|_F.
$$
Since $F$ is a general fibre of $X\to X_i$, we deduce that $-\sum_{j=1}^{i-1} n_j g_j^*K_{X_j}$ is 
an integral divisor over the generic point of $X_i$ (note that since $F$ is a general fibre we have: 
if $P=\sum p_kD_k$ is an $\R$-divisor on $X$ where $D_k$ are distinct irreducible components, 
then $P|_F=\sum p_kD_k|_F$ where $D_k|_F$ are reduced divisors and there is no common component 
for distinct $k$; so the set of coefficients of $P|_F$ coincides with the set of horizontal$/X_i$ coefficients of $P$). Let 
$$
M=\rddown{-\sum_{j=1}^{i-1} n_j g_j^*K_{X_j}}.
$$ 
Then $M|_F\sim J_F\ge 0$, hence there is an integral divisor $0\le J\sim M/X_i$ (this can be seen 
by taking a resolution $W\to X$ and applying base change of cohomology to $W\to X_i$). In particular,  
$$
J|_F\sim M|_F\sim J_F\sim-\sum_1^{i-1} n_j g_j^*K_{X_j}|_F
$$ 
and 
$$
0<\vol((B+J)|_F)=\vol(B_F+J_F)<v.
$$\

\emph{Step 5.}
\emph{In this step we establish claim (1) of step 2.}
As mentioned earlier we can assume that the claim holds in lower dimension. 
If $i=1$, the claim holds trivially. Moreover, if $\dim X_i=0$, then claim (1) holds 
 as it is vacuous in this case. But if $\dim X_i>0$, then claim (1) follows by applying  
Corollary \ref{cor-cb-conj-sing-bnd-gen-fib} to $(X,B)\to X_i$ using the integral divisor $J$ of the previous step. 
We can assume that $\delta$ of claim (1) depends only on  $d,l,\epsilon,\tau$.\\

\emph{Step 6.}
\emph{In this step we work towards establishing claim (2) of step 2.}
We will prove claim (2). If $\dim X_i>0$, then it follows from the previous step. 
So assume $\dim X_i=0$ which means $i=l$ and that $X_{l-1}$ is a Fano variety. 
By claim (1), $(X_{l-1},B_{l-1}+M_{l-1})$ is generalised $\delta$-lc, hence $X_{l-1}$ is 
a $\delta$-lc Fano variety. Thus $X_{l-1}$ is bounded by [\ref{B-BAB}, Theorem 1.1] (=Theorem \ref{t-BAB}), 
so there are natural numbers $n_{l-1},v$
depending only on $d,\delta$ such that $-n_{l-1}K_{X_{l-1}}$ is very ample 
with volume less than $v$. In particular, we are done if $l=2$, so we can assume $l\ge 3$.
We will construct $n_i,\dots,n_{l-2}$ inductively and during the process we modify $n_{l-1},v$.
 
Denote $X_j\to X_k$ by $e_{j,k}$. Assume that  for some $2\le j\le l-1$ 
there exist natural numbers $n_j,\cdots,n_{l-1},v$ depending only on $d,j,\delta$ 
such that 
$$
H_{j}:=-\sum_{k=j}^{l-1} n_k e_{j,k}^*K_{X_k}
$$ 
is very ample on $X_j$ with volume less than $v$.  
By claim (1), $(X_{j-1},B_{j-1}+M_{j-1})$ is generalised $\delta$-lc. 
By assumption, $-K_{X_{j-1}}$ is ample over $X_{j}$. Moreover, since $\dim Z=\dim X_l=0$,  
$K_X+B\sim_\Q 0$ from which we get 
$$
K_{X_{j-1}}+B_{j-1}+M_{j-1}\sim_\Q 0.
$$
Then 
$$
(X_{j-1},B_{j-1}+M_{j-1})\to X_{j}
$$
is a generalised $(\dim X_{j-1}, v, \delta)$-Fano type fibration where we use the assumption that 
$H_{j}^{\dim X_{j}}<v$. Then by Lemma \ref{l-from-gen-fib-to-usual-fib}, there is a boundary $\Delta_{j-1}$ such that 
$(X_{j-1},\Delta_{j-1})\to X_{j}$ is a $(\dim X_{j-1}, v, \frac{\delta}{2})$-Fano type fibration.\\

\emph{Step 7.}
\emph{In this step we establish the claim of step 2 from which we derive the theorem.}
By Theorem \ref{t-log-bnd-cy-fib} (or \ref{t-log-bnd-cy-gen-fib}), 
 $X_{j-1}$ belongs to a bounded family. On the other hand,  by Lemma \ref{l-bnd-cy-fib-v-ampleness},
there exist bounded natural numbers $p,q$ such that the divisor 
$$
H_{j-1}:=p(qe_{j-1,j}^*H_j-K_{X_{j-1}})
$$ 
is very ample. In particular, letting $n_{j-1}:=p$ and replacing $n_j,\cdots,n_{l-1}$ with the numbers 
$qn_{j-1}n_j,\cdots, qn_{j-1}n_{l-1}$, respectively,  we can rewrite 
$$
H_{j-1}=-\sum_{k=j-1}^{l-1} n_k e_{j-1,k}^*K_{X_k}.
$$
 Applying Proposition \ref{l-bnd-cy-bndness-volume}, the volume of $H_{j-1}$ is bounded from above, so replacing $v$ 
we can assume $\vol(H_{j-1})<v$. 

Repeating the above process  gives bounded natural numbers $n_1$,$\cdots$,$n_{l-1}$,$v$ 
depending only on $d,l,\delta$ (hence depending only on $d,l,\epsilon,\tau$) such that 
$$
H_1:=-\sum_{k=1}^{l-1} n_k e_{1,k}^*K_{X_k}
$$ 
is very ample with volume less than $v$.  
In particular, $X=X_1$ belongs to a bounded family which in turn implies that $(X,B)$ is log bounded 
because $H_1^{d-1}\cdot B=-H_1^{d-1}\cdot K_X$ is bounded from above and because 
the coefficients of $B$ are $\ge \tau$. Moreover, we can find 
$$
0\le J\sim H_1=-\sum_{k=1}^{l-1} n_k e_{1,k}^*K_{X_k}=-\sum_{k=1}^{l-1} n_k g_{k}^*K_{X_k}
$$ 
and 
perhaps after replacing $v$  we can assume 
$$
0<\vol(B+J)=\vol(-K_X+H_1)<v.
$$ 
This proves claim (2) and finishes the proof of the theorem.

\end{proof}

\begin{proof}(of Theorem \ref{cor-bnd-cy-fib-non-product})
We follow the strategy in [\ref{DiCerbo-Svaldi}] which reduces the theorem to a special case of \ref{t-towers-of-Fanos}.
Taking a $\Q$-factorialisation we can assume $X$ is $\Q$-factorial. 
Since $B\neq 0$ and $(X,B)$ is not of product type, by [\ref{DiCerbo-Svaldi}, Theorem 3.2](=Theorem \ref{t-tower-of-Mfs}), 
there exist a birational map $\phi\colon X\bir X_1$ and a sequence of contractions 
$$
X_1\to  X_2 \to \cdots \to X_l
$$
such that $\phi^{-1}$ does not contract divisors, each $X_i\to X_{i+1}$ is a Mori fibre space, 
 and $X_l$ is a point. In particular, $l\le d$. Then applying Theorem \ref{t-towers-of-Fanos}, 
we deduce that $(X_1,B_1)$ is log bounded where $B_1=\phi_*B$.

On the other hand, since $K_X+B\sim_\Q 0$, each exceptional prime divisor $D$ of $\phi$ satisfies 
$$
a(D,X_1,B_1)=a(D,X,B)\le 1,
$$
hence there is a birational contraction $\psi \colon  X'\to X_1$ 
from a normal projective variety such that the induced map $X\bir X'$ is an isomorphism in codimension one. 
Let $B'$ on $X'$ be the birational transform of $B$. Then $(X',B')$ is a crepant model of $(X_1,B_1)$. 
Thus $(X',B')$ is log bounded by Theorem 1.3. 

\end{proof}

In the rest of this section we give different proofs of \ref{t-towers-of-Fanos} 
in certain special cases.\\ 

\emph{Alternative proof of \ref{t-towers-of-Fanos} when coefficients of 
$B$ are in a fixed DCC set $\Phi$.}

\emph{Step 1.}
\emph{In this step we apply adjunction to $(X,B)$ over each $X_i$.}
First, as in the previous proof we can assume $l\ge 2$ and that $\dim X_{l-1}>0$. 
Let $B_i$ and $M_i$ (resp. ${\bf{B}}_i$ and ${\bf{M}}_i$) be the discriminant and moduli divisors (resp. b-divisors)
of adjunction defined for $(X_1,B_1)=(X,B)$ over $X_i$ when $\dim X_i>0$. We consider $(X_i, B_i+M_i)$ as a 
generalised pair with data consisting of some 
high resolution $X_i'\to X_i$ and nef part $M_i'$ on $X_i'$ (as in \ref{rem-base-fib-gen-pair}). A crucial point is that, by 
Lemma \ref{l-adj-composition-contractions}, we have the following property: 
\begin{itemize}
\item[$(*)$] $B_i$  (resp. ${\bf{B}}_i$) coincides with the discriminant divisor (resp. b-divisor) of generalised adjunction for 
$$
(X_{i-1},B_{i-1}+M_{i-1}) ~~~\mbox{over $X_i$.}
$$ 
\end{itemize}
\medskip

\emph{Step 2.}
\emph{In this step we investigate the  $(X_{i},B_{i}+M_{i})$.}
We claim that there exist a DCC set $\Psi$, a natural number $p$, and a positive 
real number $\delta$ depending only on $d,l,\Phi,\epsilon$ such that for each $i$ we have: 
\begin{itemize}
\item the coefficients of $B_i$ belong to $\Psi$, 

\item we can choose $M_i'$ in its $\Q$-linear equivalence class so that ${p}M_i'$ is Cartier, and 

\item $(X_{i},B_{i}+M_{i})$ is generalised $\delta$-lc. 
\end{itemize}
For $i=1$ the claim holds trivially by taking $\Psi=\Phi$, $p=1$, and $\delta=\epsilon$.
Assuming that we have already found $\Psi,p,\delta$ which satisfy the claim up to $i-1\ge 1$, we 
prove the claim for $i$ (where we assume $\dim X_i>0$). 
By Lemma \ref{l-fib-adj-dcc},  the coefficients of $B_i$ belong to 
some DCC set $\tilde{\Psi}$ depending only on $d,\Phi$.\\   

\emph{Step 3.}
\emph{In this step we show that we can choose $M_i'$ with bounded Cartier index.}
Let $F$ be a general fibre of  $X_1\to X_i$ over a point $v$. 
Let 
$$
K_{F}+B_{F}:=(K_X+B)|_{F}.
$$ 
By induction on dimension, the set of such $(F,B_{F})$ form a log bounded family.
Thus there is a bounded natural number $\tilde{p}$ such that we can choose $M_i'$ 
in its $\Q$-linear equivalence class so that $\tilde{p}M_i'$ is Cartier: this follows from the same arguments as in the 
proof of [\ref{HX}, Claim 3.2 (3)] (note that in [\ref{HX}, Claim 3.2 (3)] it is implicitly assumed that $B_F$ 
is big but this is not needed in the proof once we know that $(F,B_F)$ is log bounded).
Alternatively, as in [\ref{B-compl}, Proposition 6.3], we can use boundedness of relative complements 
for $K_{X_{i-1}}+B_{i-1}+M_{i-1}$ over $X_i$ (similar to that of usual relative complements [\ref{B-compl}, Theorem 1.8])
to show that $\tilde{p}$ exists.\\

\emph{Step 4.}
\emph{In this step we finish the proof of the claim of Step 3.}
By $(*)$ above and by Corollary \ref{cor-cb-conj-sing-bnd-gen-fib} applied to $(X_{i-1},B_{i-1}+M_{i-1})$ 
over $X_i$, the b-divisor ${\bf{B}}_i$ has coefficients $\le 1-\tilde{\delta}$ for some 
positive real number $\tilde{\delta}$ depending only on $d,p,\Psi, \delta$. In particular,  
 $(X_i,B_i+M_i)$ is generalised $\tilde{\delta}$-lc. 
Now replace $\Psi,p,\delta$ 
with $\Psi\cup \tilde{\Psi}$, $p\tilde{p}$, $\delta\tilde{\delta}$, respectively. 
Then we can assume that the coefficients of $B_i$ are in $\Psi$, $pM_i'$ is Cartier, and that 
$(X_{i},B_{i}+M_{i})$ is generalised $\delta$-lc. This proves the above claim inductively.\\

\emph{Step 5.}
\emph{In this step we show that $X_{l-1}$ is bounded.}
Denote $X_{l-1}\to X_l$ by $h$. Pick $0\le \Delta_{l-1}\sim_\Q h^*A$ with coefficients in a fixed finite set so 
that 
$$
(X_{l-1},B_{l-1}+\Delta_{l-1}+M_{l-1})
$$ 
is still generalised $\delta$-lc with nef part $M_{l-1}'$. By assumption, $-(K_{X_{l-1}}+\Delta_{l-1})$ is ample over $X_{l}$. 
By construction, 
$$
K_{X_{l-1}}+B_{l-1}+\Delta_{l-1}+M_{l-1}\sim_\Q h^*(L+A).
$$
Then 
$$
(X_{l-1},B_{l-1}+\Delta_{l-1}+M_{l-1})\to X_{l}=Z
$$
is a generalised $(\dim X_{l-1}, r2^{\dim X_l}, \delta)$-Fano type fibration. 
Thus the pairs $(X_{l-1},\Delta_{l-1})$ form a log bounded family, by Lemma \ref{l-from-gen-fib-to-usual-fib} 
and Theorem \ref{t-log-bnd-cy-fib}. 
In particular, we can find a very ample divisor $H$ on $X_{l-1}$ such that $H^{\dim X_{l-1}}$ 
is bounded from above and $H-h^*A\sim_\Q H-\Delta_{l-1}$ is ample.\\ 

\emph{Step 6.}
\emph{In this step we finish the proof.}
Denote $X=X_1\to X_{l-1}$ by $g$. Then $K_X+B\sim_\Q g^*h^*L$ and 
$$
H-h^*L=H-h^*A+h^*(A-L)
$$ 
is ample. 
Therefore, if $l>2$, then we can apply induction on $l$. If $l=2$, 
then $X=X_{l-1}$ and $h=f$, in particular, $X$ belongs to a bounded family.
Moreover,
$$
H-(K_X+B)\sim_\Q H-f^*L
$$ 
is ample, hence  $H^{d-1}\cdot B$ is bounded from above.
This implies that $(X,B)$ is log bounded.\\

\emph{Alternative proof of \ref{t-towers-of-Fanos} when $K_X+B\sim_\Q 0$ 
and the coefficients of $B$ are in a fixed DCC set $\Phi$.}
The previous proof can be simplified in the case $K_X+B\sim_\Q 0$ 
in the sense that we do not need \ref{cor-cb-conj-sing-bnd-gen-fib}.
The proof goes along the same lines  except that we can show that $(X_i,B_i+M_i)$ is 
generalised $\delta$-lc  by a different argument rather than 
applying \ref{cor-cb-conj-sing-bnd-gen-fib} to the generalised Fano type 
fibration 
$$
(X_{i-1},B_{i-1}+M_{i-1}) \to X_i.
$$
Indeed the generalised pair $(X_i,B_i+M_i)$ is generalised klt satisfying the following properties 
with a fixed DCC set $\Psi$ and natural number $p$: 
\begin{itemize}
\item $B_i$ has coefficients in $\Psi$, and 

\item $pM_i'$ is Cartier.
\end{itemize}
When $K_X+B\sim_\Q 0$, we also have
\begin{itemize}
\item  
$
K_{X_{i}}+B_{i}+M_{i}\sim_\Q 0.
$
\end{itemize} 
But then $(X_i,B_i+M_i)$ is generalised $\delta$-lc for some fixed $\delta>0$, by Lemma \ref{l-bnd-sing-gen-cy-pairs}.
The rest of the proof is as before.\\

%%%%%%%%%%%%%%%%%%%%%%
%%%%%%%%%%%%%%%%%%%%%%

%%%%%%%%%%%%%%%%%%%%%%%%%%%%%%%%%%%%%

\vspace{2cm}
%%%%%%%%%%%%%%%%%%%%%

\textsc{DPMMS, Centre for Mathematical Sciences} \endgraf
\textsc{University of Cambridge,} \endgraf
\textsc{Wilberforce Road, Cambridge CB3 0WB, UK} \endgraf
\email{c.birkar@dpmms.cam.ac.uk\\}

\end{document}